\declaretheorem[numberwithin=section]{theorem}
\declaretheorem[sibling=theorem]{lemma}
\declaretheorem[sibling=theorem]{claim}
\declaretheorem[sibling=theorem]{proposition}
\declaretheorem[style=definition,sibling=theorem]{definition}
\declaretheorem[style=remark,sibling=theorem]{remark}
\declaretheorem[style=definition,sibling=theorem]{example}
\renewcommand{\le}{\leqslant}
\renewcommand{\ge}{\geqslant}
\renewcommand{\leq}{\leqslant}
\renewcommand{\geq}{\geqslant}
\renewcommand{\emptyset}{\varnothing}
\renewcommand{\Pr}{\mathbb{P}}
\DeclareMathOperator{\e}{\mathbb{E}} 
\DeclareMathOperator{\Ex}{\mathbb{E}} 
\DeclareMathOperator{\Var}{Var}
\DeclareMathOperator{\Po}{Po}
\DeclareMathOperator{\codim}{codim}
\DeclareMathOperator{\UT}{UT}
\DeclareMathOperator{\Clique}{Clique}
\DeclareMathOperator{\Gmin}{Near-min}
\DeclareMathOperator{\Hub}{Hub}
\DeclareMathOperator{\Aut}{Aut}
\DeclareMathOperator{\Emb}{Emb}
\DeclareMathOperator{\Hom}{Hom}
\newcommand{\1}{\mathbbm{1}}
\newcommand{\Ber}{\mathrm{Ber}}
\newcommand{\kap}{\text{$k$-AP}}
\DeclarePairedDelimiter{\ceil}{\lceil}{\rceil}
\DeclarePairedDelimiter{\floor}{\lfloor}{\rfloor}
\newcommand{\eps}{\varepsilon}
\newcommand{\br}[1]{\llbracket{#1}\rrbracket}
\newcommand{\ff}[2]{{#1}^{\underline{#2}}}
\newcommand{\barX}{\bar X_r}
\newcommand{\one}{\mathbf{1}}
\newcommand{\cA}{\mathcal{A}}
\newcommand{\cB}{\mathcal{B}}
\newcommand{\cC}{\mathcal{C}}
\newcommand{\cD}{\mathcal{D}}
\newcommand{\cE}{\mathcal{E}}
\newcommand{\cF}{\mathcal{F}}
\newcommand{\cI}{\mathcal{I}}
\newcommand{\cS}{\mathcal{S}}
\newcommand{\cX}{\mathcal{X}}
\newcommand{\cH}{\mathcal{H}}
\newcommand{\NN}{\mathbb{N}}
\newcommand{\RR}{\mathbb{R}}
\newcommand{\ZZ}{\mathbb{Z}}
\newcommand{\core}{\cI^*}
\newcommand{\mincore}{\mathcal{J}^*}
\newcommand{\mmin}{m_{\min}}
\newcommand{\mmax}{m_{\max}}
\newcommand{\Gcore}{{G^*}}
\newcommand{\Gcoreb}{{G^*_{\mathrm{exc}}}}
\newcommand{\Gcoree}{{G^*_{\mathrm{bad}}}}
\newcommand{\Gcoreh}{{G^*_{\mathrm{high}}}}
\newcommand{\mb}{m_{\textrm{exc}}}
\newcommand{\me}{m_{\textrm{bad}}}
\newcommand{\mh}{m_{\textrm{high}}}
\newcommand{\bd}{\mathbf{d}}
\newcommand{\bdmax}{\mathbf{d}_{\max}}
\newcommand{\sigmab}{{\bar{\sigma}}}
\newcommand{\QH}{{\mathcal{Q}_H}}
\title[Upper tails via high moments and entropic stability]{Upper tails via high moments and entropic stability}
\author{Matan Harel}
\address{Matan Harel, Department of Mathematics, Northeastern University, Boston, MA 02115, USA}
\email{m.harel@northeastern.edu}
\author{Frank Mousset}
\address{Frank Mousset, School of Mathematical Sciences, Tel Aviv University, Tel Aviv 6997801, Israel}
\email{moussetfrank@gmail.com}
\author{Wojciech Samotij}
\address{Wojciech Samotij, School of Mathematical Sciences, Tel Aviv University, Tel Aviv 6997801, Israel}
\email{samotij@tauex.tau.ac.il}
\thanks{This research is supported in part by the Israel Science Foundation grants 1147/14~(FM and WS), 1028/16~(FM), and 1145/18~(FM and WS), the ERC Starting Grant 633509 (FM) and 678520 (MH), and the Zuckerman Postdoctoral Fellowship Program (MH)}
\date{\today}
\begin{document}

\begin{abstract}
  Suppose that $X$ is a bounded-degree polynomial with nonnegative
  coefficients on the $p$-biased
  discrete hypercube. Our main result gives sharp estimates on the
  logarithmic upper tail probability of $X$ whenever an associated extremal problem
  satisfies a certain entropic stability property. We apply this result
  to solve two long-standing open problems in probabilistic
  combinatorics: the upper tail problem for the number of arithmetic
  progressions of a fixed length in the $p$-random subset of the integers
  and the upper tail problem for the number of cliques of a fixed size in
  the random graph $G_{n,p}$. 
  We also make significant progress on the upper tail
  problem for the number of copies of a fixed 
  regular graph $H$ in $G_{n,p}$. To accommodate readers who are interested in
  learning the basic method, we include a short, self-contained solution to the upper
  tail problem for the number of triangles in $G_{n,p}$ for all $p=p(n)$
  satisfying $n^{-1}\log n\ll p
  \ll 1$.
\end{abstract}

\maketitle

\setcounter{tocdepth}{1}
\tableofcontents

\section{Introduction}

Suppose that $Y = (Y_1,\dotsc,Y_N)$ is a sequence of independent Bernoulli random variables with mean $p$
and that $X=X(Y)$ is an $N$-variate polynomial with nonnegative real
coefficients. Perhaps the simplest question that can be asked about the
typical behaviour of $X$ is whether it satisfies a law of large numbers,
i.e., whether $X\to \e[X]$ in probability as $N\to\infty$.
Once this is established, it is natural to ask for quantitative estimates
of the probability of the event that $X$ differs from its mean
by a significant amount.
In the special case where $Y\mapsto X(Y)$ is a linear
function, this problem is addressed by the classical theory of large
deviations, see~\cite{boucheron2013book, dembo1998book}. This theory shows
that, under mild conditions
on the coefficients of the linear function $X$ and the assumptions $p\to 0$ and $Np\to \infty$,
\[ \Pr\big(|X-\e[X]|\geq \delta \e[X]\big)
 = e^{- (I(\delta) +o(1)) Np}\]
 for an explicitly computable function $I \colon (0, \infty) \to (0, \infty]$.
However, there are
many natural situations where one would like to consider nonlinear polynomials
$X(Y)$, as in the following two examples. We use the notation
$\br N = \{1,\dotsc,N\}$.

\begin{example}[Arithmetic progressions in random sets of integers]
  \label{ex:kap}
  A \emph{$k$-term arithmetic progression} is a sequence of $k$ integers of the
  form $\big(a,a+b,a+2b,\dotsc,a+(k-1)b\big)$, where $b > 0$. We write
  $\br N_p$ for the random subset of $\br N$ obtained by including
  every element of $\br N$ independently with probability $p$. Let $X_{N,p}^\kap$
  denote the number of $k$-term arithmetic
  progressions in $\br N_p$. Then $X_{N,p}^\kap$ can be considered as a polynomial
  with nonnegative coefficients and degree $k$ in the independent
  $\Ber(p)$ random variables $Y_1,\dotsc,Y_N$,
  where $Y_i$ is the indicator variable of the event that $i\in \br N_p$; explicitly,
  \[
    X_{N,p}^\kap = \sum_{b> 0}\sum_{\substack{ a\geq 1\\a+(k-1)b\leq N}}
    \prod_{i=0}^{k-1} Y_{a+ib}.
  \]
  We remark that, unlike~\cite{bhattacharya2016upper} and several other works, we count only
  genuine arithmetic progressions (i.e., we do not consider degenerate progressions of
  the form $(a,\dotsc,a)$) and we count every progression only once (as opposed
  to counting $(a_1,a_2,\dotsc,a_k)$ and
  $(a_k,a_{k-1},\dotsc,a_1)$ as two different progressions).
\end{example}

\begin{example}[Subgraph counts in random graphs] \label{ex:subgraph}
  Fix a nonempty graph $H$ and let $X^H_{n,p}$ be the number of copies of $H$
  in the random graph $G_{n,p}$, that is, the number of subgraphs of $G_{n,p}$
  isomorphic to $H$.
  Then $X_{n,p}^H$ can be written as a polynomial with nonnegative
  coefficients and degree $e_H$ in the 
  $N = \binom{n}{2}$ indicator random variables of
  the possible edges of $G_{n,p}$. More precisely, 
  fix an arbitrary bijection $\sigma_n\colon \binom{\br{n}}{2}\to \br{N}$
  (the precise choice is irrelevant) and, for every $i\in \br N$, let $Y_i$ 
  be the indicator variable of the event that $\sigma_n^{-1}(i)$ is an edge
  in $G_{n,p}$. Then
  $Y_1,\dotsc,Y_N$ are independent $\Ber(p)$ random variables and
  \[ X_{n,p}^H = \sum_{\substack{H'\subseteq K_n\\ H' \cong H}}
  \prod_{e\in E(H')} Y_{\sigma_n(e)}, \] where $K_n$ denotes the complete
  graph on the vertex set $\br{n}$ and ${\cong}$ denotes the isomorphism
  relation on graphs.
\end{example}

In this paper, we will always assume that $\delta$ is fixed and $p\to 0$ as $N\to \infty$.

The large deviation problem for the variables described above
is significantly more involved than the linear case;
in particular, the lower and upper tail probabilities---that is, $\Pr\big(X
\leq (1-\delta) \e[X]\big)$ and 
$\Pr\big(X \geq (1+\delta) \e[X]\big)$, respectively---exhibit dramatically different behaviours.
On the one hand, using a combination of Harris's inequality~\cite{Har60} and
Janson's inequality~\cite{janson1990poisson}, one can show that $X = X_{N,p}^\kap$ satisfies
\begin{equation}\label{eq:lowertail} 
e^{-C_1(\delta) \min{\{\e[X],Np\}}}
\leq \Pr\big(X\leq (1-\delta)\e[X]\big)
\leq 
e^{-C_2(\delta) \min{\{\e[X],Np\}}}
\end{equation}
for some positive $C_1(\delta)$ and $C_2(\delta)$;
similar bounds are available for $X = X_{n,p}^H$.\footnote{For more precise results, we refer the interested reader to \cite{janson2016lower,KozSam19,mousset2017probability,Zhao17}.}
On the other hand, there are no comparably simple tools that allow one to easily obtain
similar estimates on the logarithm of the upper tail
probability. The
standard concentration inequalities due to
Azuma--Hoeffding~\cite{hoeffding1963probability},
Talagrand~\cite{talagrand1995concentration}, or
Kim--Vu~\cite{kim2000concentration,vu2002concentration} yield bounds that are
far from tight in Examples~\ref{ex:kap} and~\ref{ex:subgraph}.
For a survey discussing these and other classical approaches to the `infamous upper tail'
problem, see~\cite{janson2002infamous}.

Unlike the lower tail, the upper tail is susceptible to the influence of small
structures whose appearance increases the value $X$ atypically, a phenomenon
that we refer to as \emph{localisation}. For example, in
the case of $X = X_{N,p}^\kap$ where $k\geq 3$, a typical subset of size $m=o(N)$ contains
$\Theta\big(N^2(m/N)^k\big) = o(m^2)$ $k$-term arithmetic progressions, whereas
some very rare
subsets (notably an interval of length~$m$) can contain as many as $\Theta(m^2)$ such progressions.
The event that $\br N_p$ contains an interval of length
$\Theta(\sqrt{\e[X]})$ thus provides a lower bound on the upper tail
probability. More precisely, $\Pr\big(X\geq (1+\delta)\e[X]\big) \geq
\exp\big(-O(\sqrt{\e[X]}\log(1/p))\big)$,
which is significantly larger than the lower tail probability
\eqref{eq:lowertail} for most $p$.
In order to properly analyse the upper tail event,
one must account for these local effects, which frequently requires understanding
the peculiar combinatorial nature of the random variable $X$. 

The last decade has seen the development of an increasingly powerful theory of
`nonlinear large deviations', which began with the work of Chatterjee--Dembo~\cite{chatterjee2016nonlinear} and was further developed by
Eldan~\cite{eldan2016gaussian}, Cook--Dembo~\cite{cook2018large},
Augeri~\cite{augeri2019transportation, augeri2018nonlinear}, and Cook--Dembo--Pham~\cite{cook2021hypergraph}. Whenever a general
function $f$ of i.i.d.\ random variables satisfies certain complexity and regularity
conditions, these results can be used to express the
upper tail probability $\Pr\big(f\geq (1+\delta)\e[f]\big)$ in terms
of an
associated variational problem. In the case where $f$ is a polynomial with
nonnegative coefficients on the hypercube, this variational problem is able to capture the
presence of localisation, if it occurs. In the two examples mentioned above,
nonlinear large deviation theory gives tight control of the upper tail probabilities whenever
$p\geq N^{-\alpha}$ for some constant $\alpha>0$. However, the best-known
constant $\alpha$ is not optimal in both examples.

Our main contribution is a general method for proving sharp bounds on the upper
tail probability of the polynomial $X = X(Y)$ in the presence of localisation. In many cases where localisation occurs, our
approach can also give a coarse description of the tail event. At the heart of our method lies an adaptation of the classical moment argument of Janson, Oleszkiewicz, and Ruci\'nski~\cite{janson2004upper}, which we use to formalise the intuition that the upper tail event is dominated by the appearance of near-minimisers of the combinatorial optimisation problem
\begin{equation}\label{eq:opt}
  \Phi_X(\delta) = \min{\big\{|I|\log(1/p) : I\subseteq \br N
\text{ and }\e[X\mid Y_i = 1 \text{ for all $i\in I$}] \geq (1+\delta)\e[X]\big\}}.
\end{equation}
Roughly speaking, we say that $I\subseteq \br N$ is a \emph{core} if it
is a feasible set for the above optimisation problem, its size is $O\big(\Phi_X(\delta)\big)$, and it satisfies a certain natural rigidity condition that arises from requiring every element in the set to contribute a sizeable amount to the expectation. The constraints used to define a core are loosely analogous to the complexity conditions used in nonlinear large deviation theory; we will give a more precise definition of a core and discuss its relations to earlier work in more detail in Section~\ref{sec:method}.

We show that the upper tail probability is approximately equal to the probability of the appearance of a core. In particular, when the number of cores of size $m$ is $(1/p)^{o(m)}$,
a~property we term \emph{entropic stability}, then a union bound implies that
$- \log \Pr\big(X \ge (1+\delta)\Ex[X]\big)$ is well approximated by
$\Phi_X(\delta)$. We will verify that the
random variables $X_{N,p}^\kap$ and $X_{n,p}^H$
(for a large class of graphs $H$)
satisfy the entropic stability
condition under optimal, or nearly optimal, assumptions on $p$.\footnote{We use the phrase \emph{entropic stability} in a similar sense to the notion of stability in extremal combinatorics. More precisely, we are considering situations where the probability that $\prod_{i\in I}Y_i=1$ for some minimiser $I$ of~\eqref{eq:opt} is asymptotically equal to the probability of appearance of one such minimiser---in other words, the \emph{energy} of such configurations (given by the number of elements involved) dominates over the \emph{entropy} (that is, the number of such configurations). Then \emph{entropic stability} means that the entropy term remains
  negligible even as we move away from true minimisers of~\eqref{eq:opt} to sets that are merely close to being minimisers (the cores).}

One important caveat that we have ignored so far is that the upper tail exhibits localisation only when the expectation of $X$ tends to infinity sufficiently quickly.
In fact, if $\e[X]$ is of constant order, then, under relatively 
mild conditions, $X$ converges in distribution to a Poisson random variable
and no localisation occurs. We show that, for the two examples discussed above,
the upper tail continues to have Poisson behaviour even when $\e[X]$ goes to
infinity sufficiently slowly.
In the cases of $k$-term arithmetic progressions in $\br{N}_p$ and cliques in $G_{n,p}$, our results
for the Poisson and localised regimes cover almost the whole range of densities $p\to 0$ with $\e[X]\to \infty$,
leaving the upper tail probability undetermined only at densities for which it is believed that the two behaviours coexist.

\subsection{Arithmetic progressions in random sets of integers}

Let $X = X_{N,p}^\kap$ denote the number of $k$-term arithmetic
progressions in $\br N_p$. It is not hard to
see that $\e[X] = \Theta(N^2p^k)$. 
Whenever this expectation vanishes, the upper tail event is 
commensurate with the probability of $X\geq 1$, which can be controlled using
Markov's inequality. More generally, if $\e[X]$ is bounded,
then it follows from standard techniques that $X$ is asymptotically Poisson~\cite{BarKocLiu19};
in this case, the large deviations of $X$ are those of a Poisson random variable
with mean $\e[X]$. For the remainder
of this section, we shall thus assume that $\e[X]\to \infty$, i.e.,
that $p^{k/2}\gg N^{-1}$.

Improving an earlier estimate
due to Janson and Ruciński~\cite{janson2011upper},
Warnke~\cite{warnke2017upper} proved that under fairly general assumptions
(in particular, for constant $\delta>0$ and all $p$ bounded
away from~$1$),
\begin{equation}\label{eq:lutzwarnke} -\log \Pr(X\geq (1+\delta)\e[X]) =
\Theta\left(\min\left\{\big((1+\delta)\log(1+\delta)-\delta\big)\e[X],
\sqrt{\delta \e[X]}\log(1/p)\right\}\right), \end{equation}
where the constants implicit in
the $\Theta$-notation are independent of $\delta$. Note that the two terms of the minimum
correspond to the dominance of the Poisson and the localised regimes, respectively.

Since then, it has been an open problem to determine the missing constant factor
in~\eqref{eq:lutzwarnke}. Using
the above-mentioned framework of Eldan~\cite{eldan2016gaussian},
Bhattacharya, Ganguly, Shao, and Zhao~\cite{bhattacharya2016upper} were able to
do so in the range $N^{-\frac1{12(k-1)}}(\log N)^{O(1)}\ll p^{k/2} \ll1$. This was
subsequently improved by Briët--Gopi~\cite{briet2017gaussian} to the slightly wider range
$N^{-\frac1{12\ceil{(k-1)/2}}} (\log N)^{O(1)}\ll p^{k/2}\ll1$, also using Eldan's result.
The two theorems below improve significantly on these results and determine the precise rate
of the upper tail for all $N^{-1}\ll p^{k/2}\ll 1$, excepting the case $p^{k/2} =
\Theta(N^{-1}\log N)$. The first result 
concerns the range where the minimum in~\eqref{eq:lutzwarnke} is $\sqrt{\delta \e[X]}
\log(1/p)$.

\begin{restatable}{theorem}{thmkap}\label{thm:kap}
  Let $k \geq 3$ be an integer and let $X=X_{N,p}^\kap$ denote the number of
  $k$-term arithmetic progressions in $\br{N}_p$. Then, for every fixed positive
  constant $\delta$ and all $p=p(N)$ satisfying $N^{-1}\log N \ll p^{k/2} \ll 1$,
  \[
    \lim_{N\to\infty} \frac{-\log \Pr \big(X \geq (1+\delta) \e[X]
    \big)}{Np^{k/2}\log(1/p)} = \sqrt{\delta}.
  \]
\end{restatable}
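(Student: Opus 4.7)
The plan is to apply the general core/entropic-stability framework announced in Section~\ref{sec:method}, which (under appropriate hypotheses) establishes
\[ -\log \Pr\big(X \ge (1+\delta)\e[X]\big) = (1+o(1))\,\Phi_X(\delta). \]
It therefore suffices, for $X = X_{N,p}^\kap$, to (i) compute $\Phi_X(\delta)$ and show that it equals $(\sqrt\delta + o(1))\,Np^{k/2}\log(1/p)$, and (ii) verify the entropic stability condition, namely that the number of cores of size $m$ is $(1/p)^{o(m)}$ in the relevant range of $m$.

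For step (i), I use $\e[X] \sim N^2 p^k/(2(k-1))$. The upper bound $\Phi_X(\delta) \le (\sqrt\delta+o(1))Np^{k/2}\log(1/p)$ follows by taking $I$ to be a genuine arithmetic progression of length $m = \lceil(\sqrt\delta+\eps)Np^{k/2}\rceil$: conditioning on $Y_i=1$ for all $i \in I$ deterministically introduces the $\sim m^2/(2(k-1)) \ge (\delta+\eps')\e[X](1+o(1))$ $k$-APs inside $I$, while $k$-APs meeting $I$ in some but not all points contribute only $o(\e[X])$ to the conditional expectation (the dominant error terms are $O(|I|Np^{k-1})$ from one-point intersections and $O(p|I|^2)$ from intersections of size $\ge 2$, both negligible compared to $\e[X]$ for $k\ge 3$). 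For the matching lower bound on $\Phi_X(\delta)$, for any feasible $I$ the surplus $\e[X\mid Y_I=1] - \e[X] \ge \delta\e[X]$ is at most the number of $k$-APs contained in $I$ plus the same $o(\e[X])$ lower-order terms, yielding $|I|^2/(2(k-1)) \ge (\delta-o(1))\e[X]$, i.e., $|I| \ge (\sqrt\delta - o(1))Np^{k/2}$.

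For step (ii), my proposed strategy hinges on a structural result: any feasible set $I$ with $|I| \le (1+o(1))\sqrt\delta\,Np^{k/2}$ must be almost entirely contained in a single arithmetic progression of length $(1+o(1))|I|$. Granted this, the number of cores of size $m$ is at most $N^2\cdot 2^m$ (first choose the ambient AP, then the subset of its elements that make up $I$), and the entropic stability bound $N^2\cdot 2^m = (1/p)^{o(m)}$ reduces to $\log N = o(m\log(1/p))$. For $m \asymp Np^{k/2}$ this is exactly the hypothesis $p^{k/2}\gg N^{-1}\log N$, which sharply separates the range of validity of the theorem from the Poisson regime.

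The main obstacle is the structural result itself, which is a stability version of the extremal bound ``a set of size $m$ in $\br N$ contains at most $m^2/(2(k-1))\cdot(1+o(1))$ $k$-APs'': near-extremal sets of size $m$ must be close to an actual AP of length $m$. I expect this to be accessible via a direct additive-combinatorial argument, likely leveraging the rigidity condition built into the definition of a core to improve the degree of approximation; care will be needed to ensure that the quantitative slack in the extremal count propagates correctly to a $(1-o(1))$-fraction containment in the ambient AP, uniformly over the allowed range of $m$.
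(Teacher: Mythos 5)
Step (i) — the computation of $\Phi_X(\delta)$ — is correct and matches the paper's Proposition~\ref{prop:kap-rate}, including the identification of the two lower-order error terms.

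Step (ii) contains a genuine gap, and it is exactly the gap you flagged as ``the main obstacle.'' The proposed structural stability statement — that every feasible set $I$ of near-minimal size must be almost entirely contained in a single arithmetic progression of length $(1+o(1))|I|$ — is \emph{false}. Green and Sisask~\cite{green2008maximal} showed that near-maximisers of the number of $k$-APs among sets of a given size need not be close to an arithmetic progression, and the paper states this explicitly (immediately after Theorem~\ref{thm:kap}): ``It seems natural to guess that each such set is, in some sense, close to an arithmetic progression. However, this is not the case\ldots{} We currently do not know a structural characterisation of the sets described above.'' You cannot therefore enumerate cores by ``first choose the ambient AP, then a subset,'' and the reduction of entropic stability to $\log N = o(m\log(1/p))$ via $N^2 2^m$ does not go through.

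The paper's actual verification of entropic stability avoids any structural characterisation of cores. It instead exploits the rigidity condition~\ref{item:thmcore-mindeg} quantitatively: every element $i$ of a core $I$ must lie in many $k$-APs inside $I$ (Claim~\ref{claim:kap-core-property}). Using Janson's inequality for the hypergeometric distribution (Lemma~\ref{lemma:hyper-Janson}), one then shows that for a random ordering $(i_1,\dotsc,i_m)$ of a core, almost every $i_{m'}$ is \emph{rich} relative to the prefix $\{i_1,\dotsc,i_{m'-1}\}$, i.e.\ it closes many $k$-APs with the already-placed elements (Claim~\ref{claim:rich-orderings}). Since the number of rich candidates given any prefix is at most polylogarithmic in $1/p$ times a controlled power, one can enumerate cores by choosing a small exceptional set of non-rich positions (paying $N$ per position) and at all other positions choosing from the short list of rich candidates. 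The hypothesis $p^{k/2}\gg N^{-1}\log N$ enters precisely in making the number of exceptional positions small enough that the $N$-factors contribute $(1/p)^{o(m)}$ in total. You correctly identified where the threshold comes from, but the mechanism is a probabilistic orderings-plus-short-lists argument, not a stability-of-extremisers theorem. If you want a route to Theorem~\ref{thm:kap} that does not pass through structural stability, this is the idea to pursue.
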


Observe that Theorem~\ref{thm:kap} shows that the upper tail probability is
well approximated by the probability of appearance of an interval
(or, more generally, an arithmetic progression) of
length $\sqrt{\delta N^2p^k}$ in~$\br N_p$. Since each such interval contains approximately
$\delta \Ex[X]$ arithmetic progressions of length $k$, it is not hard to see that conditioning $\br{N}_p$
on the appearance of such a set will cause the upper tail event to occur with sizable probability. Conversely,
our methods may be used to prove that the upper tail event is dominated by the appearance of some set of size
$(1+o(1))\sqrt{\delta N^2 p^k}$ that contains nearly $\delta \Ex[X]$ arithmetic progressions of length $k$.
It seems natural to guess that each such set is, in some sense, close to an arithmetic progression. However,
this is not the case, as was shown by Green--Sisask~\cite{green2008maximal}. We currently do not know
a structural characterisation of the sets described above, which prevents
us from proving a qualitative description of the upper tail event. For
further discussion, we refer to Section~\ref{sec:conclusion}.

The second result treats the complementary range $N^{-1}\ll p^{k/2}\ll N^{-1}\log N$,
where the upper tail has Poisson behaviour.

\begin{restatable}{theorem}{thmkappoisson}\label{thm:kappoisson}
  Let $k \geq 3$ be an integer and let $X=X_{N,p}^\kap$ denote the number of
  $k$-term arithmetic progressions in $\br{N}_p$. Then, for every fixed positive
  constant $\delta$ and all $p=p(N)$ satisfying $N^{-1}\ll p^{k/2} \ll N^{-1}\log N$,
  \[
    \lim_{N\to\infty} \frac{-\log \Pr \big(X \geq (1+\delta) \e[X]
    \big)}{\e[X]} = (1+\delta)\log(1+\delta)-\delta.
  \]
\end{restatable}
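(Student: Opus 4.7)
In the regime $1 \ll \lambda := \e[X] \ll (\log N)^2$, $X$ is expected to be close in distribution to $\Po(\lambda)$, so the upper tail rate should match the Poisson rate $h(\delta) := (1+\delta)\log(1+\delta) - \delta$. The proof is built on a single technical \emph{core estimate}: for every integer $s = O(\lambda)$,
\[
  \e[X^{(s)}] := \e\big[X(X-1)\cdots(X-s+1)\big] = \lambda^s \cdot (1+o(1))^s,
\]
with the $o(1)$ uniform in $s$. One expands the falling factorial as a sum over ordered $s$-tuples of pairwise distinct $k$-APs $(S_1,\dots,S_s)$, weighted by $p^{|V(S_1)\cup\cdots\cup V(S_s)|}$. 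Pairwise vertex-disjoint tuples contribute at most $M^s p^{ks} = \lambda^s$, where $M = \Theta(N^2)$ is the total number of $k$-APs in $\br{N}$. Each additional shared vertex in a tuple reduces the joint support by $1$ (gaining a factor $p^{-1}$) but shrinks the count of such configurations by a factor $\sim N^{-1}$, for a net factor $(Np)^{-1}$ per shared vertex. Since the present regime forces $Np \gg N^{1-2/k}$ (growing polynomially) and $s = O(\lambda) \ll N$, a careful combinatorial accounting over all possible overlap patterns gives a total correction factor of $(1+o(1))^s$.

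For the \textbf{upper bound on $\Pr(X \geq (1+\delta)\lambda)$}, rather than applying Markov's inequality directly to $X^{(t)}$ (which is too weak in this regime), the plan is to bound the transformed moment generating function
\[
  \e\big[(1+\delta)^X\big] = \sum_{s \geq 0} \delta^s \e[X^{(s)}]/s! \leq e^{\delta\lambda(1+o(1))},
\]
where the core estimate is applied termwise and the series converges absolutely. Markov's inequality in the form $\Pr(X \geq (1+\delta)\lambda) \leq (1+\delta)^{-(1+\delta)\lambda} \cdot \e[(1+\delta)^X]$ then yields
\[
  -\log \Pr\big(X \geq (1+\delta)\lambda\big) \geq (1+\delta)\lambda\log(1+\delta) - \delta\lambda(1+o(1)) = h(\delta)\lambda(1+o(1)).
\]

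For the \textbf{matching lower bound}, the plan is to invert the moments via the Bonferroni/Brun-type identity
\[
  \Pr(X = t) \; = \; \sum_{s \geq 0} (-1)^s \binom{t+s}{s} \frac{\e[X^{(t+s)}]}{(t+s)!},
\]
with truncation errors controlled by the same core estimate applied for orders up to $2t + O(1)$. Substituting $\e[X^{(t+s)}] = \lambda^{t+s}(1+o(1))^{t+s}$ collapses the alternating sum to $(\lambda^t/t!) e^{-\lambda(1+o(1))}$, so with $t = \lceil (1+\delta)\lambda \rceil$,
\[
  \Pr(X = t) \geq \frac{\lambda^t}{t!} \, e^{-\lambda(1+o(1))} = e^{-h(\delta)\lambda(1+o(1))},
\]
matching the Poisson point probability; hence $\Pr(X \geq (1+\delta)\lambda) \geq e^{-h(\delta)\lambda(1+o(1))}$.

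\textbf{Main obstacle.} The technical heart is the core estimate. Because $s$ can grow up to order $\lambda = O((\log N)^2)$, a crude pairwise overlap analysis is inadequate, and one must uniformly control the contribution of arbitrarily complex overlap patterns of an $s$-tuple of $k$-APs. This requires organizing $s$-tuples by the induced overlap hypergraph on the $s$ AP-nodes and carefully balancing the combinatorial cost of each pattern against the probabilistic gain from shared vertices, simultaneously exploiting $Np \to \infty$ polynomially (from $p \gg N^{-2/k}$) and $s \ll N$ (from $\lambda \ll (\log N)^2$). Without both of these inputs, overlap corrections could swamp the leading $\lambda^s$ contribution and the Poisson rate would be lost.
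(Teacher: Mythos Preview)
Your overall strategy—control of the factorial moments $M_t(X) = \mu^t(1+o(1))$ for $t$ of order $\mu$—is exactly what the paper does, and your description of the combinatorial accounting for overlapping $k$-APs matches the paper's cluster analysis. Two points of divergence deserve comment.

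\emph{Upper bound.} Your claim that Markov applied directly to $\ff{X}{t}$ is ``too weak'' is simply wrong: the paper does precisely this with $t = \lfloor \delta\mu\rfloor$, using the elementary estimate $\log \ff{((1+\delta)\mu)}{t} \ge I(\delta)\mu + t\log\mu$, and obtains the sharp bound. Your MGF detour works too, but note that $\e[(1+\delta)^X] = \sum_{s\ge 0}\delta^s M_s(X)/s!$ requires control of $M_s$ for \emph{all} $s$, whereas your core estimate is only stated for $s = O(\mu)$; you would need a separate (crude) bound for the tail.

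\emph{Lower bound.} Here there is a genuine gap. The Bonferroni inversion $\Pr(X=t) = \sum_{s\ge 0}(-1)^s M_{t+s}/(s!\,t!)$ involves massive cancellation: the individual terms have magnitude $\lambda^{t+s}/(s!\,t!)$, which for $s\approx\lambda$ is of order $(\lambda^t/t!)\,e^{\lambda}$, yet the sum is $(\lambda^t/t!)\,e^{-\lambda}$. A relative error $|\eta_{t+s}|\le\eta$ in $M_{t+s}$ therefore produces an absolute error of order $\eta\cdot(\lambda^t/t!)\,e^{\lambda}$ in the inverted sum, which dominates the target $(\lambda^t/t!)\,e^{-\lambda}$ unless $\eta \ll e^{-2\lambda}$. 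Your core estimate only delivers $\eta = o(1)$ (indeed, the cluster analysis yields no better than a small fixed constant), so the substitution you describe does not ``collapse'' to the Poisson pmf. The paper sidesteps this entirely via a tilting argument: define $\Pr(\tilde X = x) \propto \Pr(X=x)\,\ff{x}{t}$, observe that $\e[\tilde X]$ and $\Var(\tilde X)$ are determined by the three ratios $M_{t+1}/M_t$ and $M_{t+2}/M_t$, apply Chebyshev to $\tilde X$, and change measure back. This needs only three consecutive factorial moments to within a small constant factor, which is exactly what the core estimate provides.
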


\subsection{Subgraph counts in random graphs}

Let $X = X_{N,p}^H$ be the number of copies of a~ fixed graph $H$ in
$G_{n,p}$. Note that $\e[X] = \Theta(n^{v_H}p^{e_H})$. Since controlling the distribution of $X$ for completely general
graphs involves many technical difficulties (see for example
\cite{bollobas1998random,vsileikis2018counterexample}),
 we will restrict our attention to connected, $\Delta$-regular graphs $H$.
If the expected value of $X$ is bounded, then 
$X$ converges to a Poisson random variable, as was shown 
independently
by Bollobás~\cite{bollobas1981threshold} 
and by Karoński--Ruciński~\cite{karonski1983number}.
In view of this, for the remainder of this section, we shall assume that $\e[X]\to \infty$, i.e., that $p^{\Delta/2} \gg n^{-1}$.
As mentioned before, we will also assume that $p \to 0$; the case where $p\in(0,1)$ is fixed, which is fundamentally different,
was addressed in~\cite{chatterjee2011large,lubetzky2015replica,MukBha18}.

The problem of controlling the upper tail of $X$ 
has a long history. A sequence of papers~\cite{janson2004deletion, kim2004divide, vu2002concentration},
culminating in the work of Janson, Oleszkiewicz, and Ruciński~\cite{janson2004upper},
resulted in upper and lower bounds on the logarithmic upper tail probability that differed
by a multiplicative factor of~$\log(1/p)$. In the case where $H$ is a clique, Chatterjee~\cite{chatterjee2012missing} and DeMarco--Kahn~\cite{demarco2012tight}
independently added the missing logarithmic factor to the upper bound, thus
establishing the order of magnitude of the logarithmic upper tail probability.
The theory of nonlinear large deviations (discussed above) provides a variational description of the dependence
of the upper tail probability $\Pr(X\geq (1+\delta)\e[X])$ on $\delta$
for a certain range of $p\to 0$, as established in~\cite{augeri2018nonlinear,
chatterjee2016nonlinear, cook2018large,cook2021hypergraph, eldan2016gaussian};
the strongest of these results require $p \gg n^{-1/(\Delta +1)}$
for general graphs of maximal degree $\Delta$~\cite{cook2018large, cook2021hypergraph}, and $p^{\Delta/2} \gg n^{-1/2}$
for the case where $H$ is a~cycle~\cite{augeri2018nonlinear,cook2018large} (disregarding
polylogarithmic factors).
The associated variational problem was solved 
by Lubetzky--Zhao~\cite{lubetzky2017variational} when $H$ is a clique and by
Bhattacharya, Ganguly, Lubetzky, and Zhao~\cite{bhattacharya2017upper} for general $H$.
For a more detailed overview of these techniques, we refer the reader
to the book of Chatterjee~\cite{Cha17}.

The solution to the variational problem is expressed in terms of the independence
polynomial of a graph. For any $H$, define $P_H(x) = \sum_k i_k(H)x^k$, where
$i_k(H)$ is the number of independent sets of $H$ of size $k$,
and let $\theta = \theta(\delta)$ be the unique positive solution to $P_H(\theta) = 1+\delta$.\footnote{We note that $i_0(H) = 1$ for every graph $H$, so that, for example, $P_{K_r}(x) = 1+rx$.}
There are two constructions that yield lower bounds for the upper tail
probability (see Figure~\ref{fig:H}). In both cases, one plants a `small'
subgraph whose presence ensures that $G_{n,p}$ contains
$(1+\delta)\e[X]$ copies of $H$ with good probability. The first of these subgraphs is
a clique on $\delta^{1/v_H} np^{\Delta/2}$ vertices (as in the left side of the figure), which
contains the extra $\delta\e[X]$ copies of $H$ required by the upper tail event (up to lower-order corrections).
The second subgraph (which is often called a `hub') is a complete bipartite graph with parts of size $\theta np^\Delta$ and
$n-\theta np^\Delta$, respectively (as in the right side of the figure); since we are implicitly
assuming that $\theta np^\Delta$ is an integer, rounding errors play a significant role here
unless $np^\Delta \gg 1$. A short calculation shows that the expected number of copies of $H$
which intersect this graph is approximately $\delta \e[X]$ and thus the actual
number of such copies is almost $\delta \e[X]$ with good probability.
In both cases, the complement of the planted subgraph typically contains approximately $\e[X]$ copies of $H$.
Formalising this argument, one obtains the two lower bounds
\[
  \Pr\big(X\geq (1+\delta)\e[X]\big) \ge p^{(\delta^{2/v_H} + o(1))n^2p^\Delta} \quad \text{and}  \quad
  \Pr\big(X\geq (1+\delta)\e[X]\big) \ge p^{(\theta + o(1))n^2p^\Delta},
\]
which correspond to planting the clique and the complete bipartite graph, respectively. (Recall that the latter bound is valid only when $np^\Delta \gg 1$.)

Our main result is that,
 when $H$ is not bipartite,
one of the above bounds is tight in nearly the whole range of
densities. When $H$ is bipartite, we prove tight bounds
on the logarithmic upper tail probability only when $p^{\Delta/2}\geq n^{-1/2-o(1)}$.    
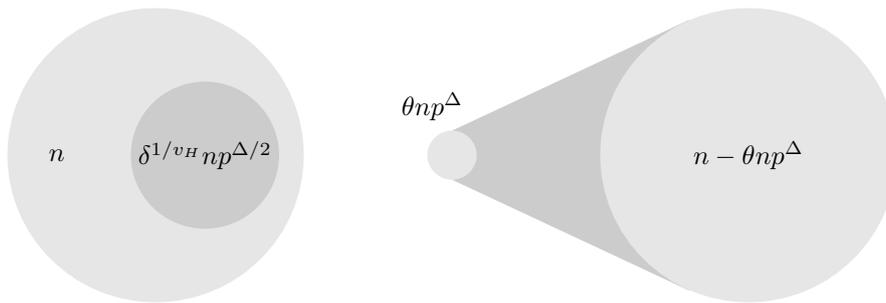
\begin{figure}
  \centering
  \begin{tikzpicture}[scale=1.3]
    \usetikzlibrary{patterns}
    \tikzset{every node/.style={draw=none,rectangle,fill=none,inner sep=0pt}}
    \begin{scope}
      \fill[black!10] (0,0) circle (1.5cm);
      \fill[black!20] (0.5,0) circle (0.75cm);
      \node at (0.5,0) {$\delta ^{1/v_H}np^{\Delta/2}$};
      \node at (-1,0) {$n$};
    \end{scope}
    \begin{scope}[xshift=4cm]
      \fill[black!20] (-1,0.25) -- (1.4,1.38) -- (1.4,-1.38) -- (-1,-0.25) -- (-1,0.25);
      \fill[black!10] (-1,0) circle (0.25cm);
      \fill[black!10] (2,0) circle (1.5cm);
      \node at (-1.2,0.5) {$\theta np^\Delta$};
      \node at (2,0) {$n - \theta np^\Delta$};
    \end{scope}
  \end{tikzpicture}
  \caption{The two constructions giving lower bounds for the upper
  tail probability of $X = X_{n,p}^{H}$.}\label{fig:H}
\end{figure}

\begin{theorem}\label{thm:H}
  Let $\Delta \ge 2$ be an integer, let
  $H$ be a connected, nonbipartite, $\Delta$-regular
  graph, and let $X = X_{N,p}^H$ denote the number of copies of
  $H$ in $G_{n,p}$. Then for every fixed positive constant~$\delta$
  and all $p = p(n)$ satisfying
  $n^{-1}(\log n)^{\Delta v_H^2} \ll p^{\Delta/2} \ll 1$,
  \[
    \lim_{n\to \infty} \frac{-\log \Pr \big( X\geq
    (1+\delta)\e[X]\big)}{n^2p^{\Delta}\log(1/p)} = \begin{cases}
    \delta^{2/v_H}/2 & \text{if $np^\Delta\to 0$,}\\
    \min{\{\delta^{2/v_H}/2,\theta\}} & \text{if $np^\Delta\to \infty$},
    \end{cases}
  \]
  where $\theta$ is the unique positive solution to $P_H(\theta) = 1+\delta$.
  Additionally, if $p^{\Delta/2} \geq n^{-1/2-o(1)}$, then the assumption that $H$ is
  nonbipartite is not necessary.
\end{theorem}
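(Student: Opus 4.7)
The plan is to deduce the theorem from the general upper-tail framework outlined in the introduction: whenever $X = X_{n,p}^H$ is entropically stable at level $\delta$, the main method of the paper yields $-\log \Pr(X \ge (1+\delta)\Ex[X]) = (1+o(1))\Phi_X(\delta)$, where $\Phi_X$ is defined in~\eqref{eq:opt}. The problem thus splits into three tasks: producing a matching lower bound by an explicit construction, computing $\Phi_X(\delta)$, and verifying entropic stability in the relevant density range.

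For the lower bound, I would use the two constructions depicted in Figure~\ref{fig:H}. Planting a clique on $\lceil\delta^{1/v_H} np^{\Delta/2}\rceil$ vertices forces roughly $\tfrac12 \delta^{2/v_H} n^2 p^\Delta$ edges to be present, and a direct calculation shows that this clique already contains $(\delta+o(1))\Ex[X]$ copies of $H$, while the remaining unconditioned edges contribute $(1+o(1))\Ex[X]$ further copies with probability bounded away from zero. When $np^\Delta \to \infty$, one may instead plant a complete bipartite graph $K_{t,n-t}$ with $t=\lceil \theta n p^\Delta\rceil$; the defining equation $P_H(\theta) = 1+\delta$ is precisely the condition that makes the expected number of copies of $H$ meeting this bipartite graph equal to $(\delta+o(1))\Ex[X]$. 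Taking the better of the two bounds gives $-\log \Pr(X\ge (1+\delta)\Ex[X]) \le (1+o(1))\min\{\delta^{2/v_H}/2,\theta\} \cdot n^2 p^\Delta \log(1/p)$.

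To evaluate $\Phi_X(\delta)$, one translates the constraint $\Ex[X \mid I \subseteq E(G_{n,p})] \ge (1+\delta)\Ex[X]$ into a variational problem on subgraphs $G = I$ of $K_n$. The conditional expectation expands as $p^{e_H}\sum_{H' \cong H} p^{-|E(H') \cap G|}$, a weighted sum of counts of copies of $H$ sharing various numbers of edges with $G$. In the limit $p \to 0$, the dominant contributions come from copies of $H$ that share many edges with $G$, and the resulting minimization is precisely the problem solved by Lubetzky--Zhao and Bhattacharya--Ganguly--Lubetzky--Zhao. Their analysis yields $\Phi_X(\delta)/\log(1/p) = (1+o(1))\min\{\delta^{2/v_H}/2,\theta\} \cdot n^2 p^\Delta$, attained asymptotically by the two planted configurations above, with the hub term relevant only when $np^\Delta \to \infty$.

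The principal obstacle is verifying entropic stability, i.e., that the number of cores of each size $m$ is at most $(1/p)^{o(m)}$. My approach is to combine a quantitative stability version of the variational analysis---asserting that every near-minimizing $I$ lies at small edit distance from either a clique or a complete bipartite hub, together with a suitable rigidity property---with a crude parameter count for such approximate structures. Each near-clique or near-hub can be specified by a center set of bounded size, so one obtains at most $n^{O(1)}$ cores, which is $(1/p)^{o(m)}$ throughout the range $p \gg n^{-2}$. The nonbipartiteness assumption on $H$ enters precisely at this step: for nonbipartite $H$, the clique-type and hub-type optimizers are the only near-minimizers throughout the range $p^{\Delta/2} \gg n^{-1}(\log n)^{\Delta v_H^2}$, so their stability basins cover all feasible $I$. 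In the bipartite case, cheap star-like configurations become competitive once $p^{\Delta/2} \ll n^{-1/2}$, producing exponentially many cores and destroying entropic stability; this is exactly what forces the bipartite conclusion to the narrower density range stated.
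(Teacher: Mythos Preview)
Your overall architecture is right: the theorem is proved by sandwiching $-\log\Pr(X\ge(1+\delta)\Ex[X])$ between $(1\pm\eps)\Phi_X(\delta\mp\eps)$ via Theorem~\ref{thm:packaged}, and then quoting the Bhattacharya--Ganguly--Lubetzky--Zhao evaluation of $\Phi_X(\delta)$. The lower-bound constructions and the computation of $\Phi_X$ are as you describe.

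The gap is in your verification of entropic stability. You propose to show that every core is close (in edit distance) to a clique or a hub and then count approximate structures, concluding that there are ``at most $n^{O(1)}$ cores''. This fails on two counts. First, cores are \emph{not} near-minimisers of the variational problem: condition~\ref{item:thmcore-size} only bounds $|I|$ by $K\cdot\Phi_X(\delta+\eps)$, a fixed constant factor larger than the optimum, and stability statements for $\Phi_X$ say nothing about feasible sets of this size. Second, even if every core were a genuine clique on $\delta^{1/v_H}np^{\Delta/2}$ vertices, the number of such cliques is $\binom{n}{\Theta(np^{\Delta/2})}$, which is certainly not $n^{O(1)}$; the ``centre set'' of a clique is its entire vertex set, which is unbounded.

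The paper's route is entirely different and bypasses any structural stability for cores. One unpacks condition~\ref{item:thmcore-mindeg} to show that, for every edge $uv$ of a core (outside a negligible exceptional set), either $\deg u\cdot\deg v\gtrsim m/(\log(1/p))^{O(1)}$ or $\deg u+\deg v\gtrsim n/(\log(1/p))^{O(1)}$; this dichotomy comes from identifying which subgraphs $J\subseteq H$ can dominate $\Ex_{G^*}[X]-\Ex_{G^*\setminus uv}[X]$ (Lemma~\ref{lemma:eJ-alphaJ-clique} shows only $J=H$ or certain bipartite $J$ with one side $\Delta$-regular survive). In the range $p^{\Delta/2}\ge n^{-1/2-\xi}$ one then sorts vertices into dyadic degree classes $B_k$ and counts cores by first choosing these classes and then the edges among them. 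In the complementary range the degree-sum alternative is impossible (it would force $\deg u+\deg v$ to exceed the total edge count), so $\deg u\cdot\deg v$ is pinned for all good edges; a $P_4$-count gives a matching upper bound on most degree products, and the nonbipartiteness of $H$ is used---via a parity argument along an odd cycle---to convert two-sided bounds on $\deg u\cdot\deg v$ along edges of an embedded $H$ into bounds on individual degrees $\deg u\approx m^{1/2}$. This confines almost all core edges to a small vertex set and yields the required count. Your explanation that nonbipartiteness matters because ``star-like configurations become competitive'' in the bipartite case is the right intuition for why entropic stability \emph{fails} there, but it is not how nonbipartiteness is actually \emph{used} in the proof that it holds.
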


We note that the theorem leaves open the case where $np^\Delta \to c\in (0,\infty)$. In this regime,
the explicit dependence of the upper tail probability on $\delta$ involves various integrality conditions
and is therefore quite complicated. In the next subsection, we explicitly treat this regime when $H$ is a clique. The assumption of nonbipartiteness is not phenomenological, but only technical. The aforementioned entropic stability condition, which plays a crucial role in our proof, ceases to hold when $H$ is bipartite as soon as $p^{\Delta/2} \le n^{-1/2-\Theta(1)}$, see Section~\ref{sec:conclusion}.

Our next result concerns the Poisson regime of the upper tail.

\begin{restatable}{theorem}{thmHpoisson}\label{thm:Hpoisson}
  Let $\Delta\geq 2$ be an integer,
  let $H$ be a connected, $\Delta$-regular graph, and let $X = X_{n,p}^H$
  denote the number of copies of $H$ in $G_{n,p}$.
  Then, for every fixed positive constant $\delta$ and all $p=p(n)$
  satisfying $n^{-1} \ll p^{\Delta/2} \ll n^{-1} (\log n)^{\frac1{v_H-2}}$,
  \[
    \lim_{n\to\infty} \frac{-\log \Pr \big(X \geq (1+\delta) \e[X]
    \big)}{\e[X]} = (1+\delta)\log(1+\delta)-\delta.
  \]   
\end{restatable}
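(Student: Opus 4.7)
I would set $\lambda = \e[X] = \Theta\bigl((np^{\Delta/2})^{v_H}\bigr)$ and $I(\delta) = (1+\delta)\log(1+\delta)-\delta$. Since $n^{-1} \ll p^{\Delta/2} \ll n^{-1}(\log n)^{1/(v_H-2)}$ gives $1 \ll \lambda \ll (\log n)^{v_H/(v_H-2)}$, the Poisson rate $I(\delta)\lambda$ is $\omega(1)$ but strictly smaller than the clique-planting rate $(\delta^{2/v_H}/2)\,n^2 p^\Delta\log(1/p)$ of Theorem~\ref{thm:H}. My plan is to prove matching upper and lower bounds on $\Pr\bigl(X\ge (1+\delta)\lambda\bigr)$ by showing that $X$ matches $\mathrm{Po}(\lambda)$ at the level of large deviations, via a factorial moment comparison.

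The key estimate I would establish is that for some fixed small $\eps > 0$,
\[
  \e\bigl[(X)_m\bigr] = (1 + o(1))^m \lambda^m \qquad \text{uniformly for } 1 \le m \le (1 + \delta + \eps)\lambda,
\]
where $(X)_m = X(X-1)\cdots(X-m+1)$. Expanding the left-hand side as a sum over ordered $m$-tuples $(\alpha_1,\ldots,\alpha_m)$ of distinct copies of $H$ in $K_n$ weighted by $p^{\lvert E(\alpha_1) \cup \cdots \cup E(\alpha_m)\rvert}$, pairwise edge-disjoint tuples contribute the Poisson-matching main term $(1+o(1))\lambda^m$. The remaining tuples I would classify by the isomorphism type of their \emph{overlap multigraph}, which records for each pair $i<j$ the subgraph $\alpha_i\cap\alpha_j$ of shared edges; using the $\Delta$-regularity of $H$ to constrain the possible shared subgraphs and a careful counting argument to handle their multiplicities, the total overlap contribution can be bounded by $o(1)^m\lambda^m$. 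The hypothesis $p^{\Delta/2}\ll n^{-1}(\log n)^{1/(v_H-2)}$ is precisely the regime in which this bound holds uniformly in $m$ up to $\Theta(\lambda)$.

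From this factorial moment estimate, both tail bounds would follow by Poisson-style arguments. For the upper bound I would use the identity $\e[e^{tX}] = \sum_{m \ge 0}(e^t-1)^m \e[(X)_m]/m!$ at $t = \log(1+\delta)$: the dominant mass lies at $m \approx \delta\lambda$, well within the range covered by the factorial moment estimate, while the contribution from $m > (1+\delta+\eps)\lambda$ can be controlled by a crude high-moment bound (for example, from a Kim--Vu type inequality). This gives $\e[e^{tX}] \le e^{(1+o(1))\lambda(e^t-1)}$, from which Markov's inequality yields $\Pr\bigl(X \ge (1+\delta)\lambda\bigr) \le e^{-(I(\delta)-o(1))\lambda}$. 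For the matching lower bound I would apply the Bonferroni identity
\[
  \Pr(X = m^*) = \sum_{k \ge m^*}(-1)^{k-m^*}\binom{k}{m^*}\frac{\e[(X)_k]}{k!}
\]
at $m^* = \lceil(1+\delta)\lambda\rceil$; substituting the Poisson-like factorial moments and recognising the resulting alternating series as $(1+o(1))\,e^{-\lambda}\lambda^{m^*}/m^*!$, Stirling's formula then gives $\Pr\bigl(X\ge(1+\delta)\lambda\bigr) \ge \Pr(X = m^*) \ge e^{-(I(\delta)+o(1))\lambda}$.

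The hard part will be the uniform factorial moment bound itself. Because both $m$ and $\lambda$ grow with $n$, even a tiny per-overlap slackness can compound across the tuple, so the classification of overlap patterns together with the associated counting estimates must be carried out with care, fully exploiting $\Delta$-regularity to control how densely subgraphs of $H$ can accumulate shared edges. The density threshold $p^{\Delta/2}\ll n^{-1}(\log n)^{1/(v_H-2)}$ is exactly where this analysis breaks down: beyond it, the planted-clique configurations of Theorem~\ref{thm:H} dominate the Poisson deviation, and the entropic-stability framework from the main body of the paper takes over.
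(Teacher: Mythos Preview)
Your overall architecture---prove $M_t(X)=(1+o(1))\lambda^t$ in a window of width $\Theta(\lambda)$ and then deduce Poisson-type tail asymptotics---is exactly the paper's strategy. The cluster analysis you sketch corresponds to the paper's Lemma~\ref{lem:poissonhg} and the proof of Proposition~\ref{prop:Hpoisson}; the decisive combinatorial input there is the inequality $m-e_H\ge(\Delta/2+1/(2v_H))(k-v_H)$ for any connected union of copies of $H$ with $k$ vertices and $m$ edges, which is where $\Delta$-regularity enters. For the upper bound, your exponential-moment route and the paper's direct Markov bound on $\ff{X}{t}$ at $t=\lfloor\delta\lambda\rfloor$ are equivalent in strength; the latter is somewhat cleaner because it needs $M_t$ only at a single value of $t$ and avoids any tail estimate on the series $\sum_m\delta^mM_m/m!$.

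The lower bound via Bonferroni, however, does not work as written, and this is a genuine gap rather than a detail. In the identity
\[
  \Pr(X=m^*)=\frac{1}{m^*!}\sum_{j\ge 0}(-1)^j\frac{M_{m^*+j}}{j!},
\]
substituting $M_{m^*+j}\approx\lambda^{m^*+j}$ gives the alternating series $\sum_{j\ge 0}(-1)^j\lambda^j/j!=e^{-\lambda}$. The individual terms peak at $j\approx\lambda$ with size $\Theta(\lambda^{-1/2})$, so the series achieves its value $e^{-\lambda}$ only through cancellation of order $e^{\lambda}$. A multiplicative $(1+o(1))$ error on each $M_{m^*+j}$ therefore translates into an additive error of order $(\lambda^{m^*}/m^*!)\cdot o(1)\cdot e^{\lambda}$, which swamps the target $(\lambda^{m^*}/m^*!)e^{-\lambda}$. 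To salvage Bonferroni you would need $|M_k-\lambda^k|\le\lambda^k\cdot o(e^{-2\lambda})$, far beyond what the cluster analysis delivers; you would also need the estimate out to $k\approx(2+\delta)\lambda$, not just $(1+\delta+\eps)\lambda$.

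The paper sidesteps this by an exponential tilting: with $t\approx(\delta+\gamma)\lambda$ for a small $\gamma>0$, define $\tilde X$ by $\Pr(\tilde X=x)\propto\Pr(X=x)\,\ff{x}{t}$. Then $\Ex[\tilde X]=M_{t+1}/M_t+t$ and $\Var(\tilde X)=(M_{t+2}M_t-M_{t+1}^2+M_{t+1}M_t)/M_t^2$, so control of just three consecutive factorial moments near $t$ places $\tilde X$ tightly around $(1+\delta+\gamma)\lambda$ by Chebyshev, and undoing the tilt gives $\Pr(X\ge(1+\delta)\lambda)\ge(1-\eps)M_t/\ff{((1+\delta+2\gamma)\lambda)}{t}$. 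This is robust precisely because it never sums an alternating series. I would replace your Bonferroni step with this tilting argument.
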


In the case where $H$ is a clique, DeMarco and Kahn~\cite{demarco2012tight} proved that the logarithmic upper tail probability is of order $\e[X]$ throughout the regime covered by Theorem~\ref{thm:Hpoisson}. For other $\Delta$-regular graphs $H$, the analogous fact was known only in the range $n^{-1} \ll p^{\Delta/2} \ll n^{-1} (\log n)^{c_H}$, for some positive constant $c_H < 1/(v_H-2)$, see~\cite{janson2004deletion,vsileikis2012upper,Vu00,Warnke-miss-log}.

Finally, we point out that the powers of the logarithms in the assumptions of Theorems~\ref{thm:H} and~\ref{thm:Hpoisson}
do not match. After a preprint of this paper appeared, Basak and Basu~\cite{basak2019upper} combined a generalised notion of entropic stability with a more refined version of the approach used in this paper to prove tight bounds on the logarithmic upper tail probability for the subgraph count of any $\Delta$-regular graph $H$ in the entire localised regime (see Section~\ref{sec:method} for a more detailed discussion). Specifically, they remove the assumption that $H$ is nonbipartite, and improve the assumed lower bound on the density $p$ in Theorem~\ref{thm:H} to $n^{-1}(\log n)^{1/(v_H-2)} \ll p^{\Delta/2}$, thus matching the assumptions of Theorem \ref{thm:Hpoisson}.

\subsection{Clique counts in random graphs}

We now consider the case of $X = X_{n,p}^H$ where $H$ is a~clique on $r \ge 3$
vertices. Thanks to the simpler structure of these graphs, we are able to prove
significantly stronger results in this setting. First, we are
able to determine the explicit dependence
of the logarithmic upper tail probability on $\delta$ even when $np^{r-1}\to c\in
(0,\infty)$. Moreover, we resolve the upper tail problem for the optimal range of densities
$n^{-1}(\log n)^{\frac1{r-2}}\ll p^{\frac{r-1}2} \ll 1$, complementing the range covered by Theorem~\ref{thm:Hpoisson}. Finally, we give a structural description
of $G_{n,p}$ conditioned on the upper tail event.

In order to formally state the theorem, it is convenient to define
\begin{equation}\label{eq:grdcx}
  \psi_r(\delta,c,x) = \frac{\big(\delta(1-x)\big)^{2/r}}2+
\frac{\floor{x\delta  c/r} + \{x\delta  c/r\}^\frac1{r-1}}{c},
\end{equation}
where $\delta$ and $c$ are nonnegative reals, $x \in [0,1]$, and $\{a\}$ denotes the
fractional part of $a$, and
\begin{equation}\label{eq:grdc}
  \varphi_r(\delta,c) = \min_{x\in [0,1]} \psi_r(\delta,c,x).
\end{equation}
For an intuitive explanation of the combinatorial meaning of these
definitions, we refer to the discussion at the beginning of
Section~\ref{sec:cliques}.
An easy convexity argument shows that the minimum in the definition
of $\varphi_r$ is attained
when $x \in \{0,r\floor{\delta c/r}/(\delta c),1\}$,
see Lemma~\ref{lem:grdcx}. This leads to the explicit formula
\[
  \varphi_r(\delta,c) = \min{\left\{ \frac{\delta^{2/r}}{2},
  \frac{\floor{\delta c/r}+\{\delta c/r\}^{1/(r-1)}}{c}, \frac{\floor{\delta
  c/r}}{c}+ \frac{(r\{\delta c/r\}/c)^{2/r}}{2} \right\}}.
\]

\begin{restatable}{theorem}{thmkrrate}
  \label{thm:krrate}
  Let $r \ge 3$ be an integer and let $X = X_{n,p}^{K_r}$ denote the number of
  $r$-vertex cliques in the random graph $G_{n,p}$. Then, for every fixed positive
  constant
  $\delta$ and all $p = p(n)$ satisfying $n^{-1} (\log
  n)^{\frac1{r-2}} \ll p^{\frac{r-1}{2}} \ll 1$,
  \[
    \lim_{n \to \infty} \frac{-\log \Pr\big(X \ge
    (1+\delta)\Ex[X]\big)}{n^2p^{r-1} \log(1/p)} =
    \begin{cases}
      \delta^{2/r}/2 & \text{if $np^{r-1} \to 0$}, \\
      \varphi_r(\delta, c) & \text{if $np^{r-1} \to c \in (0, \infty)$}, \\
      \min\big\{\delta^{2/r}/2,\delta/r\big\} & \text{if $np^{r-1} \to \infty$}.
    \end{cases}
  \]
\end{restatable}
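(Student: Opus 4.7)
Proof plan: the argument splits into a lower bound by explicit planting and a matching upper bound using the high-moment and entropic-stability framework of Section~\ref{sec:method}, both organised around cores, i.e., near-optimal feasible sets for the optimisation problem~\eqref{eq:opt}.

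For the lower bound, for each $x\in[0,1]$ I would plant an edge set $I$ consisting of three pieces: (a) a clique on $s=((1-x)\delta)^{1/r}np^{(r-1)/2}$ vertices, which contributes $\binom{s}{r}\approx (1-x)\delta\e[X]$ deterministic copies of $K_r$; (b) $h=\lfloor x\delta c/r\rfloor$ \emph{hubs} (vertices adjacent to every other vertex), each contributing $\binom{n-1}{r-1}p^{\binom{r-1}{2}}=(r/c)\e[X]$ expected copies; and, when $c<\infty$, (c) one \emph{partial hub}, i.e., a vertex adjacent to $d=\{x\delta c/r\}^{1/(r-1)}n$ others, contributing the leftover $\{x\delta c/r\}(r/c)\e[X]$ expected copies. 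The total edge cost is $\binom{s}{2}+h(n-1)+d=(1+o(1))\psi_r(\delta,c,x)n^2p^{r-1}$. Conditioning on $I\subseteq E(G_{n,p})$ occurs with probability $p^{|I|}$ and, by a short second-moment argument on the complementary edges, produces $X\geq(1+\delta)\e[X]$ with constant probability. Optimising over $x$ and invoking Lemma~\ref{lem:grdcx} yields all three cases: the clique alone is optimal when $np^{r-1}\to 0$, the continuous limit $\min\{\delta^{2/r}/2,\delta/r\}$ emerges when $np^{r-1}\to\infty$, and $\varphi_r(\delta,c)$ interpolates through the transition regime.

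For the upper bound, the main theorem of Section~\ref{sec:method} gives $-\log\Pr\bigl(X\geq(1+\delta)\e[X]\bigr)=(1+o(1))\Phi_X(\delta)$ provided the cores are \emph{entropically stable}, i.e., the number of cores with $|I|=m$ is $(1/p)^{o(m)}$. Two tasks remain. First, I would evaluate $\Phi_X(\delta)$ via a stability analysis of~\eqref{eq:opt}: writing the conditional expectation as $\sum_V p^{\binom{r}{2}-|E(V)\cap I|}$ over $r$-subsets $V\subseteq\br{n}$, one shows that every near-minimiser decomposes, modulo negligibly many edges, into a clique, some full hubs, and at most one partial hub. This is in the spirit of the Lubetzky--Zhao~\cite{lubetzky2017variational} solution of the variational problem, but sharpened to track the integer number of hubs and to keep error terms sub-leading at the sharp density threshold. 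Second, I would verify entropic stability by noting that cores are determined, up to a mild rigidity condition, by the vertex sets of their clique and hub parts (of sizes $O(np^{(r-1)/2})$ and $O(np^{r-1})$, respectively), so their number is at most $\binom{n}{O(np^{(r-1)/2})}\binom{n}{O(np^{r-1})}=(1/p)^{o(m)}$ exactly when $p^{(r-1)/2}\gg n^{-1}(\log n)^{1/(r-2)}$.

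The main obstacle is entropic stability at the optimal density threshold: below $p^{(r-1)/2}=n^{-1}(\log n)^{1/(r-2)}$ the Poisson regime takes over (Theorem~\ref{thm:Hpoisson}) and cores proliferate. The vertex-transitivity of $K_r$ together with the simple form $P_{K_r}(x)=1+rx$ of its independence polynomial reduce the structural classification of near-minimisers to the one-parameter family $\psi_r(\delta,c,\cdot)$, which is exactly what lets us reach the tight threshold; for general $\Delta$-regular $H$ this analysis is considerably harder and Theorem~\ref{thm:H} only attains the weaker assumption $p^{\Delta/2}\gg n^{-1}(\log n)^{\Delta v_H^2}$. A secondary difficulty is the regime $np^{r-1}\to c$: since $x\delta c/r$ need not be an integer, the leading constant $\varphi_r(\delta,c)$ depends explicitly on fractional parts via $\{x\delta c/r\}^{1/(r-1)}$, and the matching between the planting lower bound and the cores emerging in the upper bound must accurately account for the partial hub throughout the argument.
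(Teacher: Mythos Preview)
Your overall architecture matches the paper exactly: the theorem is assembled from (i) a planting lower bound giving the upper bound on $\Phi_X$, (ii) the reduction of the tail probability to $\Phi_X$ via Theorem~\ref{thm:packaged}, and (iii) the lower bound on $\Phi_X$. Your planting construction (clique, full hubs, one partial hub) is precisely the paper's (Proposition~\ref{prop:kr-Phi}), and your sketch of the lower bound on $\Phi_X$ --- show that only $J=K_r$ and $J=K_{1,r-1}$ contribute, then decompose via a Lubetzky--Zhao-type splitting (Lemma~\ref{lem:lz}) and the sharp star count (Lemma~\ref{lem:stars}) --- is also the paper's route.

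The one substantive divergence is entropic stability. You propose to count cores by first showing that each core decomposes into a clique part and a hub part and then enumerating the two vertex sets. This conflates two different objects: the structural decomposition is a property of \emph{near-minimisers} of $\Phi_X$ (tight size bound, no rigidity condition), whereas the objects you must enumerate are \emph{cores} (size up to $K\cdot\Phi_X(\delta+\eps)$, plus the minimum-increment condition~\ref{item:thmcore-mindeg}). For cores a constant factor above optimal there is no reason the clean clique/hub decomposition should hold, and the paper does not attempt it. Instead, the paper works edge by edge: condition~\ref{item:thmcore-mindeg} forces, for every $uv\in E(\Gcore)$, that some subgraph $J\subseteq K_r$ satisfies $N(J,\Gcore;uv)\gtrsim n^{v_J}p^{e_J}/\mmax$, and the technical heart (Claim~\ref{claim:core-degree}) is ruling out every $J$ other than $K_r$ and the double stars $S_{i,j}$ via a careful inequality on $v_J,\alpha_J^*,e_J$. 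The conclusion is a degree dichotomy --- either $\deg_{\Gcore}(u,v)\gtrsim np^{(r-1)/2}/(\log(1/p))^{1/(r-2)}$ or $\deg_{\Gcore}u+\deg_{\Gcore}v\gtrsim n/\log(1/p)$ --- which confines every edge of the core to $\binom{A}{2}\cup(B\times\br n)$ for sets $A,B$ of controlled size. The final count is then $\binom{n}{|A|}\binom{n}{|B|}\binom{|A|^2/2+|B|n}{m}$; the third factor is essential, since cores with prescribed high-degree sets are far from unique. Your count $\binom{n}{O(np^{(r-1)/2})}\binom{n}{O(np^{r-1})}$ omits both this factor and the logarithmic corrections in $|A|,|B|$ that are responsible for the precise exponent $\tfrac{1}{r-2}$ in the density threshold.
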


Our next result describes the typical structure of the random graph $G_{n,p}$
conditioned upon the upper tail event. 
We write $G_{n,p}[U]$ for the subgraph of $G_{n,p}$ induced by $U$ and $e(A,B)$
for the number of edges in $G_{n,p}$ with one endpoint in $A$ and another in $B$.
Define the following three events:
\begin{enumerate}[{label=(\roman*)}]
\item
  Let $\UT(\delta)$ be the upper tail event $\{X \ge  (1+\delta)\e[X]\}$.
\item
  Let $\Clique_\eps(x)$ be the event that $G_{n,p}$ contains a set $U\subseteq \br n$ of 
    size at
    least $(1-\eps)x^{1/r}np^{(r-1)/2}$ such that
    $G_{n,p}[U]$ has minimum degree
    at least $(1-\eps)|U|$.
\item
  Let $\Hub_\eps(x)$ be the event that $G_{n,p}$ contains a set $U\subseteq \br n$
    such that at least $\floor{(1 - \eps)|U|}$ vertices in $U$ have degree at least $(1-
    \eps) n$ and 
    \[ e(U,\br n\setminus U) \geq (1-\eps)n
    \big( \floor{xnp^{r-1}/r}
    + \{xnp^{r-1}/r\}^{\frac{1}{r-1}}\big).\]
\end{enumerate}
Observe that $\Clique_\eps(0)$ and $\Hub_\eps(0)$ hold vacuously.

\begin{restatable}{theorem}{thmkrstructure}
  \label{thm:krstructure}
  Let $r \ge 3$ be an integer and let $\delta$, $\eps$, and $c$ be fixed positive
  constants. The following holds for all $p = p(n)$ satisfying 
  $n^{-1}
  (\log n)^{\frac1{r-2}} \ll p^{\frac{r-1}2} \ll 1$.
  \begin{enumerate}[label=(\roman*)]
  \item 
    If $np^{r-1} \to 0$, then
    \[
      \Pr\big(\Clique_\eps(\delta) \mid \UT(\delta)\big)\to 1.
    \]
  \item 
    If $np^{r-1}\to c$, then, letting $x^* = r\floor{\delta c/r}/(\delta c)$,
    \[
      \Pr\left(\bigcup_{x\in \{0,x^*,1\}} \Big(\Clique_\eps\big(\delta(1-x)\big) \cap \Hub_\eps(\delta x)\Big) \mid \UT(\delta)\right) \to 1,
    \]
    Moreover, if
    $x \mapsto \psi_r(\delta,c,x)$ has a unique minimiser $x \in \{0,x^*,1\}$,
    then
    \[
      \Pr\Big(\!\Clique_\eps\big(\delta(1-x)\big) \cap \Hub_\eps(\delta x)
      \mid \UT(\delta)\Big) \to 1.
    \]
  \item
    If $np^{r-1}\to \infty$, then
    \[
      \Pr\big(\Clique_\eps(\delta) \cup \Hub_\eps(\delta) \mid \UT(\delta)\big) \to 1.
    \]
    Moreover,
    \[
      \Pr\big(\Clique_\eps(\delta) \mid \UT(\delta)\big) \to
      \begin{cases}
        1 & \text{if $\delta^{2/r}/2< \delta/r$}, \\
        0 & \text{if $\delta^{2/r}/2> \delta/r$}.
      \end{cases}
    \]
  \end{enumerate}
\end{restatable}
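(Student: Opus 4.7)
The plan is to deduce Theorem~\ref{thm:krstructure} from the core-based analysis that underlies the upper bound in Theorem~\ref{thm:krrate}. By the entropic stability machinery of Section~\ref{sec:method}, with probability $1-o(1)$ conditional on the event $\UT(\delta)$ the graph $G_{n,p}$ contains an edge set $I$ of size $(1+o(1))\varphi_r(\delta,c)\,n^2p^{r-1}$ satisfying $\Ex[X\mid G_{n,p}\supseteq I]\ge (1+\delta-o(1))\Ex[X]$, where $X = X_{n,p}^{K_r}$. The entire theorem thus reduces to a \emph{stability} statement for the variational problem~\eqref{eq:opt}: every such near-minimiser $I$ decomposes, after removing $o(|I|)$ edges, into a near-clique part responsible for a $(1-x)$-fraction of the extra $\delta \Ex[X]$ copies of $K_r$ and a near-hub part responsible for the remaining $x$-fraction, where the parameter $x = x(I)\in[0,1]$ must coincide with a global minimiser of $\psi_r(\delta,c,\cdot)$.

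The key combinatorial step is to isolate the set $U$ of vertices incident to an atypically large number of edges of $I$ and to split $I = I_1 \sqcup I_2$, where $I_1$ consists of the edges of $I$ with at least one endpoint in $U$ and $I_2$ contains the remaining edges. Copies of $K_r$ that avoid $U$ must be witnessed by $I_2$ alone, so a robust Kruskal--Katona inequality yields $|I_2|\ge (1+o(1))\binom{(\delta(1-x))^{1/r}np^{(r-1)/2}}{2}$, with equality in the stable sense only if $I_2$ is essentially the edge set of a clique of that size, which is precisely the event $\Clique_\eps(\delta(1-x))$. Copies of $K_r$ meeting $U$ arise primarily from configurations in which a single $u\in U$ is joined by $I_1$-edges to the other $r-1$ vertices; a convexity and concentration argument then gives $|I_1|\ge (1+o(1))n\bigl(\floor{x\delta c/r}+\{x\delta c/r\}^{1/(r-1)}\bigr)$, tight only when $U$ consists of $\floor{x\delta c/r}$ vertices of $I_1$-degree $(1-o(1))n$ together with at most one further vertex of degree concentrated around $\{x\delta c/r\}^{1/(r-1)}n$, i.e.\ the event $\Hub_\eps(\delta x)$. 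Adding the two bounds reproduces $\psi_r(\delta,c,x)\cdot n^2p^{r-1}$, so the near-optimality of $|I|$ forces both inequalities to be tight simultaneously and forces $x$ to minimise $\psi_r(\delta,c,\cdot)$.

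Given this stability decomposition, the three cases of the theorem follow by invoking Lemma~\ref{lem:grdcx}, which pins the possible minimisers of $x\mapsto\psi_r(\delta,c,x)$ to the set $\{0,x^*,1\}$ with $x^*=r\floor{\delta c/r}/(\delta c)$. When $np^{r-1}\to 0$ the hub cost $\floor{x\delta c/r}/c$ blows up unless $x=0$, so $x$ must vanish and $\Clique_\eps(\delta)$ holds. When $np^{r-1}\to\infty$ the rounding disappears and $\psi_r$ reduces to $(\delta(1-x))^{2/r}/2+\delta x/r$, whose minimum over $[0,1]$ lies at $x\in\{0,1\}$ with a strict preference whenever $\delta^{2/r}/2\ne\delta/r$; the union $\Clique_\eps(\delta)\cup\Hub_\eps(\delta)$ must therefore hold in general, and each summand individually when the corresponding strict inequality is satisfied. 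The intermediate regime $np^{r-1}\to c\in(0,\infty)$ is handled identically using the three candidates $\{0,x^*,1\}$, with the uniqueness clause of the theorem an immediate consequence of the decomposition.

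The principal obstacle is the stability analysis itself. The lower bound $|I|\ge(\varphi_r(\delta,c)-o(1))\,n^2p^{r-1}$ is a consequence of the known solution of~\eqref{eq:opt}, but extracting from near-equality the clique-plus-hub structural decomposition demands a quantitatively robust form of Kruskal--Katona---any edge set with nearly the minimum number of $K_r$-copies among sets of its size must be essentially a clique---together with a matching stability result on the hub side in which the solitary partial-degree vertex at concentration $\{x\delta c/r\}^{1/(r-1)}n$ must be pinpointed. Localising this residual vertex and ruling out intermediate values of $x$ in the critical regime $np^{r-1}\to c$, without appealing to the convexity of $\psi_r$ itself (which operates only at the level of the optimum value), is where I expect the technical heart of the argument to lie.
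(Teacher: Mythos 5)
Your proposal follows essentially the same route as the paper. The paper derives Theorem~\ref{thm:krstructure} from the combination of Proposition~\ref{prop:kr-ldp} (which uses the entropic-stability machinery of Theorem~\ref{thm:packaged} to show that, conditional on $\UT(\delta)$, $G_{n,p}$ almost surely contains a near-minimiser of the optimisation problem), Proposition~\ref{prop:kr-gmin} (the stability statement: every near-minimiser lies in $\bigcup_{x\in\barX(\delta,c)}\Clique_\eps(\delta(1-x))\cap\Hub_\eps(\delta x)$), and Lemma~\ref{lem:grdcx} (the set of possible minimisers of $x\mapsto\psi_r$). Your decomposition of $I$ into high-degree and low-degree parts matches the partition $V(G)=U\cup V$ obtained via Lemma~\ref{lem:lz}, your ``robust Kruskal--Katona'' is Theorem~\ref{thm:Kr-stability}, and your ``convexity and concentration argument'' on the hub side is Lemma~\ref{lem:stars}(ii). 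The one step you flag as problematic---pinning $x$ near a global minimiser without appealing to convexity at the optimum---is in fact handled in the paper by a compactness/continuity argument (the claim in the proof of Proposition~\ref{prop:kr-gmin}, showing $x'\in B_{\eta'}(\barX(\delta,c))$), rather than by a direct convexity lower bound; this is a genuine subtlety, but it is not the obstacle you anticipate, as the tightness of the two bounds you describe already forces $\psi_r(\delta,c,x')$ to be within $o(1)$ of $\varphi_r(\delta,c)$, and the compactness argument is all that remains.
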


Note that Theorem~\ref{thm:krstructure} remains agnostic about the exact structure of the conditional model in the case
where there are multiple minimisers to $x \mapsto \psi_r(\delta, np^{r-1}, x)$.
However, it is not too difficult to show that for every $r$, the set of $(\delta,c) \in
(0,\infty)^2$ for which $x \mapsto \psi_r(\delta, c, x)$ has multiple minimisers
has Lebesgue measure zero. Figure~\ref{fig:phasediagram} gives a
graphical representation of the assertion of Theorem~\ref{thm:krstructure} in
the case where $r=3$ and $np^{2}\to c$. As the figure illustrates,
the conditional model undergoes infinitely many phase transitions
if $\delta^{2/3}/2> \delta/3$ (that is, $\delta< 3.375$) and
no phase transition at all if
$\delta^{2/3}/2< \delta/3$.

\begin{figure}
  \includegraphics{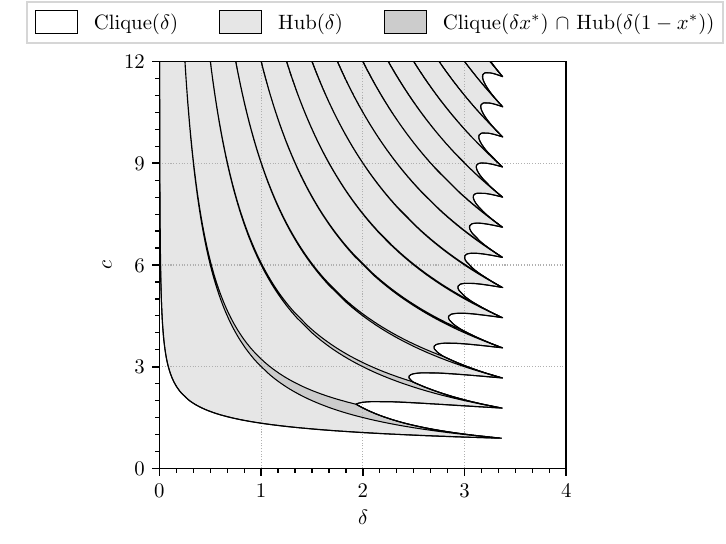}
  \caption{
    Asymptotic structure of $G_{n,p}$ conditioned upon the upper
    tail event $\UT(\delta) = \{X_{n,p}^{K_3} \geq
    (1+\delta)\e[X_{n,p}^{K_3}]\}$ when
    $np^2\to c$ as $n\to\infty$.
    In this conditional model,
    it is highly probable that we observe
    either
    $\Clique_\eps(\delta)$, 
    $\Hub_\eps(\delta)$, or
    $\Clique_\eps(\delta x^*) \cap \Hub_\eps((1-\delta)x^*)$
    with
    $x^* = 3\floor{\delta c/3}/(\delta c)\in (0,1)$,
    depending on the values of $\delta$ and $c$
    (all regions are open).
  }\label{fig:phasediagram}
\end{figure}

\subsection{Organisation of the paper}

In Section~\ref{sec:triangles}, we present a short and self-contained solution to the upper tail problem for triangle counts
in $G_{n,p}$. This section is somewhat redundant, since its content is just
a special case of the more general Proposition~\ref{prop:kr-ldp}. We include
it in order to demonstrate our method in a simple setting that conveniently
avoids many technical complications that arise in the general case.

Section~\ref{sec:method} introduces a concentration inequality that
gives a general condition under which the
logarithmic upper tail probability can be approximated by $\Phi_X(\delta)$,
the solution to the optimisation problem \eqref{eq:opt}. 

In Section~\ref{sec:aps}, we use the inequality developed in Section~\ref{sec:method}
to determine the asymptotics of
the logarithmic upper tail probability of $X_{N,p}^\kap$ in the complete range
of densities where localisation occurs. After collecting some graph-theoretic
tools in Section~\ref{sec:graph-theory-preliminaries}, we study the localised
regime of the upper tails of $X_{n,p}^{K_r}$ and $X_{n,p}^H$ for connected,
$\Delta$-regular graphs $H$ in Sections~\ref{sec:cliques} and \ref{sec:H},
respectively. We note that the three Sections \ref{sec:aps}, \ref{sec:cliques},
and \ref{sec:H} are logically independent and may be read in any order;
however, both Sections~\ref{sec:cliques} and~\ref{sec:H} rely on the tools
of Section~\ref{sec:graph-theory-preliminaries}.

In Section~\ref{sec:poisson}, we prove various results related to the
Poisson regime; in particular, we give the proofs of
Theorems~\ref{thm:kappoisson} and \ref{thm:Hpoisson}.
The arguments we use there do not rely on the methods developed
in Section~\ref{sec:method}, but rather on explicit calculations of high factorial moments.
Section~\ref{sec:nonpolynomial} contains a brief
discussion on extending the result from Section~\ref{sec:method} to the
more general case of nonnegative random variables on the hypercube. Finally,
in Section~\ref{sec:conclusion} we make some concluding remarks and discuss
open problems.

\subsection{Notation}
Before ending the introduction, we collect some notation which will be
used throughout the paper. We write $K_r$ for the complete graph
on $r$ vertices and $K_{s,t}$ for the complete bipartite graph with parts
of size $s$ and $t$. For any graph $G$, let
$V(G)$ and $E(G)$ denote the vertex and edge sets of $G$, respectively,
and set
$v_G = |V(G)|$ and $e_G =|E(G)|$. For two graphs $J$
and $G$, we let $N(J,G)$ be the number of copies of $J$ in $G$, and
$\Emb(J,G)$ be the set of embeddings of $J$ into $G$---i.e., injective
maps from $V(J)$ to $V(G)$ that map edges
of $J$ to edges of $G$. For an edge $uv \in E(G)$, we also let $N(J,G;uv)$
and $\Emb(J,G;uv)$ be the number of copies of $J$ that contain the edge
$uv$, and the set of embeddings that map
an edge of $J$ to $uv$, respectively.
Finally, for a subset
$I$ of $\br{N}$, we let $Y_I = \prod_{i \in I} Y_i$, and $\mathbb{E}_I[X] = \mathbb{E}[X \mid Y_I = 1]$.
If subsets $I\subseteq \br N$ can be identified with subgraphs $G\subseteq K_n$,
as in Example~\ref{ex:subgraph}, we will write $\e_G[X]$ instead
of $\e_I[X]$.

\section{Triangles in random graphs}\label{sec:triangles}

Assume that $n^{-1}\log n \ll p \ll 1$ and let $X$ denote the number of triangles in $G_{n,p}$.
Using the shorthand notation $\e_G[X] = \e[X\mid G\subseteq G_{n,p}]$, we define, for each positive $\delta$,
\begin{equation}\label{eq:triaXi}
  \Phi_X(\delta) = \min{\big\{e_G\log(1/p) : G\subseteq K_n\text{ and }\e_G[X]\geq
  (1+\delta)\e[X]\big\}}.
\end{equation}
Note that this agrees with the definition \eqref{eq:opt}.
The goal of this section is to prove that,
for every fixed positive $\eps$ and all large enough $n$,
\begin{equation}\label{eq:goaltriangles}
  (1-\eps)\Phi_X(\delta-\eps)
  \leq -\log \Pr\big(X\geq (1+\delta)\e[X]\big)
  \leq (1+\eps)\Phi_X(\delta+\eps).
\end{equation}
At this point, we do not address the issue of evaluating $\Phi_X(\delta)$.
For the sake of completeness, let us mention that a special case of a more general result of
Lubetzky--Zhao~\cite{lubetzky2017variational} is that, when $n^{-1} \ll p \ll 1$,
\[ \lim_{n\to\infty}\frac{\Phi_X(\delta)}{n^2p^2\log(1/p)} =
\begin{cases}
  \delta^{2/3}/2&
\text{if $np^2 \to 0$}\\
  \min\left\{\delta/3,\delta^{2/3}/2\right\} &
\text{if $np^2 \to \infty$.}
\end{cases} \]
In Section~\ref{sec:cliques}, we shall fill in the gap at
$p=\Theta(n^{-1/2})$ to obtain an asymptotic formula for
$\Phi_X(\delta)$ in the full range of interest.

We now give a proof of~\eqref{eq:goaltriangles}, where we may assume without loss
of generality that $\eps\leq \delta/10$. All statements that we make in this
section should be understood to hold only for sufficiently large~$n$. We start
with some easy observations.
First, for every graph $G$ with $O(1)$ edges, we have
$\e_G[X] - \e[X] = O(1+np^2)\ll (\delta-2\eps)\e[X]$, and so
$\Phi_X(\delta-2\eps)\gg\log(1/p)\gg 1$. Second, the condition in~\eqref{eq:triaXi}
is satisfied when $G$ is a clique on  $(1+\delta)^{1/3}np$ vertices, and therefore $\Phi_X(\delta-\eps) \leq
(1+\delta)^{2/3}n^2p^2\log(1/p)/2$. 

The easier of the two inequalities in~\eqref{eq:goaltriangles} is the upper bound. To
prove it, let $G$ be a graph attaining the minimum in the definition of
$\Phi_X(\delta+\eps)$ and let $\Pr_G(\cdot) = \Pr(\cdot\mid G\subseteq G_{n,p})$. Since $X$ never exceeds $\binom{n}{3}$,
\[
  (1+\delta+\eps)\e[X] \leq \e_G[X] \leq \binom{n}{3}\Pr_G\big(X\geq
  (1+\delta)\e[X]\big) + (1+\delta)\e[X],
\]
and so
\[
  \Pr_G\big(X\geq (1+\delta)\e[X]\big)\geq \eps \e[X]/\binom{n}{3} =\eps p^3.
\]
Hence,
\[ 
\begin{split}
  -\log \Pr\big(X\geq (1+\delta)\e[X]\big)
  &\leq -\log\Big(\Pr(G\subseteq G_{n,p})\cdot\Pr_G\big(X\geq
  (1+\delta)\e[X]\big)\Big)\\
  &\leq e_G\log(1/p) +\log(1/\eps p^3).
\end{split} \]
Since $e_G\log(1/p) = \Phi_X(\delta+\eps)
\geq \Phi_X(\delta-2\eps)\gg \log(1/p)$, this establishes the lower bound in~\eqref{eq:goaltriangles}.

We now turn to proving the lower bound. Let $C = C(\eps,\delta)$ denote a
sufficiently large positive constant. We call a graph $G\subseteq K_n$ a \emph{seed} if
\begin{enumerate}[label=(S\arabic*)]
  \item \label{item:triaseed-bias} $\e_G[X] \geq (1+\delta-\eps)\e[X]$ and
  \item \label{item:triaseed-size} $e_G \leq Cn^2p^2\log(1/p)$.
\end{enumerate}

We make the following claim:
\begin{claim}\label{cl:triaseed}
  $\Pr\big(X\geq (1+\delta)\e[X]\big)\leq (1+\eps) \Pr\big(G_{n,p} \text{
    contains a seed}\big)$.
\end{claim}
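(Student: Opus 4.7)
The plan is to split $\Pr(X \geq (1+\delta)\e[X])$ according to whether $G_{n,p}$ contains a seed and bound the complementary part using a high-moment argument in the spirit of Janson, Oleszkiewicz, and Rucinski. Writing $\cE = \{X \geq (1+\delta)\e[X]\}$ and $\cS = \{G_{n,p}\text{ contains a seed}\}$, one has $\Pr(\cE) = \Pr(\cE\cap\cS) + \Pr(\cE\setminus\cS) \leq \Pr(\cS) + \Pr(\cE\setminus\cS)$, so it suffices to prove $\Pr(\cE\setminus\cS)\leq \eps\Pr(\cS)$.

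For an integer $k$ to be chosen, Markov's inequality applied to the nonnegative random variable $X \cdot \1_{\cS^c}$ gives
\[
  \Pr(\cE \setminus \cS) \leq \frac{\e[X^k \1_{\cS^c}]}{\big((1+\delta)\e[X]\big)^k}.
\]
I would expand the numerator as a sum over $k$-tuples of triangles in $K_n$ and group by $F = \bigcup_i E(T_i)$, using the crucial observation that if $F$ is itself a seed then $F\subseteq G_{n,p}$ places us in $\cS$, annihilating $\1_{\cS^c}$. Hence
\[
  \e[X^k \1_{\cS^c}] \leq \sum_{F\subseteq K_n,\; F\text{ not a seed}} p^{e_F}\cdot N(K_3, F)^k.
\]

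The analysis then splits by the reason that $F$ fails to be a seed. If $F$ violates the bias condition~\ref{item:triaseed-bias}, then the elementary inequality $N(K_3, F) \leq \e_F[X]$ (valid because the triangles in $F$ are deterministically present given $F\subseteq G_{n,p}$) yields $N(K_3, F) < (1+\delta-\eps)\e[X]$, and the factor $(N(K_3,F)/((1+\delta)\e[X]))^k$ decays at rate $(1-\Theta(\eps))^k$. If $F$ violates the size condition~\ref{item:triaseed-size}, then $e_F > Cn^2 p^2\log(1/p)$, so the factor $p^{e_F}$ is much smaller than $\Pr(\cS)\geq p^{O(n^2p^2)}$, the latter coming from planting a single clique of side $(1+\delta)^{1/3}np$.

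The main obstacle is calibrating $k$ so that the exponential decay in the bias case dominates the combinatorial enumeration of non-seed graphs $F$ of any given size, while the contribution of the size-violation case is controlled by a separate enumeration over $F$ weighted by~$p^{e_F}$. I expect $k$ comparable to the seed size $n^2 p^2\log(1/p)$ to balance these two effects, and the key simplification throughout is the inequality $N(K_3, F) \leq \e_F[X]$, which lets the argument proceed in terms of the bias parameter of $F$ rather than its more delicate triangle count.
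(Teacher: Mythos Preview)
Your setup through the displayed inequality
\[
  \e[X^k \1_{\cS^c}] \leq \sum_{F\text{ not a seed}} p^{e_F}\cdot N(K_3, F)^k
\]
is correct, but the plan breaks down at the next step. Bounding $N(K_3,F)<(1+\delta-\eps)\e[X]$ on each summand leaves you with $((1+\delta-\eps)\e[X])^k\sum_F p^{e_F}$, and this residual sum over subgraphs is of order $(1+p)^{\binom{n}{2}}\approx e^{\Theta(n^2p)}$. Since $k=\Theta(n^2p^2\log(1/p))\ll n^2p$, no choice of $k$ makes the decay $\big(\frac{1+\delta-\eps}{1+\delta}\big)^k=e^{-\Theta(\eps k)}$ beat this enumeration. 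The lossy step is replacing $\#\{(T_1,\dotsc,T_k):\bigcup T_i=F\}$ by $N(K_3,F)^k$: this counts every tuple of triangles in $F$, not just those whose union is exactly $F$, so the same tuple is charged to many different $F$'s.

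What the paper does instead is \emph{not} group by the full union $F$ but peel off one triangle at a time. Writing the tuple sum with the indicator that the running union contains no seed, one conditions on $(T_1,\dotsc,T_{k-1})$ and observes that $\sum_{T_k}\e[Y_{T_k}\mid Y_{T_1}\cdots Y_{T_{k-1}}=1]=\e_{T_1\cup\dotsb\cup T_{k-1}}[X]$. The choice $k\le (C/3)n^2p^2\log(1/p)$ guarantees $e_{T_1\cup\dotsb\cup T_{k-1}}\le Cn^2p^2\log(1/p)$, so~\ref{item:triaseed-size} holds automatically; hence if the running union contains no seed it must violate~\ref{item:triaseed-bias}, giving the bound $(1+\delta-\eps)\e[X]$. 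Iterating yields $\e[X^k\1_{\cS^c}]\le((1+\delta-\eps)\e[X])^k$ directly, with no enumeration over $F$ at all. This also shows that your case split on \ref{item:triaseed-bias} versus \ref{item:triaseed-size} is unnecessary: with the right $k$, the size condition never fails. The inductive peeling, not the inequality $N(K_3,F)\le\e_F[X]$, is the heart of the matter.
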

\begin{remark}\label{remark:SeedsSuck}
Given this claim, one may be tempted to simply apply the union bound over all seeds. Using such a strategy, one would find that 
\begin{align}
    \Pr\big(X\geq (1+\delta)\e[X]\big)
    &\stackrel{\text{Claim~\ref{cl:triaseed}}}{\leq} (1+\eps)
    \Pr\big(G_{n,p}\text{ contains a seed}\big)\nonumber\\
    &\stackrel{\phantom{\text{Claim~\ref{cl:triaseed}}}}{\leq}
    (1+\eps) \sum_{m=m_{\min}}^\infty p^m \cdot |\{G \subseteq K_n : \text{$G$ is a seed with $m$ edges}\}|,\label{eq:failedunion}
\end{align}
where $m_{\min} = \Phi_X(\delta - \eps)/\log(1/p)$ is the minimal number of edges in a seed. Unfortunately, such a strategy is bound to fail, as there are far too many seeds. To see this, we observe that if $G$ satisfies~\ref{item:triaseed-bias}, then so does any supergraph of $G$. In particular, if we take a seed with $\mmin$ edges and add an arbitrary set of $K m_{\min}$ edges (where $K$ is a large constant), the resulting graph remains a seed. Therefore, we can (rather loosely) bound the number of seeds with $\tilde{m} = (K+1) m_{\min}$ edges from below:
\begin{align*}
|\{G \subseteq K_n : \text{$G$ is a seed with $\tilde{m}$ edges}\}| \geq \binom{{\binom {n}{2} - m_{\min}}}{K m_{\min} }  \geq \left(\frac{1}{3 (1+\delta)^{2/3} K  p^2 }\right)^{K \mmin},
\end{align*}
using the bound $\binom{x}{y} \geq (x/2y)^y$ for any $y < x/2$ and $\mmin \leq (1+\delta)^{2/3} n^2 p^2/2$. If we choose $K$ to be large enough, we may conclude that 
\[
|\{G \subseteq K_n : \text{$G$ is a seed with $\tilde{m}$ edges}| \geq (1/p)^{3 \tilde{m}/2},
\] 
for all large enough $n$. This shows that the $\tilde{m}$th term of the final sum in~\eqref{eq:failedunion} is arbitrarily large, making the entire approach fruitless.
\end{remark}

From Remark~\ref{remark:SeedsSuck}, it is clear that seeds may include `extraneous' edges that have no structural role in the upper tail event. We wish to consider more constrained structures which exclude constructions like the one outlined above. To that end, we call a graph $G^*\subseteq K_n$ a \emph{core} if
\begin{enumerate}[label=(C\arabic*)]
  \item\label{item:triacore-bias} $\e_{G^*}[X] \geq (1+\delta-2\eps)\e[X]$,
  \item\label{item:triacore-size} $e_{G^*} \leq Cn^2p^2\log(1/p)$, and
  \item\label{item:triacore-mindeg}
    $\min_{e\in E({G^*})} \big(\e_{G^*}[X]-\e_{{G^*}\setminus e}[X]\big) \geq
    \eps\e[X]/\big(Cn^2p^2\log(1/p)\big)$.
\end{enumerate}
Condition~\ref{item:triacore-mindeg} requires that every edge contributed meaningfully to the expectation, and thus excludes the pathological seeds described above. 
\begin{claim}\label{cl:triacore}
  Every seed contains a core.
\end{claim}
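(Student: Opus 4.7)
The plan is a greedy edge-deletion argument. Starting from $G_0 := G$, while $G_t$ contains some edge $e$ with
\[
  \e_{G_t}[X] - \e_{G_t \setminus e}[X] < \frac{\eps \e[X]}{Cn^2p^2\log(1/p)},
\]
I would set $G_{t+1} := G_t \setminus e$; otherwise stop and output $G^* := G_t$. Since $|E(G_t)|$ strictly decreases per step, the procedure terminates after some $T \leq e_G \leq Cn^2p^2\log(1/p)$ iterations.

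Two of the core conditions are then immediate: condition \ref{item:triacore-size} holds because $G^* \subseteq G$ and \ref{item:triaseed-size} gives the required edge bound, while \ref{item:triacore-mindeg} holds because the process only terminates once no edge violates the deletion criterion. For the remaining condition \ref{item:triacore-bias}, the key observation is that $X$ is an increasing polynomial in the Bernoulli variables $Y_i$, so the quantity $\e_H[X]$ is monotone nondecreasing in $H \subseteq K_n$. Therefore every telescoping term $\e_{G_t}[X] - \e_{G_{t+1}}[X]$ is nonnegative and strictly less than the threshold above. Summing,
\[
  \e_G[X] - \e_{G^*}[X] \;=\; \sum_{t=0}^{T-1}\big(\e_{G_t}[X] - \e_{G_{t+1}}[X]\big) \;\leq\; T \cdot \frac{\eps \e[X]}{Cn^2p^2\log(1/p)} \;\leq\; \eps \e[X],
\]
and combining with \ref{item:triaseed-bias} yields $\e_{G^*}[X] \geq (1+\delta-2\eps)\e[X]$, which is \ref{item:triacore-bias}.

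I do not foresee a serious obstacle: the threshold in the deletion rule has been calibrated so that the maximum possible loss in bias (at most $Cn^2p^2\log(1/p)$ deletions, each costing less than $\eps\e[X]/(Cn^2p^2\log(1/p))$) matches precisely the slack $\eps \e[X]$ between \ref{item:triaseed-bias} and \ref{item:triacore-bias}. The only ingredient requiring verification is the monotonicity $\e_{G\cup\{e\}}[X] \geq \e_G[X]$, which is immediate because $X$ is a polynomial with nonnegative coefficients in the $Y_i$ (alternatively, FKG).
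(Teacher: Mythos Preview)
Your proposal is correct and follows essentially the same greedy edge-deletion argument as the paper's proof: iteratively remove edges whose contribution falls below the threshold, then verify \ref{item:triacore-size} and \ref{item:triacore-mindeg} by construction and \ref{item:triacore-bias} via the telescoping bound. Your added remark on monotonicity (nonnegativity of each telescoping term) is true but not actually needed, since the strict upper bound on each term already suffices to control the sum.
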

Finally, we claim that the additional constraint~\ref{item:triacore-mindeg} allows us to get a very strong bound on the number of cores with a given number of edges, as it ensures that either the  product of the degrees of the endpoints of any edge in $G^*$ must be nearly as large as $e_{G^*}$, or the sum of these degrees is nearly linear in $n$ (note that the former condition holds when $G^*$ is a clique, and the latter when $G^*$ is a hub). This, in turn, implies that the number of cores with a given set of vertices and $m$ edges is $\exp(O(m \log \log (1/p))$. Once we show that the number of choices for the vertex set of a core with $m$ edges is $(1/p)^{o(m)}$, we prove the following claim.
\begin{claim}\label{cl:triacorecount}
  For every $m$, there are at most $(1/p)^{\eps m}$ cores with exactly $m$ edges.
\end{claim}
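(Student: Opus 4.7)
The plan is to show that the rigidity condition~\ref{item:triacore-mindeg} forces each edge of a core to be locally pinned, permitting an efficient encoding of cores. As a first step, I would compute, for any edge $e = uv \in G^*$,
\[
  \e_{G^*}[X] - \e_{G^* \setminus e}[X] \;=\; (1-p)\sum_{w \neq u,v} p^{\,2 - d_e(w)},
\]
where $d_e(w) = \1[uw \in G^*] + \1[vw \in G^*] \in \{0,1,2\}$. Writing $t_e$ for the number of triangles of $G^*$ through $e$ and $p_e$ for the number of vertices $w$ with $d_e(w) = 1$ (i.e.\ cherries through $e$), the right-hand side equals $(1-p)\bigl[(n-2-p_e-t_e)p^2 + p_e\,p + t_e\bigr]$. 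Condition~\ref{item:triacore-mindeg} therefore translates into the requirement that, for every $e \in G^*$, at least one of the three summands $(n-2)p^2$, $p_e\,p$, or $t_e$ is a constant multiple of $V_{\min} := \eps\e[X]/(Cn^2p^2\log(1/p))$, which to leading order equals $\eps np/(6C\log(1/p))$.

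I would then use this trichotomy to bound the number of cores with exactly $m$ edges by an encoding argument. Partition the edges of $G^*$ into \emph{triangle-type} edges (those with $t_e$ large), \emph{cherry-type} edges (those with $p_e$ large), and \emph{background} edges (those for which $(n-2)p^2$ already dominates, which forces $p\log(1/p) \gtrsim \eps/C$). For triangle-type edges, each such edge is certified by many triangles in $G^*$, so a triangle-type edge can be described using two incident edges already present in $G^*$, saving roughly $\log(1/p)$ bits per edge. For cherry-type edges, $p_e \gtrsim V_{\min}/p$ together with $p_e + 2t_e = d_{G^*}(u) + d_{G^*}(v) - 2$ implies that at least one endpoint of $e$ has $G^*$-degree at least $\sim \eps n/\log(1/p)$; this confines cherry-type edges to a small set $H$ of high-degree vertices, where $|H|$ is controlled using condition~\ref{item:triacore-size}. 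Summing the resulting encoding costs over the three types (via a canonical ordering of the edges of $G^*$, peeling one edge at a time and specifying its certificate) should yield the bound $(1/p)^{\eps m}$, provided $C$ is large and $\eps$ is sufficiently small in terms of $\delta$.

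The main obstacle is the regime where $p\log(1/p) \gtrsim 1$. In this regime the ``background'' term $(n-2)p^2$ alone can satisfy~\ref{item:triacore-mindeg} for every edge, and so the rigidity condition imposes no local structural constraint on $G^*$. Here the bound has to come entirely from~\ref{item:triacore-bias} combined with~\ref{item:triacore-size}: the fact that $G^*$ generates $\gtrsim \delta\e[X]$ excess expected triangles must be converted into a bound on the number of graphs that can do so with only $O(n^2 p^2 \log(1/p))$ edges. Managing this, while simultaneously making all implicit constants cooperate uniformly across the transition $p\log(1/p) \sim 1$, is the delicate technical core of the argument; everything else should follow from bookkeeping on the encoding described above.
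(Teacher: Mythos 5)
Your computation of $\e_{G^*}[X] - \e_{G^*\setminus e}[X]$ is correct and the trichotomy you extract from~\ref{item:triacore-mindeg} is the right starting point. Your treatment of cherry-type edges (one endpoint of high degree) also matches the paper's observation, which isolates a small set $B_{G^*}$ of very-high-degree vertices. However, there are two problems with the rest of the plan.

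First, the ``background regime'' you identify as the delicate technical core is in fact a non-issue. The whole of Section~\ref{sec:triangles} operates under the standing assumption $p\ll 1$, so $p\log(1/p)\to 0$ and hence $(n-2)p^2 \ll np/\log(1/p)$ for all large $n$. The background term can never satisfy~\ref{item:triacore-mindeg} on its own, and the paper dismisses it in one line. Your concern would be real for constant $p$, but that is outside the scope of the claim.

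Second, and more seriously, your proposed encoding of triangle-type edges by pairs of already-decoded incident edges does not yield the bound $(1/p)^{\eps m}$. Each such certificate costs on the order of $\log(m^2)$ bits, so over $m$ edges the count is (even ignoring the nontrivial question of whether enough certificates precede the edge in a canonical ordering) of order $m^{2m}$. Since~\ref{item:triacore-size} only gives $m\le Cn^2p^2\log(1/p)$, which can be polynomially large in $n$, while $(1/p)^{\eps}$ can be as small as $(n/\log n)^{\eps}$ in the allowed range of $p$, the inequality $m^{2m}\le (1/p)^{\eps m}$ fails badly for most densities. The insight you are missing is the simple degree bound $N(K_3,G^*;uv)\le\min\{\deg_{G^*}u,\deg_{G^*}v\}$: a triangle-type edge has \emph{both} endpoints of degree $\gtrsim np/\log(1/p)$, hence both endpoints lie in a set $A_{G^*}$ of size $O\big(m\log(1/p)/(np)\big)$. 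This shrinks the pool of candidate triangle-type edges to $|A_{G^*}|^2/2 = O\big(m\log^3(1/p)\big)$, which is only polylogarithmically larger than $m$; choosing $m$ edges from such a pool costs $e^{O(m\log\log(1/p))} = (1/p)^{o(m)}$, and revealing the sets $A_{G^*}$ and $B_{G^*}$ costs $n^{|A_{G^*}|+|B_{G^*}|} = (1/p)^{o(m)}$ by the lower bound $p\gg n^{-1}\log n$. These two savings are exactly what closes the union bound, and neither appears in your encoding.
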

As will be discussed in Section~\ref{sec:method}, we refer to the property described in Claim~\ref{cl:triacorecount} as {\em entropic stability.}

Let us show how these three claims imply the lower bound in~\eqref{eq:goaltriangles}:
\[
  \begin{split}
    \Pr\big(X\geq (1+\delta)\e[X]\big)
    &\stackrel{\text{Claim~\ref{cl:triaseed}}}{\leq} (1+\eps)
    \Pr\big(G_{n,p}\text{ contains a seed}\big)\\
    &\stackrel{\text{Claim~\ref{cl:triacore}}}{\leq}
    (1+\eps)\Pr\big(G_{n,p}\text{ contains a core}\big)\\
    &\stackrel{\text{\phantom{Claim~2.2}}}{\le}
    (1+\eps) \sum_{m=0}^\infty p^m \cdot |\{G^* \subseteq K_n : \text{$G^*$ is a core with $m$ edges}\}|\\
    &\stackrel{\text{Claim~\ref{cl:triacorecount}}}{\leq}
    (1+\eps)\sum_{m=\mmin}^\infty p^{(1-\eps)m},
  \end{split}
\]
where $\mmin$ is the minimal number of edges in a core. Since~\ref{item:triacore-bias} implies that $\Phi_X(\delta-2\eps) \leq \mmin \cdot \log(1/p)$, the assumption $p \ll 1$ yields 
\[
  \Pr\big(X \ge (1+\delta)\e[X]\big) \le (1+2\eps) \exp\big(-(1-\eps)\Phi_X(\delta-2\eps)\big).
\]
Finally, as $\Phi_X(\delta-2\eps) \gg 1$, we obtain
\[
  -\log\Pr\big(X\geq (1+\delta)\e[X]\big) \geq (1-2\eps) \Phi_X(\delta-2\eps),
\]
thus proving \eqref{eq:goaltriangles} with $2\eps$ instead of $\eps$.

\begin{remark}
Conditions~\ref{item:triaseed-size} and~\ref{item:triacore-size} require both seeds and cores to have no more than $C n^2p^2 \log(1/p)$ edges, forcing us to consider graphs which are larger than the minimiser of $\Phi_X(\delta)$ by a multiplicative factor of $\log(1/p)$. This is optimal in the following sense. On the one hand, replacing $\log(1/p)$ with a larger function would weaken the conclusion of Claim~\ref{cl:triaseed} and make Claim~\ref{cl:triacorecount} hold only for a smaller range of densities $p$. On the other hand, if we could replace $\log(1/p)$ by $\ell(p) = o(\log(1/p))$ without altering the validity of Claim~\ref{cl:triaseed}, then Claim~\ref{cl:triacorecount} (with an appropriately adjusted definition of a core) would hold for any $p \gg \ell(p)/n$; this would imply an upper bound on the upper tail probability that is stronger than the lower bound proven in Theorem~\ref{thm:Hpoisson} for $\ell(p)/n \ll p \ll \log n /n$.
\end{remark}

\begin{proof}[Proof of Claim~\ref{cl:triaseed}]
  We refine a classical argument due to Janson, Oleszkiewicz,
  and Ruci\'nski~\cite{janson2004upper}. Let $Z$  be the indicator
  random variable of the event that $G_{n,p}$ does not contain a seed and let $\ell
  = \lceil (C/3)n^2p^2\log(1/p) \rceil$. Since $XZ\geq 0$ and $Z^\ell =Z$, 
  Markov's inequality
  gives
  \begin{equation}\label{eq:stability1}
      \Pr\big(X \geq 
      (1+\delta)\e[X] \text{ and $G_{n,p}$ contains no seed}\big)
      =
      \Pr\big(X Z \geq (1+\delta)\e[X]\big)
      \leq 
      \frac{\e[X^\ell Z]}{(1+\delta)^\ell \e[X]^\ell}.
  \end{equation}
  We write $X = \sum_T Y_T$, where the sum ranges over all triangles $T$ in
  $K_n$ and $Y_T$ is the indicator random variable of the event that
  $T$ is contained in $G_{n,p}$.
  For every subgraph $G\subseteq K_n$, let $Z_G$ be the indicator
  random variable of the event that $G\cap G_{n,p}$ does not contain a seed.
  Observe that $Z_{G'}\leq Z_G$ whenever $G\subseteq G'$. In particular,
  since $Z = Z_{K_n}$, we have, for every $k\in \br{\ell}$,
  \[
    \begin{split}
      \e[X^k Z] &= \sum_{T_1,\dotsc,T_k} \e[Y_{T_1}\dotsb
    Y_{T_k} \cdot Z]\\
      &\leq \sum_{T_1,\dotsc,T_k}
    \e[Y_{T_1}\dotsb Y_{T_k}\cdot Z_{T_1\cup \dotsb \cup T_k}]\\
    & \leq \kern-5pt
    \sum_{T_1, \dotsc, T_{k-1}}
    \kern-4pt
      \e[Y_{T_1} \dotsb Y_{T_{k-1}}
      \cdot Z_{T_1\cup \dotsb \cup T_{k-1}}] 
      \cdot
      \sum_{T_k}\e[Y_{T_k}
      \mid Y_{T_1} \dotsc Y_{T_{k-1}}
      \cdot Z_{T_1\cup \dotsb \cup T_{k-1}}
      = 1],
    \end{split}\]
  where we can let the first sum in the last line range only over sequences $T_1,\dotsc,T_{k-1}$
  for which the event
  $\big\{Y_{T_1}\dotsb Y_{T_{k-1}}\cdot Z_{T_1\cup \dotsb \cup T_{k-1}}=1\big\}$
  has positive probability. This is equivalent to saying that
  the graph $T_1\cup \dotsb\cup T_{k-1}$ does not contain a seed and thus
  $Y_{T_1}\dotsb Y_{T_{k-1}}\cdot Z_{T_1\cup \dotsb \cup T_{k-1}}
  =Y_{T_1}\dotsb Y_{T_{k-1}}$.
  Moreover, since $e_{T_1\cup \dotsb \cup T_{k-1}} \leq 3(k-1) \leq 3(\ell-1) \leq Cn^2p^2\log(1/p)$,
  \[
    \sum_{T_k}\e[Y_{T_k} \mid Y_{T_1} \dotsc
    Y_{T_{k-1}} = 1] =  \e_{T_1\cup \dotsb \cup T_{k-1}} [X]
    <(1+\delta-\eps)\e[X],
  \]
  as otherwise $T_1 \cup \dotsb \cup T_{k-1}$ would be a seed, see~\ref{item:triaseed-bias} and \ref{item:triaseed-size}.
  Therefore,
  \[
    \sum_{T_1,\dotsc,T_k}
    \e[ Y_{T_1}\dotsb Y_{T_k}\cdot Z_{T_1\cup \dotsb\cup T_k}]
      <
      (1+\delta-\eps)\e[X]
      \cdot \sum_{T_1,\dotsc,T_{k-1}}
      \e[ Y_{T_1}\dotsb Y_{T_{k-1}}\cdot Z_{T_1\cup \dotsb\cup T_{k-1}}]
  \]
  and it follows by induction that $\e[X^\ell Z] 
  < (1+\delta-\eps)^\ell\e[X]^\ell$. Substituting this inequality into~\eqref{eq:stability1} gives
  \[
  \Pr\big(X \geq (1+\delta)\e[X] \text{ and $G_{n,p}$ contains no seed}\big) \leq
  \left(\frac{1+\delta-\eps}{1+\delta}\right)^\ell.
  \]
  Since the probability that $G_{n,p}$ contains a seed is at least $e^{-\Phi_X(\delta-\eps)}$, the probability that $G_{n,p}$ contains a given
  seed of smallest size, the bounds $1 \ll \Phi_X(\delta-\eps)  \leq (1+\delta)^{2/3}n^2p^2\log(1/p)/2$ imply that, for all sufficiently large $n$, 
  \[
    \left(\frac{1+\delta-\eps}{1+\delta}\right)^\ell
    \le \left(\frac{1+\delta-\eps}{1+\delta}\right)^{(C/3)n^2p^2\log(1/p)}
    \leq \eps  e^{-\Phi_X(\delta - \eps) }\leq  \eps
    \Pr\big(\text{$G_{n,p}$ contains a seed}\big)
  \]
  whenever the constant $C$ is sufficiently large\footnote{We note that the requirement for $C$ to be large is only used here.}. This implies the assertion of the claim.
\end{proof}

\begin{proof}[Proof of Claim~\ref{cl:triacore}]
  Let $G$ be a seed. Define a sequence $G = G_0\supseteq G_1\supseteq \dotsb
  \supseteq G_s = G^*$ of subgraphs of $G$ by repeatedly
  setting $G_{i+1}= G_i\setminus e$ for some edge $e\in G_i$ such that $\e_{G_i}
- \e_{G_i \setminus e}[X] < \eps \e[X]/\big(Cn^2p^2\log(1/p)\big)$, as long as such an
edge $e$
  exists. By construction, $G^*$ clearly satisfies
  \ref{item:triacore-mindeg}.
  Since $e_{G^*} \leq e_G \leq Cn^2p^2\log(1/p)$,
  we see that \ref{item:triacore-size} holds as well.
  Finally, as $s \leq e_G \leq Cn^2p^2\log(1/p)$, we have
  \[ \e_G[X] - \e_{G^*}[X] = \sum_{i=0}^{s-1}\big(\e_{G_i}[X] - \e_{G_{i+1}}[X]\big)
  < \eps\e[X]. \]
  Since $G$ is a seed, $\e_G[X]\geq (1+\delta-\eps)\e[X]$ and we obtain \ref{item:triacore-bias}.
\end{proof}

\begin{proof}[Proof of Claim~\ref{cl:triacorecount}]
  \begin{figure}
    \centering
    \begin{tikzpicture}[scale=1.3]
      \usetikzlibrary{patterns}
      \tikzset{every node/.style={draw,circle,fill=black,inner sep=0pt,minimum
      size=3pt}}
      \tikzset{plain/.style={draw=none,rectangle,fill=none,inner sep=0pt}}
      \coordinate (x) at (0,0) {};
      \coordinate (y)  at (1,0) {};
      \coordinate (a) at (0,1) {};
      \coordinate (b) at (0.5,-0.8) {};
      \coordinate (c) at (1,1) {};
      \fill[black!10,xshift=0.25cm] (0.75,-1.25) .. controls ++(180:-1) and
      ++(0:1) .. (0.75,0.25) .. controls (0.25,0.25) .. (0.25,0.75) .. controls
      ++(90:0.75) and ++(90:0.75) .. (-0.75,0.75) to (-0.75,-0.5) ..controls ++(90:-1/2)
      and ++(180:1/2).. (0,-1.25) to (0.75,-1.25) ;
      \draw (x) -- (b) -- (y);
      \draw[dashed] (a) -- (y);
      \draw (x)--(a);
      \draw[dashed] (x) -- (c)  -- (y);
      \draw[very thick] (x) -- node[fill=none,draw=none,yshift=-0.15cm] {$e$} (y);
      \draw node at (x) {} node at (y) {} node at (a) {} node at (b) {} node
      at (c) {};
      \node[plain] at (-0.75,0) {$G^*$};

      \begin{scope}[xshift=-5cm]
        \fill[black!10,rounded corners=1cm] (-2,-2) rectangle (3.5,2);
        \fill[black!20] (0,0) circle (1.5cm);
        \fill[black!30] (0.75,0) circle (0.75cm);
        \node[plain] at (0.5,0) {$B_{G^*}$};
        \node[plain] at (-0.75,0) {$A_{G^*}$};
        \node[plain] at (3,0) {$G^*$};
        \draw (-0,-0.5) node {}--(0.5,-0.5) node {};
        \draw (0.4,0.5) node {}--++(0.4,-0.2) node {};
        \draw (-0.6,0.5) node {}--++(0,0.5) node {};
        \foreach \i/\j in {1/-1.1,2/-0.5,3/0,4/0.5,5/1.1} {
          \node (a\i) at (2,\j) {};
        }
        \begin{scope}[xshift=0.75cm]
          \foreach \i/\j in {1/-45,2/-22.5,3/0,4/22.5,5/45} {
            \draw (\j:0.5) node {}-- (a\i);
          }
        \end{scope}
      \end{scope}
    \end{tikzpicture}
    \caption{
      Left: The sets $A_{G^*}$ and $B_{G^*}$ of high-degree
    vertices capture the edges of the core.
      Right: Three different types of triangles containing an edge $e$ in
      the core $G^*$.}\label{fig:triacore}
  \end{figure}
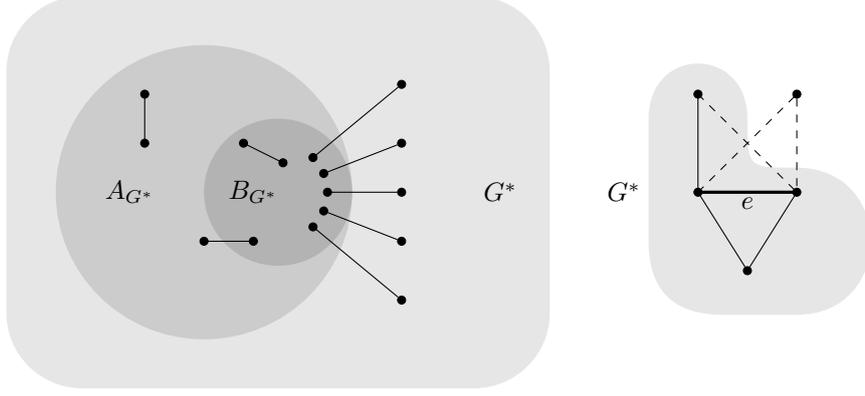
  We bound the number of cores with $m$ edges from above.
  This number is zero whenever $m> Cn^2p^2\log(1/p)$, by~\ref{item:triacore-size}.
  We may thus assume that $m\leq Cn^2p^2\log(1/p)$.
  Given a core $\Gcore$, we denote by $A_\Gcore$ the set of vertices of
  $\Gcore$ with degree at least $\eps np/\big(30C\log(1/p)\big)$ and by
  $B_\Gcore\subseteq A_\Gcore$ the set of vertices of $\Gcore$ with degree
  at least $\eps n/\big(30C\log(1/p)\big)$, where the degree is taken in $G^*$. Since $G^*$ has $m$ edges,
  \[
    |A_{G^*}|\leq a := \frac{60Cm\log(1/p)}{\eps np}
    \quad\text{and}\quad
    |B_{G^*}| \leq b := \frac{60Cm\log(1/p)}{\eps n}.
  \]
  The key observation, which we will
  now verify, is that every edge of $\Gcore$ is either contained in $A_\Gcore$
  or has an endpoint in $B_\Gcore$, see Figure~\ref{fig:triacore} for an
  illustration. To see this, consider some edge $e\in E(G^*)$.
  For every nonempty graph $F \subseteq K_3$, let $N(F,G^*; e)$ denote the
  number of copies
  of $F$ in $G^*$ that contain $e$.
  By considering how the $n-2$ triangles in $K_n$
  that contain $e$ intersect $G^*$ (see Figure~\ref{fig:triacore}),
  one can see that
  \[
    \e_{G^*}[X] -
    \e_{G^*\setminus e}[X] \leq \big(N(K_3,G^*;e)  + N(K_{1,2},G^*;e) \cdot p + np^2 \big) \cdot (1-p).
  \]
  Using
  $\e_{G^*}[X] - \e_{G^* \setminus e}[X]\geq \eps \e[X]/(Cn^2p^2\log(1/p))$ and
  $\e[X] \geq (1-o(1))n^3p^3/6$, we thus get
  \[
    \frac{\eps np}{7C\log(1/p)} \leq N(K_3,G^*; e) + N(K_{1,2},G^*; e) \cdot p + np^2.
  \]
  Since $p\ll 1$ implies that $np^2\ll np/\log(1/p)$, we find that either
  \begin{equation}
    \label{eq:tria-core-degrees}
    N(K_3,G^*; e)\geq \frac{\eps np}{15C\log(1/p)}
    \quad\text{or}\quad
    N(K_{1,2},G^*;e) \geq \frac{\eps n}{15C\log(1/p)}.
  \end{equation}
  Since $N(K_3,G^*; uv) \le \min\{ \deg_{G^*} u, \deg_{G^*} v\}$ and $N(K_{1,2},G^*;  uv) \le \deg_{G^*} u + \deg_{G^*} v$, the first inequality in~\eqref{eq:tria-core-degrees} implies that $e$ is contained in $A_{G^*}$ whereas the second inequality implies that $e$ has an endpoint in $B_{G^*}$, as claimed.

  Our key observation implies that for fixed sets $B \subseteq A \subseteq \br{n}$ with $|A| = a$ and $|B| = b$, there are at most $\binom{a^2/2 + bn}{m}$ cores $\Gcore$ with $m$ edges that satisfy $A_{\Gcore} \subseteq A$ and $B_{\Gcore} \subseteq B$. We can thus (generously) upper bound the number of cores with~$m$ edges by
  \[
    \binom{n}{a}\binom{n}{b}\binom{a^2/2 + bn}{m}.
  \]
  Recalling the inequality $\binom{x}{y} \le (ex/y)^y$, valid for all nonnegative integers $x$ and $y$, we may conclude that the number of cores with $m$ edges is at most
  \[
    n^{\frac{120Cm\log(1/p)}{\eps np}} \cdot \left(
    \frac{e(60C)^2m\big(\log(1/p)\big)^2}{2\eps^2 n^2p^2} +\frac{e60C\log(1/p)}{\eps} \right)^{m}.
  \]
  Since $p\gg n^{-1}\log n$, the first factor is at most $e^{o(m\log(1/p))}$.
  Using $m \leq Cn^2p^2\log(1/p)$, the second factor is at most
  $e^{O(m\log\log(1/p))} = e^{o(m\log(1/p))}$. This shows that the number of
  cores with $m$ edges is indeed at most $(1/p)^{\eps m}$, as claimed.
\end{proof}

\section{The main technical result: `entropic stability implies localisation'}
\label{sec:method}

The goal of this section is to state a general result that allows one, in many cases of interest, to reduce the
problem of determining the precise asymptotics of the logarithmic upper tail probability of a polynomial
(with nonnegative coefficients) of independent Bernoulli random variables to a counting problem.
Since the main technical lemmas also apply to non-product measures on the hypercube, we phrase the
basic definitions in this broader context.

We denote by $Y$ a random variable taking values
in the discrete $N$-dimensional cube $\{0,1\}^N$ and by $X=X(Y)$ a real-valued, increasing
function of $Y$ with positive expectation. Given a~subset $I\subseteq \br N$, we write 
$Y_I=\prod_{i\in I}Y_i$ for the indicator random variable of the event
$\big\{Y_i=1 \text{ for all }i\in I\big\}$. Using the shorthand notation $\e_I[X] = \e[X\mid
Y_I
=1]$,\footnote{Strictly speaking, $\Ex_I[X]$ is well defined only if $\Pr(Y_I=1)>0$.
However, the value of
$\e_I[X]$ for sets $I$ with $\Pr(Y_I=1)=0$
does not affect any of our statements.} we define
a function $\Phi_X\colon \RR \to \RR_{\geq 0}\cup \{\infty\}$ by\footnote{We use the standard convention that $\min \emptyset = \infty$.}
\begin{equation}\label{eq:phi} \Phi_X(\delta) = \min\big\{-\log \Pr(Y_I=1):
I\subseteq \br N \text{ and } \e_I[X]\geq (1+\delta)\e[X]\big\}. \end{equation}
It is easy to see that $\Phi_X$ is a nondecreasing function satisfying
$\Phi_X(\delta) > 0$ for all $\delta>0$. We say that a function $X\colon \{0,1\}^N \to \RR_{\geq 0}$
is a polynomial with nonnegative coefficients and
degree at most $d$ if it admits a representation $X = \sum_{I\subseteq
\br N} \alpha_IY_I$, where each coefficient $\alpha_I$ is nonnegative and
$\alpha_I=0$ whenever $|I| > d$.

Let $\mathcal{I}$ be a collection of subsets $I \subset \br{N}$. Given $\eps>0$ and $p >0$, we say that $\mathcal{I}$ is an {\em entropically stable} family (with respect to $\eps$ and $p$) if, for every integer $m$,
\[
|\{ I \in \mathcal{I}: |I| =m \} | \leq (1/p)^{\varepsilon m /2}.
\]
For the sake of brevity, we will suppress the dependence of this property on $\eps$ and $p$.

The following statement is the main technical result of our work.

\begin{theorem}
  \label{thm:packaged}
  For every positive integer $d$ and all positive real numbers
  $\eps$ and $\delta$ with $\eps<1/2$, there is a positive
  $K=K(d,\eps,\delta)$ such that the following holds. Let $Y$ be a sequence
  of $N$ independent $\Ber(p)$ random variables
  for some $p \in (0, 1-\eps]$ and let
  $X=X(Y)$ be a nonzero polynomial with nonnegative coefficients and
  degree at most $d$ such that
  $\Phi_X(\delta-\eps)\geq K\log(1/p)$. Denote by $\core$ the collection of all
  subsets $I\subseteq \br N$ satisfying
  \begin{enumerate}[label=(C\arabic*)]
    \item\label{item:thmcore-bias}
      $\e_{I}[X] \geq (1+\delta-\eps)\e[X]$,
    \item\label{item:thmcore-size}
    $|I|\leq K \cdot \Phi_X(\delta+\eps)$, and
    \item\label{item:thmcore-mindeg}
      $\min_{i\in I}\left(\e_{I}[X]-\e_{I\setminus \{i\}}[X]\right)
      \geq \e[X]/\big(K \cdot \Phi_X(\delta+\eps)\big)$,
  \end{enumerate}
  and assume $\core$ is an entropically stable family.
  Then
  \begin{equation}\label{eq:thm-rate}
    (1-\eps)\Phi_X(\delta-\eps)\leq -\log\Pr\big(X\geq (1+\delta)\e[X]\big) \leq
    (1+\eps)\Phi_X(\delta+\eps)
  \end{equation}
  and, writing $\mincore$ for the collection of those $I\in \core$ with
  $-\log\Pr(Y_I=1)\leq (1+\eps)\Phi_X(\delta+\eps)$,
  \begin{equation}\label{eq:thm-structure}
    \Pr\big( X\geq (1+\delta)\e[X] \text{ and $Y_I = 0$ for all $I\in \mincore$} \big)
    \leq \eps \Pr\big(X\geq (1+\delta)\e[X]\big).
  \end{equation}
\end{theorem}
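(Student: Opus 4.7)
The overall strategy is a direct generalisation of the triangle argument of Section~\ref{sec:triangles}. I split it into the upper bound on $-\log\Pr(X\ge(1+\delta)\e[X])$ and the (harder) lower bound together with the structural statement~\eqref{eq:thm-structure}.

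\emph{Upper bound in~\eqref{eq:thm-rate}.} Pick $I^*$ attaining the minimum in $\Phi_X(\delta+\eps)$, so that $\e_{I^*}[X]\ge(1+\delta+\eps)\e[X]$. Since $X$ is a polynomial of degree at most $d$ with nonnegative coefficients, $X\le X(1,\dots,1)\le p^{-d}\e[X]$ pointwise. Splitting $\e_{I^*}[X]$ according to whether the event $\{X\ge(1+\delta)\e[X]\}$ occurs gives $\Pr_{I^*}(X\ge(1+\delta)\e[X])\ge\eps p^d$. Independence of the $Y_i$ then yields $\Pr(X\ge(1+\delta)\e[X])\ge\eps p^d e^{-\Phi_X(\delta+\eps)}$, and with $K$ sufficiently large the additive term $d\log(1/p)+\log(1/\eps)$ is absorbed into the factor $(1+\eps)$ on the right-hand side of~\eqref{eq:thm-rate}, using the hypothesis $\Phi_X(\delta+\eps)\ge\Phi_X(\delta-\eps)\ge K\log(1/p)$.

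\emph{Setup for the lower bound.} Fix constants $L,\ell$ depending on $d,\eps,\delta$ (to be chosen) and take $K$ sufficiently large. Call $I\subseteq\br N$ a \emph{seed} if $\e_I[X]\ge(1+\delta-\eps/2)\e[X]$ and $|I|\le L\Phi_X(\delta+\eps)$, and recall that $I$ is a \emph{core} if it satisfies~\ref{item:thmcore-bias}--\ref{item:thmcore-mindeg}. I will establish three claims, mirroring Claims~\ref{cl:triaseed}--\ref{cl:triacorecount}:
\begin{enumerate}[label=(\alph*),leftmargin=*]
\item $\Pr(X\ge(1+\delta)\e[X])\le(1+\eps/2)\,\Pr(\text{some seed is present})$;
\item every seed contains a core;
\item there are at most $(1/p)^{\eps m/2}$ cores of size $m$ (this is the hypothesis).
\end{enumerate}
Claim~(b) is an iterative deletion argument: as long as some $i\in I$ violates~\ref{item:thmcore-mindeg}, remove it; since the total loss in $\e_I[X]$ is bounded by $|I|\cdot\e[X]/(K\Phi_X(\delta+\eps))\le\e[X]/K\le\eps\e[X]/2$ (for $K\ge 2L/\eps$), the resulting set still satisfies~\ref{item:thmcore-bias}. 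Combining the claims, summing over cores, and using independence,
\[
\Pr(\text{some core appears})\le\sum_{m\ge m_{\min}}(1/p)^{\eps m/2}p^m=\sum_{m\ge m_{\min}}p^{(1-\eps/2)m},
\]
where by~\ref{item:thmcore-bias} the smallest core size satisfies $m_{\min}\log(1/p)\ge\Phi_X(\delta-\eps)$. Since $\Phi_X(\delta-\eps)\ge K\log(1/p)$ with $K$ large, this geometric sum is at most $e^{-(1-\eps)\Phi_X(\delta-\eps)}$, which together with (a) and~(b) gives the left half of~\eqref{eq:thm-rate}. The same sum restricted to cores $I$ with $-\log\Pr(Y_I=1)>(1+\eps)\Phi_X(\delta+\eps)$ is at most $e^{-(1-\eps/2)(1+\eps)\Phi_X(\delta+\eps)}\le e^{-(1+\eps/4)\Phi_X(\delta+\eps)}$; by the already-proved upper bound this is at most $\eps\,\Pr(X\ge(1+\delta)\e[X])$, giving~\eqref{eq:thm-structure}.

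\emph{Main obstacle: the seeding claim~(a).} This is the high-moment method of Janson--Oleszkiewicz--Ruci\'nski~\cite{janson2004upper}, adapted to general polynomials. Let $Z$ be the indicator that no seed is present and, for $J\subseteq\br N$, let $Z_J$ be the indicator that no seed $I\subseteq J$ with $Y_I=1$ is present, so $J\subseteq J'\Rightarrow Z_J\ge Z_{J'}$ and $Z=Z_{\br N}$. Writing $X=\sum_J\alpha_J Y_J$ with $\alpha_J\ge 0$ and $\alpha_J=0$ for $|J|>d$, expand
\[
\e[X^\ell Z]=\sum_{J_1,\dots,J_\ell}\alpha_{J_1}\cdots\alpha_{J_\ell}\,\e\bigl[Y_{J_1\cup\cdots\cup J_\ell}Z\bigr]\le\sum_{J_1,\dots,J_\ell}\alpha_{J_1}\cdots\alpha_{J_\ell}\,\e\bigl[Y_{J_1\cup\cdots\cup J_\ell}Z_{J_1\cup\cdots\cup J_\ell}\bigr],
\]
and peel off the last factor. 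The crucial point is that $|J_1\cup\cdots\cup J_{\ell-1}|\le d(\ell-1)$, so provided $L$ is chosen with $L\Phi_X(\delta+\eps)\ge d\ell$ this union respects the seed size cap; hence the surviving event forces the bias condition to fail, yielding $\sum_{J_\ell}\alpha_{J_\ell}\e[Y_{J_\ell}\mid\cdot]\le(1+\delta-\eps/2)\e[X]$. Iterating gives $\e[X^\ell Z]\le(1+\delta-\eps/2)^\ell\e[X]^\ell$, and Markov produces $\Pr(XZ\ge(1+\delta)\e[X])\le\bigl((1+\delta-\eps/2)/(1+\delta)\bigr)^\ell$. Taking $\ell$ a large constant multiple of $\Phi_X(\delta+\eps)$ makes the right-hand side at most $(\eps/2)e^{-\Phi_X(\delta+\eps)}\le(\eps/2)\Pr(\text{some seed appears})$, since the minimiser of $\Phi_X(\delta+\eps)$ is itself a seed. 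The main delicacy is the coupled choice of the three constants $K,L,\ell$: they must satisfy $L\Phi_X(\delta+\eps)\ge d\ell$, $\ell$ a sufficiently large multiple of $\Phi_X(\delta+\eps)$, and $K\ge 2L/\eps$, which is consistent because all bounds are of the correct order in $\Phi_X(\delta+\eps)$.
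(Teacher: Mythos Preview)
Your proposal is correct and follows essentially the same approach as the paper: the paper packages the upper bound as Lemma~\ref{lem:lower}, the high-moment seeding argument as Lemma~\ref{lem:stability}, and the iterative deletion as Lemma~\ref{lem:core}, then assembles them exactly as you do (with $\ell=\lceil \eps(3d)^{-1}K\Phi_X(\delta+\eps)\rceil$ playing the role of your coupled $L,\ell$). The only slip is in your claim~(b), where the bound should read $|I|\cdot\e[X]/(K\Phi_X(\delta+\eps))\le L\e[X]/K$ rather than $\e[X]/K$, but your stated constraint $K\ge 2L/\eps$ shows you had the right inequality in mind.
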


\begin{remark}
  Observe that~\eqref{eq:thm-rate} gives tight bounds on the logarithmic upper tail probability of $X$, provided that
  $\Phi_X(\delta) = \big(I(\delta)+o(1)\big) f(N,p)$ for a continuous, positive function $I$ and some function $f$.
  Equation \eqref{eq:thm-structure} states that the upper tail  event is (almost) contained in the event that
  $Y_I=1$ for some $I \in \mincore$; note that each such $I$ is a near-minimiser of $\Phi_X(\delta-\eps)$. In some cases, it is possible to
  classify these near-minimisers and thereby obtain a rough structural characterisation of
  the upper tail event.
\end{remark}

\begin{remark}
  The assumption $\Phi_X(\delta-\eps) \geq K\log (1/p)$ means
  that conditioning on $Y_I=1$ for any constant-size subset $I\subseteq \br N$
  cannot increase the expected value of $X$ by more than $(\delta-\eps) \e[X]$;
  it is very easy to verify this for the applications we have in mind.
  The more onerous task is verifying that $\core$ is an entropically stable family. In fact, a large part of this paper
  is dedicated to counting {\em cores} (as we call the elements of $\core$). Frequently, there are very few minimisers of $\Phi_X(\delta)$,
  for every $\delta > 0$. Entropic stability quantifies the notion that there are few near-minimisers as well.
  
\end{remark}

\begin{remark}
  In the following sections, we will compute the logarithmic upper tail probabilities in various settings
  by estimating the function $\Phi_X$ and verifying that the random variable $X$ in question satisfies the assumptions
  of Theorem~\ref{thm:packaged}. As will be seen in the proof of Theorem~\ref{thm:packaged}, entropic stability implies that
  \begin{equation}\label{eq:NonUnionBound}
  -\log \mathbb{P}\left(Y_{I^*} = 1 \text{ for some } I^* \in \core\right) \geq  (1 - \eps/2) \cdot\Phi_X(\delta -\eps).
  \end{equation} 
  However, there are many natural contexts  where the entropic stability assumption fails despite the fact that \eqref{eq:NonUnionBound} remains true. For example, when $H = C_4$, then every copy of the complete bipartite graph $K_{2,cn^2p^2}$ in $K_n$ is a core, provided that $c$ is large enough. There are $\binom{n}{cn^2p^2}$ such copies in $K_n$ and $\binom{n}{cn^2p^2}$ is larger than
    $(1/p)^{\Omega(n^2p^2)}$ when $p \ll n^{-1/2 - \xi}$ for some small $\xi >0$. In order to study such scenarios, one may search out a weaker condition which still implies \eqref{eq:NonUnionBound}, and employ Theorem~\ref{thm:packaged} {\em mutatis mutandis}. One such modification is to allow the number of cores with $m$ edges to be as large as $(1/p)^{m - m_{\min} +o(m_{\min})}$; such a generalisation was introduced in the work of Basak--Basu~\cite{basak2019upper}.
\end{remark}

\begin{remark}
Let $\mathcal{M}(\{0,1\}^N)$ be the set of measures on the $N$-dimensional hypercube. For any random variable $X = X(Y)$ defined on the $p$-biased hypercube, it is possible to give an abstract description of the probability of the upper tail event via the Gibbs variational principle, which states that 
\[
-\log \Pr\left(X \geq (1 + \delta) \Ex[X] \right) \geq \inf_{\nu \in \mathcal{M}_\delta^X} D_{\text{KL}}( \nu \mathrel{\Vert} \mu_p),
\]
where $\mu_p$ is the product of $N$ Bernoulli measures of mean $p$, $D_{\text{KL}}(\cdot \mathrel{\Vert} \cdot)$ is the relative entropy
\[
  D_{\text{KL}}(\nu \mathrel{\Vert} \mu) = \sum_{y \in \{0,1\}^N} \nu(y) \log \left(\frac{\nu(y)}{\mu(y)}\right),
\]
and 
\[
\mathcal{M}_\delta^X := \Big\{ \nu \in \mathcal{M}(\{0,1\}^N) : \sum_{y \in \{0,1\}^N} X(y) \nu(y) \geq (1 + \delta) \Ex[X]\Big\}.
\] 
The {\em na\"ive mean field approximation} holds for the upper tail of $X$ if one can replace the infimum over all measures in $\mathcal{M}_\delta^X$ by an infimum over all {\em product} measures in $\mathcal{M}_\delta^X$, while incurring only lower order errors.

The seminal work of Chatterjee--Dembo~\cite{Cha17}, further developed by Eldan~\cite{eldan2016gaussian} and Augeri~\cite{augeri2018nonlinear}, provides very general sufficient conditions that imply the na\"ive mean field approximation for a general function $f$ of Bernoulli random variables, stated in terms of the `smoothness' of $f$ and the `complexity' of its gradient. Although the various works consider different notions of low complexity, all of them seem to imply the heuristic statement that the set of all directions of the gradient of $f$ is an extremely sparse subset of the $(N-1)$-dimensional sphere. An alternate approach to the na\"ive mean field approximation is used by Cook--Dembo~\cite{cook2018large}, which specializes to the case of subgraph counts in Erd\H{o}s--R\'enyi random graphs. Instead of appealing to gradient complexity bounds, they construct an efficient covering of (most of) the hypercube by convex sets on which the subgraph counts are nearly constant, in the spirit of the regularity-based approach of Chatterjee--Varadhan~\cite{chatterjee2011large}.

Although its formulation is rather different, Theorem~\ref{thm:packaged} can also be considered in the context of the na\"ive mean field approximation, coverings of the hypercube, and low-complexity gradients. Given a subset $I \subseteq \br N$, we define a product measure $\nu_I$ by
\[
\nu_I(Y_i=1) = \begin{cases}
  1 & \text{if $i\in I$,}\\
  p & \text{otherwise}.
\end{cases}
\]
A straightforward computation shows that $D_{\text{KL}}(\nu_I \mathrel{\Vert} \mu_p)
=  |I|\log(1/p)$, and so
\[ \Phi_X(\delta) = \inf \big\{ D_{\text{KL}}(\nu_I \mathrel{\Vert} \mu_p) : \nu_I\in \mathcal M_\delta^X \big\}. \]
In these terms, Theorem~\ref{thm:packaged} shows that, if $\core$ an entropically stable family, then a particularly simple form of the na\"ive mean field approximation holds: one must only consider product measures that assign edges probability $p$ or $1$. Our adaptation of the high moment argument of Janson, Oleszkiewicz, and Ruciński, used in Lemma~\ref{lem:stability}, constructs a covering of (most of) the upper tail event by a family $\mathcal{I}$ of small subsets $I$ with $\nu_I \in \mathcal{M}_\delta^X$. The extraction of cores from these subsets corresponds to identifying the directions in which the possible partial derivatives are large; in this perspective, entropic stability is analogous to the sparseness property that is encoded by the low-complexity gradient condition.

The final stipulation of Theorem~\ref{thm:genpackaged} gives a structural description of the measure conditioned on the upper tail event. More specifically, a sample from the conditional measure will contain an element of $\mincore$ with high probability. Working from the na\"ive mean-field approximation, one may consider the more subtle question of the exact relationship between the conditional measure and the family of product measures in $\mathcal{M}_\delta^X$ that attain the minimal relative entropy from $\mu_p$. The work of Eldan--Gross~\cite{eldan2018decomposition} shows that, under certain conditions, the conditional measure is close to a mixture of such product measures, in the sense of optimal transport; Austin~\cite{austin2018structure} proves similar results for a broader class of measures (not necessarily on the hypercube). 

\end{remark}

The upper bound on $-\log \Pr\big(X\geq(1+\delta)\e[X]\big)$ stated in~\eqref{eq:thm-rate} will follow easily
from the following simple lemma.

\begin{lemma}
  \label{lem:lower}
  Let $Y$ be a random variable taking values in $\{0,1\}^N$ and let $X =
  X(Y)$ be a real-valued function of $Y$. Suppose that $\Ex[X] > 0$ and that
  $X\leq M$ always. Then for all positive $\eps$ and $\delta$,
  \[
    - \log \Pr\big(X\geq (1+\delta)\e[X]\big) \leq \Phi_X(\delta+\eps) + \log \left(\frac{M}{\eps \e[X]}\right).
  \]
\end{lemma}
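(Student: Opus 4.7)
The plan is to exploit a near-minimiser $I$ of the optimisation problem defining $\Phi_X(\delta+\eps)$ as a witness that makes the upper tail event occur with reasonable probability. First, if $\Phi_X(\delta+\eps)=\infty$ there is nothing to prove, so we may assume the minimum in~\eqref{eq:phi} with parameter $\delta+\eps$ is attained (which it is, because $\br N$ is finite) by some $I\subseteq \br N$ with $\e_I[X]\geq (1+\delta+\eps)\e[X]$ and $-\log\Pr(Y_I=1) = \Phi_X(\delta+\eps)$.

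Next I would apply a reverse Markov-type argument conditional on $Y_I=1$. Since $0\leq X\leq M$ always, splitting the conditional expectation according to whether $X\geq (1+\delta)\e[X]$ or not gives
\[
  (1+\delta+\eps)\e[X] \leq \e_I[X] \leq M\cdot \Pr\bigl(X\geq (1+\delta)\e[X] \mid Y_I=1\bigr) + (1+\delta)\e[X],
\]
which rearranges to
\[
  \Pr\bigl(X\geq (1+\delta)\e[X] \mid Y_I=1\bigr)\geq \frac{\eps\,\e[X]}{M}.
\]

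Finally, bounding the unconditional probability from below by planting $I$, we obtain
\[
  \Pr\bigl(X\geq (1+\delta)\e[X]\bigr)\geq \Pr(Y_I=1)\cdot \Pr\bigl(X\geq (1+\delta)\e[X]\mid Y_I=1\bigr) \geq e^{-\Phi_X(\delta+\eps)}\cdot \frac{\eps\,\e[X]}{M},
\]
and taking $-\log$ of both sides yields the desired inequality. There is no real obstacle here; the argument is essentially a one-line reverse Markov estimate combined with the definition of $\Phi_X$. The only mild subtlety is remembering that the minimum in~\eqref{eq:phi} is a genuine minimum (not just an infimum) because $\br N$ is finite, so the near-minimiser $I$ can be chosen to attain the stated bounds exactly rather than up to an additional slack.
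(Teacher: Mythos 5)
Your proof is correct and matches the paper's argument essentially line for line: pick a set $I$ attaining $\Phi_X(\delta+\eps)$, apply the reverse Markov estimate $\e_I[X] \leq M\cdot\Pr\big(X\geq t\mid Y_I=1\big) + t$ with $t=(1+\delta)\e[X]$, and take negative logarithms. One small remark: you invoke $X\geq 0$, which is not among the lemma's hypotheses, but this is harmless since the splitting of the conditional expectation only requires $t\geq 0$, which follows from $\e[X]>0$.
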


\begin{proof}
  Let $t = (1+\delta)\e[X]$.
  If $\Phi_X(\delta+\eps) = \infty$, then
  the assertion of the lemma is vacuous. 
  Otherwise, there exists a set $I\subseteq \br N$ with $-\log {\Pr(Y_I=1)} =
  \Phi_X(\delta+\eps)$ and $\e_I[X] \geq t+\eps\e[X]$.
  As $\e_I[X] \leq M \cdot
  \Pr(X\geq t\mid Y_I=1) + t$, it follows that \[ \Pr(X\geq t) \geq
  \Pr(Y_I=1)
  \cdot \Pr(X\geq t \mid Y_I = 1) \geq \Pr(Y_I=1) \cdot \frac{\eps \e[X]}{M}. \]
  Taking the negative logarithm of both sides gives the assertion of the lemma.
\end{proof}

The next lemma lies at the heart of the matter.  In very broad
terms, it states that the upper tail event $\big\{X \ge (1+\delta)\Ex[X]\big\}$,
viewed as a subset of the cube $\{0,1\}^N$, may be covered almost completely
by a union of subcubes of small codimension, where, crucially, the average
value of $X$ on each of these subcubes is at least $(1+\delta - \eps)\Ex[X]$.
The proof uses a variant of the moment argument of Janson,
Oleszkiewicz, and Ruciński~\cite{janson2004upper}.

\begin{lemma}
  \label{lem:stability}
  Let $Y$ be a random variable taking values in $\{0,1\}^N$ and let $X = X(Y)$
  be a~nonzero polynomial with nonnegative coefficients and degree at most $d$. Then for
  every positive integer~$\ell$ and all positive real numbers $\eps$ and $\delta$,
  \[
    \Pr\big(X \geq (1+\delta)\e[X] \text{ and } Y_I = 0 \text{ for all $I\in \cI$}\big)
    \leq \left(\frac{1+\delta-\eps}{1+\delta}\right)^\ell,
  \]
  where $\cI = \big\{I\subseteq \br N: |I|\leq d\ell \text{ and } \e_I[X]\geq
  (1+\delta-\eps)\e[X]\big\}$.
\end{lemma}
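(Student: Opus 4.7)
The plan is to adapt the high-moment argument of Claim~\ref{cl:triaseed} from the triangle warm-up to this abstract setting. Writing $Z$ for the indicator of the event $\{Y_I = 0 \text{ for all } I \in \cI\}$ and noting $Z^\ell = Z$ and $XZ \ge 0$, Markov's inequality reduces the problem to establishing the high-moment estimate
\[
  \e[X^\ell Z] \le \big((1+\delta - \eps)\e[X]\big)^\ell,
\]
after which the claimed bound follows by dividing by $((1+\delta)\e[X])^\ell$.

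For this recursive estimate, I would introduce an auxiliary family of indicators: for each $A \subseteq \br N$, let $Z_A$ be the indicator of the event $\{Y_I = 0 \text{ for every } I \in \cI \text{ with } I \subseteq A\}$. Two structural properties drive the argument. First, $Z_A$ depends only on $\{Y_i : i \in A\}$, so conditional on $Y_A = 1$ the value of $Z_A$ is deterministic and equals $1$ precisely when no element of $\cI$ is contained in $A$. Second, $Z \le Z_B \le Z_A$ whenever $A \subseteq B$. Expanding $X = \sum_J \alpha_J Y_J$ (the sum restricted to $|J| \le d$), define
\[
  T_k = \sum_{J_1, \dotsc, J_k} \alpha_{J_1} \dotsm \alpha_{J_k}\,\e\big[Y_{J_1} \dotsm Y_{J_k} Z_{J_1 \cup \dotsb \cup J_k}\big],
\]
so that $\e[X^\ell Z] \le T_\ell$ by the first monotonicity. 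The goal then reduces to proving the recursion $T_k \le (1+\delta - \eps)\e[X] \cdot T_{k-1}$ for each $1 \le k \le \ell$, which iterates down to the trivial bound $T_0 \le 1$.

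To prove the recursion, I would first use the second monotonicity to replace $Z_{J_1 \cup \dotsb \cup J_k}$ by the larger $Z_{J_1 \cup \dotsb \cup J_{k-1}}$, and then recombine $\sum_{J_k} \alpha_{J_k} Y_{J_k}$ into $X$, leaving terms of the form $\e[Y_A Z_A X]$ with $A = J_1 \cup \dotsb \cup J_{k-1}$ of size at most $d(k-1) \le d\ell$. The key observation is that whenever the event $\{Y_A Z_A = 1\}$ has positive probability, no set in $\cI$ can be contained in $A$; in particular $A \notin \cI$, and since $|A| \le d\ell$, the definition of $\cI$ forces $\e_A[X] < (1+\delta - \eps)\e[X]$. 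Using the deterministic description of $Z_A$ on $\{Y_A = 1\}$, this gives $\e[Y_A Z_A X] \le (1+\delta - \eps)\e[X] \cdot \e[Y_A Z_A]$, and summing back over $J_1, \dotsc, J_{k-1}$ produces exactly $(1 + \delta - \eps)\e[X] \cdot T_{k-1}$, closing the induction. The only subtle point, and the reason for working with the local indicators $Z_A$ instead of $Z$ itself, is that $Z$ depends on coordinates outside $A$ and would therefore not become deterministic upon conditioning on $Y_A = 1$; the monotonicity $Z \le Z_A$ is what allows the localisation at the cost of just an inequality. I note that no independence of the coordinates $Y_1, \dotsc, Y_N$ enters anywhere in the argument, consistent with the fact that $Y$ is only assumed to take values in $\{0,1\}^N$.
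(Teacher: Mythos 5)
Your proposal is correct and takes essentially the same route as the paper: the same auxiliary indicators $Z_A$ with the same monotonicity and locality properties, the same Markov-plus-recursion on the high moment, and the same observation that on the event $\{Y_A Z_A = 1\}$ one has $A \notin \cI$ and hence $\e_A[X] < (1+\delta-\eps)\e[X]$. Your explicit introduction of the quantities $T_k$ is a marginal notational cleanup of the paper's implicit induction, and your closing remark that independence of the $Y_i$'s is never used is accurate and consistent with the lemma's hypotheses.
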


\begin{proof}
  Given $S\subseteq \br N$, let
  $Z_S$ be the indicator random variable of the event that $Y_I=0$ for all
  $I\in\cI$ with $I\subseteq S$.
  Note that $I'\subseteq I$ implies $Z_I\leq Z_{I'}$ and
  let $Z = Z_{\br N}$.
  Since $XZ\geq 0$ and $Z^\ell=Z$, Markov's inequality gives
  \begin{equation}\label{eq:stability2}
    \Pr\big(X \geq (1+\delta)\e[X]\text{ and } Z=1\big)
    = \Pr\big(X Z \geq (1+\delta)\e[X]\big) \leq 
    \frac{\e[X^\ell Z]}{\big((1+\delta)\e[X]\big)^\ell}.
  \end{equation}
  Write $X = \sum_I \alpha_IY_I$,
  where the sum ranges over all subsets $I\subseteq \br N$,
  each coefficient $\alpha_I$ is nonnegative, and  
  $\alpha_I = 0$ unless $|I|\leq
  d$. Then for every $k\in \br{\ell}$,
  \[ 
  \begin{split}
    \e[X^k Z] &= \sum_{I_1,\dotsc,I_k}
  \alpha_{I_1}\dotsb\alpha_{I_k}\e[Y_{I_1}\dotsb Y_{I_k}
  \cdot Z]\\
    &\leq \sum_{I_1,\dotsc,I_k}
  \alpha_{I_1}\dotsb\alpha_{I_k} 
  \e[Y_{I_1}\dotsb Y_{I_k}\cdot Z_{I_1\cup \dotsb \cup I_k}]\\
    &
    \leq \kern-5pt\sum_{I_1,\dotsc,I_{k-1}}
    \kern-6pt
    \alpha_{I_1}\dotsb\alpha_{I_{k-1}} 
    \e[Y_{I_1}\dotsb Y_{I_{k-1}}\cdot Z_{I_1\cup \dotsb \cup I_{k-1}}]
    \cdot \e[X
    \mid Y_{I_1}\dotsb Y_{I_{k-1}}\cdot Z_{I_1\cup \dotsb \cup I_{k-1}}
    =1],
  \end{split}\]
  where we may let the third sum range only over sequences $I_1,\dotsc,I_{k-1}$
  for which the event $\big\{Y_{I_1}\dotsb Y_{I_{k-1}}\cdot Z_{I_1\cup \dotsb \cup I_{k-1}}=1\big\}$
  has a positive probability of occurring. Note that for any such sequence,
  $Y_{I_1}\dotsb Y_{I_{k-1}}\cdot Z_{I_1\cup \dotsb \cup I_{k-1}}
  = Y_{I_1}\dotsb Y_{I_{k-1}}$
  and
  $I_1\cup \dotsb \cup I_{k-1}\notin \cI$.
  Since $|I_1\cup \dotsb \cup I_{k-1}| \leq d(k-1) \leq d\ell$, we have
  \[
    \e[X
    \mid Y_{I_1}\dotsb Y_{I_{k-1}}=1]
    = \e_{I_1\cup \dotsb \cup I_k}[X]
    < (1+\delta-\eps)\e[X],
  \]
  as otherwise $I_1 \cup \dotsb \cup I_{k-1}$ would belong to $\cI$.
  It follows that
  \begin{multline*}
    \sum_{I_1,\dotsc,I_k}
    \alpha_{I_1}\dotsb\alpha_{I_k} 
    \e[Y_{I_1}\dotsb Y_{I_k}\cdot Z_{I_1\cup \dotsb \cup I_k}]\\
    < (1+\delta-\eps)\e[X]\cdot 
    \sum_{I_1,\dotsc,I_{k-1}}
    \alpha_{I_1}\dotsb\alpha_{I_{k-1}} 
    \e[Y_{I_1}\dotsb Y_{I_{k-1}}\cdot Z_{I_1\cup \dotsb \cup I_{k-1}}].
  \end{multline*}
  By induction, we see that $\e[X^\ell Z] 
  < \big((1+\delta-\eps)\e[X]\big)^\ell$. Substituting this inequality into~\eqref{eq:stability2} completes
  the proof.
\end{proof}

The following easy lemma will be used to relate the family $\cI$ from the statement of Lemma~\ref{lem:stability}
to the family $\core$ of cores.

\begin{lemma}
  \label{lem:core}
  Let $Y$ be a random variable taking values in $\{0,1\}^N$ and let $X = X(Y)$
  be a real-valued function of $Y$. Then for every $I \subseteq \br N$ and
  every nonnegative real number $s$, there exists some $I^*\subseteq I$ such that
  \begin{enumerate}[label={(\roman*)}]
  \item
    \label{item:core-bias}
      $\e_{I^*}[X] \geq \e_I[X]-s$ and
  \item
    \label{item:core-mindeg}
      $\min_{i\in I^*}\big( \e_{I^*}[X]-\e_{I^*\setminus \{i\}}[X]\big)\geq
    s/|I|$.
  \end{enumerate}
\end{lemma}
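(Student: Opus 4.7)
The plan is to construct $I^*$ greedily, by iterative element deletion, in direct analogy with the construction used in the proof of Claim~\ref{cl:triacore}. If $I = \emptyset$ there is nothing to do, so assume $|I| \geq 1$ and set $I_0 = I$. As long as there exists some $i \in I_j$ with $\e_{I_j}[X] - \e_{I_j \setminus \{i\}}[X] < s/|I|$, I would set $I_{j+1} = I_j \setminus \{i\}$; otherwise I stop and declare $I^* := I_j$. Since $|I_{j+1}| = |I_j| - 1$, the procedure terminates after at most $|I|$ steps. Property~\ref{item:core-mindeg} is then immediate from the stopping condition: for every remaining $i \in I^*$ the strict inequality fails, i.e., $\e_{I^*}[X] - \e_{I^* \setminus \{i\}}[X] \geq s/|I|$.

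For property~\ref{item:core-bias}, a one-line telescoping estimate suffices. Writing $a_j = \e_{I_j}[X]$, each deletion step was chosen to satisfy $a_j - a_{j+1} < s/|I|$, so if the procedure runs for $j^* \leq |I|$ steps,
\[
  \e_I[X] - \e_{I^*}[X] \;=\; \sum_{j=0}^{j^*-1}\bigl(a_j - a_{j+1}\bigr) \;<\; j^* \cdot \frac{s}{|I|} \;\leq\; s,
\]
which rearranges to $\e_{I^*}[X] > \e_I[X] - s$, as required.

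I do not anticipate any real obstacle. The statement is a purely greedy-peeling lemma that uses no structure of $X$ beyond the values of the conditional expectations $\e_J[X]$ for $J \subseteq I$; in particular, it works without any monotonicity assumption on $X$, since the telescoping identity absorbs any steps where $a_j < a_{j+1}$ without issue. The corner cases are benign: if $I = \emptyset$ then $I^* = I$ works trivially, and if the procedure terminates with $I^* = \emptyset$ then property~\ref{item:core-mindeg} is vacuously satisfied while~\ref{item:core-bias} still follows from the telescoping bound above.
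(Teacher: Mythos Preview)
Your proposal is correct and follows essentially the same greedy peeling argument as the paper's own proof: iteratively delete any element whose removal decreases the conditional expectation by less than $s/|I|$, then telescope to bound the total loss by $s$. The only cosmetic difference is that the paper phrases the telescoping bound as $\le \max\{0,s\}$ to cover the case of zero deletions in one line, whereas you handle that edge case separately.
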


\begin{proof}
  Define a sequence $I = I_0\supseteq I_1\supseteq \dotsb \supseteq I_r = I^*$
  by repeatedly setting $I_{k+1}= I_k\setminus \{i\}$ for some $i\in I_k$ such that
   $\e_{I_k}[X] - \e_{I_k\setminus \{i\}}[X] < s/|I|$, as long as such an $i$ exists. By construction,
  the set $I^*$ satisfies~\ref{item:core-mindeg}. 
  Finally, since $r\leq |I|$, we have
  \[ \e_I[X] - \e_{I^*}[X] = \sum_{k= 0}^{r-1}\big(\e_{I_k}[X] - \e_{I_{k+1}}[X]\big) \leq
  s, \]
which is~\ref{item:core-bias}.
\end{proof}

\begin{proof}[Proof of Theorem~\ref{thm:packaged}]
  Let $t = (1+\delta)\e[X]$.
  We first prove the upper bound in \eqref{eq:thm-rate}. Let $\one$ denote the
  $N$-dimensional all-ones vector. Since $X$ is an
  increasing function of $Y$, we have $X \le X(\one)$ always.
  In particular, Lemma~\ref{lem:lower} implies that
  \[ -\log\Pr(X\geq t) \leq \Phi_X(\delta+\eps) + \log\left(\frac{X(\one)}{\eps
  \e[X]}\right). \]
  As $X$ has degree at most $d$ and nonnegative coefficients, we have $\e[X] \geq X(\one) \cdot p^d$ and thus
  \begin{equation}
    \label{eq:thm-lower}
    -\log\Pr(X\geq t) \leq \Phi_X(\delta+\eps) + \log\big(1/(\eps p^d)\big)
    \leq(1+\eps/8)\cdot\Phi_X(\delta+\eps), 
  \end{equation}
  where the second inequality holds provided that $K$ is sufficiently large, as
  we have assumed that $\Phi_X(\delta+\eps) \geq K\log(1/p)$ and $p\leq
  1-\eps$.

  For the rest of the proof let $\ell = \lceil \eps(3d)^{-1}K \cdot \Phi_X(\delta+\eps) \rceil$ and define
  \[
    \cI = \big\{I\subseteq \br N : |I|\leq d\ell \text{ and } \e_I[X] \geq (1+\delta-\eps/2)\e[X]\big\}.
  \]
  It follows from Lemma~\ref{lem:stability} (invoked with $\eps$ replaced by $\eps/2$) that
  \[
    \Pr(X\geq t\text{ and }Y_I=0\text{ for all $I\in \cI$}) \leq \left(1-\frac{\eps/2}{1+\delta}\right)^{\ell}.
  \]
  Since we have already shown that $\Pr(X\geq t)
  \geq \exp\big(-(1+\eps)\Phi_X(\delta+\eps)\big)$, see~\eqref{eq:thm-lower}, we find that letting $K$ be sufficiently large ensures
  \[
    \Pr(X\geq t \text{ and }Y_I=0\text{ for all $I\in \cI$}) \leq \big(1-\eps/(2+2\delta)\big)^\ell \leq (\eps/2) \cdot \Pr(X\geq t).
  \]
  Note next that every $I\in \cI$ satisfies $|I|\leq d\ell \leq (\eps
  K/2) \cdot \Phi_X(\delta+\eps)$ and hence, by Lemma~\ref{lem:core} applied with $s=
  \eps\e[X]/2$, there is a subset $I^*\subseteq I$ satisfying the conditions \ref{item:thmcore-bias},
  \ref{item:thmcore-size}, and \ref{item:thmcore-mindeg}. It follows that
  \begin{equation}\label{eq:thm-step2}
    \Pr(X\geq t \text{ and }Y_{I^*}=0\text{ for all $I^*\in \core$}) \leq (\eps/2) \cdot
    \Pr(X\geq t).
  \end{equation}
  Let $\core_m := \{I^*\in \core : |I^*| = m\}$ and recall that we assume
  $|\core_m|\leq (1/p)^{\eps m/2}$ for all $m\in \NN$.

  We now prove the upper bound in \eqref{eq:thm-rate}. It follows from
  \eqref{eq:thm-step2} that
  \[
    \Pr(X\geq t)\leq 
    (1-\eps/2)^{-1} \cdot \Pr\big(Y_{I^*}=1\text{ for some $I^*\in \core$}\big).
  \]
  Moreover, the definitions of $\core$ and $\Phi_X(\delta-\eps)$
  imply that every core $I^*\in \core$ satisfies $|I^*|\log(1/p)
  = -\log\Pr(Y_{I^*}=1) \geq \Phi_X(\delta-\eps)$, see~\ref{item:thmcore-bias}. Hence,
  taking the union bound over all cores and using $|\core_m|\leq (1/p)^{\eps m/2}$, we find that
  \[
    \begin{split}
      \Pr(X\geq t) & \leq (1-\eps/2)^{-1} \sum_{I^* \in \core} p^{|I^*|} \le 
      (1-\eps/2)^{-1} \sum_m |\core_m| \cdot p^m \\
      & \le    (1-\eps/2)^{-1}
      \sum_{m= \frac{\Phi_X(\delta-\eps)}{\log(1/p)}}^{\infty}
      p^{(1-\eps/2)m} = \frac{e^{-(1-\eps/2)\Phi_X(\delta-\eps)}}{(1-\eps/2)(1-p^{1-\eps/2})}.
    \end{split}
  \]
  Taking logarithms and using $p\leq 1-\eps$
  and $\Phi_X(\delta-\eps)\geq K\log(1/p)$, we see that a large enough
  choice of $K$ ensures that $-\log \Pr(X\geq t) \geq
  (1-\eps)\Phi_X(\delta-\eps)$, as required.

  Finally, let us prove \eqref{eq:thm-structure}. Using \eqref{eq:thm-step2},
  we obtain
  \[
    \Pr(X\geq t \text{ and }Y_{I^*}=0\text{ for all $I^*\in \mincore$})
    \leq (\eps/2) \cdot \Pr(X\geq t)+ \Pr(Y_{I^*}=1\text{ for some $I^*\in
      \core\setminus \mincore$}).
  \]
  Noting that every $I^*\in \core \setminus \mincore$ satisfies $|I^*|\log(1/p)= -\Pr(Y_{I^*}=1) > (1+\eps)\Phi_X(\delta+\eps)$,
  we may employ a union bound again to show that
  \[
    \Pr(Y_{I^*}=1\text{ for some $I^*\in \core \setminus \mincore$})\leq \sum_{I^* \in \core \setminus \mincore} p^{|I^*|} \leq \frac{e^{-(1-\eps/2)(1+\eps)\Phi_X(\delta+\eps)}}{1-p^{1-\eps/2}}.
  \]
  In order to complete the proof, it now suffices to show that
  \begin{equation}
    \label{eq:core-0-upper}
    \frac{e^{-(1+\eps)(1-\eps/2)\Phi_X(\delta+\eps)}}{1-p^{1-\eps/2}} \le (\eps/2) \cdot \Pr(X\geq t).
  \end{equation}
  To see that this inequality holds, note first that $(1+\eps)(1-\eps/2)> 1+\eps/4$ as $\eps < 1/2$ and therefore, by
  \eqref{eq:thm-lower},
  \[
    e^{-(1+\eps)(1-\eps/2)\Phi_X(\delta+\eps)} \le \Pr(X \ge t) \cdot e^{-(\eps/8) \Phi_X(\delta+\eps)}.
  \]
  As $p\leq 1-\eps$  and $\Phi_X(\delta+\eps)\geq K\log (1/p)$, we can choose $K$ so large that $e^{-(\eps/8) \Phi_X(\delta+\eps)} / (1-p^{1-\eps/2}) \le \eps/2$,
  proving~\eqref{eq:core-0-upper}.
\end{proof}

\section{Arithmetic progressions in random sets of integers}
\label{sec:aps}

Fix an integer $k \ge 3$ and let $X=X_{N,p}^\kap$ be the number of $k$-term arithmetic progressions ($k$-APs) in
the random set $\br{N}_p$. The goal of this section is to study the upper tail
of $X$ in the regime where Theorem~\ref{thm:packaged} is applicable. In
particular, we will prove Theorem~\ref{thm:kap}, which we restate here for
convenience.

\thmkap*

To prove the theorem, we will use Theorem~\ref{thm:packaged} to
relate $-\log\Pr\big(X \geq (1+\delta)\e[X]\big)$ to the solution of the optimisation problem
\[
  \Phi_X(\delta) =
  \min{\big\{|I|\log(1/p): I\subseteq \br N \text{ and }\e_I[X]\geq (1+\delta)\e[X]\big\}}.
\]
More precisely, we shall prove the following statement, which is the main result of this section.

\begin{proposition}\label{prop:kap-ldp}
  For every integer $k \geq 3$ and all positive real numbers $\eps$ and
  $\delta$, there exists a positive constant $C$ such that the following
  holds. Suppose that $N\in \NN$ and $p\in (0,1)$ satisfy 
  $CN^{-1}\log N \leq p^{k/2}\leq 1/C$. Then $X = X_{N,p}^\kap$ satisfies
  \[ (1-\eps)\Phi_X(\delta-\eps)\leq -\log \Pr\big(X\geq (1+\delta)\e[X]\big) \leq
  (1+\eps)\Phi_X(\delta+\eps).\]
\end{proposition}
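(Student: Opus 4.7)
The plan is to apply Theorem~\ref{thm:packaged} to $X=X_{N,p}^\kap$ with $d=k$, $\eps$ replaced by (a small multiple of) the given $\eps$, and the same $\delta$. The representation of $X$ given in Example~\ref{ex:kap} already shows that $X$ is a nonzero polynomial in $Y_1,\dotsc,Y_N$ with nonnegative coefficients and total degree $k$, so the two remaining things to verify are:
\begin{enumerate}[label=(\roman*)]
\item $\Phi_X(\delta-\eps)\geq K\log(1/p)$ for a suitable constant $K=K(k,\eps,\delta)$;
\item for every $m$, the family $\core$ of cores of size $m$ satisfies $|\core|\leq (1/p)^{\eps m/2}$.
\end{enumerate}
Once these are in place, the inequalities in \eqref{eq:thm-rate} yield Proposition~\ref{prop:kap-ldp} directly.

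The first item (i) is straightforward. Any set $I\subseteq \br{N}$ of size $m$ can contain at most $\binom{m}{2}=O_k(m^2)$ arithmetic progressions of length $k$, so a crude computation using the identity $\e_I[X]-\e[X]=\sum_{\mathrm{AP}\cap I\neq \emptyset}p^{k-|\mathrm{AP}\cap I|}(1-p^{|\mathrm{AP}\cap I|})$ gives $\e_I[X]-\e[X]\leq O_k\!\left(m^2+mNp^{k-1}\right)$. For this to exceed $(\delta-\eps)\e[X]=\Theta(N^2p^k)$ one must have $m=\Omega_{k,\delta,\eps}(Np^{k/2})$. Consequently
\[
  \Phi_X(\delta-\eps)\geq \Omega_{k,\delta,\eps}\!\left(Np^{k/2}\right)\log(1/p)\gg \log(1/p),
\]
since the hypothesis $p^{k/2}\gg N^{-1}\log N$ gives $Np^{k/2}\gg \log N\to\infty$. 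In the opposite direction, planting an arithmetic progression of length $\ceil{\sqrt{(\delta+\eps)\cdot 2(k-1)}\,Np^{k/2}}$ easily verifies $\Phi_X(\delta+\eps)=O_{k,\delta,\eps}\!\left(Np^{k/2}\log(1/p)\right)$; this upper bound is what we actually need in order to translate condition~\ref{item:thmcore-mindeg} into quantitative information about cores.

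The main obstacle is (ii), the entropic stability estimate. Let $I^*$ be a core of size $m$. Writing out condition~\ref{item:thmcore-mindeg} gives, for every $i\in I^*$,
\[
  \sum_{\mathrm{AP}\ni i}p^{-|\mathrm{AP}\cap I^*|}\ \geq\ \frac{\e[X]}{(1-p)\,p^k\,K\cdot \Phi_X(\delta+\eps)}\ =\ \Omega_{k,\delta,\eps}\!\left(\frac{Np^{-k/2}}{\log(1/p)}\right).
\]
Partitioning the $k$-APs through $i$ according to the value $j=|\mathrm{AP}\cap I^*|\in\{1,\dotsc,k\}$, by the pigeonhole principle one of these levels $j=j(i)$ contains $\Omega_{k,\delta,\eps}\!\left(N p^{(j-k)/2}/\log(1/p)\right)$ APs through $i$ that meet $I^*$ in at least $j$ points. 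Aggregating over $i\in I^*$ and exchanging the order of summation, I would conclude that $I^*$ has a small collection of ``heavy'' elements (those with $j(i)$ large) together with a residual piece covered efficiently by ``dense'' structures of prescribed shape. A core is then specified by choosing the heavy set (bounded by a binomial coefficient on $\br N$ with a very small index) and the covering data (which costs $\log N$ per AP seed and $\log m$ per element within it). The range assumption $p^{k/2}\gg N^{-1}\log N$ is tailored precisely so that all binomial coefficients of the form $\binom{N}{m/(Np^{(k-j)/2})}$ contribute at most $(1/p)^{o(m)}$, because $\log N = (1+o(1))\log(1/p)$ in that regime. Summing over the finitely many choices of level profile $(j(i))_{i\in I^*}$ (at most $k^m$ options, also absorbed into the $o(m\log(1/p))$ error) gives the desired bound $(1/p)^{\eps m/2}$.

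The delicate part is arranging the encoding so that the price paid per element of $I^*$ in the low-level regime $j=1$ is really $o(\log(1/p))$ per element, rather than $\log N$ per element; this is the reason the argument requires $p^{k/2}\gg N^{-1}\log N$ (rather than merely $p^{k/2}\gg N^{-1}$) and is ultimately where the method breaks down at the boundary of the Poisson regime treated in Theorem~\ref{thm:kappoisson}. Everything else is bookkeeping: once (i) and (ii) are established, Theorem~\ref{thm:packaged} immediately delivers the conclusion of Proposition~\ref{prop:kap-ldp}.
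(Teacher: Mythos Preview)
Your high-level plan is correct and matches the paper: apply Theorem~\ref{thm:packaged}, verify $\Phi_X(\delta-\eps)\ge K\log(1/p)$, and then establish the entropic stability bound on the number of cores. Your treatment of item~(i) is essentially the paper's argument.

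The gap is in item~(ii). Two points:

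\textbf{The pigeonhole over $j$ is a red herring.} The paper shows (Claim~\ref{claim:kap-core-property}) that for \emph{every} $i$ in a core, the level $j=k$ already accounts for the entire lower bound: $A_k(I^*;i)\ge \Omega\!\big(Np^{k/2}/\log(1/p)\big)$. This is because the contributions of the levels $j<k$ are provably negligible: $a_1(I^*;i)p^{-1}\le kNp^{-1}\ll Np^{-k/2}/\log(1/p)$ since $k\ge 3$, and $\sum_{j=2}^{k-1}a_j(I^*;i)p^{-j}\le k^2|I^*|p^{-(k-1)}=O\big(Np^{1-k/2}\log(1/p)\big)\ll Np^{-k/2}/\log(1/p)$. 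So your concern that the ``low-level regime $j=1$'' is delicate is misplaced; that level never competes.

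\textbf{The encoding is where the real work is, and your sketch does not supply the key idea.} Knowing that every element of $I^*$ lies in many $k$-APs inside $I^*$ does not by itself give an efficient encoding of $I^*$; the binomial coefficients $\binom{N}{m/(Np^{(k-j)/2})}$ you mention do not obviously arise. The paper's mechanism is a \emph{random ordering} argument: fix a uniformly random ordering $(i_1,\dotsc,i_m)$ of $I^*$ and call an index $m'$ \emph{rich} if $i_{m'}$ already lies in $\Omega\!\big((m'/m)^{k-1}\cdot Np^{k/2}/\log(1/p)\big)$ many $k$-APs within $\{i_1,\dotsc,i_{m'-1}\}$. Using Janson's inequality for the hypergeometric distribution (Lemma~\ref{lemma:hyper-Janson}), the paper shows that all but at most $\big(K''\log(1/p)/(Np^{k/2})\big)^{1/(k-1)}m$ indices are rich in at least half of the orderings. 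Rich indices can then be encoded at cost $O(\log\log(1/p))$ each (there are only $O(m(\log(1/p))^2)$ integers rich with respect to a given prefix), and the few non-rich indices cost $\log N$ each; the hypothesis $Np^{k/2}\ge C\log N$ is exactly what makes this last cost $o(m\log(1/p))$. This Janson-based ordering argument is the substantive content of the entropic stability proof, and it is absent from your proposal.
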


The variational problem $\Phi_X(\delta)$ is a discretisation of the variational problem considered by Bhattacharya, Ganguly, Shao, and Zhao~\cite{bhattacharya2016upper}. In their setup, one minimizes over the set of all product measures on $\{0,1\}^{\br{N}}$, whereas we only consider `planting' constructions; in other words, we restrict our attention to products of $\Ber(p)$ and $\Ber(1)$ measures. The result below can be easily deduced from~\cite[Theorem 2.2]{bhattacharya2016upper}, but we will reprove it in Section~\ref{sec:EstimatingPhi}, for completeness. 
\begin{proposition}
  \label{prop:kap-rate}
  For every integer $k \geq 3$ and all positive real numbers
  $\eps$ and $\delta$, there exists a positive constant $C$ such that the
  following holds. Suppose that $N\in \NN$ and $p\in (0,1)$
  satisfy 
  $CN^{-1} \leq p^{k/2}\leq 1/C$. Then $X = X_{N,p}^\kap$ satisfies
  \[ 1-\eps \leq \frac{\Phi_X(\delta)}{ \sqrt{\delta }\cdot Np^{k/2}\log(1/p)}
  \leq 1+ \eps. \]
\end{proposition}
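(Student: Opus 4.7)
The plan is to establish matching asymptotic upper and lower bounds on $\Phi_X(\delta)$ by exhibiting a near-optimal feasible set (for the upper bound) and by carefully bounding $\e_I[X]$ in terms of $|I|$ (for the lower bound). Throughout, we use that $\e[X] = A_k(\br N) p^k = N^2 p^k/(2(k-1)) \cdot (1+o(1))$, which follows from~\eqref{eq:tk} under the hypothesis $Np^{k/2}$ large.

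For the upper bound, take $I = \br m$ with $m = \ceil{(1+\eta)\sqrt\delta\, Np^{k/2}}$ for a suitably small $\eta = \eta(\eps)$. Since every \kap\ fully contained in $I$ becomes deterministic after conditioning on $Y_I=1$, we have
\[
  \e_I[X] - \e[X] \geq A_k(I)(1-p^k).
\]
The explicit formula $A_k(\br m) = m^2/(2(k-1))-O(m)$ from~\eqref{eq:tk}, combined with our choice of $m$ and the fact that $Np^{k/2}$ is large enough to make the $O(m)$ term negligible relative to the main one, yields $A_k(I)(1-p^k) \geq \delta\, \e[X]$ by a routine calculation. This gives $\Phi_X(\delta) \leq |I|\log(1/p) \leq (1+\eps)\sqrt\delta\, Np^{k/2}\log(1/p)$.

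For the lower bound, take any $I\subseteq \br N$ with $|I|=m$ and $\e_I[X] \geq (1+\delta)\e[X]$, and let $a_j$ denote the number of \kap s in $\br N$ with exactly $j$ elements in $I$. Splitting $\e[X] = p^k \sum_j a_j$ and $\e_I[X] = \sum_j a_j p^{k-j}$ according to $|T\cap I|$ gives
\[
  \e_I[X] - \e[X] = \sum_{j=1}^{k} a_j p^{k-j}(1-p^j).
\]
We estimate each contribution: (i) $a_k = A_k(I)\leq A_k(\br m)\leq m^2/(2(k-1))+O(m)$ by Theorem~\ref{thm:kap-extremal}; (ii) any pair of distinct integers lies in at most $(k-1)^2$ \kap s, so the number of \kap s meeting $I$ in at least two points is $O_k(m^2)$, whence $\sum_{j=2}^{k-1} a_j p^{k-j} \leq O_k(m^2 p)$; (iii) any single integer lies in $O_k(N)$ \kap s, so $a_1 p^{k-1} \leq O_k(mNp^{k-1})$. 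Combining and multiplying through by $2(k-1)$ yields
\[
  \delta N^2 p^k \leq m^2 + O_k\bigl(m^2 p + mNp^{k-1} + m\bigr).
\]
A short case analysis finishes the argument: either $m \geq \sqrt\delta\, Np^{k/2}$ already, or else an elementary exponent check (using $k\geq 3$, $p\to 0$, and $Np^{k/2}\to \infty$) shows each of the three error terms to be $o(\delta N^2 p^k)$, giving $m \geq (1-\eps)\sqrt\delta\, Np^{k/2}$.

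The main obstacle is the control of the cross terms $a_j p^{k-j}$ for $1\leq j \leq k-1$ in the lower bound. The naive estimate $a_j \leq O_k(m^j)$ (coming from counting $k$-APs containing any given $j$-subset of $I$) would make the $j=k-1$ contribution of order $m^{k-1}p$, which easily dominates the main term $A_k(I)\sim m^2$. The saving observation is that the \emph{total} number of \kap s meeting $I$ in at least two points is $O_k(m^2)$ rather than $O_k(m^j)$ for each $j$ separately, since \kap s are determined up to $O_k(1)$ choices by any two of their elements; combined with the elementary bound on $a_1$, this brings all cross terms under control throughout the regime of interest.
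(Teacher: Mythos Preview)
Your proof is correct and follows essentially the same approach as the paper: the upper bound by planting an interval $\br{m}$, and the lower bound by decomposing $\e_I[X]-\e[X]$ according to $|T\cap I|$, bounding $a_1=O_k(mN)$, $\sum_{j=2}^{k-1}a_j=O_k(m^2)$, and $a_k\le A_k(\br{m})$ via Theorem~\ref{thm:kap-extremal}. The only cosmetic difference is that the paper, having proved the upper bound first, takes $I$ minimal and substitutes $|I|\le 2\sqrt{\delta}\,Np^{k/2}$ directly into the error terms, whereas you do an equivalent case split on whether $m\ge\sqrt{\delta}\,Np^{k/2}$.
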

Clearly, Propositions~\ref{prop:kap-ldp} and~\ref{prop:kap-rate} imply Theorem~\ref{thm:kap}.

\subsection{Proof outline}
The proof of Proposition~\ref{prop:kap-rate} will be relatively straightforward: On the one hand, since every interval (or, more generally,
  every arithmetic progression) of length $\sqrt{\delta}Np^{k/2}$ contains approximately $\delta \Ex[X]$
  arithmetic progressions of length $k$, we have $\Ex_I[X] \ge (1+\delta-o(1)) \Ex[X]$ for each such interval $I$.
  Consequently, $\Phi_X(\delta) \le (1+o(1)) \sqrt{\delta}Np^{k/2}\log(1/p)$. On the other hand, a simple
  calculation shows that, for every set $I \subseteq \br{N}$ with $O(Np^{k/2})$ elements,
  $\Ex_I[X]- \Ex[X]$ is asymptotically equal to the number of $k$-APs in $I$. Therefore,
  $\Phi_X(\delta) / \log(1/p)$ is   bounded from below (asymptotically) by the minimal size of a set of integers that contains at least $\delta \Ex[X]$ $k$-APs. We will show that this minimum is achieved by an interval, see Theorem~\ref{thm:kap-extremal} below; thus, we conclude that   $\Phi_X(\delta) \ge (1-o(1))\sqrt{\delta}Np^{k/2} \log(1/p)$.

In order to derive Proposition~\ref{prop:kap-ldp} from Theorem~\ref{thm:packaged}, we will need to
  show that the family $\core$ of cores from the statement of the theorem is entropically stable. In order
  to bound the number of cores of a given size, we will first observe that, for every $I \in \core$ and each $i \in I$,
  the difference $\Ex_I[X] - \Ex_{I \setminus \{i\}}[X]$ is asymptotically equal to the number of $k$-APs in $I$ that
  contain the element~$i$. In particular, condition~\ref{item:thmcore-mindeg} and $\Phi_X(\delta) = O\big(Np^{k/2}\log(1/p)\big)$ can be combined to conclude that each element of every cores lies in $\Omega\big(Np^{k/2}/\log(1/p)\big)$
  arithmetic progressions of length $k$ that are fully contained in the core; see Claim~\ref{claim:kap-core-property} below.

  The heart of the proof of the proposition is a counting argument showing that very few sets
  have this combinatorial property. Let us first sketch a simplified version of this argument
  that would be sufficient to prove the proposition under the slightly stronger assumption
  that $N p^{k/2} \gg (\log N)^{k+1}$. 
  Suppose that $I$ is a core of cardinality $m$ and let $I'$
  be a random subset of $I$ with $m/\log(1/p)$ elements. For every $i \in I$, we expect that there will be $\Omega\big(Np^{k/2} / \log(1/p)\big)$ arithmetic progressions of length $k$ in $I$ that contain $i$, and that a $(\log(1/p))^{1-k}$-proportion of these will be contained in $I' \cup \{i\}$.
A standard application of Janson's inequality yields that the above description holds with
  probability very close to one, simultaneously for all $i \in I$. In particular, $I$ contains some
  subset $I'$ with $m/\log(1/p)$ elements and the property that, for every $i \in I \setminus I'$,
  there are at least $\Omega\big(Np^{k/2}/(\log(1/p))^k\big)$ arithmetic progressions of length $k$ that comprise $i$ and
  some $k-1$ elements of $I'$.

  We may now enumerate all possible cores $I$ in two steps: First, there are at most $\binom{N}{m/\log(1/p)} \le \exp\big(O(m)\big)$
  choices of $I'$. Second, since $I'$ intersects at most $O(|I'|^2)$ arithmetic progressions of length $k$ at $k-1$ elements and each $i \in I$ is
  contained in at least $\Omega\big(Np^{k/2}/(\log(1/p))^k\big)$ such progressions, the elements of $I \setminus I'$ must all
  come from a set of size
  \[
    O\left(\frac{|I'|^2(\log(1/p))^k}{Np^{k/2}} \right)\le O\big(m(\log(1/p))^{k-1}\big)
  \]
  that depends solely on $I'$.  Thus, the number of choices for $I \setminus I'$, the remaining elements of the core, is at most
  $\binom{O(m(\log(1/p))^{k-1})}{m} = \exp\big(O(m \log\log(1/p))\big)$.

  In the proof of Proposition~\ref{prop:kap-ldp} below, we give a more refined version of the above argument that allows us to recover the optimal power of the logarithm in the lower bound assumption on~$p$. Instead of constructing cores in two steps, we build them element-by-element. This enables
  a~finer control of the number of choices for each next element, given all the elements chosen so far. Roughly speaking, as we add more
  elements to $I$, the set $I'$ from the previous paragraph is gradually increasing its size.

\subsection{Estimating $\Phi_X$}\label{sec:EstimatingPhi}
As mentioned above,
we use the following extremal result about
the largest number of $k$-APs in a set of integers of a
given size, proved in the case $k=3$ by Green--Sisask~\cite{green2008maximal} and
later extended in~\cite{bhattacharya2016upper} to arbitrary $k \ge 3$; the corresponding
statement in the case where $k \in \{1, 2\}$ is trivial. For a set $I
\subseteq \ZZ$, we denote by $A_k(I)$ the number of $k$-APs in $I$. Recall
that we only count $k$-APs with positive common difference.

\begin{theorem}[\cite{bhattacharya2016upper, green2008maximal}]
  \label{thm:kap-extremal}
  For every positive integer $k$ and $I \subseteq \ZZ$, we
  have $A_k(I) \leq A_k\big(\br{|I|}\big)$.
\end{theorem}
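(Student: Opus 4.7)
The plan is a compression/rearrangement argument: define a family of elementary moves on finite subsets of $\ZZ$ that preserve the cardinality and do not decrease $A_k$, and show that iterated application of such moves reduces any $I$ to the interval $\br{|I|}$. The simplest candidate is a ``close-the-gap'' move: pick some $m \notin I$ with $\min I < m < \max I$ and replace each $y \in I$ with $y > m$ by $y - 1$, obtaining a set $I'$ with $|I'| = |I|$ and strictly smaller diameter. I would first attempt to prove $A_k(I') \ge A_k(I)$ for this move by partitioning the $k$-APs of both sets according to whether they lie entirely below $m$, entirely above $m$, or straddle $m$; the first two parts match bijectively under the compression, so everything reduces to comparing straddling APs.

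The main obstacle is that this naive comparison fails for individual gaps. For instance, with $k = 3$ and $I = \{1, 3, 5\}$, closing either gap $m \in \{2, 4\}$ destroys the unique $3$-AP of $I$ and yields a set with $A_3 = 0$; neither single close-the-gap move is monotone. My plan would therefore be to aim for an \emph{averaged} version: summing the change $A_k(I') - A_k(I)$ over a suitable family of admissible gaps $m$ (or, if necessary, an enlarged family of moves that may shift by more than one unit, or modify several elements simultaneously) should yield a nonnegative total, forcing at least one move in the family to be admissible. Iterating any successful move and observing that $\max I - \min I$ strictly decreases at each step would close the induction.

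The hard part will be the combinatorial bookkeeping of straddling $k$-APs, and, more importantly, identifying the correct family of moves whose combined effect is provably nonnegative. I expect this to require genuine additive-combinatorial input, particularly for larger $k$, where the overlaps between $k$-APs crossing $m$ are considerably more intricate than for $k = 3$. This is presumably the reason that the theorem was first established only in the case $k = 3$ in~\cite{green2008maximal} and extended to general $k$ only later, in~\cite{bhattacharya2016upper}.
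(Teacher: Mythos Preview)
Your proposal is a plan, not a proof: you correctly identify that the naive close-the-gap move is not monotone (your example $\{1,3,5\}$ is exactly the obstruction), and then you propose to salvage it by averaging over a family of moves---but you never specify the family, never state the inequality that the averaging should yield, and never prove it. The entire content of the argument is deferred to ``genuine additive-combinatorial input'' that you expect to need. As written, there is no proof here to evaluate.

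More importantly, the paper's argument is completely different and vastly simpler. It does \emph{not} compress $I$ toward an interval; instead it inducts on $k$. List the elements of $I$ in increasing order as $a_1 < \dotsb < a_n$. For a well-chosen index $m$ (specifically $m = \lceil (k-2)n/(k-1) \rceil$), split the $k$-APs in $I$ according to whether the $(k-1)$st term lies among $a_1,\dotsc,a_m$ or among $a_{m+1},\dotsc,a_n$. In the first case, dropping the last term yields a $(k-1)$-AP in $\{a_1,\dotsc,a_m\}$, so the count is at most $A_{k-1}(\br{m})$ by induction. In the second case, if the $(k-1)$st term is $a_i$ then the $k$th term is some $a_j$ with $j>i$, giving at most $n-i$ progressions for each $i>m$. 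One then checks the arithmetic identity $A_{k-1}(\br{m}) + \sum_{i>m}(n-i) = A_k(\br{n})$, which is exactly why $m$ was chosen as it was. No compression, no averaging, no additive combinatorics beyond this counting. The moral is that induction on the length of the progression, rather than on the structure of the set, is the right axis here.
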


We reproduce the proof here for the sake of completeness.

\begin{proof}
  We prove the statement by induction on $k$. The cases $k=1$ and $k=2$ are trivial as $A_k(I) = A_k\big(\br{|I|}\big)$ for every set $I$, so we may assume that $k \ge 3$. Suppose that $|I| = n$ and let $a_1, \dotsc, a_n$ be the elements of $I$ listed in increasing order. We partition the set of \kap s in $I$ into two parts depending on the location of the $(k-1)$st element. More precisely, we let $m = \lceil (k-2)n/(k-1) \rceil$, let
  \[
    \cA_1 = \big\{(i_1, \dotsc, i_k) \in \br{n}^k : (a_{i_1}, \dotsc, a_{i_k}) \text{ is a \kap\ and } i_{k-1} \le m\big\},
  \]
  and let $\cA_2$ comprise the remaining \kap s (that is, ones with $i_{k-1} > m$). The removal of the $k$th term from a progression in $\cA_1$ maps it to a $(k-1)$-AP contained the set $\{a_1, \dotsc, a_m\}$ and therefore $|\cA_1| \le A_{k-1}(\{a_1, \dotsc, a_m\}) \le A_{k-1}\big(\br{m}\big)$, by the induction hypothesis. On the other hand, we observe that for every $i > m$, there are at most $n-i$ arithmetic progression of length $k$ such that $i_{k-1} = i$ and thus
  \[
    |\cA_2| \le \sum_{i=m+1}^n n-i.
  \]
  In order to complete the proof, it is sufficient to verify that our choice of $m$ ensures that
  \[
    A_{k-1}\big(\br{m}\big) + \sum_{i=m+1}^n n-i = A_k(\br{n}).
  \]
  Indeed, $m$ satisfies the following two inequalities:
  \[
    m + \left\lfloor\frac{m-1}{k-2}\right\rfloor \le n  \qquad \text{and} \qquad m+1 - (k-2)(n-m-1) \ge 1.
  \]
  The first inequality implies that extending any arithmetic progression $(i_1, \dotsc, i_{k-1})$ contained in $\br{m}$ by adjoining to it the element $i_k = 2i_{k-1} - i_{k-2}$ yields a \kap\ contained in $\br{n}$, whereas the second inequality implies that $\sum_{i=m+1}^n n-i$ is precisely the number of \kap s in $\br{n}$ whose $(k-1)$st term exceeds $m$.
\end{proof}

For future reference, let us note that $A_k\big(\br{i}\big) - A_k\big(\br{i-1}\big) = \lfloor \frac{i-1}{k-1} \rfloor$ for all positive integers $i$ and $k \ge 2$ and, consequently,
\begin{equation}
  \label{eq:tk}
  A_k(\br{m}) = \sum_{i=1}^m  \left\lfloor \frac{i-1}{k-1} \right\rfloor =
  \frac{m^2}{2(k-1)} - \frac{(k-1)m}{2} \pm k^2.
\end{equation}
Using Theorem~\ref{thm:kap-extremal}, it is not difficult to compute
the asymptotic value of $\Phi_X(\delta)$ and complete the proof of
Theorem~\ref{thm:kap}.

\begin{proof}[Proof of Proposition~\ref{prop:kap-rate}]
  Without loss of generality, we may assume that $\eps \le 1$.
  Given a subset $I\subseteq \br{N}$, let
  $a_j(I)$ denote the number of $k$-APs in $\br{N}$ that intersect $I$ in
  exactly $j$ elements. Note that
  \begin{equation}
    \label{eq:ExX-ExIX}
    \e_I[X] = \sum_{j = 0}^k a_j(I) p^{k-j} \qquad \text{and that} \qquad
    \Ex[X]  = A_k\big(\br{N}\big) p^k = \sum_{j=0}^k a_j(I) p^k.
  \end{equation}
  It follows that
  $\e_I[X] - \Ex[X] \geq (1-p^k) a_k(I)
  = (1-p^k) A_k(I)$ for every $I \subseteq \br{N}$.
  In particular, whenever $(1-p^k)A_k(\br{m})\geq \delta p^k A_k\big(\br{N}\big)$, then
  $\e_{\br{m}}[X]\geq (1+\delta)\e[X]$.
  Therefore, 
  \[
    \Phi_X(\delta) \le \min\left\{m\log(1/p):
    A_k\big(\br{m}\big) \ge \frac{\delta p^k A_k\big(\br{N}\big)
}{1-p^k}\right\} \le (1+\eps)\cdot\sqrt{\delta } \cdot Np^{k/2} \log(1/p),
  \]
  where the last inequality follows from~\eqref{eq:tk} and our assumption
  $N^2p^k \ge C^2$ for a sufficiently large constant $C$.
  It remains to prove the matching lower bound.

  Suppose that $I$ is a smallest subset of $\br{N}$ with
  $\Ex_I[X] \ge (1+\delta)\Ex[X]$.
  Then~\eqref{eq:ExX-ExIX} implies
  \begin{equation}
    \label{eq:PsiNpkap-lower}
    \delta A_k\big(\br{N}\big) p^k = \delta \Ex[X] \le \Ex_I[X] - \Ex[X] \le
    \sum_{j=1}^k a_j(I) p^{k-j}.
  \end{equation}
  Since every pair of distinct numbers in $\br{N}$ is contained in at most
  $\binom{k}2$ arithmetic progressions of length $k$, it follows that
  $a_1(I) \le |I|  \cdot N k^2$ and $\sum_{j=2}^{k-1} a_j(I)
  \le |I|^2 \cdot k^2$. Since we already know that $|I| \le 2\sqrt{\delta}
  \cdot Np^{k/2}$, inequality~\eqref{eq:PsiNpkap-lower} gives
  \[
    \delta A_k(\br N) p^k \leq 2\sqrt{\delta}k^2N^2 p^{3k/2-1} + 4\delta k^2 N^2p^{k+1}+A_k(I).
  \]
  We now invoke Theorem~\ref{thm:kap-extremal}
  and~\eqref{eq:tk} to obtain
  \[ 
  (1-\eps)\cdot \frac{\delta N^2p^k}{2(k-1)} \leq \delta A_k(\br N) p^k -
  2\sqrt{\delta}k^2N^2 p^{3k/2-1} - 4\delta k^2 N^2p^{k+1}\leq A_k(\br I)
\leq \frac{|I|^2}{2(k-1)},
  \]
  where we use the
  assumptions 
  $p\leq C^{-2/k}$ and $N^2 p^k \ge C^2$
  for a large enough $C$.
  Thus
  $\Phi_X(\delta) = |I| \log(1/p)\ge (1-\eps) \cdot \sqrt{\delta} \cdot N
  p^{k/2} \log(1/p)$,
  as required.
\end{proof}

\subsection{Janson's inequality}
It remains to prove Proposition~\ref{prop:kap-ldp}. The proof uses the
following version of Janson's inequality for hypergeometric random variables.
It follows from the (original version of) Janson's inequality for binomial distributions~\cite[Theorem~1]{janson1990poisson}
and the fact that the median of a binomial random variable whose mean is an integer is
equal to its mean. Our argument is an adaptation of~\cite[Lemma~3.1]{BalMorSamWar2016}.

\begin{lemma}
  \label{lemma:hyper-Janson}
  Suppose that $\{B_\alpha\}_{\alpha \in A}$ is a family of subsets of a
  $t$-element set $\Omega$. Let $s \in \{0, \dotsc, t\}$ and let
  \[
    \mu = \sum_{\alpha \in A} \left(\frac{s}{t}\right)^{|B_\alpha|} \qquad
    \text{and} \qquad \Delta = \sum_{\alpha \sim \beta}
    \left(\frac{s}{t}\right)^{|B_\alpha \cup B_\beta|},
  \]
  where the second sum is over all ordered pairs $(\alpha, \beta) \in A^2$ such
  that $\alpha \neq \beta$ and $B_\alpha \cap B_\beta \neq \emptyset$. Let $S$
  be the uniformly chosen random $s$-element subset of $\Omega$ and let $Z$
  denote the number of $\alpha \in A$ such that $B_\alpha \subseteq S$. Then
  for every $\eps \in (0,1]$,
  \[
    \Pr\big(Z \le (1-\eps)\mu\big) \le 2 \exp\left(-\frac{\eps^2}{2}
    \cdot \frac{\mu^2}{\mu + \Delta}\right).
  \]
\end{lemma}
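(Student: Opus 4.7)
The plan is to deduce the hypergeometric statement from the classical Janson inequality for independent Bernoulli random variables~\cite{janson1990poisson} via a coupling to a product-measure sample at density $p = s/t$. The integrality of $s = tp$ enters in exactly one place: it guarantees that the median of $\mathrm{Bin}(t,p)$ equals its mean $s$, which is how the loss in transferring the tail bound is kept to a factor of~$2$.

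Concretely, let $U_1,\dotsc,U_t$ be i.i.d.\ $\mathrm{Unif}[0,1]$, set $R = \{i : U_i \le p\}$, and let $S$ be the set of indices of the $s$ smallest values among $U_1,\dotsc,U_t$. Then $R$ has the product $\mathrm{Ber}(p)$ law and $S$ is uniform on $s$-subsets of $\Omega$, so these are the correct marginals. Two further properties of this coupling drive the proof. First, $S$ is independent of $|R|$: indeed $S$ is a function of the ranks of $(U_i)$, whereas $|R|$ is a function of the order statistics $(U_{(j)})$, and for a sample of i.i.d.\ continuous variables the ranks and the order statistics are independent. Second, on the event $\{|R|\le s\}$ every index with $U_i\le p$ lies among the $s$ indices with smallest $U_i$, so $R\subseteq S$, whence $Z_R\le Z_S$ by monotonicity of the counting functional.

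Let $Z_R$ denote the analogue of $Z$ built from $R$. A direct calculation gives $\Ex[Z_R]=\mu$ and shows that the correlation quantity appearing in Janson's inequality coincides with our $\Delta$, so Janson's inequality yields
\[ \Pr\big(Z_R\le (1-\eps)\mu\big) \le \exp\!\left(-\frac{\eps^2\mu^2}{2(\mu+\Delta)}\right). \]
By the monotonicity property, $\{Z_S\le(1-\eps)\mu\}\cap\{|R|\le s\} \subseteq \{Z_R\le(1-\eps)\mu\}$; combining this containment with the independence of $S$ and $|R|$ and the median identity $\Pr(|R|\le s)\ge 1/2$ gives
\[ \Pr\big(Z_S\le (1-\eps)\mu\big)\cdot\Pr(|R|\le s) \le \Pr\big(Z_R\le (1-\eps)\mu\big), \]
and dividing through by $\Pr(|R|\le s)\ge 1/2$ yields the asserted bound. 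The only mildly nontrivial step is the independence of ranks and order statistics used to pass from the joint to the marginal event; the rest is a direct packaging of Janson's inequality together with the median-of-balanced-binomial fact.
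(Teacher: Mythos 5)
Your proof is correct and takes essentially the same route as the paper's: reduce to the classical Janson inequality for the $(s/t)$-random subset of $\Omega$ and transfer the bound via a monotone coupling, paying a factor of $2$ through the fact that $s$ is the median of a binomial with integer mean $s$. The only presentational difference is that you realise the coupling explicitly via i.i.d.\ uniforms and use the independence of $S$ and $|R|$ together with the containment $R\subseteq S$ on $\{|R|\le s\}$, whereas the paper conditions the product-measure sample $S'$ on its size and uses the monotonicity of $k\mapsto\Pr\big(Z'\le M \mid |S'|=k\big)$ furnished by a nested coupling of uniform $k$-subsets; both packagings produce the identical chain of inequalities.
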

\begin{proof}
  For every $k \in \{0, \dotsc, t\}$, let $S_k$ be the uniformly chosen random $k$-element subset of $\Omega$ and let $Z_k$ denote the number of $\alpha \in A$ such that $B_\alpha \subseteq S_k$, so that $Z = Z_s$, and note that there exists a natural coupling under which $Z_k \le Z_{k+1}$ for every $k$. Let $S'$ be the $(s/t)$-random subset of $\Omega$, that is the random subset of $\Omega$ formed by keeping each of its elements with probability $s/t$, independently of others, and let $Z'$ denote the number of $\alpha \in A$ such that $B_\alpha \subseteq S'$. Since $\Ex\big[Z' \mid |S'|\big] = Z_{|S'|}$, the stochastic ordering of the $Z_k$s implies that, for any number~$M$, the function $k \mapsto \Pr(Z' \le M \mid |S'| = k)$ is decreasing. Hence,
  \[
    \begin{split}
      \Pr\big(Z' \le (1-\eps) \mu\big) & = \sum_{k=0}^t \Pr\big(Z' \le (1-\eps)\mu \mid |S'| = k\big) \cdot \Pr(|S'| = k) \\
      & \ge \Pr\big(Z' \le (1-\eps)\mu \mid |S'| = s\big) \cdot \Pr(|S'| \le s) \\
      & = \Pr\big(Z \le (1-\eps)\mu\big) \cdot \Pr(|S'| \le s) \ge \Pr(Z \le (1-\eps)\mu)/2,
    \end{split}
  \]
  where the last inequality follows from the well-known fact that if $np$ is an integer, then it is the median of the binomial distribution with parameters $n$ and $p$. We can now invoke the classical version of Janson's inequality and conclude that
  \[
    \Pr\big(Z \le (1-\eps)\mu\big) \le 2 \Pr\big(Z' \le (1-\eps)\mu\big) \le 2 \exp\left(-\frac{\eps^2}{2} \cdot \frac{\mu^2}{\mu + \Delta}\right).\qedhere
  \]
\end{proof}

\subsection{Proof of Proposition~\ref{prop:kap-ldp}}
We may assume without loss of generality that 
$\eps$ is sufficiently small, say $\eps<\min{\{1/2,\delta/2\}}$.
Note also that the case $N\leq 2$ is trivial; indeed, in that case $X$ is identically
zero and thus $\log \Pr(X\geq (1+\delta)\e[X])= 0 = \Phi_X(\delta)$ for every
$\delta\in \RR$. We may therefore assume that $N\geq 3$, which, in turn, implies
that $N^2p^k\geq C^2$.

Denote by $Y_i$ the indicator random variable of the event that $i\in \br N_p$. Then
$Y=(Y_1,\dotsc,Y_N)$ is a vector of independent $\Ber(p)$ random variables and
$X$ is a nonzero polynomial with nonnegative coefficients and degree at most $k$
in the coordinates of $Y$. Let $K = K(k,\eps,\delta)$ be the constant given by Theorem~\ref{thm:packaged}.
The proposition follows once we verify that $X$ satisfies the various assumptions of the
theorem.

First, our assumption on $p$ implies that $p\leq 1-\eps$ whenever $C$ is large enough.
Second, it follows from
Proposition~\ref{prop:kap-rate} and the inequality $N^2p^k\geq C^2$ that, whenever $C$ is large
enough, $\Phi_X(\delta-\eps) \geq \Phi_X(\delta/2)\geq 
K\log(1/p)$. Recall that a subset $I\subseteq \br N$ is called a core if
\begin{enumerate}[label=(C\arabic*)]
  \item\label{item:kapcore-bias}
    $\e_{I}[X] \geq (1+\delta-\eps)\e[X]$,
  \item\label{item:kapcore-size}
    $|I|\leq K \cdot \Phi_X(\delta+\eps)$, and
  \item\label{item:kapcore-mindeg}
    $\min_{i\in I}\left(\e_{I}[X]-\e_{I\setminus \{i\}}[X]\right)
    \geq \e[X]/\big(K \cdot \Phi_X(\delta+\eps)\big)$.
\end{enumerate}
The final assumption of Theorem~\ref{thm:packaged} is that, for every
integer $m$, there are at most $(1/p)^{\eps m/2}$ cores of size $m$.

In order to count the cores, we must first unravel the meaning of 
\ref{item:kapcore-bias}, \ref{item:kapcore-size}, and \ref{item:kapcore-mindeg},
and show that each core enjoys a simple combinatorial property. 
Proposition~\ref{prop:kap-rate} supplies a constant $K' =
K'(K,k,\eps,\delta)$ such that, whenever $C$ is sufficiently large,
\begin{equation}\label{eq:kap-phibound}
  4kK \cdot \Phi_X(\delta+\eps) \leq 
  4kK \cdot (1+\eps)\sqrt{\delta+\eps} \cdot Np^{k/2}\log(1/p) \leq K' \cdot
  Np^{k/2}\log(1/p).
\end{equation}
Given a set $I \subseteq \br{N}$ and an $i \in \br{N}$, we write $A_k(I; i)$
for the number of $k$-term arithmetic progressions in $I \cup \{i\}$ that
contain the element $i$. The proof of the following claim is similar to the argument
used to prove Proposition~\ref{prop:kap-rate}.

\begin{claim}
  \label{claim:kap-core-property}
  For every core $I$ of size $m$ and all $i \in I$,
  \[ A_k(I; i) \ge \frac{Np^{k/2}}{K'\log(1/p)}. \]
\end{claim}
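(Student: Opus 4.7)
The plan is to unfold the core condition \ref{item:kapcore-mindeg} into a statement about the combinatorial quantity $A_k(I;i)$, using the same decomposition that drove the proof of Proposition~\ref{prop:kap-rate}. Writing $a_j(I;i)$ for the number of $k$-APs in $\br{N}$ that contain $i$ and meet $I$ in exactly $j$ elements, an inclusion--exclusion style computation similar to~\eqref{eq:ExX-ExIX} yields
\[
  \e_I[X] - \e_{I\setminus\{i\}}[X] = (1-p)\sum_{j=1}^{k} a_j(I;i)\, p^{k-j}.
\]
Since $a_k(I;i)$ is exactly $A_k(I;i)$, once we show that the contributions with $j<k$ are negligible compared to the lower bound furnished by \ref{item:kapcore-mindeg}, the claim will follow.

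First, I would combine \ref{item:kapcore-mindeg} with the estimate $\e[X] \ge (1-o(1))N^2 p^k/(2(k-1))$ from~\eqref{eq:tk} and with~\eqref{eq:kap-phibound} to obtain a clean bound
\[
  \e_I[X] - \e_{I\setminus\{i\}}[X] \;\ge\; \frac{c_k}{K'} \cdot \frac{Np^{k/2}}{\log(1/p)}
\]
for some positive constant $c_k$ depending only on $k$. Next, I would bound the $j<k$ terms above by elementary counting: the number of $k$-APs through $i$ is $O(kN)$, so $a_1(I;i) \le O(kN)$; and for $2\le j\le k-1$, every $k$-AP counted by $a_j(I;i)$ contains $i$ and another element of $I$, and any two fixed integers lie in at most $\binom{k}{2}$ $k$-APs, hence $\sum_{j=2}^{k-1}a_j(I;i) \le O(k^3 m)$.

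Combining these bounds, the left-hand side of the previous display is at most $A_k(I;i) + O\bigl(kN p^{k-1} + k^3 m p\bigr)$. Both error terms are $o(Np^{k/2}/\log(1/p))$ under the standing assumption $p^{k/2}\le 1/C$: the first because $Np^{k-1}/(Np^{k/2}/\log(1/p)) = p^{k/2-1}\log(1/p) \to 0$ (using $k\ge 3$), and the second because $m \le K'\cdot Np^{k/2}\log(1/p)/(4k)$ by \ref{item:kapcore-size} and~\eqref{eq:kap-phibound}, so $mp/(Np^{k/2}/\log(1/p)) = O(p\log^2(1/p)) \to 0$. Choosing the constant $C$ large enough to absorb these $o(1)$ terms and, if necessary, enlarging $K'$ by a factor of $2$ to accommodate the loss, we conclude $A_k(I;i) \ge Np^{k/2}/(K'\log(1/p))$.

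There is no serious obstacle here; the argument is a routine reprise of the calculation in Proposition~\ref{prop:kap-rate}, but localised at a single element $i\in I$. The only care needed is to ensure that the constants $K$, $K'$, and $C$ are chosen compatibly so that the two error terms are safely dominated by the main term, and that the bound in~\eqref{eq:kap-phibound} feeds cleanly into the lower bound from \ref{item:kapcore-mindeg}.
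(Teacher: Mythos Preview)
Your proposal is correct and follows essentially the same route as the paper: decompose $\e_I[X]-\e_{I\setminus\{i\}}[X]$ via the quantities $a_j(I;i)$, invoke~\ref{item:kapcore-mindeg} together with~\eqref{eq:kap-phibound} for the lower bound, and absorb the $j<k$ contributions using the elementary counts $a_1(I;i)=O(kN)$ and $\sum_{j=2}^{k-1}a_j(I;i)=O(k^2 m)$. The paper handles the constants by arranging that the main term comes out as $2Np^{k/2}/(K'\log(1/p))$, so that subtracting the $o(\cdot)$ errors still leaves $Np^{k/2}/(K'\log(1/p))$; your remark about enlarging $K'$ amounts to the same bookkeeping.
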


\begin{proof}
  Given an $i \in I$, let $a_j(I;i)$ denote the number of $k$-APs in $\br{N}$
  that intersect $I$ in exactly $j$ elements, one of which is $i$. With this
  notation, $A_k(I; i) = a_k(I;i)$ and we may write $\Ex_I[X] - \Ex_{I \setminus
  \{i\}}[X] = \sum_{j=1}^k a_j(I;i) \cdot p^{k-j}(1-p)$. Since every pair of
  distinct numbers in $\br{N}$ is contained in at most $\binom{k}{2}$
  arithmetic progressions of length $k$, we have $a_1(I;i) \le Nk^2$ and
  $\sum_{j=2}^{k-1} a_j(I;i) \le m k^2$. In particular, as $m \leq 
  K\cdot\Phi_X(\delta+\eps)$ by~\ref{item:kapcore-size}, we get
  \[ \Ex_I[X] - \Ex_{I \setminus \{i\}}[X] \le k^2 N p^{k-1} + k^2K \cdot \Phi_X(\delta+\eps) \cdot p + A_k(I; i). \]
  On the other hand, it follows from \ref{item:kapcore-mindeg} that
  \[ \Ex_I[X] - \Ex_{I \setminus \{i\}}[X] 
    \geq \frac{\e[X]}{K \cdot \Phi_X(\delta+\eps)}.
  \]
  By~\eqref{eq:tk}, we have
  $\e[X] = A_k(\br{N})p^k \ge N^2p^k/(2k)$, since $N\geq C$ and $C$ is large.
  Combining the upper and lower bounds on
  $\e_I[X]-\e_{I\setminus \{i\}}[X]$  and using \eqref{eq:kap-phibound},
  we obtain
  \[
    A_k(I;i) \ge \frac{2Np^{k/2}}{K'\log(1/p)}
    - 
    k^2 N p^{k-1} - kK'Np^{k/2+1}\log(1/p).
  \]
  Since $k\geq 3$ and $p\leq C^{-2/k}$ for a large enough $C$, we deduce the 
  assertion of the claim.
\end{proof}

For the remainder of the proof, fix some integer $m$ satisfying $1\leq m \leq 
K \cdot \Phi_X(\delta+\eps)$ and let $K''$ be a sufficiently large positive constant depending on
$K'$ and $k$ (but not on $C$). For a subset $I'\subseteq \br N$
and an integer $i \in \br{N}\setminus I'$, we shall say that $i$ is \emph{rich}
with respect to $I'$ if
\begin{equation}
  \label{eq:i-rich}
  A_k(I'; i) \ge \frac{Np^{k/2}}{K''\log (1/p)} \cdot
  \left(\frac{|I'|}{m}\right)^{k-1}.
\end{equation}
Moreover, given a sequence $(i_1, \dotsc, i_m)$ of $m$ distinct elements of
$\br{N}$, we shall say that an index $m' \in \br m$ is \emph{rich} if $i_{m'}$
is rich with respect to the set $\{i_1, \dotsc, i_{m'-1}\}$. 

We first observe that for every $I' \subseteq \br{N}$, there are relatively few
integers $i \in \br{N}\setminus I'$ that are rich with respect
to $I'$. 
Indeed, there are at most $k|I'|^2$ arithmetic progressions $P$
of length $k$ in $\br{N}$ for which $|I'\cap P| = k-1$, because any such progression
is determined by its minimal and maximal element in $I'$ and the position in the progression
of the element in $P\setminus I'$. Then
\[
  \left|\big\{ i \in \br{N}\setminus I' : \text{$i$ is rich w.r.t.\
  $I'$}\big\}\right| \cdot \frac{Np^{k/2}}{K''\log(1/p)} \cdot
  \left(\frac{|I'|}{m}\right)^{k-1} \le \sum_{i\in \br N  \setminus I'} A_k(I';i) \le
  k|I'|^2.
\]
Consequently, as $m \leq K' \cdot Np^{k/2}\log(1/p)$ by \eqref{eq:kap-phibound},
\begin{equation}\label{eq:kap-number-rich}
  \left|\big\{ i \in \br{N}\setminus I' : \text{$i$ is rich w.r.t.\
  $I'$}\big\}\right|
  \leq 
  k K'K''\cdot m \big(\log (1/p)\big)^2 \cdot
  \left(\frac{m}{|I'|}\right)^{k-3}
  .
\end{equation}

The key property that allows us to control the number of cores $I$ of size $m$
is that, in a large proportion of orderings of the members of $I$, almost all
indices are rich.
This property implies that, if one builds an (ordered) core element by element,
then, very often, one must choose the next element from the small set of integers
that are rich with respect to the previously chosen ones. From this, it
will be easy to obtain an upper bound on the number of cores of a given size.

\begin{claim}
  \label{claim:rich-orderings}
  Suppose that $I$ is a core of size $m$. Then there are at
  least $m!/2$ orderings $(i_1, \dotsc, i_m)$ of the elements of $I$ such that
  all but at most
  \begin{equation}
    \label{eq:bad-indices}
    \left(\frac{K''\log(1/p)}{Np^{k/2}}\right)^{\frac{1}{k-1}} \cdot m
  \end{equation}
  indices $m'\in \br m$ are rich.
\end{claim}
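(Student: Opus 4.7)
The plan is to sample a uniformly random ordering $\pi$ of the elements of $I$, show that the expected number of non-rich indices is at most $M_0/2$ with $M_0 := (K''\log(1/p)/(Np^{k/2}))^{1/(k-1)} m$, and apply Markov's inequality. For each $i \in I$, write $m' = \pi^{-1}(i)$ and $S_i := \{j \in I : \pi^{-1}(j) < m'\}$; conditional on $\{\pi^{-1}(i) = m'\}$, the set $S_i$ is uniformly distributed among the $(m'-1)$-subsets of $I \setminus \{i\}$. The random variable $R_i := A_k(S_i; i)$ counts the $k$-APs in $I$ through $i$ whose other $k-1$ elements all lie in $S_i$, and index $m'$ is non-rich precisely when $R_{i_{m'}}$ falls below the threshold in~\eqref{eq:i-rich}.

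For the $k$-APs $A \subseteq I$ through $i$, setting $B_A := A \setminus \{i\}$ (a $(k-1)$-element subset of $I\setminus\{i\}$), the Janson parameter from Lemma~\ref{lemma:hyper-Janson}, applied with $\Omega = I \setminus \{i\}$, $s = m'-1$, and $t = m-1$, is
\[
\mu_{i,m'} = A_k(I; i) \cdot \left(\frac{m'-1}{m-1}\right)^{k-1} \ge \frac{Np^{k/2}}{K'\log(1/p)} \cdot \left(\frac{m'-1}{m}\right)^{k-1},
\]
where the last inequality uses Claim~\ref{claim:kap-core-property}. Because any pair of integers is contained in at most $\binom{k}{2}$ $k$-APs, a routine count gives the pair-dependency bound $\Delta \le C_k \mu_{i,m'}$, and Lemma~\ref{lemma:hyper-Janson} (with $\eps = 1/2$) then yields
\[
\Pr[R_i \le \mu_{i,m'}/2 \mid \pi^{-1}(i) = m'] \le 2\exp(-c_k \mu_{i,m'}).
\]

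The final step is to fix $K''$ sufficiently large in terms of $K'$ and $k$ so that, for every $m' \ge M_0/4$, the non-rich threshold falls below $\mu_{i,m'}/2$ (an easy comparison once $K'' \ge 2K'$). Indices $m' \le M_0/4$ contribute at most $M_0/4$ to $\Ex[\text{non-rich}]$. For $m' > M_0/4$, the substitution $u = m'/M_0$ gives $\mu_{i,m'} \ge c_k'(K''/K') u^{k-1}$, and the tail sum is bounded by $M_0 \int_{1/4}^\infty 2\exp(-c_k''(K''/K') u^{k-1})\,du$, which can be made at most $M_0/4$ by taking $K''$ large. Combining yields $\Ex[\text{non-rich}] \le M_0/2$, and Markov's inequality finishes the proof. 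The main technical obstacle is the quantitative interplay between the position-dependent threshold $((m'-1)/m)^{k-1}$ and the Janson concentration parameter $\mu_{i,m'}$, resolved by a single large-enough choice of $K''$ in analogy with the choice of $K'$ in the proof of Proposition~\ref{prop:kap-ldp}.
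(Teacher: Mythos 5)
Your proposal is correct and follows the same route as the paper: a uniformly random ordering, conditioning on $\{i_{m'} = i\}$ so that the prefix is a uniform $(m'-1)$-subset of $I\setminus\{i\}$, Janson's inequality for the hypergeometric distribution (Lemma~\ref{lemma:hyper-Janson}) with the same $\mu_{m'}(i)$ and the $\Delta\le O_k(\mu)$ pair-dependency bound, the lower bound on $\mu$ via Claim~\ref{claim:kap-core-property}, and finally a first-moment/Markov argument. The only (minor) difference is cosmetic: you split the sum over positions at $m'=M_0/4$ and bound the tail by an integral, whereas the paper bounds the whole sum at once via $\sum_{m'\ge 2}\exp(-(\alpha(m'-1)/m)^{k-1})\le 2m/\alpha$; both yield $\Ex[\text{\#non-rich}]\le M_0/2$ for $K''$ large enough. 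Note also that your restriction to $m'\ge M_0/4$ when checking that the rich threshold is at most $\mu_{m'}(i)/2$ is unnecessary (the $((m'-1)/m)^{k-1}$ factor cancels, so $K''\ge 2K'$ suffices for all $m'$), but this does not affect the correctness of the argument.
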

\begin{proof}
  Let $(i_1, \dotsc, i_m)$ be a uniformly chosen random ordering of the
  elements of $I$. Fix integers $m'\in \br m$ and $i \in I$ and condition on
  the event $\{i_{m'} = i\}$. Under this conditioning, the set $\{i_1, \dotsc,
  i_{m'-1}\}$ is a uniformly random $(m'-1)$-element subset of $I \setminus
  \{i\}$. Therefore, we may use Janson's inequality for the hypergeometric distribution
  (Lemma~\ref{lemma:hyper-Janson}) to get an upper bound for the probability
  that the given $m'$ is not rich. It follows from the definition
  that $m'=1$ is trivially rich, so assume $m'\geq 2$. Let $\cB_i$ be the
  collection of all $(k-1)$-element subsets of $I \setminus \{i\}$ that form a
  $k$-AP with $i$. Define
  \[
    \mu_{m'}(i) = \sum_{B \in \cB_i} \left(\frac{m'-1}{m-1}\right)^{|B|} = A_k(I;
    i) \cdot \left(\frac{m'-1}{m-1}\right)^{k-1}
  \]
  and, writing $B\sim B'$ to mean that $B\neq B'$ and $B\cap B' \neq
  \emptyset$,
  \[
    \Delta_{m'}(i) = \sum_{\substack{B, B' \in \cB_i \\ B \sim B'}}
    \left(\frac{m'-1}{m-1}\right)^{|B \cup B'|}.
  \]
  Since for a given $B \in \cB_i$, there are fewer than $k^3$ sets $B' \in \cB_i$ such that $B \cap B' \neq \emptyset$, we have $\Delta_{m'}(i)  \le \mu_{m'}(i) \cdot k^3$.
  It follows from Claim~\ref{claim:kap-core-property} that
  \[
    \mu_{m'}(i) \geq \frac{Np^{k/2}}{K'
    \log(1/p)} \cdot \left(\frac{m'-1}{m}\right)^{k-1},
  \]
  which, provided that $K''$ is sufficiently large, is at least twice as large as the right-hand side of~\eqref{eq:i-rich} with $|I'| = m'-1$.
  Hence, by Lemma~\ref{lemma:hyper-Janson} with $\eps = 1/2$,
  \[
    \begin{split}
      \Pr\left(\text{$m'$ is not rich} \mid i_{m'} = i\right) & \le
      2\exp\left(-\frac{\mu_{m'}(i)^2}{8\big(\mu_{m'}(i) +
      \Delta_{m'}(i)\big)}\right)\\ &\le
      2\exp\left(-\frac{\mu_{m'}(i)}{9k^3}\right) \\
      & \le 2\exp\left(-\frac{Np^{k/2}}{9k^3K'\log(1/p)} \cdot
      \left(\frac{m'-1}{m}\right)^{k-1}\right).
    \end{split}
  \]
  Since this upper bound is independent of $i$, one may replace the conditional probability above with the unconditional one.
  Letting $\cX \subseteq \br m$ denote the (random) set of non-rich indices, we then find that
  \[
    \Ex\big[|\cX|\big] \le 2 \sum_{m'=2}^{m}
    \exp\left(-\frac{Np^{k/2}}{9k^3 K'\log(1/p)} \cdot
    \left(\frac{m'-1}{m}\right)^{k-1}\right).
  \]
  Since for every $\alpha>0$, we have
  \[
    \sum_{m'=2}^{m} \exp\left(-\left(\alpha\cdot
    \frac{m'-1}{m}\right)^{k-1}\right)
    \leq
    \int_0^\infty e^{-(\alpha x/m)^{k-1}}\, dx
    = \frac{m}{\alpha} \int_0^\infty e^{-y^{k-1}}\, dy
    \leq \frac{2m}{\alpha}
  \]
  we obtain
  \[ \e\big[|\cX|\big] \leq 4\cdot \left(\frac{9k^3 K'
  \log(1/p)}{Np^{k/2}}\right)^{\frac1{k-1}}\cdot m.\]
  The assertion of the claim now follows from Markov's inequality, provided that $K''$ is sufficiently large.
\end{proof}

Equipped with the above facts, we can now prove the desired upper bound on the
number of cores of size $m$. For a set $\cX \subseteq \br m$, let $\cS_m(\cX)$
denote the family of all sequences of $m$ distinct elements of $\br{N}$ such
that every index $m' \notin \cX$ is rich. 
To control the number of sequences in $\cS_m(\cX)$, note that we can pick the
first element of the sequence arbitrarily and, for every subsequent index $m'$,
bound the number of possible values for the $m'$th element of the sequence
either by appealing to \eqref{eq:kap-number-rich}, if $m' \notin \cX$, or simply by $N$, otherwise.
Thus,
\[
  \begin{split}
    \frac{\left| \cS_m(\cX) \right|}{m!}
    &\leq \frac1{m!}\cdot N\cdot N^{|\cX|}
    \cdot
    \prod_{m'=2}^m 
    \left(k K'K''\cdot m \big(\log(1/p)\big)^2 \cdot
    \left(\frac{m}{m'-1}\right)^{k-3} \right)\\
    &\leq N\cdot N^{|\cX|}
    \cdot
    \left(k K'K''\cdot \big(\log(1/p)\big)^2\right)^m
    \cdot
    \prod_{m'=1}^m 
    \left(\frac{m}{m'}\right)^{k-2}.
  \end{split}
\]
Since $\prod_{m'=1}^m \left(\frac{m}{m'}\right)^{k-2} \leq
e^{(k-2)m}$, we find that, whenever $C$ is sufficiently large,
\[
 \frac{\left| \cS_m(\cX) \right|}{m!}
  \leq e^{(|\cX|+1)\log N}\cdot e^{3m \log\log(1/p)}.
\]
Finally, denote by $\core_m$ the set of all cores of size $m$.
Claim~\ref{claim:rich-orderings} implies that
\[
  |\core_m| \le \frac{2}{m!}\sum_{\cX} \big|\cS_m(\cX)\big| \le
  2^{m+1} \cdot \max_\cX \frac{\big|\cS_m(\cX)\big|}{m!},
\]
where the sum and the maximum range over all $\cX \subseteq \br N$ 
of size at most $\left(\frac{K''\log(1/p)}{Np^{k/2}}\right)
^{\frac1{k-1}}m$. Hence,
\[
  |\core_m|
  \leq 2^{m+1} \cdot \exp\left(\left(\frac{K''\log
  (1/p)}{Np^{k/2}}\right)^\frac1{k-1}\cdot m\cdot \log N
  + \log N\right) \cdot e^{3m\log\log(1/p)}.
\]
Since we have assumed that $Np^{k/2} \geq C\log N$
and $p\leq C^{-2/k}$, then, whenever $C$ is sufficiently large, the above inequality implies that
\[
  |\core_m|
  \leq (1/p)^{\eps m/4} \cdot \exp\left(\left(\frac{K''\log
  (1/p)}{Np^{k/2}}\right)^\frac1{k-1}\cdot m\cdot \log N\right)
  \leq (1/p)^{\eps m /2},
\]
where the last inequality can be seen, for example, by distinguishing between the cases $p\leq N^{-1/k}$ (in which case $\log{N} = \Theta(\log(1/p))$) and $p > N^{-1/k}$ (where $Np^{k/2}> N^{1/k}$).
 This completes
the proof of Proposition~\ref{prop:kap-ldp}.

\section{Counting small subgraphs---a graph-theoretic interlude}
\label{sec:graph-theory-preliminaries}

As mentioned in the introduction, this section will collect some graph-theoretical results which will be required to analyze the localized regime of $X_{n,p}^{K_r}$ and $X_{n,p}^H$ for connected, regular graphs $H$ in Sections~\ref{sec:cliques} and~\ref{sec:H}, respectively.

The main goal is to bound the maximum number of embeddings of a given graph $J$ into a larger graph $G$ in terms of the number of vertices and edges in $G$, where
we are interested both in bounding the number of such embeddings
\emph{globally} (i.e., without additional restrictions on the image) and \emph{locally}
(where we require that the image contain a particular edge of $G$). These estimates will play a crucial role in translating conditions \ref{item:thmcore-bias}--\ref{item:thmcore-mindeg} from Theorem~\ref{thm:genpackaged} into structural restrictions on core graphs.

In the first two subsections, we collect results related to bounding the number
of embeddings globally; 
the fractional independence number of a graph (defined below)
plays an important role in this part. The next subsection contains bounds on the number of local embeddings, where the image of the embedding is required to contain a particular edge. In the final subsection, we establish several stability results (in the sense of extremal combinatorics) on graphs allowing a nearly maximal number of embeddings of stars and cliques; these results will be of use when establishing Theorem~\ref{thm:krstructure}.

Recall that
$\Emb(J,G)$ denotes the set of embeddings of $J$ into $G$ and,
for every edge $uv$ of $G$, $\Emb(J,G;uv)$ denotes the subset of $\Emb(J,G)$
containing all embeddings that map an edge of $J$ to $uv$. 

\subsection{Fractional graph theory}

 A \emph{fractional independent set} in a graph $J$ is an assignment $\alpha
  \colon V(J) \to [0,1]$ that satisfies $\alpha_u + \alpha_v \le 1$ for every
  edge $uv$ of $J$. The \emph{fractional independence number} of $J$, denoted
  by $\alpha_J^*$, is the largest value of $\sum_{v\in V(J)} \alpha_v$ among
  all fractional independent sets $\alpha$ in $J$. The following result is folklore; we include a proof for completeness.

\begin{lemma}
  \label{lemma:fractional-duality}
  Every graph $J$ admits a fractional independent set $\alpha$ with $\sum_{v
  \in V(J)} \alpha_v = \alpha_J^*$ such that $\alpha_v \in \{0, \frac12, 1\}$ for
  every $v \in V(J)$. Moreover, there is a partition $V(J) = V_1 \cup V_2$ with
  $|V_1|/2 + |V_2| = \alpha_J^*$ such that $V_1$ can be covered by a collection
  of vertex-disjoint edges and cycles of $J$.
\end{lemma}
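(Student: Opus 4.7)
The plan is to deduce the lemma from two classical LP facts: the half-integrality of the fractional independent set polytope and Balinski's theorem on the half-integrality of the fractional matching polytope. Starting with any optimal $\alpha$, I would put $S_- = \{v : 0 < \alpha_v < 1/2\}$ and $S_+ = \{v : 1/2 < \alpha_v < 1\}$. The edge constraint $\alpha_u + \alpha_v \le 1$ precludes edges inside $S_+$, edges between $S_+$ and $\{v : \alpha_v \ge 1/2\}$, and edges from $S_-$ to $\{v : \alpha_v = 1\}$, so both perturbations $\alpha \pm \eps(\mathbf{1}_{S_-} - \mathbf{1}_{S_+})$ remain feasible for all small $\eps > 0$. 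Optimality forces $|S_-| = |S_+|$, and iterating with the largest admissible $\eps$ drives every coordinate into $\{0, 1/2, 1\}$ without changing the objective. Writing $V_i = \{v : \alpha_v = i/2\}$ for $i \in \{0, 1, 2\}$, we have $\alpha_J^* = |V_2| + |V_1|/2$, and feasibility rules out edges inside $V_2$ as well as between $V_1$ and $V_2$.

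\textbf{Step 2.} The heart of the argument is to show that $J[V_1]$ admits a fractional perfect matching $y \colon E(J[V_1]) \to [0, 1]$ with $\sum_{e \ni v} y_e = 1$ for every $v \in V_1$. I would argue by contradiction: if no such $y$ exists, then the fractional matching LP on $J[V_1]$ has value strictly below $|V_1|/2$, so LP duality supplies a fractional vertex cover $z \colon V_1 \to [0, 1]$ of $J[V_1]$ (truncating any coordinate above $1$) with $\sum_v z_v < |V_1|/2$. Setting $\alpha'_v = 1 - z_v$ for $v \in V_1$ and $\alpha'_v = \alpha_v$ otherwise, I would check feasibility edge by edge; this works because Step 1 forbids $V_1$-$V_2$ edges, leaving only the $V_1$-$V_1$ case (where $\alpha'_u + \alpha'_v = 2 - z_u - z_v \le 1$ by the cover inequality) and the $V_1$-$V_0$ case (where $\alpha'_u + \alpha'_v = 1 - z_u \le 1$ trivially). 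The resulting $\sum_v \alpha'_v = |V_2| + |V_1| - \sum_v z_v$ strictly exceeds $\alpha_J^*$, contradicting the optimality of $\alpha$.

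\textbf{Step 3.} Once a fractional perfect matching on $J[V_1]$ is available, Balinski's theorem gives a half-integral optimum whose support is a vertex-disjoint union of edges (of weight $1$) and odd cycles (of weight $1/2$ per edge). Because this matching is perfect on $V_1$, its support covers every vertex of $V_1$, and so $V_1$ decomposes as the vertex-disjoint union of edges and (odd) cycles of $J$. Together with $V_2$, this yields the partition required by the lemma, with $|V_1|/2 + |V_2| = \alpha_J^*$.

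\textbf{Main obstacle.} I expect the duality argument in Step 2 to be the point requiring the most care: the feasibility of $\alpha'$ hinges on the preparatory structural work of Step 1 (no $V_1$-$V_2$ edges), with the remaining edge types handled by the cover inequality and by nonnegativity of $z$. The other two steps (the half-integrality of $\alpha$ and of the fractional matching) are standard results whose proofs both reduce to the same perturbation idea used at the start of Step 1.
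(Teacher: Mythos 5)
Your route is genuinely different from the paper's. You work directly with the LP, establishing half-integrality of the fractional independent set polytope by a perturbation argument, and then feed a fractional perfect matching of $J[V_1]$ into Balinski's theorem on the fractional matching polytope. The paper instead passes to the bipartite double cover $J'$, applies the K\H{o}nig--Egerv\'ary theorem there, projects the resulting matching $M'$ back to $J$, and reads off the partition $V_1 \cup V_2$ from the component structure of $\pi(M')$ rather than from the level sets of $\alpha$. Your approach is more modular (assuming the half-integrality results as known), the paper's is more self-contained.

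There is, however, a genuine gap. The sets $V_1 = \{v : \alpha_v = \tfrac12\}$ and $V_2 = \{v : \alpha_v = 1\}$ that you hand to the lemma need not partition $V(J)$: the set $V_0 = \{v : \alpha_v = 0\}$ may be nonempty, and your Step~3 leaves it stranded. For instance, if $J = K_{1,2}$, the half-integral optimum is $\alpha = (0,1,1)$, so your $V_1 = \emptyset$ and $V_2$ consists of the two leaves, and the centre vertex is covered by neither. Simply merging $V_0$ into $V_1$ or into $V_2$ destroys the required identity $|V_1|/2 + |V_2| = \alpha_J^*$, so the partition in the lemma genuinely cannot be the level-set partition.

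The gap is fixable by one more optimality argument: if some $S \subseteq V_0$ had $|N_J(S) \cap V_2| < |S|$, then the perturbation $\alpha'$ that sets $\alpha'_v = \tfrac12$ on $S \cup (N_J(S)\cap V_2)$ and leaves $\alpha$ unchanged elsewhere would be feasible (feasibility uses the absence of $V_1$--$V_2$ and $V_2$--$V_2$ edges, which you established) and would strictly increase the objective. So Hall's condition holds and $V_0$ can be matched into $V_2$ by edges of $J$. Augmenting your $V_1$ by $V_0$ together with the matched partners in $V_2$, and deleting those partners from $V_2$, yields a genuine partition of $V(J)$ with $|V_1|/2 + |V_2|$ unchanged, and the matching edges extend your edge-and-cycle cover of $\{\alpha_v = \tfrac12\}$ to all of the new $V_1$. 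With this step inserted your argument is complete.
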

\begin{proof}
  Let $J'$ be the bipartite double cover of $J$, that is, the graph with
  vertex set $V(J) \times \{1, 2\}$ whose edges are all pairs $\{(u, 1),
  (v,2)\}$ such that $uv \in E(J)$. Moreover, let $\pi \colon V(J) \times \{1, 2\} \to V(J)$ be the projection
  onto the first coordinate. The K\H{o}nig--Egerv\'ary theorem
  (see, e.g., \cite[Theorem~8.32]{BonMur08} or \cite[Theorem~2.1.1]{Die17})
  implies that $J'$ contains a matching $M'$ and an independent set $I'$ such
  that $|I'| + |M'| = v_{J'}$. Define $\alpha \colon V(J) \to \{0, \frac12, 1\}$
  by letting $\alpha_v = |\pi^{-1}(v) \cap I'| / 2$ for every $v\in V(J)$.
  Since $I'$ is an independent set in $J'$, one can see that
  $\alpha$ is a fractional independent set
  with $\sum_{v \in V(J)} \alpha_v = |I'|/2$. In particular,
  we have
  \begin{equation}\label{eq:fracdual1} 
    v_{J'} - |M'| = 2\sum_{v\in V(J)}\alpha_v\leq 2 \alpha_J^*.
  \end{equation}
  Since $\pi$ induces a projection of $J'$ onto $J$, we can define $M = \pi(M')$ to be the image of the matching
  $M'$. Since $M\subseteq J$, we have $\alpha_J^* \leq \alpha_M^*$. Moreover, as $M'$ is a matching in $J'$,
  we see that $M$ has maximum degree at most two and thus each nontrivial connected component of $M$
  is either a cycle or a path. Let $V_2 \subseteq V(J)$ comprise all isolated vertices of $M$
  and one arbitrarily chosen endpoint of each path of even length; let $V_1 = V(J) \setminus V_2$.
  By construction, each connected component of $M[V_1]$ is either a cycle or a path of odd length.
  Since the fractional independence number of every cycle and every path of odd length is exactly half
  its number of vertices, it follows that
  $\alpha_M^*\leq \alpha^*_{M[V_1]} + |V_2| = |V_1|/2 + |V_2|$.
  It is clear that $V_1$ can be covered with vertex-disjoint edges and cycles of $M$ and thus also of $J$.
  We now claim that $|V_1| \ge |M'|$. To see this, fix a connected component $L$ of $M$ and observe
  that $\pi^{-1}(L)$ has at most $e_L$ edges unless $L$ is a single edge, in which case $\pi^{-1}(L)$ has at most two
  edges. Therefore,
  \[
    e_{\pi^{-1}(L)} \le
    \begin{cases}
      v_L - 1 & \text{if $L$ is a path of length at least two}, \\
      v_L & \text{otherwise}.
    \end{cases}
  \]
  Let $\cC(M)$ denote the nontrivial connected components of $M$. We have
  \[
    \begin{split}
      |M'| & = \sum_{L \in \cC(M)} e_{\pi^{-1}(L)} \le \sum_{L \in \cC(M)} v_L - \1[\text{$L$ is a path of length at least two}] \\
      & \le \sum_{L \in \cC(M)} v_L - \1[\text{$L$ is a path of even length}] = |V_1|.
    \end{split}
  \]
  Consequently, \eqref{eq:fracdual1} shows that
  \[
    |V_1|+ 2|V_2|  = 2v_J - |V_1| \leq 2v_J - |M'| = v_{J'} - |M'|   =
    2\sum_{v\in
  V(J)}\alpha_v
    \leq 2\alpha_J^* \leq 2\alpha_M^* \leq |V_1|+2|V_2|,
  \]
  and so 
  $\sum_{v\in V(J)}\alpha_v =\alpha_J^* = |V_1|/2 + |V_2|$.
\end{proof}

The following lemma is implicit in~\cite[Appendix A]{janson2004upper}.
\begin{lemma}
  \label{lemma:eJ-alphaJ-clique}
  Suppose that $J$ is a nonempty subgraph of a connected, $\Delta$-regular
  graph $H$. Then
  \[ e_J \leq 
  \Delta\cdot(v_J-\alpha_J^*)
  \leq
  \Delta\cdot\alpha_J^*
  . \]
  If the first inequality is tight, then
  \begin{enumerate}[label=(Q\arabic*)]
  \item
    \label{item:QH-regular}
    $J = H$ or
  \item
    \label{item:QH-bipartite}
    $J$ admits a bipartition $V(J) = A \cup B$ such that $\deg_Ja = \Delta$ for all $a \in A$.
  \end{enumerate}
  If both inequalities are tight, then $J = H$.
\end{lemma}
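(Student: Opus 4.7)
The plan is to derive both inequalities from standard fractional LP duality, applied through Lemma~\ref{lemma:fractional-duality}. The second inequality is immediate: the constant assignment $\alpha_v = 1/2$ is a fractional independent set, witnessing $\alpha_J^* \geq v_J/2$, which is equivalent to $v_J - \alpha_J^* \leq \alpha_J^*$.

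For the first inequality, I would invoke Lemma~\ref{lemma:fractional-duality} to obtain a fractional independent set $\alpha \colon V(J) \to \{0, \tfrac12, 1\}$ with $\sum_v \alpha_v = \alpha_J^*$, and then pass to its complement $y_v := 1 - \alpha_v$, which is a fractional vertex cover satisfying $y_u + y_v \geq 1$ on every edge. A standard double counting then yields
\[
e_J \leq \sum_{uv \in E(J)} (y_u + y_v) = \sum_{v \in V(J)} y_v \deg_J(v) \leq \Delta \sum_v y_v = \Delta(v_J - \alpha_J^*),
\]
which is the desired bound.

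For the tightness characterisation of the first inequality, I would read off the equality conditions from both estimates above: the first forces $\alpha_u + \alpha_v = 1$ for every $uv \in E(J)$, and the second forces $\deg_J(v) = \Delta$ whenever $\alpha_v < 1$. Setting $V_a = \{v : \alpha_v = a\}$ for $a \in \{0, \tfrac12, 1\}$, the edge condition allows only edges inside $V_{1/2}$ or between $V_0$ and $V_1$. If $V_{1/2}$ is empty, then $A := V_0$ and $B := V_1$ form a bipartition of $V(J)$ with $\deg_J(a) = \Delta$ for every $a \in A$, giving~\ref{item:QH-bipartite}. Otherwise, any $v \in V_{1/2}$ satisfies $\deg_J(v) = \Delta = \deg_H(v)$ and has all its $J$-neighbours inside $V_{1/2}$; hence every $H$-neighbour of $v$ lies in $V_{1/2}$ and the edges between them lie in $J$. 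Iterating along paths in the connected graph $H$ then forces $V(H) = V_{1/2}$ and $J = H$, giving~\ref{item:QH-regular}.

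For the final assertion, I would note that tightness of the second inequality gives $\alpha_J^* = v_J/2$, whence tightness of the first yields $e_J = \Delta v_J/2$, i.e.\ the average degree of $J$ equals $\Delta$. Combined with $\deg_J(v) \leq \deg_H(v) = \Delta$, this forces $J$ to be $\Delta$-regular, and the same $H$-propagation argument as above then gives $J = H$. The only step that really demands any care is the propagation inside $V_{1/2}$ via the connectedness of $H$; everything else is LP duality dressed up as elementary double counting.
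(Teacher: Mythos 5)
Your proof is correct and follows essentially the same approach as the paper's: both invoke Lemma~\ref{lemma:fractional-duality} to get a half-integral optimal fractional independent set, derive the chain of inequalities by the same double counting (you phrase it via the fractional vertex cover $y_v = 1-\alpha_v$, the paper via $2 - \alpha_u - \alpha_v$, but this is purely notational), and analyse the equality cases via the same partition of $V(J)$ into $\alpha$-levels, with the same propagation-along-$H$ argument to conclude $J = H$ when $V_{1/2}$ is nonempty. The only genuine (and harmless) difference is in the final assertion: you deduce $\Delta$-regularity of $J$ directly from $2e_J = \Delta v_J$ together with $\deg_J \le \Delta$, which is marginally cleaner than the paper's route through $|A| = |B|$ in the bipartite case.
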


\begin{remark}
  \label{remark:eJ-alphaJ-clique}
  Since every graph $J$ is a subgraph of the complete graph of $v_J$ vertices,
  Lemma~\ref{lemma:eJ-alphaJ-clique} implies that
  \[
    e_J \le (v_J-1) \cdot (v_J - \alpha_J^*).
  \]
  Moreover, equality holds if and only if $J$ is complete, $J$ is empty, or $J = K_{1,v_J-1}$.
\end{remark}

\begin{proof}[Proof of Lemma~\ref{lemma:eJ-alphaJ-clique}] By Lemma~\ref{lemma:fractional-duality}, $J$ has a
  fractional independent set
  $\alpha$ such that $\alpha_v \in \{0,\frac12,1\}$.
  Then
  \begin{equation}
    \label{eq:optimal-alpha}
    e_J \leq \sum_{uv\in E(J)}(2-\alpha_u-\alpha_v) =\sum_{v\in V(J)} (1-\alpha_v) \deg_Jv \le \Delta \cdot \sum_{v\in V(J)} (1-\alpha_v)  = \Delta \cdot (v_J - \alpha_J^*),
  \end{equation}
  which is the first inequality. For the second inequality, note that
  the function $\alpha\colon V(J)\to [0,1]$ defined by
  $\alpha_v = 1/2$ is a fractional independent set, so
  $\alpha^*_J \geq v_J/2$.

  Assume now that $e_J = \Delta\cdot (v_J-\alpha^*_J)$. Then both
  inequalities in~\eqref{eq:optimal-alpha} are equalities;
  this implies
  $\alpha_u + \alpha_v = 1$ for every edge $uv
  \in E(J)$ and $\deg_J(v) = \Delta$ whenever
  $\alpha_v \neq 1$.
  Let $A$, $B$, and $C$ denote the sets of vertices that
  $\alpha$ maps to $0$, $1$, and $1/2$, respectively.
  Each vertex in $A \cup C$ has degree $\Delta$ and each edge of $J$ has
  either both endpoints in $C$ or one endpoint in each of $A$ and $B$. In
  particular, if $C$ is not empty, then it induces a $\Delta$-regular graph
  and hence $C = V(J)$ and $J = H$, as $H$ is connected and $\Delta$-regular.
  Otherwise,
  if $C$ is empty, then $A \cup B$ is a bipartition of $J$ and all vertices
  of $A$ have degree $\Delta$.
  
  Lastly, suppose that $e_J = \Delta\cdot \alpha_J^*$,
  which implies $e_J = \Delta\cdot (v_J-\alpha^*_J)$.
  Let $A,B,C$ be the same partition as above.
  If $C$ is nonempty, then $J$ is $\Delta$-regular, and we are done.
  Otherwise, 
  \[ |A| = e_J/\Delta = 
  \alpha_J^* = v_J - e_J/\Delta = v_J-|A|= |B|. \]
  Therefore, every vertex of $B$ has degree
  $\Delta$ and $J = H$.
\end{proof}

\subsection{Global embedding bounds}\label{sec:GlobalEmbedding}

The main result of this section is the following theorem of Janson, Oleszkiewicz, and
Ruci\'nski~\cite{janson2004upper}. A closely related bound
that does not depend on the number of vertices in $G$ was obtained earlier by
Alon~\cite{alon1981number} (see also~\cite{FriKah98} for a short proof). The dependence on the number of vertices will be essential in the case where $J$ is a (double) star, see Figure~\ref{fig:doublestars}.

\begin{theorem}[{\cite{janson2004upper}}]
  \label{thm:max-copies}
  For every nonempty graph $J$ without isolated vertices and every graph $G$ with $n$ vertices,
  \[
    |\Emb(J, G)| \le (2e_G)^{v_J - \alpha_J^*} \cdot \min\{2e_G, n\}^{2\alpha_J^* - v_J}
  \]
\end{theorem}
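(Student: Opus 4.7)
The plan is to combine Lemma~\ref{lemma:fractional-duality} and Lemma~\ref{lemma:NCell} by decomposing the embedding-counting task into first embedding a convenient subgraph that covers part of $V(J)$ and then placing the remaining vertices freely. Apply Lemma~\ref{lemma:fractional-duality} to obtain a partition $V(J) = V_1 \cup V_2$ with $|V_1|/2 + |V_2| = \alpha_J^*$ such that $V_1$ is covered by a collection $F$ of vertex-disjoint edges and cycles of $J$. Combined with $|V_1| + |V_2| = v_J$, this yields $|V_1| = 2(v_J - \alpha_J^*)$ and $|V_2| = 2\alpha_J^* - v_J$, which is nonnegative since the constant assignment $\alpha_v \equiv 1/2$ is a feasible fractional independent set.

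Every embedding of $J$ into $G$ restricts to an embedding of $F$ into $G$ and to an arbitrary map of $V_2$ into $V(G)$, so $|\Emb(J,G)|$ is at most the number of embeddings of $F$ into $G$ times $|V(G)|^{|V_2|}$ (where we will tighten the second factor below). Since $F$ is a disjoint union of edges and cycles, each embedding of $F$ factors as a product of embeddings of its components. An individual edge embeds into $G$ in at most $2e_G$ ways, and a cycle of length $\ell$ embeds in at most $(2e_G)^{\ell/2}$ ways by Lemma~\ref{lemma:NCell}. Since each component on $\ell'$ vertices contributes at most $(2e_G)^{\ell'/2}$ embeddings, multiplying across components gives at most $(2e_G)^{|V_1|/2} = (2e_G)^{v_J - \alpha_J^*}$ embeddings of $F$.

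For the placement of $V_2$, each vertex admits at most $\min\{2e_G, n\}$ choices for its image. The bound $n$ is trivial. For the bound $2e_G$, note that since $J$ has no isolated vertices, the image of every vertex of $J$ under an embedding must be incident to at least one edge of $G$, and the number of non-isolated vertices of $G$ is at most $2e_G$. Multiplying over the $|V_2| = 2\alpha_J^* - v_J$ vertices yields a factor of $\min\{2e_G, n\}^{2\alpha_J^* - v_J}$, and combining this with the bound from the previous paragraph gives the claimed inequality.

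There is no serious obstacle here; the proof is essentially a clean packaging of the tools established earlier in the section. The only points requiring attention are the observation that the fractional-independence partition supplied by Lemma~\ref{lemma:fractional-duality} already has exactly the half-integer structure needed to marry the $(2e_G)^{\ell/2}$ bound of Lemma~\ref{lemma:NCell} with the partition sizes, and the use of the no-isolated-vertex hypothesis on $J$ to restrict the image of each $V_2$-vertex to the $\le 2e_G$ non-isolated vertices of $G$.
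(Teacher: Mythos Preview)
Your proof is correct and follows essentially the same approach as the paper's: both apply Lemma~\ref{lemma:fractional-duality} to obtain the partition $V(J)=V_1\cup V_2$, bound the embeddings on $V_1$ component-by-component via Lemma~\ref{lemma:NCell}, and use the no-isolated-vertex hypothesis to restrict each $V_2$-vertex to the at most $\min\{2e_G,n\}$ non-isolated vertices of $G$. The paper packages the second step by passing to an explicit spanning subgraph $J'\subseteq J$ that adjoins one edge through each $V_2$-vertex, whereas you argue directly about restricting embeddings of $J$; the content is identical.
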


We derive Theorem~\ref{thm:max-copies} from Lemma~\ref{lemma:fractional-duality}  and the following result due to Alon~\cite{alon1981number}, which establishes the theorem for the case where $J$ is a cycle.
\begin{lemma}
  \label{lemma:NCell}
  Let $C_\ell$ denote the cycle of length $\ell$.
  For every $\ell \ge 3$ and every graph $G$,
  \[
    |\Emb(C_\ell,G)| \le (2e_G)^{\ell/2}.
  \]
\end{lemma}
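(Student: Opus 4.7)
The plan is to give a spectral proof using the adjacency matrix of $G$. Let $A$ denote the adjacency matrix of $G$ and $\lambda_1, \ldots, \lambda_n$ its eigenvalues (real, since $A$ is symmetric).

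The first step is to observe that every embedding $(v_1, \ldots, v_\ell) \in \Emb(C_\ell, G)$ corresponds to a closed walk of length $\ell$ in $G$ (with the additional property that the $\ell$ vertices are distinct). Since the total number of closed walks of length $\ell$ in $G$ equals $\mathrm{tr}(A^\ell)$, this immediately yields $|\Emb(C_\ell, G)| \le \mathrm{tr}(A^\ell) = \sum_{i=1}^n \lambda_i^\ell$.

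The second step is purely analytic. Using $\mathrm{tr}(A^\ell) \le \sum_i |\lambda_i|^\ell$ together with the monotonicity of the $\ell^p$-norms on $\RR^n$ (namely, $\|x\|_\ell \le \|x\|_2$ whenever $\ell \ge 2$), we obtain
\[
  \sum_{i=1}^n |\lambda_i|^\ell \le \left(\sum_{i=1}^n \lambda_i^2\right)^{\ell/2}.
\]
Finally, $\sum_i \lambda_i^2 = \mathrm{tr}(A^2)$, which counts the closed walks of length $2$ in $G$ and hence equals $2e_G$. Combining these bounds gives the claim.

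I expect no serious obstacle: both steps are standard. The only mild point to double-check is that the argument handles odd $\ell$ (where $\sum \lambda_i^\ell$ need not be nonnegative), but this is resolved by passing through absolute values before applying the norm inequality. An alternative combinatorial route would be a Cauchy--Schwarz argument splitting the cycle into two walks of length $\lceil \ell/2 \rceil$ and $\lfloor \ell/2 \rfloor$ meeting at two fixed vertices, but the spectral argument is cleaner and gives the stated bound directly for all $\ell \ge 3$.
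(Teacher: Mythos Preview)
Your spectral proof is correct and complete. The three steps are all sound: embeddings of $C_\ell$ are closed walks on distinct vertices (hence bounded by $\mathrm{tr}(A^\ell)$), the passage through absolute values handles odd $\ell$, and the $\ell^p$-monotonicity step $\sum_i |\lambda_i|^\ell \le (\sum_i \lambda_i^2)^{\ell/2}$ together with $\mathrm{tr}(A^2)=2e_G$ finishes the argument.

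The paper takes a different, purely combinatorial route. It applies Cauchy--Schwarz to the quantities $c_e$ (the number of $\ell$-cycles through each edge $e$), obtaining $|\Emb(C_\ell,G)|^2 \le 2e_G \cdot \sum_e 2c_e^2$, and then interprets $\sum_e 2c_e^2$ as (at most) the number of homomorphisms of the graph $C_\ell^*$ obtained by gluing two $\ell$-cycles along an edge. Since $C_\ell^*$ has a perfect matching of size $\ell-1$, this homomorphism count is at most $(2e_G)^{\ell-1}$. Interestingly, the paper's own remark after the lemma alludes to exactly your $L^p$-norm argument (attributing it to Rivin), though it states it only for prime $\ell$; your version shows that the primality restriction is unnecessary for the coarse bound in the lemma. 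Your argument is shorter and more direct; the paper's has the minor advantage of avoiding spectral language and staying consistent with the matching-based embedding bounds used elsewhere in that section.
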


\begin{remark}
  If $\ell$ is even, this follows immediately from the
  fact that $C_\ell$ contains a perfect matching of $\ell/2$ edges.
  If $\ell$ is prime, there is also a very short and pretty proof using
  the monotonicity of $L^p$ norms; see~\cite{rivin2002counting} for this proof and
  more precise estimates.
  The proof presented below works for all
  $\ell\geq 3$.
\end{remark}

\begin{proof}[Proof of Lemma~\ref{lemma:NCell}]
  For each edge $e \in E(G)$, denote by $c_e$ the number of copies of $C_\ell$ in $G$ that contain the edge $e$. Since $\sum_{e \in E(G)} c_e = \ell \cdot N(C_\ell, G)$, 
  where $N(C_\ell,G)$ is the number of copies of $C_\ell$ in $G$, it follows from the Cauchy--Schwarz inequality that
  \[
    |\Emb(C_\ell, G)|^2 = \big(2\ell \cdot N(C_\ell,G)\big)^2 \le 2e_G \cdot \sum_{e \in E(G)} 2c_e^2.
  \]
  Let $C_\ell^*$ be the graph obtained from gluing two copies of $C_\ell$ along an edge. In other words, $C_\ell^*$ is obtained from the cycle of length $2\ell-2$ by adding to it one longest chord. Observe that if $(L_1, L_2)$ is an ordered pair of copies of $C_\ell$ in $G$, both containing $e$, then there are at exactly two homomorphisms $\varphi \colon V(C_\ell^*) \to V(G)$ that map the two vertices of degree three in $C_\ell^*$ onto the endpoints of $e$ and the two copies of $C_\ell$ in $C_\ell^*$ onto $L_1$ and $L_2$, respectively. Letting $\Hom(C_\ell^*, G)$ be the collection of all homomorphisms from $C_\ell^*$ to~$G$, we may conclude that
  \[
    \sum_{e \in E(G)} 2c_e^2 \le |\Hom(C_\ell^*, G)| \le |\Hom\big((\ell-1) \cdot K_2, G\big)| \le (2e_G)^{\ell-1},
  \]
  where the second inequality holds because $C_\ell^*$ contains a perfect matching of $\ell-1$ edges.
\end{proof}

\begin{proof}[Proof of Theorem~\ref{thm:max-copies}]
  By Lemma~\ref{lemma:fractional-duality}, there is a partition of $V(J)$ into $V_1$ and $V_2$ such that $|V_1|/2+|V_2| = \alpha_J^*$ and $V_1$ can be covered by a collection $\cC$ of vertex-disjoint edges and cycles of $J$. Let $J'$ be the spanning subgraph comprising the edges and cycles of $\cC$ and one edge incident to every vertex in $V_2$. We claim that
  \[
    |\Emb(J', G)| \le \prod_{C \in \cC} |\Emb(C, G)| \cdot \min\{2e_G, n\}^{|V_2|}.
  \]
  Indeed, every embedding of $J'[V_1]$ decomposes into embeddings of the graphs in $\cC$, and there are at most $\min\{2e_G, n\}$ possible images for every vertex of $V_2$. By Lemma~\ref{lemma:NCell}, for every cycle $C \in \cC$,
  \[
    |\Emb(C,G)| \le (2e_G)^{v_C/2};
  \]
  the same inequality holds when $C$ is a single edge. Since every embedding of $J$ into $G$ is also an embedding of $J'$, we deduce that
  \[
    |\Emb(J, G)| \le \prod_{C \in \cC} (2e_G)^{v_C/2} \cdot \min\{2e_G, n\}^{|V_2|} = (2e_G)^{|V_1|/2} \cdot \min\{2e_G, n\}^{|V_2|}.
  \]
  Since $|V_1|/2 = v_J - \alpha_J^*$ and $|V_2| = 2\alpha_J^* - v_J $, this completes the proof.
\end{proof}

\subsection{Local embedding bounds}\label{section:LocalEmbedding}

We now state three lemmas that bound $|\Emb(J,G;uv)|$ from above.

\begin{lemma}
  \label{lemma:core-edge-regular}
  Suppose that $H$ is a $\Delta$-regular graph. For every graph $G$ and
  each $uv \in E(G)$,
  \[
    |\Emb(H,G; uv)| \le 4e_H \cdot (2e_G)^{\frac{v_H}{2} -
    \frac{2\Delta-1}{\Delta}} \cdot (4\deg_Gu \cdot \deg_G
    v)^{\frac{\Delta-1}{\Delta}}.
  \]
\end{lemma}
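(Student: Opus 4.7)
The plan is to first reduce the count $|\Emb(H,G;uv)|$ to a sum over the $2e_H$ oriented edges of $H$. Since any $\phi \in \Emb(H,G;uv)$ maps exactly one oriented edge $(x,y) \in \vec{E}(H)$ onto $(u,v)$, we have
\[
  |\Emb(H,G;uv)| = \sum_{(x,y) \in \vec{E}(H)} N_{x,y}^{u,v},
\]
where $N_{x,y}^{u,v} := |\{\phi \in \Emb(H,G) : \phi(x) = u,\ \phi(y) = v\}|$. It will thus suffice to bound each of the $2e_H$ summands by $2 \cdot (2e_G)^{v_H/2 - (2\Delta-1)/\Delta} \cdot (4\deg_Gu \cdot \deg_G v)^{(\Delta-1)/\Delta}$, which absorbed into the prefactor yields the claimed $4e_H$.

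For a fixed oriented edge $(x,y)$, I would set $A = N_H(x) \setminus \{y\}$, $B = N_H(y) \setminus \{x\}$, and $T = V(H) \setminus \{x,y\}$. Since $H$ is $\Delta$-regular, $|A| = |B| = \Delta-1$, and every embedding $\phi$ counted by $N_{x,y}^{u,v}$ restricts to an injective homomorphism from $H[T]$ to $G$ sending $A$ into $N_G(u)$ and $B$ into $N_G(v)$. The core step is to bound the number of such $\phi|_T$ via a fractional version of H\"older's inequality (a Finner/Loomis--Whitney-type bound) applied to a hypergraph on $T$ consisting of: (i) the edges of $H[T]$, each contributing a factor $2e_G$; (ii) singletons $\{z\}$ for $z \in A$, each contributing $\deg_G u$; and (iii) singletons $\{z\}$ for $z \in B$, each contributing $\deg_G v$. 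Weighting every pendant singleton by $1/\Delta$ makes the exponents on $\deg_G u$ and $\deg_G v$ equal to $(\Delta-1)/\Delta$ as required, and leaves residual vertex-coverage demands of $(\Delta-1)/\Delta$, $(\Delta-2)/\Delta$, and $1$ at vertices in $A \cup B \setminus (A \cap B)$, $A \cap B$, and $T \setminus (A \cup B)$, respectively.

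A short calculation, using the identities $|A \setminus B| + |B \setminus A| + 2|A \cap B| = 2(\Delta - 1)$ and $|T \setminus (A \cup B)| = v_H - 2\Delta + |A \cap B|$, shows that a fractional edge cover of $H[T]$ meeting these residual requirements with total weight exactly
\[
  \frac{(\Delta - 1)^2}{\Delta} + \frac{|T \setminus (A \cup B)| - |A \cap B|}{2} \;=\; \frac{v_H}{2} - \frac{2\Delta - 1}{\Delta}
\]
would deliver precisely the target exponent on $(2e_G)$. To produce such a cover, I plan to invoke a weighted variant of Lemma~\ref{lemma:fractional-duality}: the bipartite double cover of $H[T]$, combined with the K\H{o}nig--Egerv\'ary theorem, should yield a fractional $b$-matching matching our demands, after which the dual cover can be read off.

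The principal obstacle is this last step, namely verifying that the specified fractional edge cover of $H[T]$ exists and attains the claimed total weight. This hinges on the $\Delta$-regularity of $H$, which forces $H[T]$ to have vertex degrees close to $\Delta$ and hence enough edge connectivity for the $b$-matching linear program to be feasible. Extra care will be needed when $A$ and $B$ share many common neighbours (so that $|A \cap B|$ is large) and when $H[T]$ splits into components containing odd cycles; the constant factor $2$ in the per-summand bound is there to absorb any slack from rounding fractional weights or handling these degenerate configurations.
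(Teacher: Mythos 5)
Your proposal is correct and is in essence the same argument as the paper's: both are instances of a fractional-cover (Shearer/Brascamp--Lieb/Finner) bound in which $\Delta$-regularity supplies the cover. The paper's route is to apply Shearer's inequality directly to $V(H)$ with the natural cover assigning weight $1/\Delta$ to every edge of $H$; for the family $\cE^{(c,d)}$ of embeddings sending $c\mapsto u$ and $d\mapsto v$, the edge $cd$ projects to a single edge, the $2(\Delta-1)$ pendant edges $cz$ and $dz$ project to subsets of $N_G(u)$ and $N_G(v)$, and the remaining $e_H-2\Delta+1$ edges project to $E(G)$. Your version restricts to $T=V(H)\setminus\{x,y\}$ and replaces each pendant edge $xz$ or $yz$ by a singleton with image set $N_G(u)$ or $N_G(v)$; since $\phi(x)$ and $\phi(y)$ are pinned, these are literally the same projection sets, so the two decompositions are the same cover in disguise.

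However, the ``principal obstacle'' you flag --- constructing a fractional edge cover of $H[T]$ meeting the residual demands with total weight $v_H/2 - (2\Delta-1)/\Delta$ --- is not an obstacle at all, and the machinery you propose (bipartite double cover, K\H{o}nig--Egerv\'ary, fractional $b$-matching, special handling of odd cycles and large $|A\cap B|$) is unnecessary. Just put weight $1/\Delta$ on every edge of $H[T]$. Since $H$ is $\Delta$-regular, a vertex $z\in T$ has degree $\Delta-|\{z\}\cap A|-|\{z\}\cap B|$ in $H[T]$, so the coverage contributed by the $H[T]$-edges is exactly $1-2/\Delta$ if $z\in A\cap B$, exactly $1-1/\Delta$ if $z\in A\triangle B$, and exactly $1$ otherwise --- precisely the residual demands, with equality throughout, for every $\Delta$-regular $H$. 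The total weight is
\[
  \frac{e_{H[T]}}{\Delta} = \frac{e_H-(2\Delta-1)}{\Delta}
  = \frac{v_H}{2}-\frac{2\Delta-1}{\Delta},
\]
using $e_H=\Delta v_H/2$, which is exactly the exponent you want on $2e_G$. There is no rounding, no degenerate configuration, and no slack to absorb; in fact, your route produces the sharper constant $2e_H\cdot(\deg_Gu\cdot\deg_Gv)^{(\Delta-1)/\Delta}\cdot(2e_G)^{v_H/2-(2\Delta-1)/\Delta}$ rather than the stated $4e_H\cdot(4\deg_Gu\cdot\deg_Gv)^{(\Delta-1)/\Delta}\cdot(2e_G)^{\dots}$, because singleton projections don't pick up the factor of $2$ that the ordered-pair projections do in the paper's version of Shearer.
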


\begin{lemma}
  \label{lemma:core-edge-bipartite}
  Suppose that $J$ is a nonempty, connected graph with maximum degree $\Delta$ that admits a bipartition $V(J) = A \cup B$ such that $|A| < |B|$ and $\deg_J a = \Delta$ for every $a \in A$. For every graph $G$ and every $uv \in E(G)$,
  \[
    |\Emb(J,G; uv)| \le e_J \cdot (\deg_G u + \deg_G v) \cdot (2e_G)^{|A|-1} \cdot \big(\min\{e_G, v_G\}\big)^{|B|-|A|-1}.
  \]
\end{lemma}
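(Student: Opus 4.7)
The plan is to decompose $|\Emb(J,G;uv)|$ according to the edge of $J$ mapped to $uv$ and its orientation, then bound the resulting extensions by splitting $V(J)$ into four groups of vertices placed in succession: the pinned edge $e'$, a matching among the remaining $A$-vertices, a distinguished vertex of $B$ adjacent to $a'$, and the rest of $B$ placed in breadth-first order.

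Fix $e'=a'b'\in E(J)$, with $a'\in A$ and $b'\in B$ (the bipartition forces this assignment); we will sum over the $e_J$ choices of $e'$ at the end. Also fix the orientation $\phi(a')=u$, $\phi(b')=v$; the other orientation is symmetric and will replace $\deg_G u$ by $\deg_G v$ below. The crucial combinatorial ingredient is the existence of a matching $M\subseteq E(J-\{a',b'\})$ of size $|A|-1$ saturating $A\setminus\{a'\}$ together with a vertex $x\in N_J(a')\setminus(\{b'\}\cup M_B)$, where $M_B$ denotes the set of $B$-endpoints of $M$. The hypotheses force a strict form of Hall's condition in $J$: for every nonempty $S\subseteq A$, since each vertex of $S$ has degree exactly $\Delta$ in $J$ and each vertex of $N_J(S)$ has degree at most $\Delta$, we get $\Delta|S|\leq\Delta|N_J(S)|$, hence $|N_J(S)|\geq|S|$; equality would force all $J$-edges incident to $N_J(S)$ to stay in $S\cup N_J(S)$, which together with connectedness and $|A|<|B|$ is impossible. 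This yields at least one saturating matching $M_0$ in $J-\{a',b'\}$; if $M_0$ already misses some vertex of $N_J(a')\setminus\{b'\}$, take $M=M_0$. Otherwise $N_J(a')\setminus\{b'\}\subseteq(M_0)_B$, so $\Delta\leq|A|$, and a further Hall-type analysis of the tight sets arising from the inability to avoid any $b''\in N_J(a')\setminus\{b'\}$ in a saturating matching forces $N_J(a)=N_J(a')$ for all $a\in A$, whence $N_J(A)=N_J(a')$ has size $\Delta\leq|A|$, contradicting $|B|=|N_J(A)|>|A|$.

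With $M$ and $x$ in hand, we count embeddings with $\phi(e')=uv$ and $\phi(a')=u$ by placing $V(J)$ in four successive groups: $(i)$ the edge $e'$ is pinned to $uv$, contributing $1$; $(ii)$ the $|A|-1$ matching edges of $M$ are each mapped to an edge of $G$, contributing $(2e_G)^{|A|-1}$; $(iii)$ the distinguished vertex $x\in N_J(a')$ must be mapped to a $G$-neighbor of $u$, contributing $\deg_G u$; $(iv)$ the remaining $|B|-|A|-1$ vertices of $V_2\setminus\{x\}$, where $V_2=B\setminus(M_B\cup\{b'\})$, are placed in a BFS order rooted at the already-placed set (possible since $J$ is connected), so each must be a $G$-neighbor of some previously-placed vertex, contributing at most $\Delta(G)^{|B|-|A|-1}$. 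Since any graph with an edge satisfies $\Delta(G)\leq\min\{e_G,v_G\}$, this step contributes at most $\min\{e_G,v_G\}^{|B|-|A|-1}$. Summing the two orientations replaces $\deg_G u$ by $\deg_G u+\deg_G v$, and summing over the $e_J$ choices of $e'$ yields the claimed bound.

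The main obstacle is establishing the existence of the good matching $M$ together with the leftover vertex $x\in N_J(a')$, particularly in the regime $|A|\geq\Delta$, where the Hall-type lattice argument for tight sets becomes delicate; once this is settled, the remaining count is a routine edge-then-vertex decomposition in the spirit of the proof of Theorem~\ref{thm:max-copies}.
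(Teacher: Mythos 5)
Your counting decomposition is essentially the same as the paper's: fix an edge $a'b'$ of $J$ mapped to $uv$, place a matching of $|A|-1$ edges to account for most of $A$, place one additional neighbour $x$ of $a'$ to get the factor $\deg_G u + \deg_G v$, and place the remaining $|B|-|A|-1$ vertices one at a time, each contributing at most $\min\{e_G,v_G\}$. That part is sound.

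The gap is in the existence argument for the matching $M$ of $A\setminus\{a'\}$ in $J-\{a',b'\}$ together with a spare neighbour $x\in N_J(a')\setminus(\{b'\}\cup M_B)$. You first fix \emph{one particular} saturating matching $M_0$ and then, in the ``otherwise'' branch, pivot to reasoning about what ``the inability to avoid any $b''\in N_J(a')\setminus\{b'\}$ in a saturating matching'' would imply --- but that is a statement about \emph{all} saturating matchings, and you never reduce to it from the failure of a single $M_0$. The ``Hall-type lattice analysis'' that would close this hole is not carried out, and you flag it yourself as the main obstacle. As written, the construction of $(M,x)$ is not established.

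The cleaner route, which the paper takes, avoids the obstacle entirely: rather than first matching $A\setminus\{a'\}$ and then hoping a neighbour of $a'$ is left over, apply Hall's theorem in $J-b'$ to find a matching $M'$ saturating \emph{all of} $A$. The strict Hall condition you already derived, $|N_J(S)|\ge|S|+1$ for every nonempty $S\subseteq A$, immediately gives $|N_{J-b'}(S)|\ge|S|$, so such an $M'$ exists. Then $a'$ is matched in $M'$ to some $c\in N_J(a')\setminus\{b'\}$, and $M'\setminus\{a'c\}$ is exactly your $M$ while $c$ is your $x$ --- both produced in one application of Hall's theorem, with no case split on whether $\Delta\le|A|$ and no auxiliary tight-set analysis. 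If you replace your two-step construction by this single application, the rest of your argument goes through verbatim.

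Two minor remarks. First, the BFS phrasing in step $(iv)$ is harmless but overkill: the remaining vertices all lie in $B$, so each of them is adjacent (in $J$) only to already-placed $A$-vertices, and the uniform bound $\deg_G(w)\le\min\{e_G,v_G\}$ follows directly. Second, when you argue that equality $|N_J(S)|=|S|$ is impossible, it is worth spelling out that equality forces $J[S\cup N_J(S)]$ to be $\Delta$-regular, which for a connected graph $J$ of maximum degree $\Delta$ would force $J$ itself to be $\Delta$-regular, contradicting $|A|<|B|$; your parenthetical is too compressed to be checked as stated.
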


\begin{lemma}
  \label{lemma:max-copies-bad-edges}
  Suppose that $H$ is a $\Delta$-regular graph. For every graph $G$ and every $G' \subseteq G$,
  \[
    \sum_{uv \in E(G')} |\Emb(H,G; uv)| \le e_H \cdot (2e_G)^{v_H/2} \cdot
    \left(\frac{e_{G'}}{e_G}\right)^{1/\Delta}.
  \]
\end{lemma}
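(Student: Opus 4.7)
My plan is to refine the proof of Theorem~\ref{thm:max-copies} by tracking which edge of $H$ is mapped to $E(G')$ under each embedding. Since every $\varphi \in \Emb(H,G)$ is injective on vertices and hence on edges, at most one edge of $H$ is mapped by $\varphi$ to any given edge of $G$; swapping the order of summation therefore gives
\[
  \sum_{uv \in E(G')} |\Emb(H,G;uv)| = \sum_{e_0 \in E(H)} \big|\{\varphi \in \Emb(H,G) : \varphi(e_0) \in E(G')\}\big|.
\]
It suffices to show that each summand on the right is at most $(2e_G)^{v_H/2}\sqrt{e_{G'}/e_G}$, since summing over the $e_H$ edges of $H$ and using the trivial inequality $\sqrt{e_{G'}/e_G} \leq (e_{G'}/e_G)^{1/\Delta}$, valid for $\Delta \geq 2$ as $e_{G'} \leq e_G$, then yields the claim.

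Fix $e_0 = u'v' \in E(H)$. The main combinatorial step is to produce a spanning subgraph $J \subseteq H$ whose components are vertex-disjoint edges and cycles of length at least three, such that $e_0$ lies in some component of $J$. To find $J$, I would work in the bipartite double cover $H^*$ of $H$ (as defined in the proof of Lemma~\ref{lemma:fractional-duality}), which is $\Delta$-regular and bipartite and hence decomposes into $\Delta$ edge-disjoint perfect matchings by K\H{o}nig's edge-colouring theorem; at least one such matching $M^*$ must contain the edge $\{(u',1),(v',2)\} \in E(H^*)$ corresponding to $e_0$. Projecting $M^*$ back to $H$ yields a $2$-regular multi-graph on $V(H)$, and collapsing each doubled edge to a single edge produces the required simple subgraph $J$ containing $e_0$.

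It then remains to bound the number of embeddings of $H$ into $G$ mapping $e_0$ into $E(G')$ by the corresponding count for $J$ (which is at least as large since $J \subseteq H$), and to observe that this count factorises over the components of $J$ because they are vertex-disjoint. Each component $C$ not containing $e_0$ contributes at most $(2e_G)^{v_C/2}$ embeddings (trivially if $C$ is an edge, by Lemma~\ref{lemma:NCell} if $C$ is a cycle), while the component $C_{i_0}$ containing $e_0$ contributes at most $(2e_G)^{v_{C_{i_0}}/2}\sqrt{e_{G'}/e_G}$. When $C_{i_0}$ is an edge, the constrained count is $2e_{G'} \leq 2\sqrt{e_G \cdot e_{G'}}$; when $C_{i_0}$ is an $\ell$-cycle, Cauchy--Schwarz applied to the inequality $\sum_{uv \in E(G)} c_{uv}^2 \leq (2e_G)^{\ell-1}/2$ derived in the proof of Lemma~\ref{lemma:NCell} (where $c_{uv}$ is the number of copies of $C_\ell$ in $G$ through $uv$) yields $2\sum_{uv \in E(G')} c_{uv} \leq \sqrt{2e_{G'}} \cdot (2e_G)^{(\ell-1)/2}$, which is exactly $(2e_G)^{v_{C_{i_0}}/2}\sqrt{e_{G'}/e_G}$. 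Multiplying the per-component bounds produces the desired per-edge estimate $(2e_G)^{v_H/2}\sqrt{e_{G'}/e_G}$. The main obstacle is the existence of the cover $J$ through a prescribed edge; this is handled cleanly by the K\H{o}nig edge-colouring of the bipartite double cover, after which the remainder is a careful accounting of constants in the product-over-components argument already used in the proof of Theorem~\ref{thm:max-copies}.
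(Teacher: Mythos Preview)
Your argument is correct and genuinely different from the paper's. The paper fixes an edge $cd\in E(H)$, applies the Shearer-based Lemma~\ref{lemma:Shearer-embeddings} to the family $\cE^{cd}$ of embeddings sending $cd$ into $G'$, and reads off
\[
  |\cE^{cd}| \le (2e_{G'})^{1/\Delta}\,(2e_G)^{(e_H-1)/\Delta} = (2e_G)^{v_H/2}\,(e_{G'}/e_G)^{1/\Delta},
\]
then sums over $cd$. By contrast, you bypass entropy entirely: via K\H{o}nig edge-colouring of the bipartite double cover you find a spanning union of disjoint edges and cycles through the prescribed edge $e_0$, and then combine Lemma~\ref{lemma:NCell} with Cauchy--Schwarz on the component containing $e_0$. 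This is more hands-on but also more elementary, and in fact yields the sharper exponent $1/2$ in place of $1/\Delta$ before you discard it. One small caveat: your final step $\sqrt{e_{G'}/e_G}\le (e_{G'}/e_G)^{1/\Delta}$ needs $\Delta\ge 2$, so your proof as written does not cover the (trivial and unused) case $\Delta=1$, whereas the paper's entropy argument does.
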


Our proofs of Lemmas~\ref{lemma:core-edge-regular} and~\ref{lemma:max-copies-bad-edges} are relatively straightforward adaptations of the elegant entropy argument of Friedgut and Kahn~\cite{FriKah98} (see also the excellent survey of Galvin~\cite{Gal}). They will be derived from the following somewhat abstract form of the main result of~\cite{FriKah98}. The proof of Lemma~\ref{lemma:core-edge-bipartite} is elementary.

\begin{lemma}
  \label{lemma:Shearer-embeddings}
  Suppose that $H$ is a $\Delta$-regular graph. Let $\cE$ be a family of
  embeddings of $H$ into a graph $G$ and, for every edge $ab$
  of $H$, let
  \[
    \cE_{ab} = \left\{\varphi(ab) : \varphi \in \cE\right\}.
  \]
  Then
  \[
    |\cE| \le \prod_{ab \in E(H)} \big(2|\cE_{ab}|\big)^{1/\Delta}.
  \]
\end{lemma}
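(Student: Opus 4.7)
The plan is to derive this statement as a direct application of Shearer's entropy inequality, exactly in the spirit of the Friedgut--Kahn argument. Let $\varphi$ be a uniformly chosen random element of $\cE$, viewed as a collection of random variables $(\varphi(v))_{v \in V(H)}$ indexed by the vertex set of $H$. Since $\varphi$ is uniform on $\cE$ and is determined by its values on $V(H)$, the Shannon entropy satisfies $H(\varphi) = \log |\cE|$.

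The core idea is to cover $V(H)$ by the two-element sets $\{a,b\}$ ranging over $ab \in E(H)$. Because $H$ is $\Delta$-regular, every vertex $v \in V(H)$ appears in exactly $\Delta$ of these sets. Shearer's lemma then yields
\[
  \Delta \cdot H(\varphi) \;\le\; \sum_{ab \in E(H)} H\bigl(\varphi(a), \varphi(b)\bigr).
\]

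Next, for each edge $ab \in E(H)$, orient it arbitrarily and set
\[
  \widetilde{\cE}_{ab} \;=\; \bigl\{(\varphi(a), \varphi(b)) : \varphi \in \cE\bigr\}.
\]
The ordered pair $(\varphi(a), \varphi(b))$ takes values in $\widetilde{\cE}_{ab}$, so
\[
  H\bigl(\varphi(a), \varphi(b)\bigr) \;\le\; \log |\widetilde{\cE}_{ab}|.
\]
Since each unordered pair in $\cE_{ab}$ corresponds to at most two ordered pairs in $\widetilde{\cE}_{ab}$, we have $|\widetilde{\cE}_{ab}| \le 2 |\cE_{ab}|$.

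Combining the displays gives
\[
  \Delta \cdot \log|\cE| \;\le\; \sum_{ab \in E(H)} \log\bigl(2|\cE_{ab}|\bigr),
\]
which after exponentiation is precisely the claimed bound $|\cE| \le \prod_{ab \in E(H)} (2|\cE_{ab}|)^{1/\Delta}$. There is no real obstacle here: the only subtle point is the factor of $2$, which accounts for the two-to-one map from ordered to unordered edge images, and the regularity of $H$ is used solely to obtain the uniform covering multiplicity $\Delta$ needed to invoke Shearer's lemma.
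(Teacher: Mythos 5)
Your proof is correct and follows essentially the same argument as the paper: pick a uniform random embedding, apply Shearer's inequality using the $\Delta$-regular cover of $V(H)$ by edges, and bound the entropy of each pair $(\varphi(a),\varphi(b))$ by $\log(2|\cE_{ab}|)$ since an unordered image edge corresponds to at most two ordered pairs.
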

\begin{proof}
  Let $\bar\varphi \colon V(H) \to V(G)$ be a uniformly chosen random element
  of $\cE$. Write $H(Z)$ for the entropy of a discrete random variable $Z$
  and observe that $H(\bar\varphi) = \log |\cE|$. Since $H$ is $\Delta$-regular, Shearer's
  inequality~\cite{ChuGraFraShe86} implies that
  \[ 
    H(\bar\varphi) \le \frac{1}{\Delta} \cdot \sum_{ab \in E(H)}
    H\big(\bar\varphi(a), \bar\varphi(b)\big).
  \]
  The random variable $(\bar\varphi(a), \bar\varphi(b))$ can take on
  at most $2|\cE_{ab}|$ values, as it an ordered pair of vertices
  that make up the edge $\varphi(ab)$.
  Using the fact that the entropy of any
  distribution on
  a set is at most that of the uniform distribution on that set,
  it follows that $H\big(\bar\varphi(a), \bar\varphi(b)\big) \le \log
  (2|\cE_{ab}|)$. This implies the assertion of the lemma.
\end{proof}

\begin{proof}[{Proof of Lemma~\ref{lemma:core-edge-regular}}]
  Given an ordered pair $(c,d)$ of adjacent vertices of $H$, let
  $\cE^{(c,d)}$ be
  the family of embeddings $\varphi$ of $H$ into $G$ such that
  $\varphi(c) = u$ and $\varphi(d) = v$.
  For a given edge $ab$ of $H$,
  define $\cE_{ab}^{(c,d)} = \{ \varphi(ab) : \varphi \in \cE^{(c,d)}\}$ as in the
  statement of Lemma~\ref{lemma:Shearer-embeddings}. Observe that
  \[
    \big|\cE^{(c,d)}_{ab}\big| \le
    \begin{cases}
      e_G & \text{if } \{a, b\} \cap \{c, d\} = \emptyset, \\
      \deg_G u & \text{if } \{a, b\} \cap \{c, d\} = \{c\},\\
      \deg_G v & \text{if } \{a, b\} \cap \{c, d\} = \{d\},\\
      1 & \text{if } \{a, b\} = \{c, d\}. \\
    \end{cases}
  \]
  Invoking Lemma~\ref{lemma:Shearer-embeddings} to bound $|\cE^{(c,d)}|$ from
  above and summing over all $2e_H$ pairs $(c,d)$ of adjacent vertices of $H$, the
  claimed upper bound on the number of embeddings of $H$ into $G$ that
  use the edge $uv$ follows.
\end{proof}

\begin{proof}[{Proof of Lemma~\ref{lemma:core-edge-bipartite}}]
  We fix an edge $ab$ of $J$, where $a\in A$ and $b\in B$, and count the
  embeddings $\varphi$ of $J$ into $G$ such that $\varphi(ab) = uv$.
  To this end, we first
  show that $J - b$ contains a matching $M$ that saturates $A$. To see this,
  note that for every nonempty $S \subseteq A$,
  \[
    |S| \cdot \Delta = \sum_{c \in S} \deg_J c \le \sum_{d \in N_J(S)} \deg_J d \le |N_J(S)| \cdot \Delta,
  \]
  yielding $|N_J(S)| \ge |S|$. Moreover, this inequality is strict unless
  the subgraph of $J$ induced by $S \cup N_J(S)$ is
  $\Delta$-regular. However, the latter is impossible since, as $J$ is
  connected, the only $\Delta$-regular subgraph of $J$ could be $J$ itself, but
  our assumption $|A|<|B|$ implies that $J$ is not regular.
  Hence $|N_{J-b}(S)| \ge |N_J(S)| - 1 \ge |S|$, verifying Hall's condition.
  Now, given a matching $M \subseteq J-b$ that saturates $A$, we may bound the
  number of embeddings $\varphi$ as above in the following way. Let $c$ be the
  neighbour of $a$ in $M$. There are at most $\deg_G u + \deg_G v$ embeddings
  of $J[\{a, b, c\}]$ into $G$ that map $ab$ to $uv$. Each of them
  admits at most $(2e_G)^{|M|-1}$ extensions to an embedding of $J[V(M) \cup
  \{b\}]$. Each of those embeddings can be extended to an embedding of $J$ in at most
  $\min\{e_G, v_G\}^{|B \setminus (\{b\} \cup V(M))|}$ ways. Since $|M| = |A|$
  and $|B \setminus (\{b\} \cup V(M))| = |B|-|A|-1$, summing over all
  $ab \in E(J)$ gives the claimed bound on $|\Emb(J, G; uv)|$.
\end{proof}

\begin{proof}[{Proof of Lemma~\ref{lemma:max-copies-bad-edges}}]
  Given an edge $cd$ of $H$, let $\cE^{cd}$ be the family of embeddings of
  $H$ into $G$ that map $cd$ onto an edge of $G'$. Define $\cE_{ab}^{cd} =
  \{ \varphi(ab) : \varphi \in \cE^{cd}\}$ as
  in the
  statement of Lemma~\ref{lemma:Shearer-embeddings} and observe that
  \[
    \big|\cE_{ab}^{cd}\big| \le
    \begin{cases}
      e_{G'} & \text{if } \{a, b\} = \{c,d\}, \\
      e_{G} & \text{otherwise}.
    \end{cases}
  \]
  Invoking Lemma~\ref{lemma:Shearer-embeddings} to bound $|\cE^{cd}|$ from
  above and summing over all edges $cd$ of $H$ gives the claimed upper bound.
\end{proof}

\subsection{Stability results}

Observe that, when $J$ is the complete graph, then Theorem~\ref{thm:max-copies}
yields the upper bound $|\Emb(K_r,G)|\leq (2e_G)^{r/2}$. This is a weak
version of a more precise result due to Erd\H{o}s--Hanani~\cite{Erd62} and also
follows from the Kruskal--Katona theorem
\cite{katona1966theorem, kruskal1963number}.
One can see that the upper bound $(2e_G)^{r/2}$
is asymptotically optimal if $G$ contains a clique comprising all of its edges.
Our next theorem states that, when the upper bound
given in Theorem~\ref{thm:max-copies} is nearly tight, then $G$ resembles such
a  graph, in the sense that it must contain a subgraph of density $1-o(1)$
covering nearly all of its edges. This could be proved by appealing to a
stability version of the Kruskal--Katona theorem due to
Keevash~\cite{keevash2008shadows}. The proof we present below is elementary.

\begin{theorem}
  \label{thm:Kr-stability}
  Suppose that $r \ge 3$. If a graph $G$ satisfies
  \[
    |\Emb(K_r, G)| \ge (1-\eps) \cdot (2e_G)^{r/2},
  \]
  for some $\eps \ge e_G^{-1/2}$, then $G$ has a subgraph $G'$ with minimum degree at least $(1-4\eps^{1/2}) \cdot (2e_G)^{1/2}$.
\end{theorem}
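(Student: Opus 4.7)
\emph{Proof plan.} The strategy is a peeling argument. Set $d^* = (2e_G)^{1/2}$ and $\alpha = 1 - 4\eps^{1/2}$; we may assume $\eps < 1/16$, for otherwise $\alpha \le 0$ and the conclusion is vacuous. Starting from $G_0 = G$, iteratively remove any vertex whose current degree is less than $\alpha d^*$, producing a nested sequence $G_0 \supseteq G_1 \supseteq \cdots$ and a stable final subgraph $G'$. By construction, $G'$ has minimum degree at least $\alpha d^*$ whenever it is nonempty, so it suffices to rule out the possibility that the peeling strips away more than $\eps^{1/2}e_G$ edges.

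I would argue by contradiction. Suppose the peeling does reach a graph with at most $(1-\eps^{1/2})e_G$ edges, and take $G'$ to be the first such graph; since each peel removes fewer than $\alpha d^*$ edges, $\Delta m := e_G - e(G')$ lies in $[\eps^{1/2}e_G, \eps^{1/2}e_G + \alpha d^*]$. The key observation is that removing a vertex $v$ of current degree $d$ from a graph $H$ decreases $|\Emb(K_r, H)|$ by exactly $r \cdot |\Emb(K_{r-1}, H[N_H(v)])|$, which Theorem~\ref{thm:max-copies} bounds by $r \cdot (2 e(H[N_H(v)]))^{(r-1)/2} \le r \cdot d^{r-1}$. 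Telescoping over the peeling sequence and using $\sum_i (d_i^{\mathrm{rem}})^{r-1} \le (\alpha d^*)^{r-2} \sum_i d_i^{\mathrm{rem}} = (\alpha d^*)^{r-2}\Delta m$, the total embedding-loss is at most $r(\alpha d^*)^{r-2}\Delta m$.

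Combining this loss bound with the upper bound $|\Emb(K_r, G')| \le (2e(G'))^{r/2} \le (1-\eps^{1/2})^{r/2}(d^*)^r$ from Theorem~\ref{thm:max-copies} and the hypothesis $|\Emb(K_r, G)| \ge (1-\eps)(d^*)^r$, then dividing through by $(d^*)^r$ and using $\eps \ge e_G^{-1/2}$ (which gives $d^* \ge \sqrt{2}/\eps$) to bound the leftover $r\alpha^{r-1}/d^*$ term by $r\eps/\sqrt{2}$, I arrive at
\[
1 - \eps \;\le\; \tfrac{r}{2}\alpha^{r-2}\eps^{1/2} + (1-\eps^{1/2})^{r/2} + \tfrac{r\eps}{\sqrt{2}}.
\]
Writing $u = \eps^{1/2}$ and expanding the right-hand side to second order in $u$, the specific choice $\alpha = 1-4u$ causes the linear-in-$u$ terms to cancel exactly, so the inequality simplifies to $\big(\tfrac{15}{8}r(r-2) - \tfrac{r}{\sqrt{2}} - 1\big)u^2 \le O(u^3)$. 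For every $r \ge 3$, the bracket is strictly positive (about $2.5$ at $r = 3$ and growing with $r$), producing the desired contradiction once $u$ is sufficiently small; the remaining bounded range of $\eps$ in $(0, 1/16)$ is handled by a direct check of this one-variable inequality, using concavity of $u \mapsto (1-u)^{r/2}$.

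The main obstacle is the delicate tuning of the two constants---the multiplier $4$ in $\alpha = 1 - 4\eps^{1/2}$ and the threshold fraction $\eps^{1/2}$ in the stopping rule---which must be chosen so that the linear-in-$u$ Taylor terms vanish and the resulting quadratic coefficient is large enough to dominate both the $-\eps$ on the left-hand side and the $r\eps/\sqrt{2}$ finite-$d^*$ rounding correction on the right; the assumption $\eps \ge e_G^{-1/2}$ is used precisely to control the latter.
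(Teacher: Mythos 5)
Your overall strategy---a direct vertex-peeling argument on $G$ itself, bounding the loss in $\Emb(K_r,\cdot)$ per removed vertex via Theorem~\ref{thm:max-copies} and telescoping---is a genuinely different route from the paper's, which first reduces to the cases $J = P_4$ (for $r$ even) and $J = K_3$ (for $r$ odd) via the spanning-subgraph trick, and then runs a peeling argument on an \emph{auxiliary} graph $F$ on $E(G)$ whose edges encode copies of $K_4$. Your approach is plausible and conceptually attractive, but it has gaps in the final estimates that would need to be repaired.

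First, a concrete error: in bounding the rounding term you weaken $r\alpha^{r-1}/d^*$ to $r\eps/\sqrt2$, i.e.\ you drop the factor $\alpha^{r-1} = (1-4\eps^{1/2})^{r-1}$. This is fatal for large $r$. The weakened inequality
\[
1-\eps \le (1-\eps^{1/2})^{r/2} + \tfrac{r}{2}(1-4\eps^{1/2})^{r-2}\eps^{1/2} + \tfrac{r\eps}{\sqrt2}
\]
actually \emph{holds} (no contradiction!) at, e.g., $r=25$, $\eps^{1/2}=0.24$: the left-hand side is $\approx 0.94$ while the last term alone is $\frac{25\cdot 0.0576}{\sqrt2}\approx 1.02$. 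Since $\eps < 1/16$ and $e_G \ge 1/\eps^2$ are both satisfiable here, this is a real failure of the argument, not a vacuous case. Retaining the factor $(1-4\eps^{1/2})^{r-1}$---which is astronomically small precisely when $\eps^{1/2}$ is near $1/4$---repairs this instance; it is an avoidable but genuine mistake.

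Second, your claim that the specific constant $4$ in $\alpha = 1-4\eps^{1/2}$ causes the linear-in-$u$ Taylor terms to cancel is incorrect. Writing $u=\eps^{1/2}$, the linear terms $-\frac{r}{2}u$ from $(1-u)^{r/2}$ and $+\frac{r}{2}u$ from $\frac{r}{2}(1-cu)^{r-2}u$ cancel for \emph{every} constant $c$, since $(1-cu)^{r-2}=1+O(u)$. What the constant $4$ does control is the \emph{quadratic} coefficient $\frac{15}{8}r(r-2)-\frac{r}{\sqrt2}-1$ (with $c=4$); a quick computation shows that, say, $c=2$ would give a negative quadratic coefficient at $r=3$. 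So the tuning is real, but it operates one order higher than you describe.

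Third, and more substantively: even after retaining $\alpha^{r-1}$, what you need to verify is that
\[
1-u^2 > (1-u)^{r/2} + \tfrac{r}{2}(1-4u)^{r-2}u + \tfrac{r}{\sqrt2}(1-4u)^{r-1}u^2
\]
for all $u\in(0,1/4)$ and all integers $r\ge 3$. This is a two-parameter inequality, not the ``one-variable inequality'' you describe, and the dependence on $r$ is not uniform in a trivial way: the first correction term is maximised at $r \approx 1/(4u)$, which diverges as $u\to 0$. Your Taylor argument covers the regime of small $u$ (uniformly in $r$, since the quadratic coefficient grows like $r^2$), but the transition regime where $u$ is moderate and $r$ is of order $1/u$ requires a separate argument. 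Numerical checks suggest the inequality does hold throughout, but ``handled by a direct check\ldots using concavity'' glosses over the part of the proof that actually needs work. The paper's reduction to $P_4$ and $K_3$ sidesteps exactly this uniformity-in-$r$ issue, at the cost of the auxiliary-graph construction.
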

\begin{proof}
  The assertion of the theorem follows once we establish the case $r=3$
  and an analogous property for the path with four vertices (and three edges), which we denote by $P_4$. Indeed, if $r$ is odd,
  then $K_r$ contains a spanning subgraph that is the disjoint union of $K_3$ and a matching of size $(r-3)/2$. Thus,
  \[
    |\Emb(K_r, G)| \le |\Emb(K_3, G)| \cdot |\Emb(K_2,G)|^{(r-3)/2} = |\Emb(K_3, G)| \cdot (2e_G)^{(r-3)/2}
  \]
  and hence $|\Emb(K_3, G)| \ge (1-\eps) \cdot (2e_G)^{3/2}$. Analogously, if $r$ is even, then $K_r$ contains a subgraph
  that is the disjoint union of $P_4$ and a matching of size $(r-4)/2$. Thus,
  \[
    |\Emb(K_r, G)| \le |\Emb(P_4, G)| \cdot |\Emb(K_2,G)|^{(r-4)/2} = |\Emb(P_4, G)| \cdot (2e_G)^{(r-4)/2},
  \]
  which implies that $|\Emb(P_4, G)| \ge (1-\eps) \cdot (2e_G)^2$. Therefore, it suffices to prove the following two claims.
  \renewcommand{\qedsymbol}{}
\end{proof}

\begin{claim}
  \label{claim:stability-P4}
  If $|\Emb(P_4,G)| \ge (1-\eps) \cdot  (2e_G)^2$, for some positive $\eps$, then $G$ has a subgraph $G'$ with minimum degree at least $(1-2\eps^{1/2})(2e_G)^{1/2}$.
\end{claim}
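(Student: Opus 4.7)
My plan is to translate the hypothesis into a pair of structural inequalities about the degree sequence, and then construct $G'$ as the subgraph induced by the ``high-degree'' vertices. Write $D = 2e_G$ and $d = \sqrt{D}$. Since $|\Hom(P_4,G)| = \sum_{(u,v)\in E^{\rightarrow}} \deg u \deg v$ (sum over directed edges) and $D^2 = (\sum_v \deg v)^2 = \sum_{u,v} \deg u \deg v$ (sum over all ordered pairs), an algebraic rearrangement gives the identity
\[
D^2 - |\Hom(P_4,G)| = \sum_v \deg^2 v + \sum_{\substack{u \ne v\\uv \notin E}} \deg u \cdot \deg v.
\]
Combined with $|\Hom(P_4,G)| \ge |\Emb(P_4,G)| \ge (1-\eps)D^2$, each of the two nonnegative summands on the right is bounded by $\eps D^2$. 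In particular, $d_{\max}^2 \le \sum_v \deg^2 v \le \eps D^2$, and for every vertex $u$, $\sum_{w\ne u, uw \notin E} \deg w \le \eps D^2/\deg u$.

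Next I would extract a large-degree vertex. The bound $|\Hom(P_4,G)| \le D \cdot d_{\max}^2$, combined with $|\Hom(P_4,G)| \ge (1-\eps)D^2$, yields $d_{\max} \ge \sqrt{1-\eps}\, d \ge (1-\sqrt{\eps})d$. Hence the set $A := \{v \in V(G): \deg v \ge (1-\sqrt{\eps})d\}$ is nonempty; I would take $G' := G[A]$ (possibly followed by a finite round of low-degree pruning).

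The verification that $\delta(G') \ge (1-2\sqrt{\eps})d$ proceeds by fixing $u \in A$ and invoking the non-edge inequality. Since $\deg u \ge (1-\sqrt{\eps})d$, the inequality gives
\[
\sum_{\substack{w \in A \setminus N(u)\\w \ne u}} \deg w \le \frac{\eps D^2}{\deg u} \le \frac{\eps d^3}{1-\sqrt{\eps}},
\]
and since each such $w$ has $\deg w \ge (1-\sqrt{\eps})d$, one obtains $|A \setminus N(u) \setminus \{u\}| \le \eps d^2/(1-\sqrt{\eps})^2$. Thus $|N(u) \cap A| \ge |A| - 1 - \eps d^2/(1-\sqrt{\eps})^2$, and the desired minimum-degree bound follows once $|A|$ is shown to exceed $(1-2\sqrt{\eps})d + \eps d^2/(1-\sqrt{\eps})^2$.

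The main obstacle is precisely this lower bound on $|A|$: the bound $\sum_v \deg^2 v \le \eps D^2$ only gives an \emph{upper} bound $|A|(1-\sqrt{\eps})^2 d^2 \le \eps D^2$, not a useful lower one. To close the argument I would use the degree-weighted Markov inequality applied to the quantity $D - S(v)$ where $S(v) = \sum_{w \in N(v)} \deg w$: since $\sum_v \deg v \cdot (D - S(v)) = D^2 - |\Hom(P_4,G)| \le \eps D^2$, the set $W = \{v: S(v) \ge (1-\sqrt{\eps})D\}$ has degree-weight at least $(1-\sqrt{\eps})D$. For $v \in W$, splitting $S(v)$ between $N(v)\cap W$ and $N(v)\setminus W$ and using $\sum_{w \notin W} \deg w \le \sqrt{\eps}D$ gives $\sum_{w \in N(v) \cap W} \deg w \ge (1-2\sqrt{\eps})D$. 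This forces $W$ to contain enough high-degree vertices (yielding the required size of $A$) and, after intersecting $W$ with $A$ and performing one final pruning of the (boundedly few) remaining low-$G[A]$-degree vertices, produces the desired subgraph.
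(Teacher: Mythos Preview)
Your decomposition
\[
D^2 - |\Hom(P_4,G)| = \sum_v \deg^2 v + \sum_{\substack{u\ne v\\uv\notin E}} \deg u \deg v
\]
is correct, and the consequence $\sum_v \deg^2 v \le \eps D^2$ is real; but this very inequality kills the rest of the argument. Since every $v \in A$ has $\deg^2 v \ge (1-\sqrt\eps)^2 d^2$, the bound $\sum_v \deg^2 v \le \eps D^2 = \eps d^4$ forces
\[
|A| \;\le\; \frac{\eps d^2}{(1-\sqrt\eps)^2}.
\]
This is precisely the error term in your estimate $|N(u)\cap A| \ge |A| - 1 - \eps d^2/(1-\sqrt\eps)^2$, so that estimate is always $\ge -1$ and hence vacuous. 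Put differently, the lower bound on $|A|$ you identify as the ``main obstacle'' is not merely hard to prove---it directly contradicts the upper bound on $|A|$ you have already derived from the same hypothesis. No choice of threshold in the definition of $A$ fixes this: the two bounds scale identically.

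The $W$-argument at the end does not repair the problem. You correctly obtain $\sum_{w\in N(v)\cap W}\deg w \ge (1-2\sqrt\eps)D$ for each $v\in W$, but to convert this into a lower bound on $|N(v)\cap W|$ you would need $d_{\max}$ of order $d$, and nothing in your hypotheses gives this (you only have $d_{\max}\le \sqrt\eps\, d^2$). The final sentence (``forces $W$ to contain enough high-degree vertices \dots'') is an assertion, not an argument.

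The paper's proof avoids degree-sequence bookkeeping entirely. It passes to an auxiliary graph $F$ with vertex set $E(G)$, in which two edges of $G$ are adjacent when their four endpoints span a $K_4$; the hypothesis translates to $e_F \ge (\tfrac12 - 2\eps)e_G^2$, and a greedy deletion in $F$ produces a nonempty subgraph $F'$ of minimum degree at least $(1-2\sqrt\eps)e_G$. High $F'$-degree of an edge $uv$ means that $u$ and $v$ have a common neighbourhood in $G$ spanning at least $(1-2\sqrt\eps)e_G$ edges, which immediately gives the desired minimum degree in the subgraph of $G$ induced by the endpoints of $V(F')$.
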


\begin{claim}
  \label{claim:stability-K3}
  If $|\Emb(K_3,G)| \ge (1-\eps) \cdot  (2e_G)^{3/2}$, for some $\eps \ge e_G^{-1/2}$, then $G$ has a subgraph $G'$ with minimum degree at least $(1-4\eps^{1/2})(2e_G)^{1/2}$.
\end{claim}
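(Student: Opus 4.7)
The plan is to deduce Claim~\ref{claim:stability-K3} from Claim~\ref{claim:stability-P4} via a Cauchy--Schwarz reduction, showing that the hypothesis $|\Emb(K_3, G)| \geq (1 - \eps)(2e_G)^{3/2}$ implies $|\Emb(P_4, G)| \geq (1 - 3\eps)(2e_G)^2$. Once this is established, Claim~\ref{claim:stability-P4} applied with $3\eps$ in place of $\eps$ yields a subgraph of $G$ of minimum degree at least $(1 - 2\sqrt{3\eps})(2e_G)^{1/2}$, which is at least $(1 - 4\eps^{1/2})(2e_G)^{1/2}$ since $2\sqrt{3} < 4$, as required.

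The key identity, obtained by counting each $P_4$ embedding via the orientation of its middle edge, is
\[
  |\Emb(P_4, G)| \;=\; 2 \sum_{bc \in E(G)} \bigl[(d(b)-1)(d(c)-1) - c_{bc}\bigr],
\]
where $c_{bc} := |N(b) \cap N(c)|$ is the codegree. Because every common neighbour of $b$ and $c$ is a neighbour of $b$ other than $c$ (and similarly for $c$), we have $c_{bc}^2 \le (d(b)-1)(d(c)-1)$. Applying Cauchy--Schwarz on the edge-indexed sum then gives
\[
  \sum_{bc \in E(G)} (d(b)-1)(d(c)-1) \;\geq\; \sum_{bc \in E(G)} c_{bc}^2 \;\geq\; \frac{1}{e_G}\Bigl(\sum_{bc \in E(G)} c_{bc}\Bigr)^{\!2} \;=\; \frac{|\Emb(K_3, G)|^2}{4 e_G},
\]
where I used $\sum_{bc} c_{bc} = 3 N(K_3, G) = |\Emb(K_3, G)|/2$. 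Substituting this back into the identity and invoking the Kruskal--Katona-type bound $|\Emb(K_3, G)| \leq (2e_G)^{3/2}$ from Theorem~\ref{thm:max-copies}, one obtains
\[
  |\Emb(P_4, G)| \;\geq\; \frac{|\Emb(K_3, G)|^2}{2 e_G} - |\Emb(K_3, G)| \;\geq\; (1-\eps)^2 (2e_G)^2 - (2e_G)^{3/2}.
\]

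Dividing by $(2e_G)^2$, using $(1-\eps)^2 \geq 1 - 2\eps$, and exploiting $\eps \geq e_G^{-1/2} \geq (2e_G)^{-1/2}$ to absorb the additive $(2e_G)^{-1/2}$ error, one concludes $|\Emb(P_4, G)| \geq (1-3\eps)(2e_G)^2$, completing the reduction. The whole argument avoids any iterative vertex-deletion or potential-function machinery; the main technical nuance is the absorption of the $(2e_G)^{-1/2}$ error into $\eps$, which is exactly why the claim insists on $\eps \geq e_G^{-1/2}$. (For $\eps \geq 1/16$ the claim is trivial anyway, since the lower bound $(1-4\eps^{1/2})(2e_G)^{1/2}$ is then nonpositive and any single vertex furnishes the required subgraph.)
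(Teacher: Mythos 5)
Your proof is correct and takes essentially the same approach as the paper: you reduce Claim~\ref{claim:stability-K3} to Claim~\ref{claim:stability-P4} by bounding $|\Emb(P_4,G)|$ from below using a second-moment estimate on edge codegrees $c_{bc}$ (your Cauchy--Schwarz step $\sum c_{bc}^2 \ge (\sum c_{bc})^2/e_G$ is exactly the paper's convexity of $t \mapsto 2t(t-1)$), then absorb the lower-order term using $\eps \ge e_G^{-1/2}$. The only differences are cosmetic — you write the exact identity for $|\Emb(P_4,G)|$ rather than the one-sided bound $\ge \sum_e 2t_e(t_e-1)$, and you absorb the error via $(1-\eps)^2 - (2e_G)^{-1/2} \ge 1-3\eps$ instead of the paper's $(1-\eps)^3 \ge 1-3\eps$.
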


\begin{proof}[Proof of Claim~\ref{claim:stability-P4}]
  We may assume that $\eps < 1/4$, as otherwise the assertion of the claim is trivially satisfied. Let $F$ be the graph with vertex set $E(G)$
  whose edges are all pairs $\{uv, xy\}$ such that the set $\{u, v, x, y\}$ induces a $K_4$ in $G$. Let $\1_G$ be the indicator function of
  the edge set of $G$ and note that
  \[
    \begin{split}
      |\Emb(P_4,G)| & = \sum_{\substack{\{uv, xy\} \subseteq E(G) \\ \{u, v\} \cap \{x, y\}
      = \emptyset}} 2 \cdot \big(\1_G(ux) + \1_G(uy) + \1_G(vx) +
      \1_G(vy) \big) \\
      & \le 8e_F + 6\left(\binom{e_G}{2} - e_F\right) \le 3e_G^2 + 2e_F.
    \end{split}
  \]
  In particular, our assumption implies that
  $e_F \ge (1/2-2\eps) \cdot e_G^2$.

  Let $F'$ be the subgraph obtained from $F$ by iteratively removing vertices whose degree is smaller than $(1-2\eps^{1/2}) \cdot e_G$.
  We claim that fewer than $2\eps^{1/2} \cdot e_G$ vertices are removed this way and, consequently, the
  graph $F'$ is nonempty and its minimum degree is at least $(1-2\eps^{1/2}) \cdot e_G$. Suppose that this
  were not true. We would then have
  \[
    e_F \le \binom{(1-2\eps^{1/2}) \cdot e_G}{2} + 2\eps^{1/2} \cdot e_G \cdot (1-2\eps^{1/2})\cdot e_G < \left( \frac{1}{2} - 2\eps\right) \cdot e_G^2 \le e_F,
  \]
  a contradiction.

  Finally, let $G'$ be the subgraph of $G$ induced by the set of endpoints of the edges from $V(F')$.
  Let $u$ be an arbitrary vertex of $G'$. There must
  be another vertex $v$ of $G'$ such that $uv \in V(F')$. Since $\deg_{F'} uv
  \geq (1-2\eps^{1/2})\cdot e_G$, the
  common neighbourhood of $u$ and $v$ in $G'$ induces a subgraph with at
  least $(1-2\eps^{1/2}) \cdot e_G$ edges in $G$. In particular,
  \[
    \deg_{G'}u \ge \deg_{G'}(u,v) \ge \sqrt{2 \cdot (1-2\eps^{1/2}) \cdot e_G} \ge (1-2\eps^{1/2}) \cdot (2e_G)^{1/2}.
  \]
  Since $u$ was arbitrary, we obtain the desired lower bound on the minimum degree of $G'$.
\end{proof}

\begin{proof}[Proof of Claim~\ref{claim:stability-K3}]
  We may assume that $\eps < 1/16$, as otherwise the assertion of the claim is trivially satisfied.
  For every edge $e \in E(G)$, let $t_e$ denote  the number of copies of $K_3$ in $G$ that contain the edge $e$.
  Observe that, for each $e \in E(G)$, there are at least $2t_e(t_e-1)$ embeddings of $P_4$
  into $G$ that map the middle edge of $P_4$ onto $e$. Since $\sum_{e \in
  E(G)} t_e = |\Emb(K_3,G)|/2$ and the function $t \mapsto 2t(t-1)$ is convex, we
  conclude that
  \[
    |\Emb(P_4, G)| \ge \sum_{e \in E(G)} 2t_e(t_e-1) \ge e_G \cdot
    \frac{|\Emb(K_3,G)|}{e_G} \cdot \left(\frac{|\Emb(K_3,G)|}{2e_G} - 1\right).
  \]
  Our assumptions imply that
  \[
    1 \le \eps \cdot e_G^{1/2} \le \eps \cdot \frac{(1-\eps) \cdot (2e_G)^{3/2}}{2e_G} \le \eps \cdot \frac{|\Emb(K_3, G)|}{2e_G}
  \]
  and consequently,
  \[
    |\Emb(P_4, G)| \ge (1-\eps) \cdot \frac{|\Emb(K_3, G)|^2}{2e_G} \ge (1-\eps)^3 \cdot
    (2e_G)^2 \ge (1-3\eps) \cdot (2e_G)^2,
  \]
  It now follows from Claim~\ref{claim:stability-P4} that $G$ contains a subgraph $G'$ with minimum degree at least
  \[
    \big(1-2 \cdot (3\eps)^{1/2}\big) \cdot (2e_G)^{1/2} \ge (1-4\eps^{1/2}) \cdot (2e_G)^{1/2},
  \]
  as claimed.
  \qedhere\,\qedsymbol
\end{proof}

Our next lemma gives a tight upper bound on the number of stars $K_{1,s}$ in a
given bipartite graph, as well as a structural characterisation of
the bipartite  graphs that
are close to achieving this bound. This lemma and Theorem~\ref{thm:Kr-stability} above
constitute the main combinatorial ingredient in the proof of Theorem~\ref{thm:krstructure}.
Given a graph $G$ and a set $U$ of vertices of $G$, we let $\Emb_U(K_{1,s}, G)$ denote the set
of embeddings of $K_{1,s}$ into $G$ that map the centre vertex to a vertex of $U$.

\begin{lemma}\label{lem:stars}
  Let $s \geq 2$ be an integer and suppose that $G$ is a bipartite graph with parts $U$ and $V$ and at most $q|V|$ edges, for some $q \in (0, |U|]$.
  Then the following holds:
  \begin{enumerate}[label=(\roman*)]
  \item
    \[
      |\Emb_U(K_{1,s}, G)| \leq \big(\floor{q}+\{q\}^s\big)|V|^s.
    \]
  \item
    For every positive $\eps$, there exists a positive $\eta$ such that, if
    \[
      |\Emb_U(K_{1,s}, G)| \geq (1-\eta)\cdot \big(\floor{q}+\{q\}^s\big)|V|^s,
    \]
    then there is a subset $W\subseteq U$ of size $\ceil{q}$ such that
      $e_G(W,V) \geq (1-\eps)q|V|$, and a further subset $W' \subseteq W$ of
      size at least $\lfloor (1 - \eps) |W| \rfloor$ such that $\deg_Gu \ge (1 -
      \eps)|V|$ for every $u \in W'$.
  \end{enumerate}
\end{lemma}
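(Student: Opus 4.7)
For part (i), I would observe that $|\Emb_U(K_{1,s}, G)| = \sum_{u \in U} (\deg_G u)^{\underline{s}} \le \sum_{u \in U} (\deg_G u)^s$. Setting $x_u = \deg_G u / |V| \in [0, 1]$, the constraint $e_G \le q|V|$ becomes $\sum_u x_u \le q$, and the claim reduces to showing that
\[
  \max\Bigl\{\sum_u x_u^s : x_u \in [0,1],\ \sum_u x_u \le q\Bigr\} = \lfloor q \rfloor + \{q\}^s.
\]
Since $t \mapsto t^s$ is convex on $[0,1]$, the maximum is attained at an extreme point of the polytope, and any such extreme point has at most one fractional coordinate; a direct inspection identifies the optimal configuration as $\lfloor q \rfloor$ coordinates equal to $1$, one equal to $\{q\}$, and the remainder equal to $0$, yielding the asserted value.

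For part (ii), the plan is to exploit the strict convexity of $t \mapsto t^s$ to turn near-extremality into structural information. First, using $d_u^s - d_u^{\underline s} = O_s(d_u^{s-1})$ and $\sum_u d_u^{s-1} \le q|V|^{s-1}$, the passage from $\sum_u d_u^{\underline s}$ to $\sum_u d_u^s$ costs only $O_s(q|V|^{s-1})$, which is negligible relative to $|V|^s$ once $|V|$ is large in terms of $s$ and $q$; bounded $|V|$ can be dispatched by direct inspection. Thus the hypothesis yields $\sum_u x_u^s \ge (1-\eta')(\lfloor q \rfloor + \{q\}^s)$ for some $\eta'$ comparable to $\eta$, which, together with $\sum_u x_u \le q$, bounds the total \emph{defect}
\[
  \sum_u (x_u - x_u^s) \le \bigl(\{q\} - \{q\}^s\bigr) + \eta'(\lfloor q \rfloor + 1).
\]
The pointwise inequalities $t - t^s \ge c_{s,\tau} \cdot t$ on $[\tau, 1-\tau]$ and $t - t^s \ge (1-\tau^{s-1}) \cdot t$ on $(0, \tau]$ (each valid for any $\tau \in (0, 1/2)$ with an explicit positive constant depending only on $s$ and $\tau$) then translate this defect bound into upper bounds on the total $x$-mass lying in each of these ranges.

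With this in hand, I would let $A = \{u : x_u \ge 1 - \eps\}$ and note that $\sum_u x_u \le q$ forces $|A| \le \lceil q \rceil$ once $\eps$ is chosen small enough. If $|A| \ge \lceil q \rceil$, I take any $W \subseteq A$ of size $\lceil q \rceil$; otherwise I take $W$ to consist of $A$ together with the $\lceil q \rceil - |A|$ highest-degree vertices outside $A$. In both cases, $W' := A \cap W$ serves as the subset of high-degree vertices; the defect bounds on the mid-range and low-range masses together with $\sum_u x_u \le q$ yield both $e_G(W,V) \ge (1-\eps)q|V|$ and $|W'| \ge \lfloor (1-\eps)|W|\rfloor$, provided $\eta$ is chosen sufficiently small in terms of $\eps$ and $s$. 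The main technical obstacle is calibrating the parameters uniformly across all $\{q\} \in [0,1)$: the case when $\{q\}$ is close to $0$ (where the extremal fractional vertex is negligible and $W$ must include a vertex of essentially arbitrary degree) and the case when $\{q\}$ is close to $1$ (where that vertex is absorbed into $A$) require slightly different bookkeeping, which can be unified by choosing the thresholds $\eps$ and $\tau$ carefully in terms of $\eps$ and~$s$.
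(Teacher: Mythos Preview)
Your argument for part~(i) is correct and essentially the same as the paper's: both bound $|\Emb_U(K_{1,s},G)| \le \sum_u (\deg_G u)^s$ and then maximise the right-hand side subject to the degree constraints, the paper via a discrete shifting identity and you via convexity and extreme points of the constraint polytope.

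For part~(ii), your route genuinely differs from the paper's. The paper argues in the contrapositive: assuming the structural conclusion fails, it transforms the degree sequence $\mathbf d$ into the extremal sequence $\mathbf d_{\max}$ by repeatedly moving a unit of degree from a low-degree vertex to a high-degree one, and uses the quantitative convexity bound $(x+1)^s+(y-1)^s - x^s - y^s \ge (x+1-y)(x+1)^{s-2}$ to show that $S(\mathbf d_{\max}) - S(\mathbf d) \ge \eta\, S(\mathbf d_{\max})$, via a two-case split according to whether $\sum_{i \le m_0}(|V|-d_i) \gtrless \eps^2 e_G$. Your defect approach---bound $\sum_u(x_u - x_u^s)$ from above and convert via pointwise lower bounds on $t - t^s$---is sound in principle and arguably more conceptual, but the calibration near $\{q\}\in\{0,1\}$ and for small $\lfloor q\rfloor$ (which you rightly flag) requires real additional work: with a single threshold $\tau$ one only gets $|A_\tau| \ge M - (\{q\}-\{q\}^s)/c_\tau - O(\eta')M$, and the middle term can swamp the gain when $\lfloor q\rfloor$ is bounded. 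The paper's shifting argument handles all $q$ uniformly without such a split. One simplification you should take: the passage from $\sum_u d_u^{\underline s}$ to $\sum_u d_u^s$ is unnecessary, since $d_u^{\underline s} \le d_u^s$ already yields $\sum_u x_u^s \ge (1-\eta)(\lfloor q\rfloor + \{q\}^s)$ directly from the hypothesis, with no error term and no need to treat bounded $|V|$ separately; the paper exploits this by working with $S(\mathbf d) = \sum_u d_u^s$ throughout.
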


\begin{proof}
  We will use the following inequality, valid for any two numbers $x$ and $y$ with $x \ge y \ge 1$:
  \begin{equation}
    \label{eq:xs-convexity}
    (x+1)^s + (y-1)^s - x^s - y^s \ge (x+1-y) \cdot (x+1)^{s-2}.
  \end{equation}
  It is clear that
  \begin{equation}
    \label{eq:NUK1s-upper}
    |\Emb_U(K_{1,s}, G)|
    = \sum_{u \in U} (\deg_Gu) (\deg_Gu-1)\dotsb (\deg_Gu - s+1)\leq \sum_{u \in U} (\deg_Gu)^s.
  \end{equation}
  Let $m = |U|$ and, given a sequence $\bd = (d_1, \dotsc, d_m)$, define
  \[
    S(\bd) = \sum_{i=1}^m d_i^s.
  \]
  By the degree sequence of a
  bipartite graph with parts $U$ and $V$, we will mean the sequence of degrees
  of the vertices in $U$, listed in a nonincreasing order.
  Thus~\eqref{eq:NUK1s-upper} implies that $|\Emb_U(K_{1,s}, G)| \le S(\bd_G)$,
  where $\bd_G$ is the degree sequence of $G$.   Let $m_0 = \floor{e_G/|V|}$ and define $\bdmax = (d_1^*, \dotsc, d_m^*)$ by
  \[
    d_i^* =
    \begin{cases}
      |V| & \text{if $i \le m_0$}, \\
      \{e_G / |V|\} \cdot |V| & \text{if $i = m_0 + 1$}, \\
      0 & \text{otherwise}.
    \end{cases}
  \]
  Note that $\sum_{i=1}^m d_i^* = e_G$; in particular, $\{e_G/|V|\}\cdot |V|$
  is an integer. We claim that $\bdmax$ maximises $S$ over all degree sequences
  whose sum is $e_G$.
  Indeed, for any other such degree sequence $\bd' = (d_1', \dotsc, d_m')$,
  there must be two distinct indices $i$ and $j$ such that $0 < d_i' \le d_j'
  < |V|$. Let $\bd''$ be the degree sequence obtained from $\bd'$ by
  decreasing $d_i'$ by one and increasing $d_j'$ by one (and reordering the
  degrees, if necessary). It follows from~\eqref{eq:xs-convexity} that
  \[
    S(\bd'') - S(\bd') = (d_j'+1)^s + (d_i'-1)^s - (d_j')^s - (d_i')^s \ge (d_j'-d_i'+1) \cdot (d_j'+1)^{s-2} \geq 1.
  \]
  Therefore,
  \[
    |\Emb_U(K_{1,s}, G)|\leq S(\bd_G) \le S(\bdmax) = \big(\floor{e_G/|V|} + \{e_G/|V|\}^s\big)|V|^s .
  \]
  Since $e_G \le q |V|$, this completes the proof of the first part of the lemma.

  \begin{figure}
    \usetikzlibrary{decorations.pathreplacing}
    \begin{tikzpicture}[scale=0.5]
      \fill[black!20] (0,0) rectangle +(1,4);
      \fill[black!10] (1,0) rectangle +(1,4);
      \fill[black!20] (1,0) rectangle +(1,3);
      \fill[black!10] (2,0) rectangle +(1,4);
      \fill[black!20] (2,0) rectangle +(1,2);
      \fill[black!20] (3,0) rectangle +(1,1);
      \foreach \i in {0,...,8} {
        \draw (\i,0) rectangle ++(1,1)
         rectangle ++(-1,1)
         rectangle ++(1,1)
         rectangle ++(-1,1);
      }
      \node[anchor=south] at (0.5,-1) {$1$};
      \node[anchor=south] at (2.5,-1) {$\smash{m_0}$};
      \node[anchor=south] at (8.5,-1) {$m$};
      \draw [decorate,decoration={brace,amplitude=5pt,mirror,raise=5pt}]
        (9,0) -- (9,4) node [black,midway,xshift=0.7cm] {$|V|$};
      \node at (-1.5,2) {$\longrightarrow$};
      \begin{scope}[xshift=-12cm]
        \fill[black!20] (0,0) rectangle +(1,4);
        \fill[black!20] (1,0) rectangle +(1,3);
        \fill[black!20] (2,0) rectangle +(1,2);
        \fill[black!10] (3,0) rectangle +(1,2);
        \fill[black!20] (3,0) rectangle +(1,1);
        \fill[black!10] (4,0) rectangle +(1,1);
        \fill[black!10] (5,0) rectangle +(1,1);
        \fill[black!10] (6,0) rectangle +(1,0);
        \fill[black!10] (7,0) rectangle +(1,0);
        \foreach \i in {0,...,8} {
          \draw (\i,0) rectangle ++(1,1)
          rectangle ++(-1,1)
          rectangle ++(1,1)
          rectangle ++(-1,1);
        }
        \node[anchor=south] at (0.5,-1) {$1$};
        \node[anchor=south] at (2.5,-1) {$\smash{m_0}$};
        \node[anchor=south] at (8.5,-1) {$m$};
        \draw [decorate,decoration={brace,amplitude=5pt,mirror,raise=5pt}]
        (0,4) -- (0,-0) node [black,midway,xshift=-0.7cm] {$|V|$};
      \end{scope}
    \end{tikzpicture}
    \caption{The degree sequence on the left can be turned into
    the degree sequence on the right by moving mass from
    columns $i\geq m_0+1$ to columns
    $j\leq m_0+1$.
    }\label{fig:dmax}
  \end{figure}
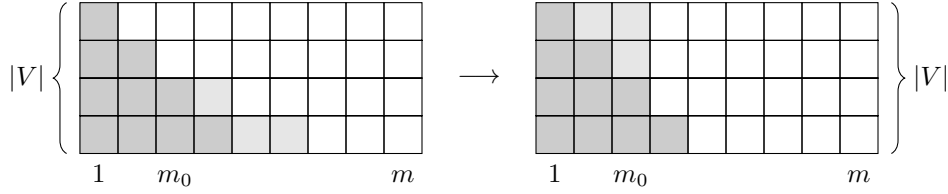

  For the second stipulation of the lemma, fix a positive $\eps$.
  Let $\Omega$ be the set of degree sequences of bipartite graphs $G$
  with parts $U$ and $V$ for which there is a subset $W \subseteq U$ of size
  $\ceil{e_G/|V|}$ such that $e_G(W, V) \ge (1-\eps) e_G$ and, additionally,
  a further
  subset $W' \subseteq W$ of size $\lfloor (1-\eps)|W| \rfloor$ such that
  $\deg_G u \ge (1-\eps)|V|$ for each $u \in W'$.

  It suffices to show that any degree
  sequence $\bd \notin \Omega$ whose sum is $e_G$ satisfies $S(\bd) < (1-\eta)
  S(\bdmax)$, for some positive $\eta$ that depends only on $\eps$. Let
  $\bd = (d_1, \dotsc, d_m)$. The crucial observation 
  is that we may obtain $\bdmax$ from $\bd$ by successively
  increasing some $d_i$ with $i \le m_0+1$ by one and, simultaneously,
  decreasing some $d_j$ with $j \ge m_0+1$ by one (see Figure~\ref{fig:dmax}).
  Note that, when doing so, we perform at least
  \[
    \sum_{i=1}^{m_0} (|V| - d_i) + \max\big\{d_{m_0+1}^* - d_{m_0+1}, 0\big\}
  \]
  such operations.
  We split further analysis into two cases.

  First, assume that $\sum_{i=1}^{m_0} (|V| - d_i) \ge \eps^2 e_G$; this implies
  that $m_0\geq 1$. In this case, in at
  least $\eps^2 e_G / 2$ steps
  of the above procedure, we will be increasing some $d_i$ with $i \le m_0$
  which is, at this time, already at least $\floor{(|V| + d_i) / 2}
  \geq 
  (|V| + d_{m_0}) / 2 - 1
  $, while decreasing
  some $d_j$ with $j \ge m_0+1$ which is at most $d_{m_0}$.
  Inequality~\eqref{eq:xs-convexity} implies that
  \[
    S(\bdmax) - S(\bd) \ge \frac{\eps^2 e_G}{2} \cdot \left(\frac{|V| + d_{m_0}}{2} - d_{m_0}\right) \cdot \left(\frac{|V| + d_{m_0}}{2}\right)^{s-2}.
  \]
  Since our assumption implies that $|V| - d_{m_0} \ge \eps^2 e_G / m_0$, it
  follows that
  \[
    S(\bdmax) - S(\bd) \ge \frac{\eps^2 e_G}{2} \cdot \frac{\eps^2 e_G}{2m_0} \cdot \left(\frac{|V|}{2}\right)^{s-2} = \frac{\eps^4}{2^{s+1}} \cdot \left(\frac{e_G}{m_0|V|}\right)^2 \cdot 2m_0|V|^s.
  \]
  Finally, we note that $e_G \ge m_0|V|$ and that $S(\bdmax) \le (m_0+1) |V|^s
   \le 2m_0 |V|^s$, and thus
  \[
    S(\bdmax) - S(\bd) \ge \frac{\eps^4}{2^{s+1}} \cdot S(\bdmax),
  \]
  proving $S(\bd) < (1-\eta)S(\bdmax)$ for some positive $\eta$.

  Assume now that $\sum_{i=1}^{m_0} (|V| - d_i) < \eps^2 e_G$. Let $W$ be the
  set comprising the $\lceil e_G / |V| \rceil$ vertices with largest degrees in
  $U$. Suppose first that $e_G(W,V) \ge (1-\eps)e_G$, but every set of
  $\floor{(1-\eps)|W|}$ vertices of $W$ contains a vertex of degree smaller
  than $(1-\eps)|V|$. In this case, $(1-\eps)|W| \ge 1$, as otherwise the
  latter condition is vacuously false, and $d_i < (1-\eps)|V|$ whenever $i \ge
  \floor{(1-\eps)|W|}$. Then
  \[
    \sum_{i=1}^{m_0} (|V|-d_i) > \big(m_0+1-\floor{(1-\eps)|W|}\big) \cdot
    \eps |V| \ge \big( |W| - \floor{(1-\eps)|W|}\big) \cdot \eps
    |V| \ge \eps^2 e_G,
  \]
  contradicting our assumption. Thus, since
  $\bd \notin \Omega$,
  we may assume that $e_G(W, V) < (1-\eps)e_G$. Therefore,
  \[
    (1-\eps)e_G > \sum_{i=1}^{|W|} d_i \ge \sum_{i=1}^{m_0} d_i = m_0 |V| -
    \sum_{i=1}^{m_0} (|V|-d_i) > m_0|V| - \eps^2 e_G.
  \]
  This means, in particular, that $m_0 < e_G/|V|$ and hence $|W| = m_0+1$. Moreover,
  \[
    m_0 |V| + d_{m_0+1}^* = e_G > \sum_{i=1}^{|W|}
    d_i + \eps e_G  > m_0|V| + d_{m_0+1} + (\eps-\eps^2) e_G,
  \]
  which implies that $d_{m_0+1}^* - d_{m_0+1} > (\eps-\eps^2) e_G$. Therefore,
  there exist
  at least $(\eps-\eps^2) e_G/2$ steps in which 
  we increase $d_{m_0+1}$ at a time where
  it is already at least $\floor{(\eps-\eps^2) e_G /2}$.
  Inequality~\eqref{eq:xs-convexity} implies that
  \[
    S(\bdmax) - S(\bd) \ge \left(\frac{(\eps-\eps^2) e_G}{2}\right)^s.
  \]
  However, we trivially have $S(\bdmax) \le e_G^s$ and thus
  \[
    S(\bdmax) - S(\bd) \ge \frac{(\eps-\eps^2)^s }{2^s} \cdot S(\bdmax),
  \]
  completing the proof.
\end{proof}

We remark that the extremal structures given by 
Theorem~\ref{thm:Kr-stability} and Lemma~\ref{lem:stars}(ii) are quite
different and, in a sense, incompatible. This has the following technically important consequence: if a graph
$G$ simultaneously
contains many copies of $K_r$ and many copies
of $K_{1,r-1}$, then it can be split into two edge-disjoint graphs, one
containing nearly all the copies of $K_r$ and the other containing nearly all
the copies of $K_{1,r-1}$. The following lemma formalises this statement;
its proof is similar to an argument of Lubetzky and Zhao~\cite{lubetzky2017variational}. We write $G[U,V]$ for the bipartite induced subgraph of $G$ with parts $U$ and $V$.

\begin{lemma}\label{lem:lz}
  For every integer $r\geq 3$ and positive real number $\eps$, there is a
  positive $\eta$ such that the following holds. Let $G$ be a graph on $n$
  vertices with $e_G \leq \eta n^2$. Then there exists a partition $V(G) =
  U\cup V$ satisfying $|U|\leq \eps n$,
  \[ |\Emb(K_r,G[V])|
  \geq 
  |\Emb(K_r,G)|
  - \eps e_G^{r/2},\]
  and\[
    |\Emb_U(K_{1,r-1},G[U,V])| \geq 
    |\Emb(K_{1,r-1},G)| 
    - \eps e_G n^{r-2}. \]
\end{lemma}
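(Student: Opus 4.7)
My plan is to choose the partition $V(G) = U \cup V$ based on vertex degrees, putting ``hub-like'' (high-degree) vertices into $U$ and the rest into $V$. I would fix a large constant $\tau = \tau(r,\eps)$ (to be determined), and set $U = \{v \in V(G) : \deg_G v \geq \tau \sqrt{e_G}\}$ and $V = V(G)\setminus U$. Since $|U| \cdot \tau\sqrt{e_G} \leq \sum_v \deg_G v = 2e_G$, I obtain $|U| \leq 2\sqrt{e_G}/\tau \leq 2\sqrt{\eta}\, n/\tau$, which is at most $\eps n$ provided $\eta \leq \eps^2\tau^2/4$.

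For the $K_r$ bound, any embedding of $K_r$ into $G$ not already in $\Emb(K_r, G[V])$ must map some vertex to $U$. A union bound on the position of this vertex, together with the observation that once a vertex is mapped to some $u \in U$, the remaining $r-1$ vertices form an embedding of $K_{r-1}$ into $G[N(u)]$, yields
\[
  |\Emb(K_r, G)| - |\Emb(K_r, G[V])| \leq r \sum_{u\in U} |\Emb(K_{r-1}, G[N(u)])|.
\]
Applying Theorem~\ref{thm:max-copies} with $J = K_{r-1}$ (for which $\alpha^*_J = (r-1)/2$) and using $e_{G[N(u)]} \leq e_G$ bounds this by $r|U| \cdot (2e_G)^{(r-1)/2} \leq (r \cdot 2^{(r+1)/2}/\tau) \cdot e_G^{r/2}$, which is at most $\eps e_G^{r/2}$ as soon as $\tau \geq r \cdot 2^{(r+1)/2}/\eps$.

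For the star bound, the embeddings of $K_{1,r-1}$ into $G$ not in $\Emb_U(K_{1,r-1}, G[U,V])$ split into two groups. First, those centred at some $v \in V$: writing $d_v = \deg_G v$, their count $\sum_{v\in V} \ff{d_v}{r-1}$ is bounded above by $\sum_{v\in V} d_v^{r-1} \leq (\tau\sqrt{e_G})^{r-2} \cdot \sum_{v\in V} d_v \leq 2\tau^{r-2}e_G^{r/2}$; using $e_G \leq \eta n^2$, this is in turn at most $2\tau^{r-2}\eta^{(r-2)/2} \cdot e_G n^{r-2}$, which is at most $\tfrac{\eps}{2} e_G n^{r-2}$ for $\eta$ sufficiently small. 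Second, those centred at some $u \in U$ but using at least one leaf in $U$: applying the elementary inequality $\ff{a}{k} - \ff{b}{k} \leq k(a-b) a^{k-1}$ (valid for integers $a \geq b \geq 0$, proved by induction on $k$) with $k = r-1$, $a = \deg_G u \leq n$, and $a - b = \deg_{G[U]} u$, and summing over $u\in U$, their total is at most $(r-1) \cdot 2e_{G[U]} \cdot n^{r-2} \leq (r-1)|U|^2 n^{r-2} \leq \tfrac{4(r-1)}{\tau^2}\cdot e_G n^{r-2}$, which is at most $\tfrac{\eps}{2} e_G n^{r-2}$ provided $\tau^2 \geq 8(r-1)/\eps$.

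The main obstacle is coordinating the two parameters: $\tau$ must be taken sufficiently large (as a function of $r$ and $\eps$) to control both the $K_r$-loss and the hub-to-hub star loss, while $\eta$ must then be taken sufficiently small (as a function of $\tau$, $r$, $\eps$) to ensure $|U| \leq \eps n$ and to render the stars centred in $V$ negligible. Since the constraints on $\tau$ depend only on $r$ and $\eps$, one fixes $\tau$ first and then chooses $\eta$ to meet the remaining polynomial constraint in $\eps/\tau$, making everything compatible.
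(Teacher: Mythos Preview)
Your proof is correct and follows essentially the same approach as the paper: both define $U$ as the set of high-degree vertices (the paper uses threshold $\eta^{1/3}n$, you use $\tau\sqrt{e_G}$) and bound the $K_r$-loss via $r|U|\cdot|\Emb(K_{r-1},G)|$ and the star-loss by splitting into centre-in-$V$ and centre-in-$U$-with-leaf-in-$U$ cases. The only difference is the choice of threshold scale, which leads you to introduce the auxiliary parameter $\tau$; the resulting bounds are structurally identical to the paper's.
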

\begin{proof}
  Assume that $\eta$ is sufficiently small,  let $U$ be the set of vertices in
  $G$ with degree at least $\eta^{1/3} n$, and let $V = V(G)\setminus U$.
  Note that $|U|\leq 2 e_G/(\eta^{1/3} n) \le 2\eta^{2/3}n \le \eps n$.

  Every embedding of $K_r$ into
  $G$ that maps a vertex of $K_r$ to a vertex of $U$ can be specified
  by first choosing a vertex $v$ of $K_r$, then a vertex of $U$
  that $v$ will be mapped to, and finally an embedding of $K_{r-1}$
  into $G$. 
  Using Theorem~\ref{thm:max-copies}, we thus obtain
  \[ |\Emb(K_r,G)|-
  |\Emb(K_r,G[V])|\leq
  r\cdot |U| \cdot |\Emb(K_{r-1},G)|
  \leq r\cdot \frac{2e_G}{\eta^{1/3} n} \cdot (2e_G)^{(r-1)/2}. \]
  Since $e_G^{1/2}\leq \eta^{1/2} n$,
  this implies the first assertion of the lemma.

  Next, note that
  \begin{equation}\label{eq:ek1}
    |\Emb(K_{1,r-1},G)| 
    =
    |\Emb_U(K_{1,r-1},G[U,V])|  +
    t_1 + t_2,
  \end{equation}
  where $t_1$ is the number of embeddings of $K_{1,r-1}$ into $G$ that map the
  centre vertex and at least one leaf of $K_{1,r-1}$ to $U$ and $t_2$ is the
  number of embeddings that map the centre vertex of $K_{1,r-1}$ to $V$. We have
  \[ t_1 \leq (r-1)\cdot |U|^2\cdot n^{r-2}
  \leq (r-1)\left(\frac{2e_G}{\eta^{1/3} n} \right)^2n^{r-2}\leq \eps
  e_Gn^{r-2}/2, \] as $e_G\leq \eta n^2$.
  Finally, in order to bound $t_2$, observe that
  every embedding counted by $t_2$
  can be specified by first choosing
  a leaf $a$ of $K_{1,r-1}$,
  then choosing the image $e$ of the edge of $K_{1,r-1}$ incident with $a$,
  then choosing the endpoint $v \in V$ of $e$ that is the image of the centre
  vertex of $K_{1,r-1}$, and finally choosing the images of the remaining $r-2$ leaves
  of $K_{1,r-1}$ among the neighbours of $v$ in $G$.
  Since every vertex $v \in V$ has degree at most $\eta^{1/3} n$, it follows that
  \[t_2\leq (r-1)\cdot e_G\cdot 2 \cdot (\eta^{1/3} n)^{r-2}
  \leq \eps e_Gn^{r-2}/2.
  \] 
  Together with \eqref{eq:ek1}, these bounds on $t_1$ and $t_2$ imply the
  second assertion of the lemma.
\end{proof}

\section{Cliques in random graphs}
\label{sec:cliques}

Fix an integer $r \ge 3$ and let $X = X_{n,p}^{K_r}$ be the number of
$r$-vertex cliques in the random graph $G_{n,p}$. In this section, we
shall use Theorem~\ref{thm:packaged} not only to determine the
logarithmic upper tail probability of $X$ but also to provide a
detailed description of the upper tail event. Before we restate the two
theorems that will be proved in this section, we discuss the combinatorial
constructions that are responsible for the localisation phenomenon in more
detail.

As was shown in~\cite{lubetzky2017variational}, when $n^{-1} \ll p^{(r-1)/2} \ll 1$, there
are essentially two optimal strategies for planting
a subgraph inside $G_{n,p}$ that increases the expected number of copies of $K_r$ by
the required $\delta \Ex[X]$.
The first, and most
straightforward, involves planting a clique with $\delta^{1/r} n p^{(r-1)/2}$ vertices; note that our
assumption on $p$ implies that this expression is unbounded and thus we may
implicitly assume that it is an integer. 
Note that such a clique
has close to
\[ \frac{\delta^{2/r} n^2 p^{r-1}}2 \]
edges
and contains approximately $\delta\binom{n}{r}p^{\binom{r}{2}} = \delta \e[X]$
copies of $K_r$. If $np^{r-1}$ is bounded from below, then there is an alternative
strategy that
competes with planting a clique. By a \emph{hub}
of order $\floor{\delta np^{r-1}/r}+1$, 
we mean a subgraph of
$G_{n,p}$ constructed as follows. Let $U$ be a set of $\lfloor \delta
np^{r-1}/r \rfloor$ vertices of $G_{n,p}$ and let $u$ be another vertex that
lies outside of $U$. Connect every vertex in $U$ to every vertex outside of $U
\cup \{u\}$ and connect $u$ to some $\{ \delta n p^{r-1} /
r\}^{1/(r-1)} \cdot n$ such vertices.
Note that every hub
has close to 
\[
  \Big(\lfloor \delta n p^{r-1} / r \rfloor + \{ \delta n p^{r-1}/r
  \}^{1/(r-1)}\Big) \cdot n
\]
edges, which is $\Theta(n^2p^{r-1})$, as $np^{r-1}$ is bounded from
below. Unlike in the
previous construction, the hub itself 
contains \emph{no} copies of $K_r$.
However, as $p \ll 1$, planting a hub creates approximately
$\lfloor \delta n p^{r-1}/r \rfloor \cdot
\binom{n}{r-1}$ copies of the star graph $K_{1,r-1}$ whose centre
vertex lies in $U$ and approximately $\{\delta n p^{r-1} / r\} \cdot
\binom{n}{r-1}$ copies of $K_{1,r-1}$ whose centre vertex is $u$. 
The total number of planted copies of $K_{1,r-1}$ is thus
approximately $\delta p^{r-1}\cdot \binom{n}{r} = \delta
\Ex[X] \cdot p^{-\binom{r-1}{2}}$.
Since each of the
planted copies of $K_{1,r-1}$ lies in a copy of $K_r$ that now
appears in $G_{n,p}$ with probability $p^{\binom{r-1}{2}}$, one
expects to see approximately $\delta \Ex[X]$ such extra copies of
$K_r$. We remark that if $np^{r-1}$ is large,
then the contribution of the single vertex $u$ becomes negligible and
the hub construction can be described more concisely as connecting
some $\delta n p^{r-1}/r$ vertices to all
the others, using $\delta n^2 p^{r-1}/r$ edges.

We prove that, for a vast majority of values of $p$ in the range of interest, the logarithmic upper tail probability of $X$ corresponds to one of the two strategies described above. In particular, we show that
\begin{enumerate}[label={(\roman*)}]
\item
  If $np^{r-1} \to 0$, then the logarithmic upper tail probability is
  asymptotically equal to the `cost' of planting the smallest
  clique that has $\delta \Ex[X]$ copies of $K_r$.
\item
  If $np^{r-1} \to \infty$, then the logarithmic upper tail
  probability is asymptotically equal to the `cost' of planting
  either a clique as above (when $\delta^{2/r} \le \delta/r$) or the
  smallest hub that has $\delta \Ex[X] \cdot p^{-\binom{r-1}{2}}$
  copies of $K_{1,r-1}$ (when $\delta^{2/r} \ge \delta/r$).
\end{enumerate}
Note that in the regime $np^{r-1} \to \infty$, we may approximate the number of edges
planted in the hub construction by $\delta n^2p^{r-1}/r$. However, when
$np^{r-1} \to c$ for some constant $c \in (0, \infty)$, this
approximation is no longer valid and we are forced to account for the
lack of smoothness that stems from the integral and fractional parts of
$\delta c / r$. As a result, we find that, for certain values of the
parameters $\delta$ and $c$, the logarithmic upper tail probability
corresponds to a mixture of the first and second strategies: it is equal
to the cost of planting a graph comprising both a hub and a clique, each
contributing a nonnegligible proportion of the (expected) extra $\delta
\Ex[X]$ copies of $K_r$.

Finally, suppose that one conditions $G_{n,p}$ on the upper tail event $\{X \ge (1+\delta) \Ex[X]\}$. We prove that, with probability close to one, the conditioned random graph contains a subgraph that very closely resembles the graph described by the optimal strategy (for the particular values of $n$, $p$, and $\delta$). For example, in cases where the logarithmic upper tail probability corresponds to planting a clique, we show that $G_{n,p}$ conditioned on the event $\{X \ge (1+\delta)\Ex[X]\}$ contains a set of $\delta^{1/r}np^{(r-1)/2}$ vertices that induces an `almost-clique', that is, a subgraph of density $1-o(1)$.

We now turn to the details.
As in the introduction, we define
continuous functions $\psi_r\colon (0,\infty)^2\times
[0,1]\to (0,\infty)$ and
$\varphi_r\colon (0,\infty)^2
\to (0,\infty)$ by
\[ \psi_r(\delta,c,x) = \frac{\big(\delta(1-x)\big)^{2/r}}2 +
\frac{\floor{x\delta  c/r} + \{x\delta  c/r\}^\frac1{r-1}}{c} \quad\text{and}
\quad \varphi_r(\delta,c) = \min_{x\in [0,1]} \psi_r(\delta,c,x).
\]
Note that $\psi_r(\delta,np^{r-1},x)\cdot n^2p^{r-1}$ is approximately the
number of edges in the disjoint union of a clique with $\delta (1-x)
np^{(r-1)/2}$ vertices and a hub of order $\floor{\delta x np^{r-1}/r}+1$. Recalling
the discussion above, $\varphi_r(\delta,np^{r-1})\cdot n^2p^{r-1}$
then represents the smallest number of edges among all combinations of
clique and hub that yield an expected $\delta \e[X]$ copies of $K_{r-1}$.

In order to handle the three cases $np^{r-1}\to 0$, $np^{r-1}\to c\in
(0,\infty)$, and $np^{r-1}\to \infty$ in a unified manner, it will be
convenient to extend $\varphi_r$ to a continuous function $\varphi_r
\colon (0,\infty)\times [0,\infty] \to (0,\infty)$. This
extension may be defined by noting that
\[
  \lim_{c \to 0} \varphi_r(\delta, c) = \delta^{2/r}/2 \qquad \text{and}
  \qquad  \lim_{c \to \infty} \varphi_r(\delta, c) = \min\{\delta^{2/r}/2,
  \delta/r\}
\]
uniformly as functions of $\delta$.
For every $\delta>0$ and $c\in [0,\infty]$, we then define the set
\[ \barX(\delta,c) = \{ x \in [0,1] :
\varphi_r(\delta,c) = \lim_{c'\to c} \psi_r(\delta,c',x) \} \]
of (asymptotic) minimisers to $x\mapsto \psi_r(\delta,c,x)$. One can check
that this set is nonempty for any $\delta$ and $c$, though it might contain
more than one element. The following lemma describes the set of possible
minimisers in more
detail.

\begin{lemma}\label{lem:grdcx}
  Let $r\geq 3$ be an integer and let $\delta$ and $c$ be positive real numbers. 
  Let $x^* = r\floor{\delta c/r}/(\delta c)$.
  Then
  \[ \barX(\delta,0) =
  \big\{0\big\},
  \quad
   \barX(\delta,c) \subseteq
  \big\{0,x^*,1\big\},
  \quad\text{and}\quad
  \barX(\delta,\infty) \subseteq
  \big\{0,1\big\}.\]
\end{lemma}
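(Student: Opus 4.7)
I would split the argument into the bulk case $c\in(0,\infty)$ and the two degenerate limits $c=0$ and $c=\infty$. For the bulk case, the strategy is piecewise concavity: I will show that $x\mapsto\psi_r(\delta,c,x)$ is concave on each maximal subinterval of $[0,1]$ on which $\floor{x\delta c/r}$ is constant, and that the function is continuous across the breakpoints, so the minimum is realised at one of finitely many candidate points.

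On a subinterval where $\floor{x\delta c/r}=j$, the map $x\mapsto(\delta(1-x))^{2/r}/2$ is concave because $2/r\le 2/3<1$, and the second term of $\psi_r$ reduces to $j/c + c^{-1}(x\delta c/r-j)^{1/(r-1)}$, which is concave because $1/(r-1)\le 1/2<1$. Continuity across the breakpoints $x_k:=kr/(\delta c)$ (with $1\le k\le K:=\floor{\delta c/r}$) follows from the identity $(k-1)+1^{1/(r-1)}=k+0^{1/(r-1)}$, so the minimum of $\psi_r(\delta,c,\cdot)$ on $[0,1]$ is attained at some point of $\{0,x_1,\dotsc,x_K,1\}$. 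Noting that $x_K=x^*$, it remains to eliminate the intermediate breakpoints $x_1,\dotsc,x_{K-1}$.

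To do this, set $h(k):=\psi_r(\delta,c,x_k)=(\delta-kr/c)^{2/r}/2+k/c$ and view $h$ as a function of a real variable on $[0,K]$. A direct computation gives $h'(k)=c^{-1}\bigl(1-(\delta-kr/c)^{2/r-1}\bigr)$; since $2/r-1<0$ and $\delta-kr/c$ is decreasing in $k$, the quantity $(\delta-kr/c)^{2/r-1}$ is increasing in $k$, so $h'$ is decreasing and $h$ is concave on $[0,K]$. Concavity forces $\min_{0\le k\le K}h(k)$ to be attained at $k=0$ or $k=K$, which together with the remaining candidate $x=1$ yields $\barX(\delta,c)\subseteq\{0,x^*,1\}$.

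The two degenerate cases are read off the definition. For $c\to 0^+$ and any fixed $x>0$, the second term of $\psi_r(\delta,c,x)$ equals $(x\delta/r)^{1/(r-1)}\cdot c^{1/(r-1)-1}$, which diverges because $1/(r-1)-1<0$, whereas $\psi_r(\delta,c,0)=\delta^{2/r}/2$ is bounded; hence $\barX(\delta,0)=\{0\}$. For $c\to\infty$, termwise inspection gives the pointwise limit $(\delta(1-x))^{2/r}/2+x\delta/r$ on $[0,1]$, which is concave as the sum of a concave and a linear term, so its minimum lies in $\{0,1\}$. The main obstacle will be the bookkeeping around the breakpoints---in particular, verifying that piecewise concavity together with continuity really forces minimisers onto the endpoint set---along with the sign check $2/r-1<0$ that drives the concavity of $h$.
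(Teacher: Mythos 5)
Your proposal is correct and follows essentially the same route as the paper: piecewise concavity on the subintervals where $\floor{x\delta c/r}$ is constant reduces the candidate minimisers to the breakpoints and $x=1$, and then concavity of the auxiliary function on the lattice of breakpoints eliminates all intermediate ones. The only cosmetic difference is that you establish the concavity of $h$ by an explicit derivative computation, whereas the paper simply writes $h$ in the form $(\delta(1-x))^{2/r}/2 + x\delta/r$ and observes it is the sum of a concave and a linear function.
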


\begin{proof}
  The statement for $\barX(\delta,0)$ holds because $\lim_{c\to 0}
  \psi_r(\delta,c,x)=\infty$ whenever $x>0$.
  The statement for $\barX(\delta,\infty)$ follows
  because
  \[
    \lim_{c\to \infty}
    \psi_r(\delta,c,x) = (\delta(1-x))^{2/r}/2 + x\delta/r
  \]
  and the right-hand side is strictly concave in $x \in [0,1]$. In particular, since
  \[
    \varphi_r(\delta,\infty) = \min{\{\delta^{2/r},\delta/r\}} = \min_{x \in [0,1]}(\delta(1-x))^{2/r}/2 + x\delta/r,
  \]
  the equality $\varphi_r(\delta, \infty) = \lim_{c\to \infty} \psi_r(\delta,c,x)$ implies that $x$ is one of the endpoints of the interval~$[0,1]$.
  Assume now that $c\in (0,\infty)$.
  Treating $r$, $\delta$, and $c$ as fixed, consider the function $f
  \colon [0,1] \to (0, \infty)$ defined by $f(x) =
  \psi_r(\delta,c,x)$. Since $\psi_r$ is continuous,
  we have
  $x\in \barX(\delta,c)$ if and only if $x$ is a minimiser of $f(x)$
  in $[0,1]$.
  We cover the domain of $f$ with essentially
  disjoint intervals as follows:
  \[
    [0,1] = \frac{r}{\delta c} [0,1] \cup \frac{r}{\delta c}
    [1,2] \cup \dotsb \cup
    {\frac{r}{\delta c} [\floor{\delta c/r}-1, \floor{\delta c/r}]} \cup
    \frac{r}{\delta c}
    [\floor{\delta c/r},\delta c/r].
  \]
  Observe that $f$ is strictly concave on each of these intervals and thus
  $f$ can achieve its minimum value only at an endpoint of one of the intervals,
  that is, either at some $x$ for which $\delta xc/r \in \ZZ$ or at $x=1$.
  Define the function $h \colon [0,1] \to (0, \infty)$ by
  \[
    h(x) = \frac{\big(\delta(1-x)\big)^{2/r}}2+ \frac{x\delta}{r}
  \]
  and observe that $h$ is strictly concave and that $h(x) = f(x)$ whenever $\delta
  xc/r\in \ZZ$. It follows that the minimum of $f(x)$ is achieved either
  at $x=1$ or at the smallest or largest $x$ satisfying $\delta x c/r \in \ZZ$.
  Since the latter two points are $x=0$ and $x=r\floor{\delta c/r}/(\delta
  c)=x^*$, respectively, we may conclude that the minimiser is in the set
  $\{0,x^*,1\}$, as desired.
\end{proof}

Let us now state the two main results of this section. The following is a
straightforward reformulation of Theorem~\ref{thm:krrate}.

\begin{theorem}\label{thm:krrate2}
  Let $r \ge 3$ be an integer and let $X = X_{n,p}^{K_r}$ denote the number of
  $r$-vertex cliques in the random graph $G_{n,p}$.
  Then, for every fixed positive constant
  $\delta$ and all $p = p(n)$ such that $n^{-1} (\log
  n)^{\frac1{r-2}} \ll p^{\frac{r-1}{2}} \ll 1$ and $\lim_{n\to \infty} np^{r-1} = c \in [0,\infty]$,
  \[
    \lim_{n \to \infty} \frac{-\log \Pr\big(X \ge
    (1+\delta)\Ex[X]\big)}{n^2p^{r-1} \log(1/p)} =
      \varphi_r(\delta, c) .
  \]
\end{theorem}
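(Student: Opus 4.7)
The plan is to derive Theorem~\ref{thm:krrate2} by applying the general framework of Theorem~\ref{thm:packaged} to the polynomial $X = X_{n,p}^{K_r}$, regarded as a polynomial of degree $\binom{r}{2}$ in the $\binom{n}{2}$ edge-indicator variables of $K_n$. This reduces the task to two essentially independent subproblems: \emph{(i)}~computing $\Phi_X(\delta\pm\eps)$ up to $(1\pm o(1))$-factors so as to recover the formula $\varphi_r(\delta, np^{r-1}) \cdot n^2 p^{r-1} \log(1/p)$, and \emph{(ii)}~verifying the entropic stability hypothesis that the number of cores of size $m$ is at most $(1/p)^{\eps m/2}$ for every $m$. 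The hypothesis $\Phi_X(\delta-\eps) \ge K\log(1/p)$ follows easily from the lower bound in~(i), since $n^2p^{r-1} \gg 1$ in the range of densities considered.

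For \emph{(i)}, the upper bound on $\Phi_X$ is obtained from the two explicit constructions outlined at the start of the section: for each candidate $x \in \barX(\delta,c)$ (with $c = \lim np^{r-1} \in [0,\infty]$) given by Lemma~\ref{lem:grdcx}, taking $G$ to be the disjoint union of a clique on roughly $(\delta(1-x))^{1/r}np^{(r-1)/2}$ vertices and a hub with $\floor{x\delta np^{r-1}/r}+1$ centre vertices yields a feasible $G$ whose edge count is $\big(\psi_r(\delta,c,x)+o(1)\big) n^2p^{r-1}$; minimising over $x$ gives the desired upper bound, and the three regimes $c=0$, $c\in(0,\infty)$, and $c=\infty$ are handled uniformly through the continuous extension of $\varphi_r$. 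For the matching lower bound, I would take any feasible $G$ and expand
\[
  \e_G[X] - \e[X] \;=\; \big(1+o(1)\big) \sum_{\varnothing \neq J \subseteq K_r} \kappa_J \cdot |\Emb(J,G)| \cdot p^{\binom{r}{2}-e_J}\,n^{r-v_J}
\]
for explicit combinatorial constants $\kappa_J$. Combining Theorem~\ref{thm:max-copies} with the bound $e_J \le (r-1)(v_J - \alpha_J^*)$ from Lemma~\ref{lemma:eJ-alphaJ-clique} (applied with $H = K_r$) shows that the only subgraphs $J$ that can contribute a nonnegligible fraction of $\delta\e[X]$ are $J = K_r$ and $J = K_{1,r-1}$. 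I would then apply Lemma~\ref{lem:lz} to split $V(G) = U \cup V$ with $|U| \ll n$ so that almost all copies of $K_r$ lie inside $G[V]$ and almost all copies of $K_{1,r-1}$ in $G[U,V]$ have their centre in $U$; bounding $|\Emb(K_r, G[V])|$ by Theorem~\ref{thm:max-copies} and $|\Emb_U(K_{1,r-1}, G[U,V])|$ by Lemma~\ref{lem:stars}(i), and optimising over how $e_G$ is distributed between the clique part and the hub part, recovers $e_G \ge (1-o(1))\varphi_r(\delta,c) \cdot n^2p^{r-1}$.

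The heart of the matter is \emph{(ii)}: verifying entropic stability. The rigidity condition~\ref{item:thmcore-mindeg} demands that
\[
  \e_{G^*}[X] - \e_{G^*\setminus e}[X] \;\ge\; \frac{\e[X]}{K \cdot \Phi_X(\delta+\eps)}
\]
for every edge $e$ of a core $G^*$. Expanding the left-hand side as a sum over nonempty $J \subseteq K_r$ containing $e$, and using Lemma~\ref{lemma:core-edge-regular} to bound $|\Emb(K_r, G^*; e)|$ together with Lemma~\ref{lemma:core-edge-bipartite} (applied with the star $K_{1,r-1}$) to bound $|\Emb(K_{1,r-1}, G^*; e)|$, the dominant contributions again come from the clique and star terms. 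A short calculation then forces every edge $e = uv \in E(G^*)$ to \emph{either} be incident to two vertices of degree $\Omega\big(np^{(r-1)/2}/\mathrm{polylog}(1/p)\big)$ \emph{or} to be incident to at least one vertex of degree $\Omega\big(n/\mathrm{polylog}(1/p)\big)$. Mimicking Claim~\ref{cl:triacorecount} from the triangle case, I would define $A_{G^*}$ and $B_{G^*}$ to be the sets of vertices of high, respectively very high, degree in $G^*$; their sizes satisfy $|A_{G^*}| = O\big(m \cdot \mathrm{polylog}(1/p)/(np^{(r-1)/2})\big)$ and $|B_{G^*}| = O\big(m \cdot \mathrm{polylog}(1/p)/n\big)$, and every edge of $G^*$ is contained in $A_{G^*}$ or incident to $B_{G^*}$. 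The number of cores with $m$ edges is therefore at most
\[
  \binom{n}{|A_{G^*}|}\binom{n}{|B_{G^*}|}\binom{|A_{G^*}|^2/2 + |B_{G^*}|\cdot n}{m},
\]
and a direct computation bounds this by $(1/p)^{\eps m/2}$ provided that $p^{(r-1)/2} \gg n^{-1}(\log n)^{1/(r-2)}$. The factor $(\log n)^{1/(r-2)}$ is precisely what is needed to absorb the polylogarithmic losses in the lower bound on the maximum degree forced by the $K_{1,r-1}$ term, and this is the main technical obstacle as well as the source of the hypothesis on $p$ in the theorem.
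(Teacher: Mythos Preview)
Your overall strategy matches the paper's: derive Theorem~\ref{thm:krrate2} from Theorem~\ref{thm:packaged} via the two subproblems you list, and your treatment of~(i) is essentially identical to the paper's proof of Proposition~\ref{prop:kr-Phi}.

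For~(ii), however, there are two genuine gaps. First, the assertion that ``the dominant contributions again come from the clique and star terms'' in the edge-rooted expansion of $\Ex_{\Gcore}[X] - \Ex_{\Gcore\setminus e}[X]$ is precisely the hard part, and it does \emph{not} follow from the unrooted argument you used in~(i). The paper's Claim~\ref{claim:core-degree} spends two pages showing that any subgraph $J\subseteq K_r$ satisfying the edge-rooted extremal inequality must be either $K_r$ or a double star $S_{i,j}$ with $i+j=r-2$; this requires a delicate analysis of partitions $V(J_{ab}) = V_1\cup V_2$ witnessing $\alpha_{J_{ab}}^*$ and an inequality of the form $e_J \le (v_J-1)(v_J-\alpha_{ab}^*-1)+x$ (where $x$ counts common neighbours of the rooted edge in $V_2$), together with a characterisation of when equality holds. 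You give no indication of how to carry this out.

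Second, the tool you cite for the $K_r$ case, Lemma~\ref{lemma:core-edge-regular}, bounds $|\Emb(K_r,\Gcore;uv)|$ in terms of the \emph{product} $\deg_{\Gcore}u\cdot\deg_{\Gcore}v$, which does not imply that \emph{both} endpoints have degree $\Omega\big(np^{(r-1)/2}/\mathrm{polylog}\big)$. The paper instead uses the elementary bound $N(K_r,\Gcore;uv)\le\deg_{\Gcore}(u,v)^{r-2}$ on the common neighbourhood, which immediately forces $\deg_{\Gcore}(u,v)\ge\eta\,np^{(r-1)/2}/(\log(1/p))^{1/(r-2)}$ and hence places both $u$ and $v$ in $A_{\Gcore}$. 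This is exactly where the optimal exponent $\tfrac{1}{r-2}$ on $\log n$ comes from; the product bound from Lemma~\ref{lemma:core-edge-regular} would feed into the Section~\ref{sec:H} machinery and yield only the weaker hypothesis $p^{(r-1)/2}\gg n^{-1}(\log n)^{(r-1)r^2}$ of Proposition~\ref{prop:ldp-H}, not the sharp one in Theorem~\ref{thm:krrate2}.
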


Recall the following three events:
  \begin{enumerate}[{label=(\roman*)}]
  \item
    Let $\UT(\delta)$ be the upper tail event $\{X \ge  (1+\delta)\e[X]\}$.
  \item
    Let $\Clique_\eps(x)$ be the event that $G_{n,p}$ contains a set $U\subseteq \br n$ of 
    size at
    least $(1-\eps)x^{1/r}np^{(r-1)/2}$ such that
    $G_{n,p}[U]$ has minimum degree
    at least $(1-\eps)|U|$.
  \item
    Let $\Hub_\eps(x)$ be the event that $G_{n,p}$ contains a set $U\subseteq \br n$
    such that at least $\floor{(1 - \eps)|U|}$ vertices in $U$ have degree at least $(1-
    \eps) n$ and 
    \[ e(U,\br n\setminus U) \geq (1-\eps)n
      \big( \floor{xnp^{r-1}/r}
      + \{xnp^{r-1}/r\}^{\frac{1}{r-1}}\big).\]
  \end{enumerate}

Together with Lemma~\ref{lem:grdcx}, the following theorem directly implies
Theorem~\ref{thm:krstructure}.

\begin{theorem}\label{thm:krstructure2}
  Let $r \ge 3$ be an integer and let $X = X_{n,p}^{K_r}$ denote the number of
  $r$-vertex cliques in the random graph $G_{n,p}$. Then, for every fixed positive
  constant
  $\delta$ and all $p = p(n)$ such that $n^{-1} (\log
  n)^{\frac1{r-2}} \ll p^{\frac{r-1}{2}} \ll 1$ and $\lim_{n\to \infty} np^{r-1} = c \in [0,\infty]$,
  \[
    \lim_{n\to\infty} \Pr\Big(\bigcup_{x\in \bar X(\delta,c)}
    \Clique_\eps(\delta(1-x)) \cap \Hub_\eps(\delta x)\mid \UT(\delta)\Big)= 1.
  \]
\end{theorem}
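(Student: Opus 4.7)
The plan is to derive Theorem~\ref{thm:krstructure2} as a direct consequence of the main technical result Theorem~\ref{thm:packaged}, combined with the stability results from Section~\ref{sec:graph-theory-preliminaries}. First, I would verify that the assumptions of Theorem~\ref{thm:packaged} hold for $X = X_{n,p}^{K_r}$ in the given range of $p$. Concretely, I need to (i) estimate $\Phi_X(\delta\pm\eps)$ by planting cliques and hubs of the optimal sizes, showing $\Phi_X(\delta)/\bigl(n^2 p^{r-1}\log(1/p)\bigr) \to \varphi_r(\delta,c)$, and (ii) prove the entropic stability bound $|\core_m|\le (1/p)^{\eps m/2}$. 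This counting step is by far the most substantial piece of work: the argument would parallel the one used in Section~\ref{sec:triangles} but iterated, classifying each core edge by whether it contributes mainly through the $K_r$-subgraph count (handled via Theorem~\ref{thm:max-copies} and Lemma~\ref{lemma:core-edge-regular}) or through the $K_{1,r-1}$-star count (handled via Lemma~\ref{lem:stars}), which forces each edge of a core to have at least one endpoint in a small ``high-degree'' set whose size can be controlled by the core's size.

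Once entropic stability is established, \eqref{eq:thm-structure} from Theorem~\ref{thm:packaged} gives that, with probability $1-o(1)$ conditional on $\UT(\delta)$, the graph $G_{n,p}$ contains some core $G^* \in \mincore$. Each such core satisfies $e_{G^*}\log(1/p) \le (1+\eps)\Phi_X(\delta+\eps) = (1+o(1))\varphi_r(\delta,c)n^2p^{r-1}\log(1/p)$ together with $\Ex_{G^*}[X] \ge (1+\delta-\eps)\Ex[X]$. Writing $\Ex_{G^*}[X]-\Ex[X]$ as a sum over subgraphs $J\subseteq K_r$ of $p^{e_{K_r}-e_J}|\Emb(J,G^*)|$ and applying Theorem~\ref{thm:max-copies} together with Lemma~\ref{lemma:eJ-alphaJ-clique} shows that only two types of summands can contribute a nonnegligible amount: those coming from $J = K_r$ and those from $J = K_{1,r-1}$. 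Therefore, up to $o(n^rp^{\binom{r}{2}})$ errors,
\[
  \binom{n}{r}p^{\binom{r}{2}}\,\delta
    \le |\Emb(K_r,G^*)| + p^{\binom{r-1}{2}}\cdot |\Emb(K_{1,r-1},G^*)|.
\]

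The next step is to extract the combinatorial structure. I would apply Lemma~\ref{lem:lz} to split $V(G^*)$ into $U\cup V$ so that $G^*[V]$ carries almost all the copies of $K_r$ while the bipartite graph $G^*[U,V]$ carries almost all the copies of $K_{1,r-1}$ centred in $U$. Writing $e_1 = e_{G^*[V]}$ and $e_2 = e_{G^*[U,V]}$ and letting $x=e_2/e_{G^*}\cdot (\text{normalisation})$, the upper bounds from Theorem~\ref{thm:max-copies} (for $K_r$) and Lemma~\ref{lem:stars}(i) (for $K_{1,r-1}$) together with the edge budget $e_{G^*}\le (1+\eps)\varphi_r(\delta,c)n^2p^{r-1}$ imply that the implicit splitting parameter $x$ must satisfy $\psi_r(\delta,c,x) \le \varphi_r(\delta,c)+o(1)$; by continuity of $\psi_r$, this forces $x$ to be within $o(1)$ of some element of $\bar X(\delta,c)$. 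Moreover, both edge-budget inequalities must be tight up to $o(1)$, which places us in the hypothesis of the stability statements: Theorem~\ref{thm:Kr-stability} applied to $G^*[V]$ produces a subgraph with minimum degree at least $(1-\eps')(2e_1)^{1/2}\ge (1-\eps)\bigl(\delta(1-x)\bigr)^{1/r}np^{(r-1)/2}$, realising $\Clique_\eps(\delta(1-x))$, while Lemma~\ref{lem:stars}(ii) applied to $G^*[U,V]$ produces the set of hub vertices with the degree and edge-count required for $\Hub_\eps(\delta x)$.

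The hardest part is the entropic stability count for cores, since the cliquey and the huby portions of a core must be handled simultaneously and balanced against each other, with the relative proportion determined by $x$. The rest of the argument is essentially a ``stability-plus-packaging'' matter: combining Theorem~\ref{thm:packaged}, Lemma~\ref{lem:lz}, Theorem~\ref{thm:Kr-stability}, and Lemma~\ref{lem:stars} in the way described above. The convex/concave analysis of $\psi_r(\delta,c,\cdot)$ carried out in Lemma~\ref{lem:grdcx} ensures that the near-minimisers of the variational problem are close to $\bar X(\delta,c)$, so the extracted $x$ is indeed within $o(1)$ of a point in $\bar X(\delta,c)$, completing the proof of Theorem~\ref{thm:krstructure2}.
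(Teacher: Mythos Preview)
Your proposal is correct and follows essentially the same architecture as the paper: the proof of Theorem~\ref{thm:krstructure2} is packaged into Propositions~\ref{prop:kr-ldp}, \ref{prop:kr-Phi}, and~\ref{prop:kr-gmin}, which respectively verify the hypotheses of Theorem~\ref{thm:packaged} (including entropic stability), evaluate $\Phi_X$, and extract the clique/hub structure from near-minimisers via Lemma~\ref{lem:lz}, Theorem~\ref{thm:Kr-stability}, and Lemma~\ref{lem:stars}, exactly as you outline. One small imprecision: the entropic stability step (Claim~\ref{claim:core-degree}) is not handled via Lemma~\ref{lemma:core-edge-regular} or Lemma~\ref{lem:stars} but by a direct case analysis over all $J\subseteq K_r$ using fractional independence numbers, yielding for each core edge $uv$ either a large \emph{common neighbourhood} $\deg_{G^*}(u,v)$ or a large degree sum $\deg_{G^*}u+\deg_{G^*}v$---the former, not just a single high-degree endpoint, is what forces both endpoints of ``clique-type'' edges into the small set $A_{G^*}$.
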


In order to prove Theorems~\ref{thm:krrate2} and~\ref{thm:krstructure2}, we will first relate $-\log
\Pr\big(X \ge (1+\delta)\Ex[X]\big)$ to the solutions of the optimisation
problem
\[
  \Phi_X(\delta) = \min\big\{e_G \log(1/p) : G \subseteq K_n \text{ and } \Ex_G[X] \ge (1+\delta)\Ex[X]\big\},
\]
where $\Ex_G[X] = \Ex[X \mid G \subseteq G_{n,p}]$.
For every $\eps>0$, we  define the event
\[ \begin{split}\Gmin(\eps) = \{ G_{n,p}\text{ contains a subgraph $G$ such that }
  & e_G \log(1/p) \leq
  (1+\eps)\Phi_X(\delta+\eps) \text{ and}\\
  &\e_G[X]\geq (1+\delta-\eps)\e[X]\}.
\end{split}\]
Using
Theorem~\ref{thm:packaged}, we shall prove the following result.

\begin{proposition}
  \label{prop:kr-ldp}
  For every integer $r \ge 3$ and all positive reals $\eps$ and $\delta$, there exists a positive constant $C$ such that the following holds. Suppose that an integer $n$ and $p \in (0,1)$ satisfy $Cn^{-1} (\log n)^{1/(r-2)} \le p^{(r-1)/2} \le 1/C$. Then $X = X_{n,p}^{K_r}$ satisfies
  \[
    (1-\eps) \Phi_X(\delta - \eps) \le - \log \Pr\big(X \ge (1+\delta)\Ex[X]\big) \le (1+\eps)\Phi_X(\delta+\eps).
  \]
  Furthermore,
  \[ \Pr\big(\Gmin(\eps) \mid X\geq (1+\delta)\e[X]\big) \geq 1-\eps. \]
\end{proposition}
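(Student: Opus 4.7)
The plan is to apply Theorem~\ref{thm:packaged} directly, viewing $X = X_{n,p}^{K_r}$ as a nonnegative-coefficient polynomial of total degree $d = \binom{r}{2}$ in the $\binom{n}{2}$ edge-indicators. Let $K = K(d,\eps,\delta)$ be the constant supplied by the theorem. The preconditions $p \le 1-\eps$ and $\Phi_X(\delta-\eps) \ge K\log(1/p)$ are routine: the first is immediate from $p^{(r-1)/2} \le 1/C$, and the second follows from the easy lower bound $\Phi_X(\delta-\eps) \gtrsim_{r,\delta} n^2 p^{r-1}\log(1/p)$, itself a consequence of the fact that conditioning on any $O(1)$ prescribed edges shifts $\e[X]$ by only $O(np^{r-1}) = o(\e[X])$, so the optimiser must have unboundedly many edges.

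The crux of the proof is verifying the entropic stability assumption: at most $(1/p)^{\eps m/2}$ cores of size $m$ for every $m \in \NN$. Every core $G^* \subseteq K_n$ satisfies $e_{G^*} \le K\Phi_X(\delta+\eps)/\log(1/p) = O_{r,\delta}(n^2 p^{r-1})$ together with the minimum-contribution condition
\[
  \e_{G^*}[X] - \e_{G^* \setminus uv}[X] \gtrsim_{r,\delta} \frac{n^{r-2}\, p^{\binom{r}{2}-(r-1)}}{\log(1/p)} \qquad \text{for every } uv \in E(G^*).
\]
Expanding the left-hand side as a sum over subgraphs $J \subseteq K_r$ containing a fixed edge $e$ gives
\[
  \e_{G^*}[X] - \e_{G^* \setminus uv}[X] \le \sum_{J \ni e} c_J \cdot |\Emb(J, G^*; uv)| \cdot n^{r-v_J}\, p^{\binom{r}{2}-e_J}
\]
for absolute constants $c_J > 0$. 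Lemma~\ref{lemma:eJ-alphaJ-clique}, applied with ambient graph $K_r$ and $\Delta = r-1$, identifies $J = K_r$ and $J = K_{1,r-1}$ as the only subgraphs for which a single term can meet the required lower bound under the size constraint on $G^*$; for every other $J$, the generic bound from Theorem~\ref{thm:max-copies} shows the contribution is too small by a polynomial factor.

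We therefore partition $E(G^*)$ into \emph{clique-edges} (dominated by $J = K_r$) and \emph{hub-edges} (dominated by $J = K_{1,r-1}$). Lemma~\ref{lemma:core-edge-regular} forces both endpoints of every clique-edge to have $G^*$-degree at least $np^{(r-1)/2}/(\log(1/p))^{C_r}$, and Lemma~\ref{lemma:core-edge-bipartite}, applied with $|A|=1$ and $|B|=r-1$, forces at least one endpoint of every hub-edge to have $G^*$-degree at least $n/(\log(1/p))^{C_r}$, for some constant $C_r$. Consequently, every clique-edge lies inside a vertex set $A$ with $|A| = O_{r,\delta}\bigl(e_{G^*}(\log(1/p))^{C_r}/(np^{(r-1)/2})\bigr)$ and every hub-edge touches a vertex set $B$ with $|B| = O_{r,\delta}\bigl(e_{G^*}(\log(1/p))^{C_r}/n\bigr)$. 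Encoding each core by the triple $(A, B, E(G^*))$ yields the generous upper bound
\[
  \binom{n}{|A|}\binom{n}{|B|}\binom{|A|^2/2 + |B|n}{m}
\]
on the number of cores with $m$ edges, in exact analogy with Claim~\ref{cl:triacorecount}. Using $\binom{x}{y} \le (ex/y)^y$ and the assumption $p^{(r-1)/2} \gg n^{-1} (\log n)^{1/(r-2)}$, this is bounded by $(1/p)^{\eps m/2}$; the exponent $1/(r-2)$ is exactly what is required to absorb the $(|A|/(np^{(r-1)/2}))^{r-2}$-type terms arising from the clique-edge contribution.

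All hypotheses of Theorem~\ref{thm:packaged} now being in place, the two-sided estimate in Proposition~\ref{prop:kr-ldp} is exactly~\eqref{eq:thm-rate}. For the structural statement,~\eqref{eq:thm-structure} tells us that, up to probability $\eps \Pr(X \ge (1+\delta)\e[X])$, the upper tail event forces $Y_I = 1$ for some $I \in \mincore$; translating back to the edge model, the corresponding subgraph $G \subseteq K_n$ satisfies $\e_G[X] \ge (1+\delta-\eps)\e[X]$ (from $I$ being a core) together with the size constraint inherited from the definition of $\mincore$, which together amount to the event $\Gmin(\eps)$. The principal obstacle throughout is the accounting in the third paragraph: balancing the clique- and hub-edge regimes, and tracking the correct power of $\log n$ that propagates into the final density threshold.
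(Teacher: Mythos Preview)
Your overall architecture is right and matches the paper: apply Theorem~\ref{thm:packaged}, verify the easy preconditions, and spend the effort on bounding the number of cores via a structural dichotomy on core edges. The final enumeration with sets $A$ and $B$ is also exactly what the paper does. But two steps in the middle are underdeveloped in a way that matters.

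First, your appeal to Theorem~\ref{thm:max-copies} to dismiss all $J$ other than $K_r$ and $K_{1,r-1}$ does not work as stated: that theorem bounds the \emph{total} number of embeddings $|\Emb(J,G^*)|$, whereas the core condition~\ref{item:thmcore-mindeg} is a \emph{per-edge} lower bound on $|\Emb(J,G^*;uv)|$, and one cannot pass from the former to the latter. The paper's proof of Claim~\ref{claim:core-degree} handles this with a substantially more delicate argument: it derives a tailored per-edge bound (using a partition $V(J_{ab}) = V_1 \cup V_2$ and the set $X$ of common neighbours of $a,b$), and then shows via a case analysis that any $J$ saturating this bound must be $K_r$ or a \emph{double star} $S_{i,j}$ with $i+j = r-2$ --- not only $K_{1,r-1}$. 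All of these double stars feed into the same degree-sum conclusion, but they do need to be treated.

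Second, Lemma~\ref{lemma:core-edge-regular} gives you only that $\deg_{G^*}u \cdot \deg_{G^*}v$ is large, not that both factors are individually large; from a product bound one cannot place both endpoints into your set $A$. The paper instead uses the elementary codegree estimate $|\Emb(K_r,G^*;ab,uv)| \le 2\,\deg_{G^*}(u,v)^{r-2}$, which forces the common neighbourhood to be large and hence both endpoints to lie in $A$. This codegree bound, and not anything coming from Lemma~\ref{lemma:core-edge-regular}, is also what produces the exact power $(\log(1/p))^{1/(r-2)}$ in the degree threshold for $A$, which in turn is precisely the origin of the exponent $1/(r-2)$ in the density assumption $p^{(r-1)/2} \gg n^{-1}(\log n)^{1/(r-2)}$.
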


In order to complete the proof of Theorem~\ref{thm:krrate2}, we must also evaluate the asymptotic value of the function $\Phi_X$. This is the content of our second proposition.

\begin{proposition}
  \label{prop:kr-Phi}
  Let $r \ge 3$ be an integer and let $X = X_{n,p}^{K_r}$. Then, for every fixed
  positive real $\delta$ and all $p = p(n)$ such that $n^{-1} \ll p^{(r-1)/2}
  \ll 1$ and $\lim_{n\to\infty} np^{r-1} = c \in [0,\infty]$,
  \[
    \lim_{n \to \infty} \frac{\Phi_X(\delta)}{n^2p^{r-1} \log(1/p)} =
      \varphi_r(\delta, c).
  \]
\end{proposition}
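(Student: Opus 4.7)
The plan for proving Proposition~\ref{prop:kr-Phi} is to establish matching upper and lower bounds on $\Phi_X(\delta)$ that both converge to $\varphi_r(\delta, c) \cdot n^2 p^{r-1} \log(1/p)$.

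\textbf{Upper bound.} For each $x \in [0,1]$, I would construct an explicit witness graph $G_x \subseteq K_n$ consisting of two vertex-disjoint pieces: a clique $K$ on $s = \ceil{(\delta(1-x))^{1/r} np^{(r-1)/2}}$ vertices, and a \emph{hub} formed by a set $U$ of $m = \floor{x\delta np^{r-1}/r}$ vertices connected to every other vertex outside $K$, together with a single extra vertex $u$ joined to $\ceil{\{x\delta np^{r-1}/r\}^{1/(r-1)} \cdot n}$ vertices outside $K \cup U \cup \{u\}$. The clique contributes $\binom{s}{r} \sim (1-x)\delta \Ex[X]$ extra copies of $K_r$, while each planted star $K_{1,r-1}$ centred in the hub extends to a $K_r$ in $G_{n,p}$ with conditional probability $p^{\binom{r-1}{2}}$, producing an additional $x\delta \Ex[X]$ copies. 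Hence $\Ex_{G_x}[X] \ge (1+\delta-o(1))\Ex[X]$, and a small adjustment to $s$ or $m$ (changing $e_{G_x}$ by $o(n^2 p^{r-1})$) upgrades this to $\Ex_{G_x}[X] \ge (1+\delta)\Ex[X]$. Since $e_{G_x}$ equals $(\psi_r(\delta, np^{r-1}, x) + o(1)) \cdot n^2 p^{r-1}$, optimising over $x$ and using the uniform continuity of $\psi_r$ in its middle argument yields $\Phi_X(\delta) \le (\varphi_r(\delta, c) + o(1)) \cdot n^2 p^{r-1} \log(1/p)$.

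\textbf{Lower bound.} Suppose $G \subseteq K_n$ satisfies $\Ex_G[X] \ge (1+\delta)\Ex[X]$; one may assume $e_G \le Cn^2 p^{r-1}$ for a large constant $C$, since otherwise the bound is trivial. Expanding
\[
  \Ex_G[X] - \Ex[X] \le (1+o(1)) \sum_{S \in \binom{\br{n}}{r}} p^{\binom{r}{2}-e_{G[S]}} \cdot \1[e_{G[S]} \ge 1]
\]
and grouping $r$-sets $S$ by the isomorphism type $J$ of $G[S]$, Theorem~\ref{thm:max-copies} shows that each type $J$ contributes at most $(2e_G/n^2)^{v_J-\alpha_J^*} \cdot n^r \cdot p^{\binom{r}{2}-e_J}$. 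By Lemma~\ref{lemma:eJ-alphaJ-clique} (applied to $J \subseteq K_r$, which is $(r-1)$-regular), $e_J \le (r-1)(v_J - \alpha_J^*)$ with equality only when $J \in \{K_r, K_{1,r-1}\}$, so every other $J$ contributes $O(p) \cdot \delta \Ex[X] = o(\delta \Ex[X])$. Having reduced to these two dominant types, I would invoke Lemma~\ref{lem:lz} to produce a partition $V(G) = U \cup V$ with $|U| \le \eps n$ under which $N(K_r, G[V])$ and $|\Emb_U(K_{1,r-1}, G[U,V])|$ account for all but an $\eps$-fraction of the respective contributions. Writing them as $(1-x)\delta\Ex[X]$ and $x\delta \Ex[X]$ for some $x \in [0,1]$, the bound $N(K_r, G[V]) \le (2e_{G[V]})^{r/2}$ from Theorem~\ref{thm:max-copies} yields $e_{G[V]} \ge (\delta(1-x))^{2/r} \cdot n^2 p^{r-1}/2 \cdot (1-o(1))$, while Lemma~\ref{lem:stars}(i) yields $e_{G[U,V]} \ge (\floor{x\delta np^{r-1}/r} + \{x\delta np^{r-1}/r\}^{1/(r-1)}) \cdot n \cdot (1-o(1))$. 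Summing gives $e_G \ge (\psi_r(\delta, np^{r-1}, x) - o(1)) \cdot n^2 p^{r-1} \ge (\varphi_r(\delta, c) - o(1)) \cdot n^2 p^{r-1}$, as desired.

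\textbf{Main obstacle.} The chief difficulty lies in extracting the sharp constant $\varphi_r(\delta, c)$ in the lower bound when $c \in (0, \infty)$. This relies on the sharpness of Lemma~\ref{lem:stars}(i), which accounts for the integral and fractional parts responsible for the non-smooth shape of $\psi_r$; without that exact form, one would recover only the weaker estimate obtained by convexifying $\psi_r$ in $x$, missing the discrete phase structure described by Lemma~\ref{lem:grdcx}. A related technical point is verifying that the reduction via Lemma~\ref{lem:lz} cleanly separates the $K_r$- and $K_{1,r-1}$-contributions without double-counting, which should be feasible precisely because of the incompatibility of the clique and star extremal structures noted after Lemma~\ref{lem:stars}.
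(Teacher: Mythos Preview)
Your proposal is correct and follows essentially the same route as the paper: the upper bound via an explicit clique-plus-hub construction (the paper picks $x\in\barX(\delta+\eps,c)$ rather than optimising over all $x$, but this is cosmetic), and the lower bound by expanding $\Ex_G[X]-\Ex[X]$ over subgraph types $J\subseteq K_r$, using Theorem~\ref{thm:max-copies} and Lemma~\ref{lemma:eJ-alphaJ-clique} to reduce to $J\in\{K_r,K_{1,r-1}\}$, then applying Lemma~\ref{lem:lz} to split $G$ and Lemma~\ref{lem:stars}(i) together with Theorem~\ref{thm:max-copies} to recover $\psi_r$. One small slip: the negligible types contribute $O(\min\{n^{-1},p^{r-1}\}^\sigma)$ for some $\sigma>0$ depending on $J$, not uniformly $O(p)$, and the paper treats the cases $2e_G\le n$ and $2e_G>n$ separately to see this---your single bound using $\min\{2e_G,n\}\le n$ works only in the latter case.
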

We recall that, when $c \in \{0, \infty\}$, this result was already established in~\cite{lubetzky2017variational}. The proof we provide is not unsimilar to that work, although it is slightly easier, as $\Phi_X$ is a discrete restriction of the variational problem in their work (see the discussion following Proposition~\ref{prop:kap-rate}, which discusses the same issue in the case of arithmetic progressions). However, the extension to the case $c \in (0,\infty)$ is delicate, and requires the use of the more precise result of Lemma~\ref{lem:stars}.

We would like to emphasise that, while the proofs of Propositions~\ref{prop:kr-ldp} and~\ref{prop:kr-Phi} have certain similarities, they are conceptually very different. The analysis of the variational problem in the proof of Proposition~\ref{prop:kr-Phi} relies on bounding the maximal number of embeddings of $K_r$ and $K_{1,r-1}$ in a graph with given numbers of vertices and edges, using the results of Section~\ref{sec:GlobalEmbedding}. In contrast, the proof of Proposition~\ref{prop:kr-ldp} exploits the bounds on the number of embeddings of $K_r$ and $K_{1,r-1}$ (as well as several other key graphs) containing a given edge of the core, proved in Section~\ref{section:LocalEmbedding}. The defining properties of a core allow us to translate these upper bounds on the number of local embeddings into lower bounds on the degrees of the endpoints of edges of a core. These degree restrictions are sufficient to prove that the family of cores is entropically stable.

Finally, to prove Theorem~\ref{thm:krstructure2}, we characterise the
near-minimisers of the optimisation problem for $\Phi_X(\delta)$. This is what we do in the final
proposition of this section.

\begin{proposition}
  \label{prop:kr-gmin}
  Let $r \ge 3$ be an integer and let $X = X_{n,p}^{K_r}$. 
  For all fixed
  $\eps,\delta>0$ and $c\in [0,\infty]$, there
  exists some positive constant $\eta$ such that the following holds.
  Assume $p = p(n)$ is such that $n^{-1} \ll p^{(r-1)/2}
  \ll 1$ and $\lim_{n\to\infty}
   np^{r-1} = c$. Then
  \[ \Gmin(\eta) \subseteq \bigcup_{x\in \bar X(\delta,c)} \Clique_\eps(\delta(1-x))
  \cap \Hub_\eps(\delta x)  \]
  whenever $n$ is sufficiently large.
\end{proposition}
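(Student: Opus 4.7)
The plan is to start from an arbitrary subgraph $G \subseteq G_{n,p}$ witnessing $\Gmin(\eta)$ and extract the clique-plus-hub structure via the extremal and stability results of Section~\ref{sec:graph-theory-preliminaries}. By Proposition~\ref{prop:kr-Phi}, $e_G \leq (1+\eta)\Phi_X(\delta+\eta) = O(n^2p^{r-1})$, so all subsequent estimates are quantitatively consistent.

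First, I would decompose $\e_G[X] - \e[X]$ according to the isomorphism type of $K \cap G$ for each copy $K$ of $K_r$ in $K_n$:
\[
  \e_G[X] - \e[X] = \sum_{\emptyset \ne J \subseteq K_r} \alpha_J(n,p) \cdot N(J,G,K_n),
\]
where $N(J,G,K_n)$ is the number of copies of $K_r$ in $K_n$ whose intersection with $G$ is isomorphic to $J$, and $\alpha_J(n,p) \asymp n^{r-v_J} p^{\binom{r}{2}-e_J}$. Since $N(J,G,K_n) \leq |\Emb(J,G)| \cdot n^{r-v_J}$, Theorem~\ref{thm:max-copies} together with $e_G = O(n^2p^{r-1})$ give
\[
  \alpha_J(n,p) \cdot N(J,G,K_n) = O(\e[X]) \cdot p^{(r-1)(v_J-\alpha_J^*)-e_J}.
\]
By Lemma~\ref{lemma:eJ-alphaJ-clique}, the exponent $(r-1)(v_J-\alpha_J^*)-e_J$ is nonnegative, vanishing only for $J \in \{K_r, K_{1,r-1}\}$ (the ``$J=H$'' and ``bipartite with $\deg_J a = \Delta$'' cases for $H=K_r$). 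Since there are only finitely many $J$, all other terms are $o(\e[X])$, yielding
\[
  \e_G[X] - \e[X] = (1+o(1))\Big(N(K_r,G) + \tfrac{p^{\binom{r-1}{2}}}{(r-1)!} \cdot |\Emb(K_{1,r-1},G)|\Big).
\]
Let $t \in [0,1]$ denote the asymptotic fraction of $\delta\e[X]$ contributed by the first term.

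Next I would apply Lemma~\ref{lem:lz} with a small parameter to split $V(G) = U \cup V$ so that $G[V]$ retains almost all copies of $K_r$ in $G$ while $G[U,V]$ retains almost all $U$-centred $K_{1,r-1}$-embeddings of $G$. Applying Theorem~\ref{thm:max-copies} (a Kruskal--Katona-type bound) to $G[V]$ yields
\[
  e_{G[V]} \geq (1-o(1)) \cdot \tfrac{(t\delta)^{2/r}}{2} \cdot n^2p^{r-1},
\]
and applying Lemma~\ref{lem:stars}(i) to $G[U,V]$ with $s = r-1$ forces
\[
  e_G(U,V) \geq n \cdot \big(\lfloor (1-t)\delta np^{r-1}/r \rfloor + \{(1-t)\delta np^{r-1}/r\}^{1/(r-1)}\big) - o(n^2p^{r-1}),
\]
after inverting the increasing map $q \mapsto \lfloor q \rfloor + \{q\}^{r-1}$, whose inverse at $s$ is $\lfloor s \rfloor + \{s\}^{1/(r-1)}$. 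Summing and dividing by $n^2p^{r-1}$ gives $e_G/(n^2p^{r-1}) \geq \psi_r(\delta, np^{r-1}, 1-t) - o(1)$; combining this with the upper bound $e_G \leq (1+\eta+o(1))\varphi_r(\delta,c)n^2p^{r-1}$ from Proposition~\ref{prop:kr-Phi} gives $\psi_r(\delta, np^{r-1}, 1-t) \leq (1+o(1))\varphi_r(\delta,c)$.

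By continuity of $\psi_r$ and the definition of $\bar X(\delta,c)$, choosing $\eta$ small enough in terms of $\eps$ forces $1-t$ to lie within an arbitrarily small neighbourhood of some $x \in \bar X(\delta,c)$; since $\bar X(\delta,c) \subseteq \{0, x^*, 1\}$ by Lemma~\ref{lem:grdcx}, this $x$ is discretely determined. With $x$ fixed, both of the above lower bounds must be near-equalities, so $G[V]$ is near-extremal in Theorem~\ref{thm:Kr-stability}, yielding a subset $U' \subseteq V(G)$ of size $(1-\eps)(\delta(1-x))^{1/r}np^{(r-1)/2}$ with $G[U']$ of minimum degree $(1-\eps)|U'|$ --- that is, witnessing $\Clique_\eps(\delta(1-x))$. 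Analogously, $G[U,V]$ is near-extremal for Lemma~\ref{lem:stars}(i), so Lemma~\ref{lem:stars}(ii) yields the subset structure required by $\Hub_\eps(\delta x)$. The main obstacle I anticipate is the bookkeeping of error terms: uniformly bounding the off-diagonal $J$-contributions, carefully inverting $q \mapsto \lfloor q \rfloor + \{q\}^{r-1}$ near integer values of $s$ where the function is non-smooth (this governs the fractional-part behaviour of $\psi_r$), and verifying that the near-extremal hypothesis is strong enough to simultaneously push through both stability applications in the mixed case $x = x^*$.
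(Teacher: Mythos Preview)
Your proposal is correct and follows essentially the same route as the paper: reduce $\e_G[X]-\e[X]$ to the $K_r$ and $K_{1,r-1}$ contributions via Theorem~\ref{thm:max-copies} and Lemma~\ref{lemma:eJ-alphaJ-clique}, split $V(G)=U\cup V$ with Lemma~\ref{lem:lz}, use Theorem~\ref{thm:max-copies} and Lemma~\ref{lem:stars}(i) to lower-bound $e_{G[V]}$ and $e_{G[U,V]}$, compare with $\psi_r$ to pin down $x$ near $\bar X(\delta,c)$, and then invoke Theorem~\ref{thm:Kr-stability} and Lemma~\ref{lem:stars}(ii). The only place the paper adds real substance beyond what you wrote is the step ``$1-t$ must lie near $\bar X(\delta,c)$'': since $\psi_r(\delta,c,x)$ is only defined by limits at $c\in\{0,\infty\}$, bare continuity does not suffice, and the paper handles this by a short compactness/subsequence argument---worth flagging explicitly, but it is precisely the ``bookkeeping'' obstacle you already anticipated.
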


The propositions above readily imply Theorems~\ref{thm:krrate2} and
\ref{thm:krstructure2}.

\subsection{{Proof of Proposition~\ref{prop:kr-ldp}}}

We may assume without loss of generality that $\eps$ is sufficiently small, say
$\eps < \min{\{1/2,\delta/2\}}$. Note also that the case $n < r$ is trivial;
indeed, in that case $X$ is identically zero and thus $- \log \Pr\big(X\geq
(1+\delta)\e[X]\big) = 0 = \Phi_X(\delta)$ for every $\delta\in \RR$,
and $\Gmin(\eps)$ holds vacuously. We may
therefore assume that $n \geq r \ge 3$, which, in turn, implies that $n \ge C$.

Set $N = \binom{n}{2}$ and let $Y = (Y_1, \dotsc, Y_N)$ be the sequence of
indicator random variables of the events that $e \in E(G_{n,p})$, where $e$
ranges over $\binom{\br{n}}{2}$ in some arbitrary order. Observe that $Y$ is a
vector of independent $\Ber(p)$ random variables and that $X$ is a nonzero
polynomial with nonnegative coefficients and degree at most
$\binom{r}{2}$ in the coordinates of $Y$. Let $K = K(r, \eps, \delta)$ be the
constant given by Theorem~\ref{thm:packaged}. We shall show that $X$ satisfies
the various assumptions of the theorem; the theorem then implies
both assertions of the proposition.

First, our assumption on $p$ implies that $p \le 1-\eps$, provided that $C$ is sufficiently large.

Recall that $N(J,G)$ denotes the number of copies of $J$ in $G$.
Note that for all $J \subseteq K_r$ without isolated vertices and all $G \subseteq K_n$, we can trivially bound $N(J,G)$ from above by $e_G^{e_J}$. It follows that
\[
  \begin{split}
    \Ex_G[X] - \Ex[X] & \le \sum_{\emptyset \neq J \subseteq K_r} N(J,G) \cdot n^{r-v_J}p^{\binom{r}{2}-e_J} \le 2^{\binom{r}{2}} \cdot \max_{\emptyset \neq J \subseteq K_r} e_G^{e_J} \cdot n^{r-v_J} p^{\binom{r}{2} - \binom{v_J}{2}} \\
    & \le (2e_G)^{\binom{r}{2}} \cdot \frac{n^r p^{\binom{r}{2}}}{\min_{2 \le k \le r} n^k p^{\binom{k}{2}}},
  \end{split}
\]
where the sum ranges over all nonempty subgraphs $J \subseteq K_r$ without isolated vertices.
Since $\Ex[X] = \Theta\big(n^rp^{\binom{r}{2}}\big)$ and our assumption on $p$ implies that $n^kp^{\binom{k}{2}} \ge C^{2/(r-1)}$ for each $k \in \{2, \dotsc, r\}$, the right-hand side above is at most $(\delta/2)\Ex[X]$ unless $e_G \ge K$, provided that $C$ is sufficiently large. Therefore, $\Phi_X(\delta-\eps) \ge \Phi_X(\delta/2) \ge K\log(1/p)$. Furthermore, since a clique with $\lceil (1+2\delta)^{1/r} n p^{(r-1)/2} \rceil$ vertices contains at least $(1+\delta+\eps) \Ex[X]$ copies of $K_r$ and has fewer than $K' n^2 p^{r-1}$ edges, for some constant $K' = K'(\delta)$, we deduce that $\Phi_X(\delta+\eps) \le K' n^2p^{r-1} \log(1/p)$.

Recall that a graph $\Gcore \subseteq K_n$ is a \emph{core} if it satisfies the following three conditions:
\begin{enumerate}[label=(C\arabic*)]
\item\label{item:krcore-bias}
  $\Ex_{\Gcore}[X] \geq (1+\delta-\eps)\Ex[X]$,
\item\label{item:krcore-size}
  $e_{\Gcore} \le K \cdot \Phi_X(\delta+\eps)$, and
\item\label{item:krcore-mindeg}
  $\min_{e \in E(\Gcore)}\left(\Ex_{\Gcore}[X]-\Ex_{\Gcore \setminus e}[X]\right)
  \ge \Ex[X]/\big(K \cdot \Phi_X(\delta+\eps)\big)$.
\end{enumerate}
Our goal is to show that, for every integer $m$, there are at most $(1/p)^{\eps m/2}$ cores with $m$ edges. The key observation is that, for any core $\Gcore$ and any edge $uv$ of $\Gcore$, either $u$ and $v$ have many common neighbours or the sum of the degrees of $u$ and $v$ is large. More precisely, letting $\deg_{\Gcore}(u,v)$ denote the number of common neighbours of $u$ and $v$ in $\Gcore$, we shall establish the following statement.

\begin{claim}
  \label{claim:core-degree}
  There exists a positive constant $\eta = \eta(\delta, r, K)$ such that,
  for every core $\Gcore$ and each edge $uv \in E(\Gcore)$, either
  \[
    \deg_{\Gcore}(u,v) \ge \frac{\eta np^{(r-1)/2}}{\big(\log(1/p)\big)^{1/(r-2)}} \qquad \text{or} \qquad \deg_{\Gcore}u + \deg_{\Gcore} v \ge \frac{\eta n}{\log(1/p)}.
  \]
\end{claim}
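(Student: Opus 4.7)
The plan is to generalise the argument of Claim~\ref{cl:triacorecount} to arbitrary $r$. Fix a core $\Gcore$ and an edge $uv \in E(\Gcore)$, and write $d_u = \deg_\Gcore u$, $d_v = \deg_\Gcore v$, and $d_{uv}$ for the number of common $\Gcore$-neighbours of $u$ and $v$. Condition~\ref{item:krcore-mindeg}, together with $\Ex[X] = \Theta_r(n^rp^{\binom{r}{2}})$ and the upper bound $\Phi_X(\delta+\eps) \le K' n^2 p^{r-1}\log(1/p)$ established earlier in the proof of Proposition~\ref{prop:kr-ldp}, yields
\begin{equation}\label{eq:plan-lb}
  \Ex_\Gcore[X] - \Ex_{\Gcore\setminus uv}[X] \ge c_r \cdot \frac{n^{r-2}p^{\binom{r-1}{2}}}{\log(1/p)}
\end{equation}
for a positive constant $c_r = c_r(\delta, K)$.

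I would next bound the left-hand side of~\eqref{eq:plan-lb} from above by enumerating over $r$-cliques $L \subseteq K_n$ containing $uv$; each such $L$ contributes at most $p^{\binom{r}{2}-|L\cap \Gcore|}$ to $\Ex_\Gcore[X] - \Ex_{\Gcore\setminus uv}[X]$. I would classify each $L$ by the type $(a,b,c)$ of its $r-2$ vertices other than $\{u,v\}$: $a$ of them are common $\Gcore$-neighbours of $u$ and $v$, $b$ lie in the $\Gcore$-neighbourhood of exactly one of $\{u,v\}$, and $c = r-2-a-b$ lie in neither. Since $|L\cap\Gcore|\ge 1+2a+b$ and the number of $L$ of type $(a,b,c)$ is at most $O_r(1)\cdot d_{uv}^a(d_u+d_v)^b n^c$, this gives
\begin{equation}\label{eq:plan-ub}
  \Ex_\Gcore[X] - \Ex_{\Gcore\setminus uv}[X] \le O_r(1)\sum_{a+b+c=r-2} d_{uv}^a(d_u+d_v)^b n^c \, p^{\binom{r}{2}-1-2a-b}.
\end{equation}

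Suppose for contradiction that $d_{uv} < \eta n p^{(r-1)/2}/(\log(1/p))^{1/(r-2)}$ and $d_u+d_v < \eta n/\log(1/p)$ hold simultaneously. Substituting these bounds into~\eqref{eq:plan-ub}, the $(a,b,c)$-term is at most
\[ O_r(1) \cdot \eta^{a+b} \cdot \frac{n^{r-2}p^{\binom{r-1}{2}}}{\log(1/p)} \cdot \frac{p^{r-2-b+a(r-5)/2}}{(\log(1/p))^{a/(r-2)+b-1}}. \]
A short case analysis confirms that the trailing factor can be made smaller than $c_r/O_r(1)$ by taking $\eta$ sufficiently small as a function of $r$, $\delta$, and $K$: the three `extremal' triples $(r-2,0,0)$, $(0,r-2,0)$, and $(0,0,r-2)$ yield trailing factors $\eta^{r-2}p^{\binom{r-2}{2}}$, $\eta^{r-2}/(\log(1/p))^{r-3}$, and $p^{r-2}\log(1/p)$, respectively---all small when $\eta$ is small and $p$ is small (we may enlarge the constant $C$ in the proposition to ensure this)---and intermediate triples interpolate. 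Summing the $O_r(1)$ many terms then contradicts~\eqref{eq:plan-lb}.

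The main technical work is the routine but tedious bookkeeping ensuring that every type $(a,b,c)$ yields a controllable trailing factor. The thresholds $np^{(r-1)/2}/(\log(1/p))^{1/(r-2)}$ and $n/\log(1/p)$ in the statement of the claim are calibrated precisely so that the two extremal configurations---the `clique-like' type $(r-2,0,0)$, corresponding to $L$ being a $K_r$-copy inside $\Gcore$, and the `star-like' type $(0,r-2,0)$, corresponding to $L\cap\Gcore$ being a spanning bipartite structure between $\{u,v\}$ and the remaining vertices of $L$---saturate the required bounds up to the choice of $\eta$.
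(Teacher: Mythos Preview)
Your approach is correct and takes a genuinely different, more elementary route than the paper. You directly generalise the triangle argument of Section~\ref{sec:triangles}: classify each $K_r$-copy $L\ni uv$ by the neighbourhood type $(a,b,c)$ of its other $r-2$ vertices, use only the crude lower bound $|E(L)\cap E(\Gcore)|\ge 1+2a+b$ (ignoring edges among those $r-2$ vertices), and then case-check. The paper instead classifies by the full intersection subgraph $J=L\cap\Gcore$, invokes the embedding bound of Theorem~\ref{thm:max-copies}, and uses a delicate fractional-independence-number analysis (Lemma~\ref{lemma:eJ-alphaJ-clique}, Lemma~\ref{lemma:fractional-duality}) to rule out every $J$ other than $K_r$ and the double stars. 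Your argument avoids all of this machinery; what it sacrifices is generality---the paper's subgraph-by-subgraph analysis is what gets reused verbatim in Section~\ref{sec:H} for arbitrary $\Delta$-regular $H$, where your coarse $(a,b,c)$ bookkeeping would not suffice.

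One point to tighten: ``intermediate triples interpolate'' is not quite the right justification, since the trailing factor is not monotone or convex in any obvious sense. The clean way to finish is to observe that the $p$-exponent $a(r-5)/2+(r-2-b)$ is nonnegative for every admissible $(a,b)$ with $a+b\le r-2$ (check $r\ge 5$, $r=4$, $r=3$ separately), and that whenever it vanishes one has $a=0$, $b=r-2$, so the remaining factor is $\eta^{r-2}/(\log(1/p))^{r-3}$. Thus every term is bounded by either $\eta\cdot O_r(1)$ or $p^{\sigma}(\log(1/p))^{O_r(1)}$ for some $\sigma>0$, and the former are handled by choosing $\eta$ small while the latter are handled by enlarging $C$. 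This is exactly the ``routine but tedious bookkeeping'' you allude to, and it goes through without difficulty.
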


The above claim readily implies the desired bound on the number of cores with $m$ edges.
Note first that this number is zero whenever $m > KK'n^2p^{r-1}\log(1/p)$, see~\ref{item:krcore-size}, so we may assume that $m\leq KK'n^2p^{r-1}\log(1/p)$.
Given a core $\Gcore$, we denote by $A_\Gcore$ the set of vertices of $\Gcore$ with degree at least $\eta np^{(r-1)/2}/\big(\log(1/p)\big)^{1/(r-2)}$ and by
$B_\Gcore\subseteq A_\Gcore$ the set of vertices of $\Gcore$ with degree at least $\eta n/\\big(2\log(1/p)\big)$. Since $G^*$ has $m$ edges,
\[
  |A_{G^*}|\leq a := \frac{2m\big(\log(1/p)\big)^{1/(r-2)}}{\eta np^{(r-1)/2}}
  \quad\text{and}\quad
  |B_{G^*}| \leq b := \frac{4m\log(1/p)}{\eta n}
\]
Claim~\ref{claim:core-degree} states that every edge of $\Gcore$ is either fully contained in $A_{\Gcore}$ or has at least one endpoint in $B_{\Gcore}$. In particular, for fixed sets $B \subseteq A \subseteq \br{n}$ with $|A| = a$ and $|B| = b$, there are at most $\binom{a^2/2 + bn}{m}$ cores $\Gcore$ with $m$ edges that satisfy $A_{\Gcore} \subseteq A$ and $B_{\Gcore} \subseteq B$. We can thus (generously) upper bound the number of cores with~$m$ edges by
\[
  \binom{n}{a}\binom{n}{b}\binom{a^2/2 + bn}{m}.
\]
Recalling the inequality $\binom{x}{y} \le (ex/y)^y$, valid for all nonnegative integers $x$ and $y$, we may conclude that the number of cores with $m$ edges is at most
\[
  n^{\frac{6m(\log(1/p))^{1/(r-2)}}{\eta np^{(r-1)/2}}} \cdot \left(
    \frac{2em\big(\log(1/p)\big)^{2/(r-2)}}{\eta^2 n^2p^{r-1}} +\frac{4e\log(1/p)}{\eta} \right)^{m}.
\]
Since $p^{(r-1)/2} \ge Cn^{-1}(\log n)^{1/(r-2)}$, the first factor is at most $e^{\eps m\log(1/p)/4}$. Since we have assumed that $m \leq KK'n^2p^{r-1}\log(1/p)$ and $1/p \ge C^{2/(r-1)}$, the second factor is at most $e^{O(m\log\log(1/p))} \le e^{\eps m\log(1/p)/4}$. This shows that the number of cores with $m$ edges is indeed at most $(1/p)^{\eps m/2}$, as claimed.

\begin{proof}[{Proof of Claim~\ref{claim:core-degree}}]
  For any nonempty $J \subseteq K_r$, we shall let $N(J, \Gcore; uv)$ denote the
  number of copies of $J$ in $\Gcore$ that contain the edge $uv$. For the sake of
  brevity, we set $\mmax = KK' \cdot n^2p^{r-1} \log(1/p)$. Observe that
  \[
    \Ex_{\Gcore}[X]-\Ex_{\Gcore \setminus uv}[X] \le \sum_{\emptyset \neq J \subseteq K_r} N(J, \Gcore; uv) \cdot n^{r-v_J} p^{\binom{r}{2} - e_J},
  \]
  where $J$ ranges over all nonempty subgraphs of $K_r$ that have no isolated
  vertices. Since $\Ex[X] = \binom{n}{r}p^{\binom{r}{2}}$
  and $\Phi_X(\delta + \eps) \le K' n^2p^{r-1}\log(1/p)=\mmax/K$, it follows
  from~\ref{item:krcore-mindeg} in the definition of the core
  that
  \[
    \sum_{\emptyset \neq J \subseteq K_r} \frac{N(J,\Gcore;
    uv)}{n^{v_J}p^{e_J}}
    \geq \frac{\Ex[X]}{K\cdot \Phi_X(\delta+\eps)}
    \ge \frac{\gamma}{\mmax},
  \]
  where $\gamma = \gamma(r)$ is a constant that depends only on $r$.
  
  For every edge $ab\in E(J)$, let $\Emb(J,\Gcore;ab,uv)$ denote the
  set of embeddings of $J$ into $\Gcore$ that map $ab$ to $uv$.
  Then the above inequality implies that there is a nonempty $J
  \subseteq K_r$ with no isolated vertices, an edge $ab \in E(J)$, and a
  constant $\gamma' = \gamma'(r)$ such that \begin{equation}
    \label{eq:core-J-extremal}
    \frac{|\Emb(J, \Gcore; ab, uv)|}{n^{v_J}p^{e_J}} \ge \frac{\gamma'}{\mmax}.
  \end{equation}

  \newcommand{\doublestar}[2]{
    \draw (0,0) node (a) {} (1,0) node (b) {};
    \pgfmathsetmacro{\maxi}{#1}
    \pgfmathsetmacro{\maxj}{#2}
    \pgfmathparse{ifthenelse(\maxi>0,1,0)} 
    \ifnum\pgfmathresult=1
    \foreach \i in {1,...,\maxi} {
      \draw[xscale=0.3] (-\maxi/2+\i-1/2,1) node {} -- (a);
    }
    \fi
    \pgfmathparse{ifthenelse(\maxj>0,1,0)} 
    \ifnum\pgfmathresult=1
    \foreach \j in {1,...,\maxj} {
      \draw[xshift=1cm,xscale=0.3] (-\maxj/2+\j-1/2,1) node {} -- (b);
    }
    \fi
    \draw (a) -- (b);
    \node[draw=none,fill=none,rectangle] at (0.5,-0.5) {$S_{#1,#2}$};
  }
  \begin{figure}
    \begin{tikzpicture}
    \tikzset{every node/.style={draw,circle,fill=black,inner sep=0pt,minimum
    size=3pt}}
      \doublestar{0}{4}
      \begin{scope}[xshift=3cm]\doublestar{1}{3}\end{scope}
      \begin{scope}[xshift=6cm]\doublestar{2}{2}\end{scope}
    \end{tikzpicture}
    \caption{The double stars with six vertices.}\label{fig:doublestars}
  \end{figure}
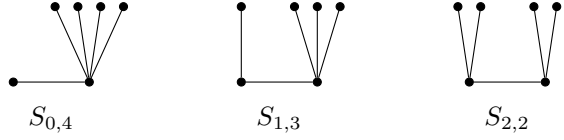

  For nonnegative integers $i$ and $j$, let $S_{i,j}$ denote the graph
  obtained from a copy of $K_{1,i}$ and a copy of $K_{1,j}$ by joining their
  centres (vertices of degrees $i$ and $j$, respectively) by an edge; see
  Figure~\ref{fig:doublestars} for an illustration.
  As the graphs $K_{1,i}$
  are often called \emph{stars}, we shall refer to the $S_{i,j}$ as
  \emph{double stars}. Moreover, we shall call an edge of $S_{i,j}$ whose
  endpoints have degrees $i+1$ and $j+1$ a \emph{centre edge}. Note that if
  $i, j > 0$, then $S_{i,j}$ has only one centre edge; otherwise, $S_{i,j}$
  is just a star graph and each of its edges is a centre edge.

  We shall now show that, unless $\deg_\Gcore(u,v) \ge np^{(r-1)/2}$, any graph $J$ that satisfies~\eqref{eq:core-J-extremal} for some $ab \in E(J)$ must be either $K_r$ or a double star with $r$ vertices. We first show how this fact implies the assertion of the claim. Assume first that 
 \[
   |\Emb(K_r, \Gcore; ab, uv)| \ge \frac{\gamma' n^r p^{\binom{r}{2}}}{\mmax} = \frac{\gamma'}{KK'} \cdot \frac{n^{r-2} p^{\binom{r-1}{2}}}{\log(1/p)}.
 \]
 Since $|\Emb(K_r, \Gcore; ab, uv)|
 \leq 2\deg_{\Gcore}(u,v)^{r-2}$, we conclude that
 \[
   \deg_{\Gcore}(u,v) \ge \left(\frac{\gamma'}{2KK'}\right)^{1/(r-2)} \cdot \frac{n p^{(r-1)/2}}{\big(\log(1/p)\big)^{1/(r-2)}},
 \]
 as claimed. Next, assume that, for some $i$ and $j$ with $i+j=r-2$,
 \[
   |\Emb(S_{i,j}, \Gcore; ab, uv)| \ge \frac{\gamma' n^r p^{i+j+1}}{\mmax} = \frac{\gamma'}{KK'} \cdot \frac{n^{r-2}}{\log(1/p)}.
 \]
 If $ab$ is a centre edge of $S_{i,j}$, then $|\Emb(S_{i,j}, \Gcore; ab, uv)| \le
 (\deg_{\Gcore}u + \deg_{\Gcore}v)^{i+j}
 \leq
 (\deg_{\Gcore}u + \deg_{\Gcore}v)\cdot (2n)^{r-3}
 $. Otherwise,
 we have $i, j > 0$ and so
 $|\Emb(S_{i,j}, \Gcore; ab, uv)| \le (\deg_{\Gcore}u +
 \deg_{\Gcore}v)^{\min\{i,j\}} \cdot n^{\max\{i,j\}} \le (\deg_{\Gcore}u +
 \deg_{\Gcore}v) \cdot
 (2n)^{r-3}$ as well. Thus, in both cases, \[
   \deg_{\Gcore}u + \deg_{\Gcore}v \ge \frac{\gamma'}{2^{r-3} KK'} \cdot \frac{n}{\log(1/p)},
 \]
 which completes the proof of the claim.

 It remains to prove our assertion. We first consider the special case $r =
 3$. The only nonempty subgraph of $K_3$ with no isolated vertices that is
 not isomorphic to a double star with three vertices is $K_2$. However,
 as $p\leq C^{-2/(r-1)} = C^{-1}$, we have
 \[
   \frac{|\Emb(K_2, \Gcore; ab, uv)|}{n^{2} p} = \frac{2}{n^2p} \le
   \frac{2\log C}{C n^2p^2 \log(1/p)} < \frac{\gamma'}{\mmax}
 \]
 whenever $C$ is sufficiently large, which contradicts~\eqref{eq:core-J-extremal}. We henceforth assume that $r \ge 4$.
 
 By way of contradiction, suppose that $J$ is neither $K_r$ nor a double star
 with $r$ vertices and that $\deg_{\Gcore}(u,v) < np^{(r-1)/2}$. Let $ab$ be
 an arbitrary edge of $J$, let $J_{ab}$ be the subgraph of $J$ induced by
 $V(J) \setminus \{a,b\}$, and let $\alpha_{ab}^*$ be the fractional
 independence number of $J_{ab}$. By Lemma~\ref{lemma:fractional-duality},
 there is a partition of
 $V(J_{ab})$ into $V_1$ and $V_2$ such that
 \begin{enumerate}[label=(P\arabic*)]
 \item
   \label{item:Jab-partition-alpha}
   $|V_1|/2+|V_2| = \alpha_{ab}^*$,
 \item
   \label{item:Jab-partition-V1}
   $V_1$ can be covered by a collection of vertex-disjoint edges and cycles of $J_{ab}$.
 \end{enumerate}
 Among all partitions satisfying~\ref{item:Jab-partition-alpha} and~\ref{item:Jab-partition-V1}, choose one that maximises the number of common neighbours of $a$ and $b$ in $V_2$, that is, the cardinality of the set
 \[
   X = \{c \in V_2 : ac, bc \in E(J)\}.
 \]
 Finally, let $v_1 = |V_1|$, $v_2 = |V_2|$, and $x = |X|$.

 We now observe that for every partition satisfying~\ref{item:Jab-partition-alpha} and~\ref{item:Jab-partition-V1}, we have
 \begin{equation}
   \label{eq:NJGcore-upper-bound}
  |\Emb(J, \Gcore; ab, uv)| \le 2\cdot \deg_{\Gcore}(u,v)^x \cdot
   (2e_{\Gcore})^{v_1/2} \cdot \min\{2e_{\Gcore}, n\}^{v_2-x}.
\end{equation}
To see this, note first that there are two embeddings of $ab$ onto $uv$. Since
$X$ lies in the common neighbourhood of $a$ and $b$, each such embedding can be
extended to an embedding of $J[\{a,b\}\cup X]$ in at most $\deg_\Gcore(u,v)^x$ ways.
Next, since $V_1$ can be covered by cycles and edges of $J$ (by
property~\ref{item:Jab-partition-V1}),
Lemma~\ref{lemma:NCell} implies that every embedding of 
$J[\{a,b\}\cup X]$ can be extended in at most
$(2e_{\Gcore})^{v_1/2}$ ways to an embedding of $J[\{a,b\}\cup X\cup V_1]$.
Finally, since
$J$ contains no isolated vertices, any embedding of
$J[\{a,b\}\cup X\cup V_1]$
can be extended in at most
$\min\{2e_{\Gcore}, n\}^{v_2-x}$ ways to an embedding of $J$.

Combining~\eqref{eq:core-J-extremal}, \eqref{eq:NJGcore-upper-bound}, the inequality $e_{\Gcore} \le \mmax$, which follows from~\ref{item:krcore-size}, and the assumption $\deg_{\Gcore}(u,v) < np^{(r-1)/2}$, we deduce that
\[
  \begin{split}
   \gamma' n^{v_J} p^{e_J} & \le \left(np^{(r-1)/2}\right)^x \cdot
    (2\mmax)^{v_1/2+1} \cdot \min\{2\mmax, n\}^{v_2-x} \\
    & = \frac{1}{\big(KK' \log(1/p)\big)^{x/2}} \cdot (2\mmax)^{v_1/2+x/2+1}
    \cdot \min\{2\mmax, n\}^{v_2-x}.
  \end{split}
\]
On the other hand,
\[
  n^{v_J}p^{e_J} = \left(n^2p^{r-1}\right)^{e_J/(r-1)} \cdot n^{v_J - 2e_J/(r-1)} = \left(\frac{\mmax}{KK'\log(1/p)}\right)^{e_J/(r-1)} \cdot n^{v_J - 2e_J/(r-1)}.
\]
Therefore, for some constant $K'' = K''(K, K', r)$, we must have
\begin{equation}
  \label{eq:mmax-lower}
  (\mmax)^{v_1/2+x/2-e_J/(r-1)+1} \cdot n^{2e_J/(r-1) - v_J} \cdot \min\{\mmax,
  n\}^{v_2-x} \ge \big( K'' \log(1/p)\big)^{-K''}.
\end{equation}
In order to reach the desired contradiction, it suffices to prove that
there is some positive $\sigma=\sigma(r)$ such that the left-hand side of
\eqref{eq:mmax-lower} is bounded from above by $\max{\{n^{-1},KK'\cdot
p^{r-1}\log(1/p)\}}^\sigma$.
We first observe that such an upper bound is implied by the following inequality:
\begin{equation}
  \label{eq:eJ-bound-sparse}
  \max\big\{v_1/2+v_2 -x/2 -e_J/(r-1) + 1, \, 0\big\} < v_J - 2e_J/(r-1).
\end{equation}
Indeed, if $\mmax\leq n$, then the left-hand side of \eqref{eq:mmax-lower} is upper bounded
by
\begin{multline*}
  (\mmax)^{v_1/2+v_2-x/2-e_J/(r-1)+1} \cdot n^{2e_J/(r-1) - v_J} \\
  \leq \max{\{n^{2e_J/(r-1)-v_J}, n^{v_1/2+v_2-x/2 +1 - (v_J-e_J/(r-1))}\}}.
\end{multline*}
On the other hand, if $n< \mmax = KK'\cdot n^2p^{r-1}\log(1/p)$, then the
left-hand side of \eqref{eq:mmax-lower} becomes
\begin{multline*}
  \left(KK'\cdot n^2p^{r-1}\log(1/p)\right)^{v_1/2+x/2-e_J/(r-1)+1} \cdot n^{2e_J/(r-1) - v_J + v_2-x}\\
  = \left(KK'\cdot p^{r-1}\log(1/p)\right)^{v_J - e_J/(r-1) - (v_1/2 +v_2 -x/2+1)},
\end{multline*}
using $v_1+v_2= v_{J_{ab}} = v_J-2$. Note that \eqref{eq:eJ-bound-sparse} guarantees
that, in both cases, the left-hand side of
\eqref{eq:mmax-lower} is at most
$\max{\{n^{-1},KK'\cdot
p^{r-1}\log(1/p)\}}^\sigma$, for a suitable positive $\sigma=\sigma(r)$.

In order to complete the proof of Claim~\ref{claim:core-degree}, we now prove inequality~\eqref{eq:eJ-bound-sparse}. Recall that $V(J_{ab}) = V_1 \cup V_2$ is a partition that satisfies~\ref{item:Jab-partition-alpha} and~\ref{item:Jab-partition-V1} that maximises the cardinality of the set $X = \{c \in V_2 : ac, bc \in E(J)\}$. Since, by the definition of $X$, each vertex in $V_2 \setminus X$ has at most one neighbour in $\{a, b\}$,
 \begin{equation}
   \label{eq:eJ-eJab}
   e_J \le e_{J_{ab}} + 2v_1 + v_2 + x + 1 = e_{J_{ab}} + 3v_J - 2\alpha_{ab}^* + x - 5
 \end{equation}
and equality holds only if every vertex in $V_1$ is adjacent to both $a$ and $b$
and every vertex in $V_2\setminus X$ is adjacent to exactly one of $a$ and $b$. Moreover, Lemma~\ref{lemma:eJ-alphaJ-clique} and Remark~\ref{remark:eJ-alphaJ-clique} give
\begin{equation}
  \label{eq:eJab-alphaab}
  e_{J_{ab}} \le (v_{J_{ab}}-1)(v_{J_{ab}}-\alpha_{ab}^*) = (v_J-3)(v_J-\alpha_{ab}^*-2),
\end{equation}
where equality holds only if $J_{ab}$ is complete, empty, or isomorphic to $K_{1,v_J-3}$. Putting~\eqref{eq:eJ-eJab} and~\eqref{eq:eJab-alphaab} together yields the inequality
\begin{equation}
  \label{eq:eJ-final}
  e_J \le (v_J-1)(v_J-\alpha_{ab}^*-1) + x.
\end{equation}
Moreover, inequality~\eqref{eq:eJ-final} is strict unless both~\eqref{eq:eJ-eJab} and~\eqref{eq:eJab-alphaab} hold with equality. Rearranging~\eqref{eq:eJ-final} gives the inequality
\begin{equation}
  \label{eq:eJ-bound}
  \frac{e_J}{r-1} \le \frac{v_J-1}{r-1} \cdot \left(v_J-\alpha_{ab}^*-1\right) + \frac{x}{r-1} \le v_J-\alpha_{ab}^*-1+\frac{x}{2}.
\end{equation}
Since $\alpha_{ab}^* \le v_{J_{ab}} = v_J-2$ and $r \ge 4$, the second inequality in~\eqref{eq:eJ-bound} is strict unless $v_J = r$ and $x = 0$. Consequently, the left-hand and the right-hand sides of~\eqref{eq:eJ-bound} can be equal only if the following conditions are satisfied for every partition $V(J_{ab}) = V_1 \cup V_2$ with properties~\ref{item:Jab-partition-alpha} and~\ref{item:Jab-partition-V1}:
\begin{enumerate}[label=(D\arabic*)]
\item
  \label{item:D1}
  $J_{ab}$ is either $K_{r-2}$, $E_{r-2}$ (the empty graph with $r-2$ vertices), or $K_{1, r-3}$;
\item
  \label{item:D2}
  every vertex in $V_1$ is adjacent to both $a$ and $b$;
\item
  \label{item:D3}
  every vertex of $V_2$ is adjacent to exactly one of $a$ and $b$.
\end{enumerate}
We now show that our assumptions preclude \ref{item:D1}--\ref{item:D3} holding
simultaneously. Indeed, note first that, if $J_{ab} = K_{r-2}$ (which also
includes the case $J_{ab} = K_{1,r-3}$ and $r = 4$), then $\alpha_{ab}^* =
v_{J_{ab}}/2$. Since $v_1/2+v_2=\alpha_{ab}^*$ and $v_1+v_2 = v_{J_{ab}}$, this
implies $V_1 = V(J_{ab})$. Then \ref{item:D2} shows that $J = K_r$, a contradiction.
Second, if $J_{ab} = E_{r-2}$, then $\alpha^*_{ab} = r-2$. Since
$v_1/2+v_2=\alpha_{ab}^*$ and $v_1+v_2 = r-2$, this implies
$V_2 = V(J_{ab})$ and it follows from \ref{item:D3} that $J$ is a double star
whose centre edge is $ab$, another
contradiction. Finally, suppose that $J_{ab} = K_{1,r-3}$ and $r > 4$. Since
$\alpha^*_{ab} = v_1/2+v_2= r - 3$ and $v_1+v_2=r-2$, we see that
$v_1 = 2$ and $v_2 = r-4$. Property~\ref{item:Jab-partition-V1} implies that $V_1 = \{c,d\}$,
where $c$ is the vertex of degree $r-3$ in $J_{ab}$ and $d$ is one of its
neighbours. Since $d \in V_1$, it must be adjacent to both $a$ and $b$ (see Figure~\ref{fig:starproof}
for an illustration). Let $e$
be an arbitrary vertex of $V_2$; there is at least one such vertex as $v_2 =
r-4 \ge 1$. The partition of $V(J_{ab})$ into $V_1' = \{c, e\}$ and $V_2' =
(V_2 \setminus \{e\}) \cup \{d\}$ satisfies both
conditions~\ref{item:Jab-partition-alpha} and~\ref{item:Jab-partition-V1} but
the set $X' = \{c' \in V_2' : ac', vc' \in E(J_{ab})\}$ is nonempty (as it
contains the vertex $d$); this contradicts property~\ref{item:D3} for the
partition $V(J) = V_1' \cup V_2'$.

\begin{figure}
  \centering
  \begin{tikzpicture}
    \tikzset{every node/.style={draw,circle,fill=black,inner sep=0pt,minimum size=3pt}}
    \path[draw=none,fill=black!10] (1.5,0) ellipse (0.5cm and 0.8cm)
    ++(-0.75,0) node[draw=none,rectangle,fill=none] {$V_2$};
    \path[draw=none,fill=black!10] (3.5,0) ellipse (1cm and 0.5cm) ++(1.3,0) node[draw=none,rectangle,fill=none] {$V_1$};
    \node[label=left:$a$] (a) at (0,-1) {};
    \node[label=left:$b$] (b) at (0,1) {};
    \node[label=45:$c$] (c) at (3,0) {};
    \node[label=45:$d$] (d) at (4,0) {};
    \draw (a)--(b) (c)--(d);
    \draw (a) to[bend right] (c) (a) to[bend right] (d);
    \draw (b) to[bend left] (c) (b) to[bend left] (d);
    \draw (1.5,-0.5) node (x1) {}
    ++(0,0.25) node (x2) {}
    ++(0,0.25) node (x3) {}
    ++(0,0.25) node (x4) {}
    ++(0,0.25) node (x5) {};
    \foreach \i in {1,...,5} {\draw (c) -- (x\i);}
    \foreach \i in {1,...,3} {\draw (a) -- (x\i);}
    \foreach \i in {4,5} {\draw (b) -- (x\i);}
  \end{tikzpicture}
  \caption{Illustration for the case $J_{ab} = K_{1,r-3}$ and $r>4$. Any partition 
  $V(J_{ab}) = V_1'\cup V_2'$ obtained by exchanging $d$ with a vertex from $V_2$ 
  satisfies~\ref{item:Jab-partition-alpha} and~\ref{item:Jab-partition-V1} but violates~\ref{item:D3}.
  }\label{fig:starproof}
\end{figure}
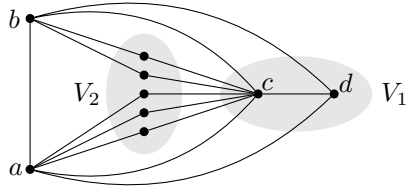

To summarise, at least one of the inequalities in~\eqref{eq:eJ-bound} is strict. Since $J \neq K_r$, not every vertex of $J$ has degree $r-1$ and thus $2e_J < (r-1) \cdot v_J$. We conclude that
\[
  \max\big\{\alpha_{ab}^* -e_J/(r-1) - x/2 +  1,\, 0\big\} < v_J - 2e_J/(r-1).
\]
Since $\alpha_{ab}^* = v_1/2 + v_2$, this is exactly~\eqref{eq:eJ-bound-sparse}.
\end{proof}

\subsection{Proof of Proposition~\ref{prop:kr-Phi}}
We begin by showing that
\begin{equation}\label{eq:kr-phi-upper}
    \limsup_{n \to \infty} \frac{\Phi_X(\delta)}{n^2p^{r-1} \log(1/p)} \le
    \varphi_r(\delta, c).
\end{equation}
For every small $\eps>0$ and sufficiently large $n$, we shall construct a
graph $G$ with vertex set $\br{n}$ and at most $(\varphi_r(\delta+\eps,
c)+\eps) \cdot n^2p^{r-1}$ edges that satisfies $\e_G[X]\geq
(1+\delta)\e[X]$. 
The existence of such a graph
and the continuity of $\varphi_r$ will imply that
\[
  \limsup_{n \to \infty} \frac{\Phi_X(\delta)}{n^2p^{r-1} \log(1/p)}
  \leq 
  \lim_{\eps \to 0} \varphi_r(\delta+\eps,c)
  +\eps
  = \varphi_r(\delta,c),
\]
as required.

Let $x \in \bar X(\delta+\eps,c)$ and note that it follows from Lemma~\ref{lem:grdcx} that
$x = 0$ if $c=0$. Set
\[
  \ell_1 = \big((\delta+\eps)(1-x)\big)^{1/r} np^{(r-1)/2}
  \qquad\text{and}\qquad
  \ell_2 = x(\delta+\eps) np^{r-1}/r.
\]
and fix an arbitrary partition  $\br n = \{u\} \cup U_1\cup U_2\cup U_3$, where $|U_1| = \floor{\ell_1}$ and $|U_2| = \floor{\ell_2}$.
Let $G$ be the union of the clique on $U_1$, the complete bipartite graph between $U_2$ and $U_3$, and an arbitrary star, centred at $u$, with $\floor{\{\ell_2\}^{1/(r-1)}|U_3|}$ edges whose non-$u$ endpoints are in $U_3$. We have
\[
  \begin{split}
    e_G & \leq \binom{\ell_1}{2} + \floor{\ell_2}\cdot |U_3| + \{\ell_2\}^{1/(r-1)}\cdot |U_3|\\
    & \le \frac{\big((\delta+\eps)(1-x)\big)^{2/r}n^2p^{r-1}}{2} + \big(\floor{\ell_2} + \{\ell_2\}^{1/(r-1)}\big)\cdot n \\
    &\le \left(\frac{\big((\delta+\eps)(1-x)\big)^{2/r}}{2} + \frac{\floor{\ell_2} + \{\ell_2\}^{1/(r-1)}}{np^{r-1}}\right) \cdot n^2p^{r-1} \\
    & =
    \psi_r(\delta+\eps,np^{r-1},x) \cdot n^2p^{r-1},
  \end{split}
\]
and our choice of $x$ ensures that
$\psi_r(\delta+\eps,np^{r-1},x) \leq
\varphi_r(\delta+\eps,c)+\eps$ for all large enough $n$.

It remains to show that $\e_G[X] \geq (1+\delta)\e[X]$. To this end, observe first that:
\begin{enumerate}[label={(\roman*)}]
  \item
    The complete graph $G[U_1]$ contains $\binom{\floor{\ell_1}}{r}$ copies of $K_r$.
  \item
    The complete bipartite graph $G[U_2,U_3]$ contains $\floor{\ell_2}\cdot \binom{|U_3|}{r-1}$ copies of $K_{1,r-1}$ whose centre vertex lies in $U_2$.
  \item
    The star $G[u ,U_3]$ contains $\binom{\floor{\{\ell_2\}^{1/(r-1)}|U_3|}}{r-1}$ copies of $K_{1,r-1}$.
\end{enumerate}
In particular,
\[
  \begin{split}
    \e_G[X] - \e[X]  & =
    \sum_{\emptyset \neq J \subseteq K_r} N(J, G) \cdot 
    \binom{n-v_J}{r-v_J}
    \cdot
    p^{\binom{r}{2}} ( p^{-e_J} - 1) \\
    & \ge (1-p) \cdot \left[\binom{\floor{\ell_1}}{r} + \left[\floor{\ell_2}\cdot \binom{|U_3|}{r-1} + \binom{\floor{\{\ell_2\}^{1/(r-1)}|U_3|}}{r-1} \right] \cdot p^{\binom{r}{2} - (r-1)} \right].
  \end{split}
\]
We now estimate the right-hand side of the above inequality. Since $np^{(r-1)/2} \to \infty$, we have
\[
  \binom{\floor{\ell_1}}{r} \ge \frac{(\ell_1 - r)^r}{r!} \ge \big((\delta+\eps)(1-x) - \eps/2\big) \cdot \frac{n^rp^{\binom{r}{2}}}{r!},
\]
provided that $n$ is sufficiently large. In the case $c = 0$, 
this is already sufficient, as $x=0$ and
\[
  \e_G[X] - \e[X]  \ge  (1-p) \cdot \binom{\floor{\ell_1}}{r} \ge (1-p) \cdot (\delta + \eps/2) \cdot \frac{n^rp^{\binom{r}{2}}}{r!} \ge \delta \Ex[X].
\]
We may therefore assume that $c \in (0, \infty]$. Since $p \to 0$, and thus $|U_3| / n \to 1$, we have
\[
  \begin{split}
    \floor{\ell_2}\cdot \binom{|U_3|}{r-1} + \binom{\floor{\{\ell_2\}^{1/(r-1)}|U_3|}}{r-1} & \ge
    \floor{\ell_2}\cdot \frac{(1-\eps^2)\cdot n^{r-1}}{(r-1)!} + \frac{\big(\{\ell_2\}^{1/(r-1)}|U_3| - r\big)^{r-1}}{(r-1)!} \\
    & \ge \frac{(1-\eps^2) \cdot \ell_2 \cdot  n^{r-1}}{(r-1)!} - \frac{\eps^2 n^{r-1}}{(r-1)!} \\
    & = \left((1-\eps^2) x (\delta+\eps)- \frac{\eps^2r}{np^{r-1}}\right) \cdot \frac{n^rp^{r-1}}{r!}.
  \end{split}
\]
Consequently,
\[
  \begin{split}
    \e_G[X] - \e[X] & \ge (1-p) \cdot \left((\delta+\eps)(1-x) - \eps/2 + (1-\eps^2) x (\delta+\eps)- \frac{\eps^2r}{np^{r-1}}\right) \cdot \frac{n^rp^{\binom{r}{2}}}{r!}\\
    & \ge (1-p) \cdot \left(\delta + \eps/2 - \eps^2(\delta+\eps) - \frac{\eps^2r}{np^{r-1}}\right) \cdot \Ex[X] \ge \delta \Ex[X],
  \end{split}
\]
where the last inequality holds for all sufficiently small $\eps$ since $np^{r-1} \to c > 0$.
This completes the proof of \eqref{eq:kr-phi-upper}.

It remains to prove the matching lower bound
\begin{equation}
  \liminf_{n \to \infty} \frac{\Phi_X(\delta)}{n^2p^{r-1} \log(1/p)} \ge
  \varphi_r(\delta,c).
\end{equation}
Fix $\eps>0$ small enough and suppose that $n$ is sufficiently large.
By the continuity of $\varphi_r$,
it is enough to show that any graph $G$ on $n$ vertices satisfying
\[\e_G[X] \geq (1+\delta)\e[X] \]
has at least $(1-\eps)\cdot \varphi_r(\delta-\eps,c)\cdot n^2p^{r-1}$
edges. By way of contradiction,
assume that $e_G < (1-\eps)\cdot \varphi_r(\delta-\eps,c)\cdot n^2p^{r-1}$.
Note that, for all large enough $n$,
\[
  (\delta-\eps/4)\cdot \frac{n^rp^{\binom{r}{2}}}{r!}
  \leq \delta \e[X]\leq  \Ex_G[X] - \e[X] \leq
  \sum_{\emptyset \neq J \subseteq K_r} N(J, G) \cdot
  n^{r-v_J}\cdot p^{\binom{r}{2}-e_J},
\]
where the sum ranges over the nonempty subgraphs $J$ of $K_r$ without
isolated vertices, so
\begin{equation}\label{eq:qwe}
  \sum_{\emptyset \neq J \subseteq K_r} 
  \frac{N(J,G)}{n^{v_J}p^{e_J}}\geq \frac{\delta-\eps/3}{r!}.
\end{equation}
Using our assumed upper bound on $e_G$,
Theorem~\ref{thm:max-copies} implies that
\[ \begin{split}
  \frac{N(J,G)}{\delta n^{v_J}p^{e_J}}
  &\leq 
  \frac{(2e_G)^{v_J-\alpha_J^*}\cdot\min{\{2e_G,n\}}^{2\alpha_J^*-v_J}}{n^{v_J}p^{e_J}}\\
  &\leq C\cdot 
  \frac{
    \min{\{
      (n^2p^{r-1})^{\alpha_J^*}, n^{v_J}p^{(r-1)(v_J-\alpha_J^*)} \}}
  }{n^{v_J}p^{e_J}
} \end{split}\]
for a suitable constant $C$.
If the minimum is achieved by
the first term, then $np^{r-1} \leq 1$, and thus
\[
  \frac{(n^2p^{r-1})^{\alpha_J^*}}{n^{v_J}p^{e_J}} = n^{\alpha_J^*-v_J+e_J/(r-1)} \cdot \big(np^{r-1}\big)^{\alpha_J^*-e_J/(r-1)} \leq  n^{\alpha^*_J - v_J + e_J/(r-1)},
\]
as $\alpha_J^* \ge v_J/2 \ge e_J / (r-1)$ for every graph $J$ with maximum degree at most $r-1$.
A straightforward algebraic manipulation in the case where the minimum is
achieved by the second term then shows that, in both cases,
\[
  \frac{N(J,G)}{n^{v_J}p^{e_J}}
  \leq C \cdot \min{\{n^{-1},p^{r-1}\}}^{v_J-\alpha^*_J - e_J/(r-1)}.
\]
If $J\subseteq K_r$ is not equal to either $K_r$ or $K_{1,r-1}$, then
Lemma~\ref{lemma:eJ-alphaJ-clique} and the remark that follows it imply that
$v_J-\alpha^*_J - e_J/(r-1)>0$, and 
then the right-hand side goes to zero as $n\to\infty$.
It thus follows from \eqref{eq:qwe} that
\[ \frac{N(K_r,G)}{n^rp^{\binom{r}{2}}}
+ 
\frac{N(K_{1,r-1},G)}{n^r p^{r-1}}
\geq \frac{\delta-\eps/2}{r!},
\]
or, equivalently, since $|\Aut(K_r)| = r!$ and $|\Aut(K_{1,r-1})| = (r-1)!$,
\begin{equation}\label{eq:embemb} |\Emb(K_r,G)|
  + 
  r\cdot |\Emb(K_{1,r-1},G)|\cdot p^{\binom{r}{2}-r+1}
  \geq (\delta-\eps/2)\cdot n^rp^{\binom{r}{2}}.
\end{equation}

Recalling our assumption $e_G < (1-\eps)\cdot \varphi_r(\delta-\eps,c)\cdot
n^2p^{r-1}\ll n^2$, it follows from Lemma~\ref{lem:lz} that
there is a partition $V(G) = U\cup V$ such that
$|U|\leq \eps^2 n$,
\[ |\Emb(K_r,G[V])|
\geq 
|\Emb(K_r,G)|
- \eps^2 e_G^{r/2}
\geq
|\Emb(K_r,G)| - \eps n^rp^{\binom{r}{2}}/4,
\]
and, recalling that $\Emb_U(K_{1,r-1}, G)$ denotes the set of embeddings of $K_{1,r-1}$ into $G$ that map the centre vertex of $K_{1,r-1}$ to a vertex of $U$,
\[
  \begin{split}
    r\cdot |\Emb_U(K_{1,r-1},G[U,V])|
    \cdot p^{\binom{r}{2}-r+1} & \geq 
    r\cdot |\Emb(K_{1,r-1},G)| 
    \cdot
    p^{\binom{r}{2}-r+1}
    - \eps^2 e_G n^{r-2}\cdot 
    p^{\binom{r}{2}-r+1}\\
    &\geq 
    r\cdot |\Emb(K_{1,r-1},G)| 
    \cdot
    p^{\binom{r}{2}-r+1}
    - \eps n^rp^{\binom{r}{2}}/4
  \end{split},
\]
where the stated inequalities are valid if $\eps$ is sufficiently small.
From this and \eqref{eq:embemb}, we obtain
\[ |\Emb(K_r,G[V])|
+ 
r\cdot |\Emb_U(K_{1,r-1},G[U,V])|\cdot p^{\binom{r}{2}-r+1}
\geq (\delta-\eps)\cdot n^rp^{\binom{r}{2}};
\]
consequently, there exists an
$x\in [0,1]$ such that
\[ |\Emb(K_r,G[V])|
\geq (1-x)\cdot
(\delta-\eps)\cdot n^rp^{\binom{r}{2}}  \]
and
\[ r\cdot |\Emb_U(K_{1,r-1},G[U,V])|\cdot p^{\binom{r}{2}-r+1} \geq x\cdot
(\delta-\eps)\cdot n^rp^{\binom{r}{2}}. \]
By Theorem~\ref{thm:max-copies} and Lemma~\ref{lem:stars}, we thus obtain
the bounds
\begin{align*}
  &\begin{cases}
    (2e_{G[V]})^{r/2} \geq (1-x)\cdot (\delta-\eps)\cdot
    n^rp^{\binom{r}{2}}\\
    (\floor{e_{G[U,V]}/|V|} + \{e_{G[U,V]}/|V|\}^{r-1})\cdot n^{r-1} \geq x\cdot (\delta-\eps)\cdot
  n^rp^{r-1}/r, \end{cases}\\
  \intertext{and solving for $e_{G[V]}$ and $e_{G[U,V]}$, we get}
  &\begin{cases}
    e_{G[V]} \geq \big((1-x)\cdot (\delta-\eps) \big)^{2/r}\cdot
    \frac{n^2p^{r-1}}{2}\\
    e_{G[U,V]} \geq |V|\cdot \big(\floor{x\cdot (\delta-\eps) np^{r-1}/r}
    + \{x\cdot (\delta-\eps) np^{r-1}/r\}^{1/(r-1)}\big).
  \end{cases}
\end{align*}
Finally, since $|V| = n-|U|\geq (1-\eps^2)n$, the definition
of $\psi_r$ shows that
\[ e_G \geq e_{G[V]} + e_{G[U,V]}
\geq (1-\eps^2)\cdot \psi_r(\delta-\eps,np^{r-1},x)\cdot n^2p^{r-1}
. \]
As $\psi_r(\delta-\eps,np^{r-1},x)\geq 
\varphi_r(\delta-\eps,np^{r-1}) \to 
\varphi_r(\delta-\eps,c)$, this contradicts our assumption that $e_G
<
(1-\eps)\cdot \varphi_r(\delta-\eps,c)\cdot n^2p^{r-1},
$
provided that $\eps$ is sufficiently small and $n$ is large enough.

\subsection{Proof of Proposition \ref{prop:kr-gmin}}
Fix $\eps,\delta>0$ and $c\in [0,\infty]$ and assume that
$np^{r-1}\to c$. We fix three additional
positive constants $\eta$, $\eta'$, and $\gamma$, where
$\gamma$ is sufficiently small given the parameters
of the proposition,
$\eta'$ is sufficiently small given $\gamma$, and finally $\eta$ is
sufficiently small given both $\eta'$ and $\gamma$.

If $\Gmin(\eta)$ occurs, then $G_{n,p}$ contains a subgraph $G$ such
that $e_G \log(1/p)\leq (1+\eta) \Phi_X(\delta+\eta)$ and $\e_G[X]\geq
(1+\delta-\eta)\e[X]$. We claim that, if $n$ is sufficiently large,
then every such graph admits a partition $V(G)
= U\cup V$ such that, for some $x\in \bar X(\delta,c)$,
\begin{enumerate}[label=(\roman*)]
  \item\label{it:clique}
    $V$ contains a subset $V'$  of size at least $(1-\eps)
    \big(\delta(1-x)\big)^{1/r}np^{(r-1)/2}$ such that
    $G[V']$ has minimum degree at least $(1-\eps)|V'|$.
  \item \label{it:hub}
    $U$ contains a set $W\subseteq U$ 
    such that at least 
    $\floor{(1-\eps)|W|}$ vertices in $W$ have
    degree at least $(1-\eps) n$ and
    \[ e(W,V)\geq (1-\eps) n \big( \floor{\delta xnp^{r-1}/r} +
    \{\delta xnp^{r-1}/r\}^{1/(r-1)}\big). \]
\end{enumerate}
Note that these properties imply $\Clique_\eps (\delta (1-x))\cap
\Hub_\eps(\delta x)$.

By repeating the argument found in the proof of the lower bound in Proposition~\ref{prop:kr-Phi},
we can find a partition $V(G) = U\cup V$ and some $x'\in [0,1]$ such that
\begin{align}
  &\begin{cases}
    |\Emb(K_r,G[V])| \geq (1-x')\cdot (\delta-2\eta)\cdot n^rp^{\binom{r}{2}}\\
    r\cdot |\Emb_U(K_{1,r-1},G[U,V])|\cdot p^{\binom{r}{2}-r+1} \geq x'\cdot
    (\delta-2\eta)\cdot n^rp^{\binom{r}{2}}
  \end{cases}
  \label{eq:emblower} 
  \\
  \intertext{and}
  &\begin{cases}
  e_{G[V]} \geq \big((1-x')\cdot (\delta-2\eta) \big)^{2/r}\cdot
\frac{n^2p^{r-1}}{2}\\
  e_{G[U,V]} \geq
  \big(\floor{x'(\delta-2\eta) np^{r-1}/r}
  + \{x'(\delta-2\eta) np^{r-1}/r\}^{1/(r-1)}\big)\cdot n.
\end{cases}
\label{eq:eglower} 
\end{align}
It follows that $e_G\geq 
e_{G[V]}+
e_{G[U,V]}
\geq \psi_r(\delta-2\eta,np^{r-1},x')\cdot n^2p^{r-1}
$. Thus, by
Proposition~\ref{prop:kr-Phi},
\begin{equation}\label{eq:phiphi}
  \psi_r(\delta-2\eta,np^{r-1},x') \leq\frac{e_G}{n^2p^{r-1}}\leq
  \frac{(1+\eta)\Phi_X(\delta+\eta)}{n^2p^{r-1}\log(1/p)}\leq
 (1+2\eta)\varphi_r(\delta+\eta,c)\cdot
\end{equation}
Our next claim tells us how to choose the constant $\eta$.
Given a set $S\subseteq \RR$, we write
$B_{\eta'}(S) = \{ x\in \RR : \inf_{s\in S} |x-s|< \eps\}$ for  the
$\eta'$-neighbourhood of $S$.

\begin{claim}
  We may choose $\eta = \eta(\eta')>0$ such that $x'\in
  B_{\eta'}(\barX(\delta,c))$ whenever $n$ is sufficiently large.
\end{claim}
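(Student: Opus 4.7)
The plan is a contradiction argument combining compactness of $[0,1]$ with the joint continuity of $\psi_r$. Negating the claim yields sequences $\eta_k \downarrow 0$, $n_k \to \infty$ with $n_k p(n_k)^{r-1} \to c$, and points $x'_k \in [0,1] \setminus B_{\eta'}(\barX(\delta,c))$ satisfying
\[
  \psi_r\big(\delta-2\eta_k,\ n_k p(n_k)^{r-1},\ x'_k\big) \leq (1+2\eta_k)\,\varphi_r(\delta+\eta_k, c).
\]
By compactness I would pass to a subsequence so that $x'_k \to x^*$; closedness of the complement of $B_{\eta'}(\barX(\delta,c))$ then gives $x^* \notin \barX(\delta,c)$, and moreover $x^*$ stays a distance $\eta'$ away from $\barX(\delta,c)$.

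The crux is to take $k\to\infty$ in the displayed inequality. The right-hand side converges to $\varphi_r(\delta, c)$ by continuity of $\varphi_r$. For the left-hand side in the generic case $c\in(0,\infty)$, one uses joint continuity of $\psi_r$ on $(0,\infty)^2\times[0,1]$. The only point that needs checking is that $g(t):=\lfloor t\rfloor+\{t\}^{1/(r-1)}$ is continuous at integer $t$: as $t\to k^-$, $\lfloor t\rfloor\to k-1$ while $\{t\}^{1/(r-1)}\to 1$, so both one-sided limits at $t=k$ equal $k=g(k)$. Consequently $\psi_r(\delta-2\eta_k, n_k p(n_k)^{r-1}, x'_k)\to\psi_r(\delta,c,x^*)$, which yields $\psi_r(\delta,c,x^*)\leq\varphi_r(\delta,c)$. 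Since in this regime $\barX(\delta,c)$ coincides with the set of minimisers of $\psi_r(\delta,c,\cdot)$ (by continuity and the limit-based definition), this forces $x^*\in\barX(\delta,c)$, contradicting $x^*\notin B_{\eta'}(\barX(\delta,c))$.

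The boundary cases $c\in\{0,\infty\}$ need small modifications and invoke Lemma~\ref{lem:grdcx}. When $c=0$, $\barX(\delta,0)=\{0\}$, so $x^*\geq\eta'$; for large $k$ one has $\lfloor x'_k \delta n_k p(n_k)^{r-1}/r\rfloor=0$, and a direct calculation shows the second summand of $\psi_r$ is then of order $(n_k p(n_k)^{r-1})^{(2-r)/(r-1)}$, which diverges because $r\geq 3$, contradicting the finite bound $\varphi_r(\delta,0)=\delta^{2/r}/2$. When $c=\infty$, $\barX(\delta,\infty)\subseteq\{0,1\}$, so $x^*\in(0,1)$ is bounded away from both endpoints; since $\psi_r(\delta,c',\cdot)$ converges uniformly as $c'\to\infty$ to the strictly concave function $x\mapsto(\delta(1-x))^{2/r}/2+x\delta/r$, whose minimum on $[0,1]$ is attained only at the endpoints, the limit value at $x^*$ strictly exceeds $\varphi_r(\delta,\infty)$, again yielding a contradiction. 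I do not expect a serious obstacle beyond the short verification that $g$ is continuous across integers; the rest of the proof is purely topological.
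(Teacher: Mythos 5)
Your proposal is correct and follows essentially the same route as the paper: a compactness/subsequence argument combined with continuity of $\psi_r$ (or uniform convergence when $c=\infty$, or divergence of the hub term when $c=0$), split into the three cases $c=0$, $c\in(0,\infty)$, $c=\infty$ exactly as the paper does. The only cosmetic difference is in the $c=\infty$ case, where the paper simply invokes the limit-based definition of $\barX(\delta,\infty)$ directly, whereas you argue via strict concavity of the limiting function; both are valid.
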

\begin{proof}
  Suppose there is no such choice.
  Then, by invoking \eqref{eq:phiphi} with
  successively smaller values of $\eta$, we find that there is a
  subsequence $(\eta_m,c_m,x_m)$
  of points in $(0,\infty)^2\times [0,1]$ such that
  $\eta_m\to 0$ and $c_m\to c$ as $m \to \infty$, and, for all $m$,
  we have $x_m \notin B_{\eta'}(\barX(\delta,c))$ and
  \[ \varphi_r(\delta-2\eta_m,c_m)\leq
    \psi_r(\delta-2\eta_m,c_m,x_m)\leq
    (1+2\eta_m)\varphi_r(\delta+\eta_m,c).
  \]
  Since both the left-hand and the right-hand sides above converge to $\varphi_r(\delta, c)$ as $m \to \infty$,
  so must $\psi_r(\delta-2\eta_m,c_m,x_m)$.
  Denote by $(\eta_\ell,c_\ell,x_\ell)$ a subsequence on which
  $x_\ell$ converges to some $x_\infty\in [0,1]$.
  We claim that $x_\infty\in \barX(\delta,c)$, which contradicts the fact
  that $x_\ell \notin B_{\eta'}(\barX(\delta,c))$ for all $\ell$.

  If $c = 0$, then we have $x_\infty = 0\in \barX(\delta,c)$, since otherwise
  the definition of $\psi_r$ would imply that
  $\psi_r(\delta-2\eta_\ell,c_\ell,x_\ell)\to \infty$. If $c\in (0,\infty)$, then
  continuity of $\psi_r$ implies that 
  $
  \varphi_r(\delta,c)=\lim_{\ell\to\infty}
  \psi_r(\delta-2\eta_\ell,c_\ell,x_\ell) = \psi_r(\delta,c,x_\infty)$ and thus $x_\infty \in \barX(\delta,c)$.
  Finally,
  if $c=\infty$, then $(\delta,x)\mapsto \psi_r(\delta,c_m,x)$ converges
  uniformly to the continuous function
  $ (\delta,x)\mapsto (\delta
  x)^{2/r}/2 + \delta x/r$,
  which implies that
  \[ \lim_{\ell\to\infty}\psi_r(\delta-2\eta_\ell,c_\ell,x_\ell)
  = 
  \frac{(\delta
  x_\infty)^{2/r}}2 + \frac{\delta x_\infty}r = \lim_{c'\to\infty}
  \psi(\delta,c',x_\infty),
  \]
  so $x_\infty\in \barX(\delta,c)$ in this case as well.
\end{proof}

Suppose now that  $x'\in B_{\eta'}(\bar X(\delta,c))$.
Since the right-hand sides of~\eqref{eq:emblower} and~\eqref{eq:eglower} are continuous in $x'$,
we may choose $\eta$ and $\eta'$ sufficiently small, as a function of $\gamma$, so that there
is some $x\in \barX(\delta,c)$ such that
\begin{align*}
  &\begin{cases}
    |\Emb(K_r,G[V])| \geq (1-x)\cdot (\delta-\gamma)\cdot n^rp^{\binom{r}{2}}\\
    r\cdot |\Emb_U(K_{1,r-1},G[U,V])|\cdot p^{\binom{r}{2}-r+1} \geq x\cdot
    (\delta-\gamma)\cdot n^rp^{\binom{r}{2}},
  \end{cases}\\
  &\begin{cases}
    e_{G[V]} \geq \big((1-x)\cdot (\delta-\gamma) \big)^{2/r}\cdot
    \frac{n^2p^{r-1}}{2}\\
    e_{G[U,V]} \geq
    \big(\floor{x(\delta-\gamma) np^{r-1}/r}
    + \{x(\delta-\gamma) np^{r-1}/r\}^{1/(r-1)}\big)\cdot n.
  \end{cases}
  \\
  \intertext{
  Since $e_{G[V]} + e_{G[U,V]} \le e_G \le (1+2\eta)\psi_r(\delta+\eta,c,x')\cdot n^2p^{r-1}$ and since
  the lower bounds on $e_{G[V]}$ and $e_{G[U,V]}$ in~\eqref{eq:eglower} sum to
  $\psi_r(\delta-2\eta, np^{r-1}, x') \cdot n^2p^{r-1}$, the two lower bounds on $e_{G[V]}$ and $e_{G[U,V]}$ stated above
  must be nearly tight. More precisely, the continuity of $\psi_r$ implies that we may choose $\eta$
  and $\eta'$ sufficiently small so that}
  &\begin{cases}
    e_{G[V]} \leq \big((1-x)\cdot (\delta+\gamma) \big)^{2/r}\cdot
    \frac{n^2p^{r-1}}{2}\\
    e_{G[U,V]} \leq
    \big(\floor{x(\delta+\gamma) np^{r-1}/r}
    + \{x(\delta+\gamma) np^{r-1}/r\}^{1/(r-1)}\big)\cdot n.
  \end{cases}
\end{align*}

Finally, if we choose $\gamma$ sufficiently small, then the above statements 
for $G[V]$ and Theorem~\ref{thm:Kr-stability} yield a set $V'\subseteq V$
satisfying the conditions in \ref{it:clique}.
Similarly, the statements for $G[U,V]$ and Lemma~\ref{lem:stars}(ii) yield
a subset $W\subseteq U$ satisfying \ref{it:hub}.

\section{Extensions to regular graphs}\label{sec:H}

Fix a connected and $\Delta$-regular graph $H$. In this section, we apply
Theorem~\ref{thm:packaged} to study the upper tail of the
random variable $X =
X_{n,p}^H$.
In this setting,
\eqref{eq:phi} may be rewritten as
\[
  \Phi_X(\delta) = \min\big\{e_G\log(1/p): G \subseteq K_n \text{ and }
  \e_G[X]\geq  (1+\delta)\e\left[X\right]\big\},
\]
where we use the notation $\e_G[X] = \e[X \mid G\subseteq G_{n,p}]$.
Our main result in this section is the following:

\begin{proposition} \label{prop:ldp-H}
  For every $\Delta \ge 2$, every connected, nonbipartite, $\Delta$-regular
  graph $H$, and all positive real numbers $\eps$ and $\delta$, there
  exists a positive constant $C$ such that the following holds. Suppose that
  an integer $n$ and $p\in(0,1)$ satisfy 
  $Cn^{-1} (\log n)^{\Delta v_H^2} \le p^{\Delta/2} \le 1/C$.
  Then $X = X_{n,p}^H$ satisfies
  \[
    (1-\eps)\Phi_X(\delta-\eps) \leq -\log \Pr\big(X \geq (1+\delta)\e[X]\big)
    \leq (1+\eps) \Phi_X(\delta+\eps).
  \]
  Additionally, there is a positive constant $\xi = \xi(\Delta,\eps)$ such that the assumption
  that $H$ is nonbipartite is not necessary when $p^{\Delta/2} \geq n^{-1/2-\xi}$.
\end{proposition}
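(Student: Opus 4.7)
The plan is to derive Proposition~\ref{prop:ldp-H} from Theorem~\ref{thm:packaged} in precisely the same manner as Proposition~\ref{prop:kr-ldp} was derived for cliques in Section~\ref{sec:cliques}. Writing $X = X_{n,p}^H$ as a polynomial of degree $e_H$ in the $\binom{n}{2}$ edge indicators of $G_{n,p}$ and bounding $\Ex_G[X] - \Ex[X]$ by a sum over nonempty subgraphs $J \subseteq H$ without isolated vertices of $|\Emb(J,G)| \cdot n^{v_H - v_J} p^{e_H - e_J}$, a routine application of Theorem~\ref{thm:max-copies} shows that conditioning on a constant-size edge set can perturb $\Ex[X]$ only by an $o(1)$ factor, so $\Phi_X(\delta - \eps) \ge K \log(1/p)$ for any fixed $K$ once $C$ is large enough. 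Planting a clique on $\lceil (1+\delta+\eps)^{1/v_H} np^{\Delta/2}\rceil$ vertices establishes the matching upper bound $\Phi_X(\delta+\eps) = O(n^2 p^\Delta \log(1/p))$, which yields a size constraint $\mmax = O(n^2 p^\Delta \log(1/p))$ on cores.

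The crux is to show that for every integer $m$ there are at most $(1/p)^{\eps m / 2}$ cores with $m$ edges. Given a core $\Gcore$ with $m$ edges and an edge $uv \in E(\Gcore)$, property~\ref{item:thmcore-mindeg} rewrites as
\[
  \sum_{\emptyset \neq J \subseteq H} \frac{|\Emb(J, \Gcore; uv)|}{n^{v_J} p^{e_J}} \gtrsim \frac{1}{\mmax},
\]
where $J$ ranges over subgraphs of $H$ without isolated vertices. So there is a dominant $J$ for which $|\Emb(J, \Gcore; uv)| \gtrsim n^{v_J} p^{e_J} / \mmax$. When $J = H$, Lemma~\ref{lemma:core-edge-regular} converts this into a lower bound of the form $\deg_{\Gcore} u \cdot \deg_{\Gcore} v \gtrsim (np^{\Delta/2})^2 / (\log(1/p))^{O(1)}$, forcing both $u$ and $v$ into a small ``high-degree'' set $A_\Gcore$. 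When $J$ is a proper subgraph of $H$, Lemmas~\ref{lemma:eJ-alphaJ-clique} and~\ref{lemma:core-edge-bipartite} together imply that the only dominant $J$'s are bipartite with one part $A$ of $H$-degree $\Delta$, and that the inequality $|\Emb(J, \Gcore; uv)| \lesssim (\deg_{\Gcore} u + \deg_{\Gcore} v) \cdot n^{v_J - 2}$ (roughly speaking) forces $u$ or $v$ into an even smaller ``very-high-degree'' set $B_\Gcore$ of vertices of $G^*$-degree at least $\Omega(n / \log(1/p))$. The number of cores is then bounded by $\binom{n}{|A|}\binom{n}{|B|}\binom{|A|^2/2 + |B|n}{m}$, and the estimates $\binom{x}{y} \le (ex/y)^y$ together with the density hypothesis complete the count as in the proof of Claim~\ref{cl:triacorecount}.

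The main obstacle is controlling the polylogarithmic losses that arise from the intermediate subgraphs $J \subsetneq H$. The key fact, from Lemma~\ref{lemma:eJ-alphaJ-clique}, is that for a nonbipartite $H$, the inequality $e_J \leq \Delta(v_J - \alpha_J^*)$ is \emph{strict} for every proper subgraph $J \subsetneq H$ that is not of the special bipartite type described in~\ref{item:QH-bipartite}, because the other equality case~\ref{item:QH-regular} would force $J = H$. This strict inequality provides the slack needed to handle intermediate subgraphs, but the slack is eroded by powers of $\log n$ coming from the factor $\mmax = O(n^2 p^\Delta \log(1/p))$ raised to the power $v_J - 2$; the hypothesis $p^{\Delta/2} \gg n^{-1} (\log n)^{\Delta v_H^2}$ is precisely what is needed to absorb all such losses across the finitely many $J \subseteq H$.

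For the final assertion concerning bipartite $H$ at densities $p^{\Delta/2} \geq n^{-1/2 - \xi}$, the analysis is simpler: in this range the star-like subgraphs whose contribution forces the failure of entropic stability at lower densities give a total contribution of strictly smaller order than the $J = H$ term, provided $\xi = \xi(\Delta, \eps)$ is taken sufficiently small. Consequently, the dichotomy ``$\{u,v\} \subseteq A_\Gcore$ or $\{u,v\} \cap B_\Gcore \neq \emptyset$'' persists without the nonbipartite hypothesis, and the counting step goes through unchanged.
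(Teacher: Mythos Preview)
Your counting strategy has a genuine gap: when the dominant subgraph is $J=H$, Lemma~\ref{lemma:core-edge-regular} gives only a lower bound on the \emph{product} $\deg_{\Gcore}u\cdot\deg_{\Gcore}v$, not on either degree individually. (Contrast this with the $K_r$ case, where the bound $|\Emb(K_r,\Gcore;uv)|\le \deg_{\Gcore}(u,v)^{r-2}$ controls the common neighbourhood and hence both degrees at once.) A product bound $\deg u\cdot\deg v\gtrsim m$ is compatible with, say, $\deg u\approx m$ and $\deg v\approx 1$, so the assertion ``forcing both $u$ and $v$ into a small high-degree set $A_{\Gcore}$'' is false, and the count $\binom{n}{|A|}\binom{n}{|B|}\binom{|A|^2/2+|B|n}{m}$ is not justified.

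The paper resolves this in two genuinely different ways. In the dense regime $p^{\Delta/2}\ge n^{-1/2-\xi}$ it introduces a \emph{dyadic} family of degree level sets $B_k=\{v:\deg_{\Gcore}v\ge e^k\sqrt{m}\,p^{\eps}\}$ so that the product bound becomes ``$u\in B_k$ and $v\in B_{-k}$ for some $k$''; summing $|B_k|\cdot|B_{-k}|$ over $O(\log(1/p))$ scales keeps the edge count small. In the sparse regime the degree-sum alternative from Lemma~\ref{lemma:core-edge-bipartite} is vacuous (since $\deg u+\deg v\le m+1\ll n$), and the paper uses a further decomposition $\Gcoreb\cup\Gcoreh\cup\Gcoree$ together with an odd-cycle propagation argument (Claim~\ref{cl:Hexc}): writing $f(a)=\log(\deg_{\Gcore}\varphi(a)/\sqrt{m})$, the two-sided bound $|f(a)+f(b)|\le f^*$ along an odd cycle of $H$ pins down every individual $f(a)$. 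This is precisely where nonbipartiteness enters---not, as you suggest, in the strict inequality of Lemma~\ref{lemma:eJ-alphaJ-clique}, which holds for bipartite $H$ as well.
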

\begin{remark}
As was mentioned in the introduction, the lower bound on the density $p$ is suboptimal by a polylogarthmic factor. We require this slightly stronger bound on the density to prove an analogue of Claim~\ref{claim:core-degree} for $\Delta$-regular graphs $H$. In this more general setup, the lower bound on the number of common neighbours of the endpoints of core edges may not hold, and will be replaced by a lower bound on the product of degrees of the endpoints of core edges. We will prove this bound via an application of Lemma~\ref{lemma:core-edge-regular}; unfortunately, this will incur a polylogarthmic loss, and thus necessitates the suboptimal bound on the density $p$.
\end{remark}

The optimisation problem $\Phi_X(\delta)$ is a discretisation of the variational problem considered by  Bhattacharya, Ganguly, Lubetzky, and Zhao~\cite{bhattacharya2017upper}. As was the case in the context of arithmetic progressions and cliques, their variational problem was more general (being optimised over a larger set), but is asymptotically equivalent to the one considered in this paper. Their results imply
\[ \lim_{n\to \infty} \frac{\Phi_X(\delta)}{n^2p^{\Delta}\log(1/p)}
= 
  \begin{cases}
    \delta^{2/v_H}/2 & \text{if $np^\Delta\to 0$,}\\
    \min{\{\delta^{2/v_H}/2,\theta\}} & \text{if $np^\Delta\to \infty$},
\end{cases}\]
where $\theta$ is the unique positive solution to $P_H(\theta) = 1+\delta$ and $P_H$ is
the independence polynomial of $H$. Thus,
Proposition~\ref{prop:ldp-H} implies Theorem~\ref{thm:H}. The proof of
Proposition~\ref{prop:ldp-H} does not require these precise estimates, but only
that $\Phi_X(\delta)$ is of order $n^2p^\Delta\log(1/p)$, see Lemma~\ref{lemma:H-phi} below. We include a short proof of this weaker statement for the sake of completeness, and to emphasize it may be proved combinatorially; the original proof given in~\cite{bhattacharya2017upper} is conceptually equivalent but more analytically-flavoured.

\begin{lemma}
  \label{lemma:H-phi}
  For every $\Delta\geq 2$, every connected, $\Delta$-regular graph $H$, and
  every positive real number $\delta$, there exists a positive
  constant $C$ such that the following holds. Assume $n\in \NN$ and $p\in
  (0,1)$ are such that
  $Cn^{-1} \le p^{\Delta/2} \le 1/C$. Then $X = X_{n,p}^H$ satisfies
  \[
    1/C \le \frac{\Phi_X(\delta)}{n^2p^\Delta\log(1/p)} \le C.
  \]
\end{lemma}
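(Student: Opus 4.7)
The plan is to establish matching upper and lower bounds on $\Phi_X(\delta)$, both of order $n^2p^\Delta\log(1/p)$. For the upper bound, I would take $G$ to be the complete graph on $k=\lceil((1+2\delta)|\Aut(H)|)^{1/v_H}\cdot np^{\Delta/2}\rceil$ vertices; the hypothesis $Cn^{-1}\leq p^{\Delta/2}\leq 1/C$ ensures that $v_H\leq k\leq n$ for $C$ large enough. Since every copy of $H$ inside $G$ is present in $G\cup G_{n,p}$ with probability one, one has $\Ex_G[X]\geq \ff{k}{v_H}/|\Aut(H)|\geq(1+\delta)\Ex[X]$ for all sufficiently large $n$. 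As $e_G=\binom{k}{2}=O_{H,\delta}(n^2p^\Delta)$, this yields $\Phi_X(\delta)\leq C\cdot n^2p^\Delta\log(1/p)$ for a suitable $C=C(H,\delta)$.

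For the lower bound, suppose $G\subseteq K_n$ satisfies $\Ex_G[X]\geq(1+\delta)\Ex[X]$. Grouping copies of $H$ in $K_n$ by the subgraph they share with $G$ and expanding $p^{-e(H'\cap G)}-1$ in the standard way, one obtains
\[
\Ex_G[X]-\Ex[X]\ \leq\ \kappa(H)\sum_{\substack{J\subseteq H\\ e_J\geq 1}} N(J,G)\cdot n^{v_H-v_J}p^{e_H-e_J},
\]
where the sum is over isomorphism classes of subgraphs of $H$ without isolated vertices. Since $\Ex[X]=\Theta_H(n^{v_H}p^{e_H})$, this forces $\sum_J N(J,G)/(n^{v_J}p^{e_J})\geq\delta/\kappa'(H)$. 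Now combine Theorem~\ref{thm:max-copies}, which gives $N(J,G)\leq(2e_G)^{v_J-\alpha_J^*}\cdot n^{2\alpha_J^*-v_J}$, with the inequality $e_J\leq\Delta(v_J-\alpha_J^*)$ from Lemma~\ref{lemma:eJ-alphaJ-clique}; this combination produces the uniform bound
\[
\frac{N(J,G)}{n^{v_J}p^{e_J}}\ \leq\ \left(\frac{2e_G}{n^2p^\Delta}\right)^{v_J-\alpha_J^*}.
\]

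The key observation, a direct consequence of the half-integrality guaranteed by Lemma~\ref{lemma:fractional-duality}, is that $v_J-\alpha_J^*\geq 1$ for every $J$ with at least one edge. Consequently, if $e_G\leq c\cdot n^2p^\Delta$ for a constant $c=c(H,\delta)$ chosen small enough, each of the $O_H(1)$ summands is at most $2c$, and the whole sum is strictly less than $\delta/\kappa'(H)$, a contradiction. Hence $e_G\geq c\cdot n^2p^\Delta$, and therefore $\Phi_X(\delta)\geq c\cdot n^2p^\Delta\log(1/p)$. The proof is essentially routine in light of the preliminaries of Section~\ref{sec:graph-theory-preliminaries}; no step presents a serious obstacle. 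The only aspect requiring some care is that the exponents in $(2e_G/(n^2p^\Delta))^{v_J-\alpha_J^*}$ must all be at least one uniformly in $J$, which is precisely what the interplay between Theorem~\ref{thm:max-copies} and Lemma~\ref{lemma:eJ-alphaJ-clique} delivers.
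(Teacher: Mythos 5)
Your proposal is correct and follows essentially the same strategy as the paper: plant a clique of size $\Theta(np^{\Delta/2})$ for the upper bound, and for the lower bound combine Theorem~\ref{thm:max-copies} with the inequality $e_J \le \Delta(v_J - \alpha_J^*)$ from Lemma~\ref{lemma:eJ-alphaJ-clique} to show that every $G$ with $\Ex_G[X] \ge (1+\delta)\Ex[X]$ must satisfy $e_G = \Omega(n^2 p^{\Delta})$. Your uniform estimate $N(J,G)/(n^{v_J}p^{e_J}) \le \bigl(2e_G/(n^2p^\Delta)\bigr)^{v_J - \alpha_J^*}$ is a mild streamlining that absorbs the paper's case split on whether $2e_G \le n$ into a single computation, and is perfectly valid since $\min\{2e_G,n\}^{2\alpha_J^*-v_J}\le n^{2\alpha_J^*-v_J}$ and $p^{-e_J}\le p^{-\Delta(v_J-\alpha_J^*)}$. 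One small inaccuracy: the bound $v_J - \alpha_J^* \ge 1$ for nonempty $J$ has nothing to do with half-integrality (Lemma~\ref{lemma:fractional-duality}); it follows directly from the constraint $\alpha_u + \alpha_v \le 1$ on any single edge $uv$ together with $\alpha_w \le 1$ for the remaining $v_J-2$ vertices. (Also, the factor $|\Aut(H)|$ in your choice of $k$ is superfluous, though harmless.)
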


\begin{proof}
  The upper bound follows by noting that, if $C$ is large enough, then a
  clique with $\lceil (1+2\delta)^{1/v_H} n p^{\Delta/2} \rceil$ vertices
  contains at least $(1+\delta) \Ex[X]$ copies of $H$ and has fewer than
  $C n^2 p^\Delta$ edges.
  For the lower bound, suppose that $G$ is a graph with $\e_G[X]\geq
  (1+\delta)\e[X]$ with fewer than $C^{-1}n^2p^{\Delta}$ edges.
  Then
  \[ \frac{\delta}{2} \cdot \frac{n^{v_H}p^{e_H}}{|\Aut(H)|}\leq \delta \e[X]
  \leq \e_G[X] - \e[X] \leq \sum_{\emptyset \neq J \subseteq H} N(J,G)\cdot
  n^{v_H-v_J}p^{e_H-e_J}, \]
  where the sum ranges over all nonempty subgraphs $J$ of $H$ without isolated
  vertices. This implies that there is a nonempty subgraph $J\subseteq H$
  without isolated
  vertices 
  and a positive constant $\gamma = \gamma(H,\delta)$
  such that
  \begin{equation}\label{eq:embjg3} |\Emb(J,G)| \geq \gamma\cdot n^{v_J}p^{e_J}. \end{equation}
  Theorem~\ref{thm:max-copies} implies that
  \[
    |\Emb(J,G)| \leq (2e_G)^{v_J-\alpha_J^*} \cdot \min{\{2e_G,n\}}^{2\alpha_J^*-v_J}
  \]
  and Lemma~\ref{lemma:eJ-alphaJ-clique} yields $\alpha_J^*\leq v_J-e_J/\Delta$.
  Therefore, if $2e_G\leq n$, then $|\Emb(J, G)|$ is bounded from above by
  \[ (2e_G)^{\alpha_J^*}
    \leq
    (2e_G)^{\alpha_J^*-v_J+2e_J/\Delta }\cdot n^{v_J-2e_J/\Delta}
    \leq
    \left(\frac{2n^2p^\Delta}{C}\right)^{e_J/\Delta }n^{v_J-2e_J/\Delta}
    = \frac{n^{v_J}p^{e_J}}{(C/2)^{e_J/\Delta}},
  \]
  where the first inequality holds since $2e_J/\Delta \le v_J$, as the maximum degree of $J$ is at most $\Delta$.
  If $n < 2e_G$, then $|\Emb(J,G)|$ is bounded from above by
  \[
    \left(\frac{2n^2p^\Delta}{C}\right)^{v_J-\alpha_J^*} \cdot n^{2\alpha_J^*-v_J}
    = \frac{n^{v_J}p^{e_J}}{(C/2)^{v_J-\alpha_J^*}}
    \cdot p^{\Delta(v_J-\alpha_J^*) - e_J}
    \leq
    \frac{n^{v_J}p^{e_J}}{(C/2)^{v_J-\alpha_J^*}}.
  \]
  In both cases, the obtained upper bound on $|\Emb(J,G)|$ contradicts \eqref{eq:embjg3} whenever $C$ is large;
  indeed $e_J/\Delta$ and $v_J-\alpha_J^*$ are both positive, as $J$ is nonempty. 
\end{proof}

\subsection{Proof of Proposition~\ref{prop:ldp-H}}\label{ssec:H-proof}

Fix $\Delta \ge 2$, a nonempty, connected, $\Delta$-regular graph $H$, and
positive reals $\eps$ and $\delta$. Without loss of generality, we may assume
that $\eps \le \min{\{1/3,\delta/2\}}$. Let $X = X_{n,p}^H$ and assume $Cn^{-1}
(\log n)^{\Delta v_H^2} \leq p^{\Delta/2} \leq 1/C$. 
Note that the case $n < v_H$ is trivial as then $X$ is identically
zero. We may therefore assume that $n\geq v_H \geq \Delta+1\geq 3$, which, in
turn, implies that $n\geq C$.

Set $N = \binom{n}{2}$ and let $Y = (Y_1,\dotsc,Y_N)$ be the sequence of indicator
random variables of the events that $e\in E(G_{n,p})$, where $e$ ranges over
$\binom{\br n}{2}$ in some arbitrary order. Then $Y$ is a vector of independent $\Ber(p)$
random variables and $X$ is a nonzero polynomial with nonnegative coefficients and degree at most $e_H$ in the coordinates of $Y$.
Let $K = K(e_H,\eps,\delta)$ be the constant whose existence is asserted by
Theorem~\ref{thm:packaged}. To prove the proposition, it suffices to verify the
assumptions of the theorem.

It follows from
Lemma~\ref{lemma:H-phi} and our assumptions on $p$ that $p\leq
1-\eps$ and $\Phi_X(\delta-\eps) \geq K \log(1/p)$ for a large enough choice
of $C$.
It thus only remains to bound the number of cores of a
given size. To this end, let $\core_m$ be the set of cores with $m$ edges,
that is, subgraphs $\Gcore\subseteq K_n$ such that
\begin{enumerate}[label=(C\arabic*)]
  \item\label{item:Hcore-bias}
    $\e_\Gcore[X] \geq (1+\delta-\eps)\e[X]$,
  \item\label{item:Hcore-size}
    $e_\Gcore = m\leq K \cdot \Phi_X(\delta+\eps)$, and
  \item\label{item:Hcore-mindeg}
    $\min_{e\in E(\Gcore)}\left(\e_\Gcore[X]-\e_{\Gcore\setminus e}[X]\right)
    \geq \e[X]/(K\cdot \Phi_X(\delta+\eps))$.
\end{enumerate}
Proposition~\ref{prop:ldp-H} will follow once we prove
\begin{equation}\label{eq:Hcore-count}
  |\core_m| \leq (1/p)^{\eps m/2}\quad \text{for all $m$}.
\end{equation}
Observe that due to \ref{item:Hcore-bias}, \ref{item:Hcore-size}, and the
definition of $\Phi_X$, it suffices to verify \eqref{eq:Hcore-count}
for integers $m$ such that  $\mmin \leq m \leq \mmax$ with
\begin{align*}
  &\mmin  := \Phi_X(\delta-\eps)/\log(1/p) 
  \ge n^2p^\Delta/K'\text{ and} \\
  &\mmax  := K\cdot  \Phi_X(\delta+\eps)
  \leq  K'\cdot n^2p^\Delta \log (1/p),
\end{align*}
where the stated inequalities follow,
for a suitable constant $K' = K'(H,\eps,\delta)$,
from Lemma~\ref{lemma:H-phi}, our
bounds on $p$, and the assumption that $C$ is sufficiently large.

The first step towards establishing~\eqref{eq:Hcore-count} is to understand the combinatorial meaning
of~\ref{item:Hcore-mindeg}. Suppose that $\mmin \le m \le \mmax$ and
$\Gcore \in \core_m$. Recall that $N(J,\Gcore;e)$ denotes the number of copies
of $J$ in $\Gcore$ that contain the edge $e$.
Note that \ref{item:Hcore-mindeg} implies that, for every $e \in E(\Gcore)$,
\begin{equation}
  \label{eq:H-core-meaning}
   \frac{\Ex[X]}{\mmax} \le \Ex_{\Gcore}[X] - \Ex_{\Gcore \setminus e}[X] \leq
  \sum_{\emptyset \neq J \subseteq H} N(J,\Gcore; e) \cdot n^{v_H-v_J}
  p^{e_H - e_J},
\end{equation}
where the sum ranges over the nonempty subgraphs $J$ of $H$ without isolated vertices.
Since $n\geq C$ for a large enough constant
$C$, we can bound
$\Ex[X] \ge
\binom{n}{v_H} p^{e_H} \ge \frac{n^{v_H}p^{e_H}}{2v_H!}$
and, consequently, \eqref{eq:H-core-meaning} implies that there is 
a positive constant $\gamma=\gamma(H)$ such that
\begin{equation}
  \label{eq:H-core-meaning-NGJe}
  \sum_{\emptyset \neq J \subseteq H} \frac{N(J,\Gcore; e)}{n^{v_J}p^{e_J}}
  \ge \frac{2\gamma}{\mmax} \qquad \text{for every $e \in E(\Gcore)$}.
\end{equation}

\begin{definition}\label{def:qh}
  Let $\QH$ denote the family of
  all nonempty subgraphs $J \subseteq H$
  without isolated vertices satisfying
  \begin{enumerate}[label=(Q\arabic*)]
  \item
    \label{item:QH-regular2}
    $J = H$ or
  \item
    \label{item:QH-bipartite2}
    $J$ admits a bipartition $V(J) = A \cup B$ such that $\deg_Ja = \Delta$ for all $a \in A$.
  \end{enumerate}
\end{definition}

Our first claim is that, for the vast majority of $e \in E(\Gcore)$, the sum on the left-hand side
of~\eqref{eq:H-core-meaning-NGJe} is dominated by subgraphs $J\in\QH$. Let
$\Gcoreb$ comprise all edges $e$ of $\Gcore$ such that
\[
  \sum_{J \in \QH} \frac{N(J,\Gcore; e)}{n^{v_J}p^{e_J}} <
  \frac{\gamma}{\mmax}.
\]

\begin{claim}
  \label{claim:mb-upper}
  There is a positive constant $\sigma=\sigma(H)$ such that $e_\Gcoreb\leq
  p^\sigma\cdot \mmin$.
\end{claim}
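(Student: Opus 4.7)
The plan is as follows. By the definition of $\Gcoreb$ combined with~\eqref{eq:H-core-meaning-NGJe}, every edge $e \in E(\Gcoreb)$ must satisfy
\[
  \sum_{J \notin \QH} \frac{N(J, \Gcore; e)}{n^{v_J}p^{e_J}} \ge \frac{\gamma}{\mmax},
\]
where the sum ranges over nonempty subgraphs $J \subseteq H$ without isolated vertices that do not belong to $\QH$. Summing this over $e \in E(\Gcoreb)$ and using the pointwise bound $\sum_{e \in E(\Gcoreb)} N(J, \Gcore; e) \le e_J \cdot N(J, \Gcore) = e_J \cdot |\Emb(J, \Gcore)|/|\Aut(J)|$ will yield
\[
  \frac{\gamma \cdot e_\Gcoreb}{\mmax} \le \sum_{J \notin \QH} \frac{e_J}{|\Aut(J)|} \cdot \frac{|\Emb(J, \Gcore)|}{n^{v_J}p^{e_J}}.
\]

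The heart of the argument will be to exploit the strict inequality furnished by Lemma~\ref{lemma:eJ-alphaJ-clique} for each $J \notin \QH$. Since $J \subseteq H$ has maximum degree at most $\Delta$, Theorem~\ref{thm:max-copies} (with the trivial bound $\min\{2e_\Gcore, n\} \le n$) gives $|\Emb(J, \Gcore)| \le (2e_\Gcore)^{v_J - \alpha_J^*} \cdot n^{2\alpha_J^* - v_J}$. Plugging in $e_\Gcore \le \mmax \le K' \cdot n^2 p^\Delta \log(1/p)$, I obtain
\[
  \frac{|\Emb(J,\Gcore)|}{n^{v_J}p^{e_J}} \le C_H \cdot \big(\log(1/p)\big)^{v_H} \cdot p^{\Delta(v_J - \alpha_J^*) - e_J}.
\]
For any $J \notin \QH$, Lemma~\ref{lemma:eJ-alphaJ-clique} guarantees $e_J < \Delta(v_J - \alpha_J^*)$; since $e_J \in \ZZ$ and $\alpha_J^* \in \tfrac{1}{2}\ZZ$ (by Lemma~\ref{lemma:fractional-duality}), the gap $\Delta(v_J - \alpha_J^*) - e_J$ is at least $1/2$. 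Thus the right-hand side is at most $C_H \big(\log(1/p)\big)^{v_H} \cdot p^{1/2}$.

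Combining the last two displays, summing the finitely many terms indexed by $J \notin \QH$, and using the lower bound $\mmin \ge n^2 p^\Delta / K'$ from Lemma~\ref{lemma:H-phi} together with $\mmax \le K' \cdot n^2 p^\Delta \log(1/p)$, I will conclude
\[
  e_\Gcoreb \le \frac{\mmax}{\gamma} \cdot C_H' \big(\log(1/p)\big)^{v_H} \cdot p^{1/2} \le C_H'' \big(\log(1/p)\big)^{v_H + 1} \cdot p^{1/2} \cdot \mmin \le p^\sigma \cdot \mmin
\]
for some positive $\sigma = \sigma(H)$, provided that $C$ is chosen large enough to absorb the polylogarithmic factor. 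No real obstacle arises: the argument is a direct consequence of the strict version of the fractional-duality inequality in Lemma~\ref{lemma:eJ-alphaJ-clique} paired with the entropy-based counting bound of Theorem~\ref{thm:max-copies}. Note in particular that this argument does not require $H$ to be nonbipartite; that hypothesis will only be needed later, when handling subgraphs $J \in \QH$.
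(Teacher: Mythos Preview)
Your proof is correct and follows the same overall strategy as the paper: sum the defining inequality of $\Gcoreb$ over its edges, bound $\sum_e N(J,\Gcore;e)$ by $e_J \cdot |\Emb(J,\Gcore)|$, invoke Theorem~\ref{thm:max-copies}, and use the strict inequality of Lemma~\ref{lemma:eJ-alphaJ-clique} for $J\notin\QH$.

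Where you differ is in the handling of the embedding bound. The paper splits into the two cases $2e_{\Gcore}\le n$ and $2e_{\Gcore}>n$ and extracts a factor of $n^{-2\sigma}$ or $p^{2\sigma}$, respectively, from the resulting expressions. You instead observe that since $2\alpha_J^*-v_J\ge 0$, one may always replace $\min\{2e_{\Gcore},n\}$ by $n$ in Theorem~\ref{thm:max-copies}; the powers of $n$ then cancel identically and one is left directly with the factor $p^{\Delta(v_J-\alpha_J^*)-e_J}$. Your additional remark that $\alpha_J^*\in\tfrac{1}{2}\ZZ$ (Lemma~\ref{lemma:fractional-duality}) forces $\Delta(v_J-\alpha_J^*)-e_J\in\tfrac{1}{2}\ZZ$, hence the gap is at least $1/2$, is a clean way to make $\sigma$ explicit. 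This streamlines the paper's argument at no cost in strength; the paper's case split is unnecessary for this claim.
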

\begin{proof}
  Let $\mb$ denote the number of edges of $\Gcoreb$. The definition of
  $\Gcoreb$ and~\eqref{eq:H-core-meaning-NGJe} imply that
  \[
    \sum_{e \in E(\Gcoreb)} \sum_{\substack{\emptyset \neq J \subseteq H
    \\ J \notin \QH}} \frac{N(J,\Gcore;
    e)}{n^{v_J}p^{e_J}} \ge \frac{\gamma\cdot\mb}{\mmax}
    \geq \frac{\gamma\cdot\mb}{\mmin \cdot (K')^2\cdot \log(1/p)}
    .
  \]
  On the other hand, since
  $\sum_{e \in E(\Gcore)} N(J,\Gcore; e) \leq e_J\cdot |\Emb(J,\Gcore)|$ for every graph $J$,
  there must exist a nonempty subgraph $J\subseteq H$ without isolated vertices such that
  $J\notin \QH$ and
  \begin{equation}
    \label{eq:mb-upper}
    \frac{\mb}{\mmin}\leq
    \frac{e_J\cdot (K')^2\cdot \log(1/p)\cdot 2^{e_H+v_H}}{\gamma} \cdot 
    \frac{|\Emb(J,\Gcore)|}{n^{v_J} p^{e_J}}.
  \end{equation}

  We now show that the right-hand side of~\eqref{eq:mb-upper} is at most $p^\sigma$, for some positive constant $\sigma = \sigma(H)$.
  To this end, recall that Theorem~\ref{thm:max-copies} states that
  \begin{equation}
    \label{eq:mb-upper2}
    |\Emb(J,\Gcore)| \leq (2m)^{v_J-\alpha_J^*} \cdot \min{\{2m,n\}}^{2\alpha_J^*-v_J}.
  \end{equation}
  Since $J$ is a proper subgraph of a connected, $\Delta$-regular graph, it must contain
  a vertex of degree smaller than $\Delta$ and hence $v_J>2e_J/\Delta$. Moreover, since $J \notin \QH$, Lemma~\ref{lemma:eJ-alphaJ-clique}
  implies that $\alpha_J^* < v_J-e_J/\Delta$.
  Therefore, if $2m\leq n$, then there is a positive $\sigma = \sigma(H)$ such that
  the right-hand side of \eqref{eq:mb-upper2} can be bounded from above as follows:
  \[\begin{split} (2m)^{\alpha_J^*}
    & \le
    (2m)^{\alpha_J^*-v_J+2e_J/\Delta+2\sigma}\cdot n^{v_J-2e_J/\Delta-2\sigma}
    \\
    & \le (2m)^{e_J/\Delta} \cdot n^{v_J-2e_J/\Delta-2\sigma}
    \\
    &\leq
  \left(2 K' \cdot n^2p^\Delta \log(1/p)\right)^{e_J/\Delta}n^{v_J-2e_J/\Delta-2\sigma}\\
    &= n^{-2\sigma} \cdot \big(2K'\cdot \log(1/p)\big)^{e_J/\Delta}\cdot n^{v_J}p^{e_J}.
  \end{split}\]
  Similarly, if
  $n < 2m$, there is a positive $\sigma = \sigma(H)$ such that
  the right-hand side of \eqref{eq:mb-upper2} is bounded from above by
  \[
    \begin{split}
      (2m)^{v_J - \alpha_J^*} \cdot n^{2\alpha_J^* - v_J} & \le
    \big(2K'\cdot n^2p^\Delta\log(1/p)\big)^{v_J-\alpha_J^*} \cdot n^{2\alpha_J^*-v_J} \\
    &= p^{\Delta(v_J-\alpha_J^*) - e_J}
    \cdot \big(2K'\cdot \log(1/p)\big)^{v_J-\alpha_J^*}
    \cdot n^{v_J}p^{e_J}\\
    &\leq
    p^{2\sigma}\cdot
    \big(2K'\cdot \log(1/p)\big)^{v_J-\alpha_J^*}
    \cdot n^{v_J}p^{e_J}.
  \end{split}
  \]
  Since $p\leq C^{-2/\Delta}$ and $Cn^{-1}\leq p$, it follows that,
  in both cases,
  \[ 
  |\Emb(J,\Gcore)| \leq p^\sigma\cdot n^{v_J}p^{e_J}\cdot
  \frac{\gamma}{e_J\cdot (K')^2\cdot \log(1/p)\cdot 2^{e_H+v_H}}, \]
  provided that $C$ is sufficiently large. Substituting this inequality into \eqref{eq:mb-upper}
  proves the claim.
\end{proof}

The next claim shows that the endpoints of every edge of $\Gcore\setminus \Gcoreb$
satisfy a certain degree restriction.

\begin{claim}
  \label{claim:core-degrees}
  There is a positive $\gamma' = \gamma'(H,K')$
  such that, for every
  edge $uv$ of $\Gcore \setminus \Gcoreb$, either
  \[
    \deg_{\Gcore}u \cdot \deg_{\Gcore}v \ge \frac{\gamma'}{\big(\log
    (1/p)\big)^{v_H}} \cdot m \qquad \text{or} \qquad \deg_{\Gcore}u +
    \deg_{\Gcore}v \ge \frac{\gamma'}{\big(\log(1/p)\big)^{v_H/2}} \cdot n.
  \]
  Moreover, if the second inequality fails, then $uv$ is contained in at least
  $\gamma' n^{v_H}p^{e_H} / \mmax$ copies of $H$ in $\Gcore$.
\end{claim}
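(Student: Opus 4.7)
The starting point is that every edge $uv \in E(\Gcore)\setminus E(\Gcoreb)$ satisfies, by the definition of $\Gcoreb$,
\[
  \sum_{J\in \QH} \frac{N(J,\Gcore;uv)}{n^{v_J}p^{e_J}} \geq \frac{\gamma}{\mmax}.
\]
Since $|\QH|$ depends only on $H$, the pigeonhole principle produces some $J^*\in\QH$ whose term contributes at least $\gamma/(|\QH|\mmax)$ to the sum. The plan is to split on whether $J^*=H$ (the case covered by condition~\ref{item:QH-regular2}) or $J^*$ is a proper bipartite subgraph satisfying condition~\ref{item:QH-bipartite2}, and to apply Lemma~\ref{lemma:core-edge-regular} in the former case and Lemma~\ref{lemma:core-edge-bipartite} in the latter.

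If $J^*=H$, Lemma~\ref{lemma:core-edge-regular} gives
\[
  N(H,\Gcore;uv)\le |\Emb(H,\Gcore;uv)|\le C_H\cdot m^{v_H/2-2+1/\Delta}\cdot(\deg u\cdot\deg v)^{(\Delta-1)/\Delta}.
\]
Combining this upper bound with the pigeonhole lower bound $N(H,\Gcore;uv)\ge \gamma n^{v_H}p^{e_H}/(|\QH|\mmax)$, substituting $\mmax\le K'n^2p^\Delta\log(1/p)$, and using $e_H=v_H\Delta/2$ yields after routine algebra a bound of the form $\deg u\cdot\deg v\ge \gamma'\,n^2p^\Delta/(\log(1/p))^\alpha$, where the arithmetic reduces to checking $\alpha=(v_H\Delta-2\Delta+2)/(2(\Delta-1))\le v_H-1$, which holds for every $\Delta\ge2$. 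Since $m\le \mmax\le K'n^2p^\Delta\log(1/p)$, this in turn gives $\deg u\cdot\deg v\ge \gamma'' m/(\log(1/p))^{v_H}$, as claimed. If instead $J^*\ne H$, then $J^*$ is bipartite with one side $\Delta$-regular; the same reasoning as in the paragraph above the claim shows that every connected component $J_0$ of $J^*$ has $|A_0|<|B_0|$ (otherwise $J_0$ would be a connected $\Delta$-regular subgraph of $H$, forcing $J_0=H$ and hence $J^*=H$). For each choice of a component $J_0$ of $J^*$ together with an edge of $J_0$ to be mapped onto $uv$, Lemma~\ref{lemma:core-edge-bipartite} bounds the corresponding contribution by a factor proportional to $(\deg u+\deg v)$ times a product of powers of $m$ and $\min\{m,n\}$, while Theorem~\ref{thm:max-copies} bounds the number of embeddings of $J^*\setminus J_0$. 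Summing over the (boundedly many) choices of $J_0$ and combining with the pigeonhole lower bound produces, after the same kind of bookkeeping as above, the alternative inequality $\deg u+\deg v\ge \gamma'' n/(\log(1/p))^{v_H/2}$.

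For the \emph{moreover} assertion, we argue contrapositively: assume that $\deg u+\deg v<\gamma'' n/(\log(1/p))^{v_H/2}$ with the constant $\gamma''$ chosen small enough that the bound derived in the previous paragraph for every proper bipartite $J\in\QH$ yields
\[
  \frac{N(J,\Gcore;uv)}{n^{v_J}p^{e_J}} \le \frac{\gamma}{2|\QH|\mmax}.
\]
Summing over the $|\QH|-1$ such $J$'s, the total contribution of these terms is at most $\gamma/(2\mmax)$, so the $J=H$ term alone must account for at least $\gamma/(2\mmax)$ of the sum. This gives $N(H,\Gcore;uv)\ge (\gamma/2)\,n^{v_H}p^{e_H}/\mmax$, completing the proof after relabelling $\gamma'$. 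The main point of care lies not in any single step but in the combined bookkeeping with constants and $\log(1/p)$ factors, and in handling disconnected subgraphs $J\in\QH$ by the decomposition argument above; the choice of the factor $(\log(1/p))^{v_H}$ in the first alternative is the tightest exponent that accommodates Lemma~\ref{lemma:core-edge-regular} for every $\Delta\ge 2$ simultaneously.
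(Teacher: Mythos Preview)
Your proof is correct and follows essentially the same route as the paper: pigeonhole to a single $J\in\QH$, then apply Lemma~\ref{lemma:core-edge-regular} when $J=H$ and Lemma~\ref{lemma:core-edge-bipartite} when $J\neq H$, with the same bookkeeping leading to the exponents $v_H$ and $v_H/2$. Two minor remarks: your component decomposition for disconnected $J$ is in fact more careful than the paper, which applies Lemma~\ref{lemma:core-edge-bipartite} to $J$ directly; and for the ``moreover'' part the paper takes a shortcut you could adopt---since the analysis for every $J\neq H$ already yields the second inequality, its failure forces the pigeonholed $J$ to equal $H$, and the bound $N(H,\Gcore;uv)\ge \gamma\, n^{v_H}p^{e_H}/(|\QH|\,\mmax)$ is then immediate from~\eqref{eq:H-core-meaning-NGJuv}, with no need to sum over the other $J$'s.
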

\begin{proof}
  Suppose that $uv$ is an edge of $\Gcore \setminus \Gcoreb$. It follows
  from~\eqref{eq:H-core-meaning-NGJe} and the definition of $\Gcoreb$ that
  there is some $J \in \QH$ such that
  \begin{equation}
    \label{eq:H-core-meaning-NGJuv}
    |\Emb(J,G; uv)| \ge N(J,G;uv) \ge \frac{\gamma}{|\QH|} \cdot \frac{n^{v_J}p^{e_J}}{\mmax}.
  \end{equation}
  Let $\mu = n^2p^\Delta/m$ and observe that $\mu \ge \big(
  K'\cdot \log(1/p)\big)^{-1} \cdot \mmax/m$. We split the remainder of the proof
  into two cases, depending on whether or not $J = H$.

  Assume first $J = H$. Then, since $n^{v_H}p^{e_H} =
  \big(n^2p^\Delta\big)^{v_H/2} = (\mu m)^{v_H/2}$,
  Lemma~\ref{lemma:core-edge-regular} and~\eqref{eq:H-core-meaning-NGJuv}
  imply that
  \[
    \begin{split}
      \big(4\deg_{\Gcore}u \cdot \deg_{\Gcore}v\big)^{\frac{\Delta-1}{\Delta}} &
      \ge |\Emb(J,G;uv)| \cdot \frac{ (2m)^{\frac{2\Delta-1}{\Delta} - \frac{v_H}{2}} }{4e_H}\\
      &\ge \frac{\gamma}{|\QH|} \cdot \frac{(\mu m)^{v_H/2}}{\mmax} \cdot
      \frac{(2m)^{\frac{2\Delta-1}{\Delta} - \frac{v_H}{2}} }{4e_H} \\
      & = \frac{2^{\frac{2\Delta-1}{\Delta} - \frac{v_H}{2}}\gamma}{4e_H|\QH|}
      \cdot \mu^{v_H/2} \cdot \frac{m}{\mmax} \cdot m^{\frac{\Delta-1}{\Delta}}
      \\
      & \ge (4\gamma')^{\frac{\Delta-1}\Delta} \cdot \big(\log(1/p)\big)^{-v_H/2} \cdot
      m^{\frac{\Delta-1}{\Delta}},
    \end{split}
  \]
  for a suitable positive constant $\gamma' = \gamma'(H,K')$.
  Since $\Delta \ge 2$, this implies the claimed lower bound on $\deg_{\Gcore} u \cdot \deg_{\Gcore} v$.

  Assume now that $J\neq H$. In this case, $J$ admits a bipartition $V(J) = A
  \cup B$ such that every vertex in $A$ has degree $\Delta$. In particular,
  $e_J = |A| \cdot \Delta$; moreover,
  as $J \neq H$ and $H$ is connected
  and $\Delta$-regular,
  we also have $|B| > |A|$. Since $n^{v_J} p^{e_J} = \big(n^2p^\Delta\big)^{|A|}
  \cdot n^{|B|-|A|} = (\mu m)^{|A|} \cdot n^{|B|-|A|}$, it follows
  from Lemma~\ref{lemma:core-edge-bipartite}
  and~\eqref{eq:H-core-meaning-NGJuv} that
  \[
    \begin{split}
      \deg_{\Gcore}u + \deg_{\Gcore}v 
      & \geq \frac{|\Emb(J,G;uv)|}{e_J\cdot (2m)^{|A|-1} \cdot \big(\min\{m,n\}
      \big)^{|B|-|A|-1}}\\
      & \ge \frac{\gamma}{|\QH|} \cdot \frac{(\mu m)^{|A|} \cdot
      n^{|B|-|A|}}{\mmax} \cdot \frac{ 1}{e_J\cdot (2m)^{|A|-1} \cdot
      \big(\min\{m, n\}\big)^{|B|-|A|-1} } \\
      & \ge \frac{\gamma}{|\QH|} \cdot \frac{\mu^{|A|}m
      }{\mmax} \cdot \frac{n}{e_J\cdot 2^{|A|-1}} \\
      & \ge \gamma' \cdot \big(\log(1/p)\big)^{-|A|} \cdot n
    \end{split}
  \]
  for a suitable positive constant $\gamma'$. Since $|A| < v_H/2$, this
  implies the claimed lower bound on $\deg_{\Gcore}u + \deg_{\Gcore}v$.
  In particular, if the second inequality in the statement of the claim
  fails, then the above shows that $J = H$, and thus the second assertion
  of the claim follows from \eqref{eq:H-core-meaning-NGJuv}.
\end{proof}

To prove \eqref{eq:Hcore-count}, we will further distinguish between the cases
$p^{\Delta/2} \ge n^{-1/2 - \xi}$ and $p^{\Delta/2} \le n^{-1/2 - \xi}$, for a
small constant $\xi$. The first case is easier and is handled by our next
claim.

\newcommand{\kmax}{{k_\text{max}}}

\begin{claim}
  There is a positive constant $\xi=\xi(\Delta,\eps)$ such
  that \eqref{eq:Hcore-count} holds if $p^{\Delta/2}\geq n^{-1/2-\xi}$.
\end{claim}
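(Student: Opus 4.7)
The plan is to adapt the core-counting strategy of Claim~\ref{cl:triacorecount} from the triangle setting to the general $\Delta$-regular setting. Given a core $G^*\in\core_m$, Claim~\ref{claim:core-degrees} guarantees that every edge of $G^*\setminus\Gcoreb$ satisfies either the \emph{product} inequality $\deg u\cdot\deg v \ge \gamma' m/(\log(1/p))^{v_H}$ or the \emph{sum} inequality $\deg u+\deg v\ge \gamma' n/(\log(1/p))^{v_H/2}$. I define the two high-degree sets
\[
  A = \{v : \deg_{G^*} v \ge T_A\}, \qquad B = \{v : \deg_{G^*} v \ge T_B\},
\]
with thresholds $T_A := \sqrt{\gamma' m/(\log(1/p))^{v_H}}$ and $T_B := \gamma' n/(2(\log(1/p))^{v_H/2})$. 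The handshake bound yields $|A|\le 2m/T_A = O(\sqrt{m}(\log(1/p))^{v_H/2})$ and $|B|\le 2m/T_B = O(m(\log(1/p))^{v_H/2}/n)$, while Claim~\ref{claim:mb-upper} gives $|E(\Gcoreb)|\le p^\sigma\mmin$.

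The key structural step is to argue that every non-bad edge of $G^*$ lies either in $\binom{A}{2}$ (both endpoints in $A$) or in $B\times[n]$ (at least one endpoint in $B$). The sum case immediately places at least one endpoint in $B$. The product case places at least one endpoint in $A$; if additionally the sum condition fails, a short case analysis using the thresholds $T_A, T_B$ and the high-density relation $p^{\Delta/2}\ge n^{-1/2-\xi}$ shows (after mild adjustment of the constants) that the other endpoint also lies in $A$. With this structural decomposition, the total number of cores is bounded by
\[
  |\core_m|\le \sum_{a\le 2m/T_A}\sum_{b\le 2m/T_B}\binom{n}{a}\binom{n}{b}\binom{\binom{n}{2}}{p^\sigma\mmin}\binom{\binom{a}{2}+bn}{m}.
\]
Applying $\binom{x}{y}\le(ex/y)^y$ to each factor and using the high-density hypothesis, which ensures $\log(1/p)\le(1+2\xi)\log n/\Delta$ and $m\ge n^{1-2\xi}/K'$, each factor can be bounded by $\exp((\eps/8)m\log(1/p))$, giving the target $|\core_m|\le(1/p)^{\eps m/2}$ provided $\xi$ is chosen sufficiently small in terms of $\Delta,\eps,v_H$. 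The polylog-in-$(1/p)$ factors appearing in $|A|$ and $|B|$ are absorbed by the polynomial slack $n^\xi$ afforded by the hypothesis.

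The main obstacle is justifying the structural assertion above. In contrast to the $K_r$ case of Claim~\ref{claim:core-degree}, where the clique bound $N(K_r, G^*; uv) \le \min(\deg u, \deg v)^{r-2}$ immediately forces \emph{both} endpoints of a non-bad edge to have individually large degree, Claim~\ref{claim:core-degrees} only controls the \emph{product} $\deg u\cdot\deg v$ in the product case. Ensuring that the other endpoint still lies in $A$ (or else that the edge has an endpoint in $B$) requires stratifying the non-bad edges by their degree profile and carefully comparing $T_A$ with $T_B$; this is where the high-density hypothesis $p^{\Delta/2}\ge n^{-1/2-\xi}$ becomes indispensable, as it forces the relevant degree thresholds into the correct order for the decomposition to go through.
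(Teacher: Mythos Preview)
Your structural decomposition into $\binom{A}{2}\cup(B\times[n])$ does not go through: the product condition $\deg u\cdot\deg v\ge\gamma'm/(\log(1/p))^{v_H}$ does \emph{not} force both endpoints into $A$, even when the sum condition fails. Concretely, if the sum condition fails then $\deg u\le T_B$, and hence the product condition only yields $\deg v\ge T_A^2/T_B=2m/(n(\log(1/p))^{v_H/2})$. For this to exceed $T_A=\sqrt{\gamma'm}/(\log(1/p))^{v_H/2}$ one needs $4m\ge\gamma'n^2$, but throughout the regime $p^{\Delta/2}\ge n^{-1/2-\xi}$ we have $m\le\mmax\le K'n^2p^\Delta\log(1/p)$, which is $o(n^2)$ since $p\ll 1$. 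So at, say, $p^{\Delta/2}=n^{-1/2}$, a core could contain an edge $uv$ with $\deg u\approx n^{1/2+\alpha}$ and $\deg v\approx n^{1/2-\alpha}$ for any $0<\alpha<1/2$: the product $\approx n\approx m$ satisfies the product condition, yet $v\notin A$ (its degree is below $T_A\approx n^{1/2}$) and $u\notin B$ (its degree is below $T_B\approx n$). Your ``short case analysis'' cannot close this gap by adjusting constants, and lowering $T_A$ enough to capture $v$ would blow up $|A|$ to order $n$, destroying the count.

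This difficulty is exactly what distinguishes general $\Delta$-regular $H$ from the clique case: for $K_r$ one gets the codegree bound $\deg_{G^*}(u,v)\ge\text{threshold}$, which forces \emph{each} of $\deg u,\deg v$ individually to be large, whereas Lemma~\ref{lemma:core-edge-regular} for general $H$ only gives the product. The paper handles this by a dyadic stratification: for $k$ ranging over $\{-k_{\max},\dotsc,k_{\max}\}$ with $k_{\max}=O(\log(1/p))$, it sets $B_k=\{v:\deg v\ge e^k\sqrt{m}\,p^\eps\}$ and observes that the product condition places every non-bad edge into some $B_k\times B_{-k}$ (or into $B_{k_{\max}}\times[n]$, which absorbs the sum case). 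The handshake bound gives $|B_k||B_{-k}|\le 4mp^{-2\eps}$, and since there are only $O(\log(1/p))$ scales, the union $\bigcup_k B_k\times B_{-k}$ has size $O(m(\log(1/p))p^{-2\eps})$, which is small enough for the binomial count. Your phrase ``stratifying the non-bad edges by their degree profile'' points in this direction, but the two-set decomposition you actually carry out is insufficient; the logarithmic number of dyadic scales is the missing idea.
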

\begin{proof}
  To simplify the presentation, we shall prove \eqref{eq:Hcore-count} with
  $7\eps$ instead of $\eps/2$.
  Suppose that $\Gcore$ is a core with $m$ edges, where $\mmin\leq m
  \leq \mmax$. For each $k \in \ZZ$, we define the set
  \[
    B_k = \left\{v \in V(\Gcore) : \deg_\Gcore v \ge
    e^k\sqrt{m}p^{\eps}\right\}
  \]
  and let $\kmax$ be the largest integer such that $e^\kmax
  \sqrt{m} \le np^{2\eps}$. Note that 
  the bounds $n^2p^\Delta/K'\leq m \leq K'\cdot n^2p^\Delta\log(1/p)$ and 
  our assumption
  $p\leq C^{-2/\Delta}$ imply that
  $0\leq \kmax\leq (\Delta/2)\cdot \log (1/p)$
  whenever $C$ is large enough.

  We will first prove that Claim~\ref{claim:core-degrees} implies that, for
  each edge $uv$ of $\Gcore \setminus \Gcoreb$, either
  \begin{enumerate}[{label=(\roman*)}]
    \item\label{item:Huv1}
      $uv$ has an endpoint in $B_\kmax$
      or
    \item\label{item:Huv2}
      $u \in B_k$ and $v \in B_{-k}$ for some $k \in \{-\kmax, \dotsc,
      \kmax\}$.
  \end{enumerate}
  Indeed, if we suppose that
  \[ \deg_\Gcore u+ \deg_\Gcore v\geq
  \frac{\gamma' \cdot n}{(\log(1/p))^{v_H/2}} ,
   \]
  then
  \ref{item:Huv1} 
  follows immediately (with room to spare) because
  $
  \frac{\gamma' \cdot n}{(\log(1/p))^{v_H/2}}
 \geq 
  np^{3\eps}
  \geq 
  2e^\kmax \sqrt{m}p^{\eps}$.
  On the other hand, if \ref{item:Huv1} does not hold (i.e., neither $u$ nor $v$ is in $B_{\kmax}$), then the above lower bound on $\deg_{\Gcore} u + \deg_{\Gcore} v$ does not hold, and Claim~\ref{claim:core-degrees} guarantees
  \[ 
    \deg_\Gcore u\cdot \deg_\Gcore v\geq \frac{\gamma'\cdot m}{
    (\log(1/p))^{v_H}}
  \geq emp^{2\eps}.
\]
Observe that the lower bound above and the assumption on $u$ and $v$ imply that
\[
\min\{ \deg_{\Gcore} u, \deg_{\Gcore} v\} = \frac{ \deg_{\Gcore} u \cdot \deg_{\Gcore} v}{\max\{ \deg_{\Gcore} u, \deg_{\Gcore} v\}} > \frac{emp^{2\eps}}{e^{\kmax} \sqrt{m} p^\eps} = e^{-\kmax + 1}\sqrt{m} p^{\eps}.
\]
Let $k \in \{-\kmax+1, \dotsc, \kmax-1\}$ be the largest index such that $\{u,v\} \subseteq B_k$. Without loss of generality, $u \not \in B_{k+1}$, so 
\[
 emp^{2\eps} \leq \deg_{\Gcore} u \cdot \deg_{\Gcore} v < e^{k+1} \sqrt{m} p^{\eps} \cdot \deg_{\Gcore} v,
\]
which implies \ref{item:Huv2}.

  To make use of this property of the edges of $\Gcore \setminus \Gcoreb$, we also require upper bounds on the
  cardinalities of the sets $B_k$. To that end, observe that, for all $k\in \ZZ$,
  \[
    2m \ge \sum_{v \in B_k} \deg_\Gcore v \ge |B_k| \cdot e^k
    \sqrt{m}p^{\eps} \]
  and hence
  \begin{equation}
    \label{eq:Bk-upper}
    |B_k| \le 
    2e^{-k} \sqrt{m}p^{-\eps}.
  \end{equation}
  We claim that there is a positive constant $\xi= \xi(\Delta, \eps)$ such that $n^2p^\Delta \ge 2K'np^\eps \log n$ whenever $p^{\Delta/2} \ge n^{-1/2-\xi}$ and $C$ is sufficiently large.
  Indeed, if $p^{\Delta/2} \ge n^{-1/3}$, then $n^2p^\Delta \ge n^{4/3} \ge 2K'np^\eps \log n$, provided that $C$ is sufficiently large; otherwise, $p^\eps \le n^{-2\eps/(3\Delta)}$
  and our assumption implies that $n^2p^\Delta \ge n^{1-2\xi} \ge K'np^\eps\log n$, provided that $\xi$ is sufficiently small and $C$ is sufficiently large.
  Now inequality~\eqref{eq:Bk-upper} and the
  choice of $\kmax$ imply
  \begin{equation}
    \label{eq:B-K-upper}
    |B_{-\kmax}| \le
    2e^{\kmax} \sqrt{m}p^{-\eps}
    \leq 2np^\eps
    \leq \frac{m}{\log n}
  \end{equation}
  and (since $e^{\kmax+1}> np^{2\eps}$)
  \begin{equation}
    \label{eq:BK-upper}
    |B_\kmax|\cdot n \le 2e^{-\kmax} \sqrt{m}p^{-\eps}\cdot n
    < 2e mp^{-3\eps} .
  \end{equation}

  Recall from Claim~\ref{claim:mb-upper} that $e_{\Gcoreb}\leq p^\sigma\cdot \mmin$ for a positive constant
  $\sigma$ depending only on $H$. It follows that we may construct each
  $\Gcore \in \core_m$ as follows:
  \begin{enumerate}[{label=(\arabic*)}]
    \item
      Choose some $\mb \le p^\sigma\cdot \mmin$ and then choose $\mb$ edges of
      $K_n$ to form $\Gcoreb$.
    \item
      Choose the sets $B_{-\kmax}, \dotsc, B_\kmax$ and then
      choose $m-\mb$ edges from
      \[
        \cB = 
        \big\{uv \in E(K_n) : u \in B_{\kmax}\big\}\cup
        \bigcup_{k=0}^{\kmax} \big\{ uv \in E(K_n) : u \in B_k, v \in
        B_{-k}\big\} 
      \]
      to form $\Gcore\setminus \Gcoreb$.
  \end{enumerate}

  Since $B_{\kmax}\subseteq \dotsb \subseteq B_{-\kmax}$
  and $|B_{-\kmax}|\leq m/\log n$ by \eqref{eq:B-K-upper}, the number of
  ways to choose the sets $B_{-\kmax},\dotsc,B_\kmax$ is at most
  \[ \big((2\kmax+2)\cdot n\big)^{m/\log n}
  \leq e^{2m}, \]
  using the (very crude) bound $\kmax\leq n/4$.
  Moreover, inequalities~\eqref{eq:Bk-upper} and \eqref{eq:BK-upper}
  imply that
  \[
    |\cB| \le 
     |B_{\kmax}| \cdot n+
    \sum_{k=0}^\kmax |B_k| \cdot |B_{-k}| \le
    2emp^{-3\eps} + 
    (\kmax+1) \cdot
    4mp^{-2\eps} \le mp^{-4\eps},
  \]
  where we use $\kmax\leq (\Delta/2)\cdot \log(1/p)$ and $p^{\Delta/2}\leq 1/C$ for a large enough $C$.
  We conclude that
  \[
    |\core_m| \le 
    e^{2m}\cdot 
    \sum_{\mb=0}^{p^\sigma\cdot \mmin}
    \binom{n^2}{\mb} \cdot \binom{mp^{-4\eps}}{m-\mb}.
  \]
  In order to bound the right-hand side above, we note that
  \[
    \binom{mp^{-4\eps}}{m-\mb} \le \binom{mp^{-4\eps}}{m} \le
    \left(\frac{em p^{-4\eps}}{m}\right)^m \le p^{-5\eps m}.
  \]
  Moreover, using the inequalities $\sum_{i=0}^k \binom{n}{i}\leq (en/k)^{k}$
  and $m\geq \mmin \ge n^2p^\Delta / K'$,
  \[
    \sum_{\mb=0}^{p^\sigma\cdot \mmin} \binom{n^2}{\mb} \le
    \left(\frac{en^2}{p^\sigma\cdot \mmin}\right)^{p^\sigma\cdot \mmin } \le
    \left(\frac{eK'}{p^{\Delta + \sigma}}\right)^{mp^\sigma}  \le
    p^{-\eps m}.
  \]
  We may conclude that
  \[
    |\core_m| \le e^{2m} \cdot p^{-6\eps m}
    \leq p^{-7\eps m},
  \]
  which completes the proof of \eqref{eq:Hcore-count}
  (with $7\eps$ instead of $\eps/2$).
\end{proof}

The argument above shows that we do not need the assumption
that $H$ is nonbipartite when $p^{\Delta/2} \geq n^{-1/2-\xi}$.
In the following, we will assume that $p^{\Delta/2}\leq n^{-1/2-\xi}$ and that
$H$ is not bipartite.

Let $\gamma'$ be the constant from Claim~\ref{claim:core-degrees}. 
Our assumption on $p$ implies that
\[
  \mmax\leq K'\cdot n^2p^\Delta \log(1/p) \le K'\cdot n^{1-2\xi} \log(1/p) <
  \frac{\gamma'\cdot n}{\big(\log(1/p)\big)^{v_H/2}}-1,
\]
where the last inequality holds because $n \ge C$ and $C$ is large.
In
particular, 
if $\Gcore\in \core_m$
for some $\mmin\leq m\leq \mmax$, then
for any two vertices $u,v\in V(\Gcore)$, we have
$\deg_\Gcore(u)+\deg_\Gcore(v)\leq \mmax+1 < 
  \gamma'n/(\log(1/p))^{v_H/2}$,
so it follows from Claim~\ref{claim:core-degrees} that 
\begin{equation}
  \label{eq:degree-product-lower}
  \deg_{\Gcore}u \cdot \deg_{\Gcore}v \ge \frac{\gamma'}{\big(\log
  (1/p)\big)^{v_H}} \cdot m \qquad \text{for every $uv \in E(\Gcore
  \setminus \Gcoreb)$}
\end{equation}
and that every edge $uv$ of $\Gcore \setminus \Gcoreb$ belongs to at least $\gamma' n^{v_H}p^{e_H} / \mmax$ copies of $H$
in $\Gcore$. Set
\[
  \beta = \Delta(1+ v_H/2) + 1/2,
\]
let $\Gcoreh$ comprise all edges $uv$ of $\Gcore$ such that
\begin{equation}
  \label{eq:degree-product-upper}
  \deg_\Gcore u \cdot \deg_\Gcore v \geq \big(\log(1/p)\big)^{\beta} \cdot m,
\end{equation}
and denote by $\mh$ the number of edges in $\Gcoreh$.
We claim that
\begin{equation}
  \label{eq:mh-upper}
  \mh \le \frac{8m}{\big(\log(1/p)\big)^{\beta}}.
\end{equation}
In order to show it, we estimate the number of copies of $P_4$, the path with four vertices (and three edges), in $\Gcore$
in two different ways. On the one hand,
\[
  2N(P_4, \Gcore) = |\Emb(P_4, \Gcore)| \le (2m)^2,
\]
since every embedding of $P_4$ into $\Gcore$ is determined by the images of its two nonincident edges. On the other hand,
\[
  \begin{split}
    N(P_4, \Gcore) & \ge \sum_{uv \in E(\Gcore)} (\deg_{\Gcore} u-1) \cdot (\deg_{\Gcore}v - 2) \\
    & = \sum_{uv \in E(\Gcore)} \deg_{\Gcore}u \cdot \deg_{\Gcore}v - 3\sum_{v \in V(\Gcore)} (\deg_{\Gcore}v)^2 + 2m \\
    & \ge \mh \cdot \big(\log(1/p)\big)^{\beta} \cdot m - 3m \sum_{v \in V(\Gcore)} \deg_{\Gcore}v \\
    & = \mh \cdot \big(\log(1/p)\big)^{\beta} \cdot m - 6m^2.
  \end{split}
\]
These two lower and upper bounds on $N(P_4, \Gcore)$ imply~\eqref{eq:mh-upper}.

\begin{claim}\label{cl:Hexc}
  Suppose that $\varphi$ is an embedding of $H$ into $\Gcore \setminus (\Gcoreb \cup \Gcoreh)$. Then for every $a \in V(H)$,
  \[
    \deg_{\Gcore} \varphi(a) \ge \frac{m^{1/2}}{\big(\log(1/p)\big)^{\beta \cdot
    v_H}}.
  \]
\end{claim}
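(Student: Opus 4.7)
The plan is to convert the pairwise constraints on $\deg_\Gcore\varphi(a) \cdot \deg_\Gcore\varphi(b)$ (for $ab \in E(H)$) provided by~\eqref{eq:degree-product-lower} and by the failure of~\eqref{eq:degree-product-upper} into a pointwise lower bound on $\deg_\Gcore \varphi(a)$ for each $a$; the crucial ingredient is the nonbipartiteness of $H$. Set $d_a := \deg_\Gcore\varphi(a)$ and $g(a) := \log d_a - \tfrac12\log m$ for $a \in V(H)$, and write $L := \log\log(1/p)$. Since every edge of the embedded copy $\varphi(H)$ lies in $\Gcore \setminus \Gcoreb$, inequality~\eqref{eq:degree-product-lower} gives $g(a) + g(b) \ge -A$ for every $ab \in E(H)$, where $A := v_H L - \log \gamma'$. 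Since none of these edges lies in $\Gcoreh$, the defining inequality of $\Gcoreh$ fails and rearranges to $g(a) + g(b) < B$ with $B := \beta L$.

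Fix an arbitrary $a_0 \in V(H)$. The crux is to produce a closed walk $a_0 = w_0, w_1, \dotsc, w_\ell = a_0$ in $H$ of odd length $\ell = 2k+1$ with $k \le v_H - 1$. Such a walk exists because $H$ is connected and nonbipartite: take a shortest path in $H$ from $a_0$ to a shortest odd cycle $C$, traverse $C$, and return along that same path. Consider the alternating sum
\[
S := \sum_{i=0}^{\ell-1} (-1)^i \bigl(g(w_i) + g(w_{i+1})\bigr).
\]
For every $1 \le i \le \ell - 1$ the coefficient of $g(w_i)$ in $S$ equals $(-1)^{i-1} + (-1)^i = 0$, whereas the coefficients of $g(w_0)$ and $g(w_\ell)$ are $1$ and $(-1)^{\ell-1} = 1$ respectively; since $w_\ell = a_0 = w_0$, this gives $S = 2 g(a_0)$. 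On the other hand, each of the $k+1$ terms with even $i$ is at least $-A$, and each of the $k$ terms with odd $i$ is at least $-B$ once the sign flip converts the upper bound into a lower bound. Thus $2g(a_0) \ge -(k+1)A - kB \ge -v_H(A+B)$, which exponentiates to
\[
d_{a_0} \ge (\gamma')^{v_H/2} \cdot m^{1/2} \cdot (\log(1/p))^{-v_H(v_H+\beta)/2}.
\]

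To conclude, note that the choice $\beta = \Delta(1 + v_H/2) + 1/2$ together with $\Delta \ge 2$ yields $\beta \ge v_H + 5/2$, so $v_H(v_H+\beta)/2 \le v_H \beta$ with polylogarithmic slack. Taking $C$ sufficiently large so that $(\log(1/p))^{v_H(\beta - (v_H+\beta)/2)}$ dominates the constant $(\gamma')^{-v_H/2}$, this yields $d_{a_0} \ge m^{1/2}/(\log(1/p))^{\beta v_H}$, as required. I do not anticipate a real technical obstacle — the computation is essentially a telescoping identity — but the argument does hinge critically on the nonbipartite hypothesis: if $\ell$ were even the alternating sum would reduce to $g(w_0) - g(w_\ell) = 0$ instead of $2g(a_0)$, and the pairwise constraints could never be converted into pointwise ones. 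This is consistent with the fact, noted in the statement of Proposition~\ref{prop:ldp-H}, that a separate argument is required in the bipartite case when $p^{\Delta/2}$ is close to $n^{-1/2}$.
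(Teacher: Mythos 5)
Your proof is correct and takes essentially the same approach as the paper: set $g(a) = \log(\deg_{\Gcore}\varphi(a)/m^{1/2})$, translate~\eqref{eq:degree-product-lower} and the complement of~\eqref{eq:degree-product-upper} into two-sided bounds on $g(a)+g(b)$ along edges of $H$, and then exploit nonbipartiteness via a telescoping alternating sum along a closed walk of odd length. The only cosmetic difference is that you fold the paper's two-step argument (alternating sum around an odd cycle $Z$, then along a shortest path to $Z$) into a single closed walk, and you keep the constants $A$ and $B$ separate where the paper first symmetrizes both into $|f(a)+f(b)| \le f^*$; the resulting bound is slightly sharper than required but the bookkeeping is equivalent.
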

\begin{proof}
  Define $f \colon V(H) \to \RR$ by
  \[
    f(a) = \log \left(\frac{\deg_{\Gcore}\varphi(a)}{m^{1/2}}\right)
  \]
  and let
  \[
    f^* = \beta \log\log(1/p).
  \]
  It suffices to show that $f(a) \ge -v_H f^*$ for every $a \in V(H)$. To
  this end, note that~\eqref{eq:degree-product-lower} and our definition of
  $\Gcoreh$ (see~\eqref{eq:degree-product-upper}) imply that
  \begin{equation}
    \label{eq:h-sum}
    -f^* \le f(a) + f(b) \le f^* \qquad \text{for every $ab \in E(H)$},
  \end{equation}
  since $\beta \geq v_H + 1$ and $p \le C^{-2/\Delta}$ for a large constant $C$.
  Since $H$ is not bipartite, it contains an odd cycle. Let $Z$ be one such
  cycle and suppose that $a_0, \dotsc, a_{2\ell}$ are its vertices (listed in
  an arbitrarily chosen cyclic ordering). 
  It follows from~\eqref{eq:h-sum},
  applied to all $2\ell+1$ edges of $Z$, that
  \[
    2f(a_0) = f(a_0) + f(a_{2\ell}) + \sum_{i=0}^{2\ell-1}
    (-1)^i\big(f(a_i) + f(a_{i+1})\big)
    \in \big[-(2\ell+1)f^*, (2\ell+1)f^*\big].
  \]
  Since the particular choice of $a_0$ among all vertices of $Z$ was
  arbitrary, we may conclude that
  \[
    -(2\ell+1)f^* \le f(a) \le (2\ell+1)f^* \qquad\text{for every $a
    \in V(Z)$},
  \]
  with room to spare.
  Since $2\ell+1 = v_Z\leq v_H$, this proves the desired inequality for all
  $a\in V(Z)$.
  Suppose now that $b \in V(H) \setminus V(Z)$. Since $H$ is connected, it
  contains a path from $b$ to $Z$. Let $b_0, b_1, \dotsc, b_{\ell'}$, where
  $b_0 = b$ and $b_{\ell'} \in V(Z)$, be the vertices of a shortest such path
  (listed in their natural order) and note that $\ell' + 2\ell+1 \le v_H$. It
  follows from~\eqref{eq:h-sum}, applied to all $\ell'$ edges of the path,
  that
  \[
    f(b) + (-1)^{\ell'-1}f(b_{\ell'}) = \sum_{i=0}^{\ell'-1} (-1)^i
    \big(f(b_i) + f(b_{i+1})\big) \in \big[-\ell' f^*, \ell' f^*\big]
  \]
  and consequently, as $b_{\ell'} \in V(Z)$, that
  \[ f(b) \geq -\ell' f^* - |f(b_\ell')| \ge -(\ell'+2\ell+1)f^* \ge -v_Hf^*, \]
  as claimed.
\end{proof}

Let $\Gcoree$ comprise all edges of $\Gcore$ that do not belong to a copy of
$H$ in the graph $\Gcore \setminus (\Gcoreb \cup \Gcoreh)$ and let $\me$ be the
number of such edges; note that $\Gcoree \supseteq \Gcoreb \cup \Gcoreh$. Since
each edge of $\Gcoree \setminus \Gcoreb$ belongs to at least
$\gamma'n^{v_H}p^{e_H} / \mmax$ copies of $H$ in $\Gcore$, none of which are in
$\Gcore\setminus \Gcoreb\cup \Gcoreh$, we have
\[
  (\me - \mb) \cdot \frac{\gamma'n^{v_H}p^{e_H}}{\mmax} \le 
  \sum_{e\in 
  E(\Gcoreb \cup \Gcoreh)}
  |\Emb(H,\Gcore; e)|.
\]
It follows from Lemma~\ref{lemma:max-copies-bad-edges}, Claim~\ref{claim:mb-upper}, and inequality~\eqref{eq:mh-upper} that
\[
  \sum_{e\in E(\Gcoreb \cup \Gcoreh)}|\Emb(H,\Gcore;e)| \le e_H \cdot (2m)^{v_H/2}
  \cdot \left(\frac{\mb + \mh}{m}\right)^{1/\Delta} \le 8e_H \cdot
  \frac{(2m)^{v_H/2}}{\big(\log(1/p)\big)^{\beta/\Delta}}.
\]
Consequently,
\[
  \begin{split}
    \me & \le
    \mb + \frac{\mmax}{\gamma'n^{v_H}p^{e_H}} \cdot 
    8e_H \cdot
    \frac{(2m)^{v_H/2}}{\big(\log(1/p)\big)^{\beta/\Delta}}\\
    &\leq
    p^\sigma\cdot \mmin+
    \frac{8\cdot 2^{v_H/2}\cdot e_H}{\gamma'} \cdot
    \frac{\mmax^{v_H/2}}{n^{v_H}p^{e_H}\big(\log(1/p)\big)^{\beta/\Delta}}
    \cdot m \\
    & \le \left(p^\sigma + 
    \frac{8\cdot 2^{v_H/2}\cdot e_H}{\gamma'}\cdot \frac{\big(K'\cdot n^2p^\Delta
    \log(1/p)\big)^{v_H/2}}{n^{v_H}p^{e_H}\big(\log(1/p)\big)^{\beta/\Delta}}
    \right) \cdot m\\
    & = \left(p^\sigma + \frac{8\cdot e_H\cdot \big(2K'
    \big)^{v_H/2}}{\gamma'\cdot \big(\log(1/p)\big)^{\beta/\Delta-v_H/2}}
    \right) \cdot m
    \le \frac{m}{\log(1/p)},
  \end{split}
\]
since $\beta/\Delta = 1+ v_H/2 + 1/(2\Delta)$ and $p \le C^{-2/\Delta}$ for a large constant $C$.

Let $U$ be the set of nonisolated vertices of $\Gcore \setminus \Gcoree$. It follows from Claim~\ref{cl:Hexc} that
\[
  |U| \cdot \frac{m^{1/2}}{\big(\log(1/p)\big)^{\beta \cdot v_H}} \le \sum_{v \in U} \deg_{\Gcore}v \le 2m
\]
and thus
\[
  |U| \le 2m^{1/2} \cdot\big(\log(1/p)\big)^{\beta\cdot v_H} \le m/\log n,
\]
as $p^{\Delta/2}\geq Cn^{-1}(\log n)^{\Delta v_H^2}$ and $C$ is large enough.

To summarise, we may construct each $\Gcore \in \core_m$ as follows:
\begin{enumerate}[{label=(\arabic*)}]
\item
  Choose $\me \le m/\log(1/p)$ and the $\me$ edges of $K_n$ to form $\Gcoree$.
\item
  Choose the vertices of $U$ and the edges of $\Gcore \setminus \Gcoree$ from the set $\binom{U}{2}$.
\end{enumerate}
Using the above bounds on the size of $U$, we conclude that
\[
  |\core_m| \le  \sum_{\me=0}^{m/\log(1/p)} \binom{n^2}{\me}
  \cdot n^{m/\log n}
  \cdot
  \binom{4m\cdot\big(\log(1/p)\big)^{2\beta\cdot v_H}}{m-\me}
  .
\]
In order to bound the right-hand side from above, we note that,
for sufficiently large $C$,
\[
  \binom{4m\cdot\big(\log(1/p)\big)^{2\beta\cdot v_H}}{m-\me} \le \binom{4m\cdot\big(\log(1/p)\big)^{2\beta\cdot v_H}}{m} \le \left(\frac{4em\cdot\big(\log(1/p)\big)^{2\beta\cdot v_H}}{m}\right)^m \le p^{-\eps m/6}
\]
and, since $m \ge n^2p^\Delta / K'$,
\[
  \sum_{\me=0}^{m/\log(1/p)} \binom{n^2}{\me} \le
  \left(\frac{en^2\log(1/p)}{m}\right)^{m/\log(1/p)} \le
  \left(\frac{eK'\cdot\log(1/p)}{p^\Delta}\right)^{m/\log(1/p)}  \le p^{-\eps
  m /6}.
\]
Since $n^{m/\log n} = e^m \leq p^{-\eps m/6}$, we may conclude that
$|\core_m| \le p^{-\eps m/2}$, which completes the proof of
Proposition~\ref{prop:ldp-H}.

\section{The Poisson regime}\label{sec:poisson}

Given a nonnegative real $\mu$, we shall denote by $\Po(\mu)$ the Poisson distribution with mean $\mu$.
Suppose that $X \sim \Po(\mu)$. A classical result in large deviation theory is that, for every fixed $\delta > 0$,
\[
  - \log \Pr\big(X \ge (1+\delta) \mu \big) = \big((1+\delta)\log(1+\delta) - \delta\big) \mu + o(\mu),
\]
as $\mu \to \infty$. Motivated by this estimate, for any random variable $X$ with positive expectation and any $\delta>0$, we define
\[
  \Psi_X(\delta) = \big((1+\delta)\log(1+\delta)-\delta\big)\e[X].
\]
Theorems~\ref{thm:kappoisson} and \ref{thm:Hpoisson} follow immediately from the following two propositions.

\begin{proposition}\label{prop:kappoisson}
  For every integer $k\geq 3$ and all positive real numbers $\eps$ and
  $\delta$, there exists a positive constant $C$ such that the following
  holds. Suppose that $N\in \NN$ and $p\in
  (0,1)$ satisfy $C N^{-1} \leq p^{k/2} \leq C^{-1} N^{-1} \log N$.
  Then 
  $X = X_{N,p}^\kap$ satisfies
  \[ (1-\eps)\Psi_X(\delta) \leq -\log \Pr\big(X\geq (1+\delta)\e[X]\big) \leq (1+\eps)\Psi_X(\delta). \]
\end{proposition}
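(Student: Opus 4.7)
The plan is to prove Proposition~\ref{prop:kappoisson} by directly comparing the upper tail of $X$ with that of a Poisson random variable of mean $\mu := \e[X]$, through explicit estimation of the high factorial moments of $X$. In the given regime we have $\mu = \Theta(N^2 p^k)$ lying between a positive constant and $C^{-2}(\log N)^2$, and also $Np \ge C^{2/k}N^{1-2/k} \to \infty$ since $k \ge 3$. Consequently $\Psi_X(\delta) = O(\mu) = O((\log N)^2)$ and the target probability $e^{-\Psi_X(\delta)}$ is only super-polynomially small, which makes a truncated inclusion-exclusion argument viable.

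The main technical step is to show that, uniformly for $1 \le \ell \le L := \ceil{A(1+\delta)\mu}$ with $A = A(k,\delta)$ a sufficiently large constant,
\[
  \e[X^{\underline{\ell}}] = \sum_{\substack{P_1,\ldots,P_\ell \\ \text{distinct $k$-APs in $\br{N}$}}} p^{|P_1 \cup \cdots \cup P_\ell|} = (1 + o(1))\,\mu^\ell,
\]
with $o(1) \to 0$ as $C \to \infty$. The contribution from pairwise disjoint tuples is $(T)^{\underline{\ell}}\,p^{k\ell} = (1 - O(\ell^2/T))\mu^\ell$, where $T$ denotes the total number of $k$-APs, giving the required main term. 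To control the overlap contribution, I would classify tuples by their \emph{overlap graph} on $[\ell]$, whose edges record which pairs $(P_i,P_j)$ share at least one element. A direct count shows that the number of ordered pairs of $k$-APs sharing a vertex is $O_k(NT)$, so each overlap edge contributes a factor of at most $O_k(1)/(Np)$ relative to the disjoint case; pairs sharing two or more vertices are controlled analogously using $N^2p^2 \to \infty$. Summing over all overlap graphs bounds the total overlap contribution by $\mu^\ell \bigl((1 + O_k(1/(Np)))^{\binom{\ell}{2}} - 1\bigr) = o(\mu^\ell)$, because $\ell^2/(Np) = O((\log N)^4 / N^{1 - 2/k}) \to 0$.

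Given the factorial moment estimate, both tail bounds follow from the Bonferroni identity
\[
  \Pr(X = m) = \frac{1}{m!}\sum_{j \ge 0}\frac{(-1)^j}{j!}\,\e[X^{\underline{m+j}}].
\]
Truncating the sum at $j = L - m$, substituting the factorial moment estimate, and bounding the tail past $L$ via the crude inequality $\e[X^{\underline{\ell}}] \le T^\ell$ together with Markov's inequality applied to $X^{\underline{L}}$, I obtain $\Pr(X = m) = (1 + o(1))\,e^{-\mu}\mu^m/m!$ uniformly for $m = \ceil{(1+\delta)\mu}$. Summing over the relevant $m$ yields $\Pr(X \ge (1+\delta)\mu) = (1 + o(1))\,\Pr(\Po(\mu) \ge (1+\delta)\mu) = e^{-(1+o(1))\Psi_X(\delta)}$, via Stirling's formula when $\mu \to \infty$ and trivially when $\mu$ stays bounded (as both sides are then positive constants). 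The main obstacle is the \emph{uniform} factorial moment estimate over the full range $\ell \le L = \Theta(\mu)$: although each overlap pattern contributes only a small amount, the number of patterns grows rapidly in $\ell$, and the upper bound $p^{k/2} \le C^{-1}N^{-1}\log N$ is essential to keep $\mu$ and hence $\ell$ polylogarithmic in $N$, so that the per-edge decay $O(1/(Np))$ absorbs the combinatorial growth.
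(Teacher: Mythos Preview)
Your strategy—estimate factorial moments and deduce Poisson-like tails—is the paper's as well, but the sketch has two genuine gaps.

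The more serious one is the Bonferroni step. Even granting $M_\ell(X) = (1+\eta_\ell)\mu^\ell$ with $|\eta_\ell| \le \eta = o(1)$ uniformly for $\ell \le L$, the truncated inclusion--exclusion for $\Pr(X=m)$ at $m \approx (1+\delta)\mu$ incurs an error of size $\eta \cdot \frac{\mu^m}{m!}\sum_{j \le L-m}\frac{\mu^j}{j!}$, which is of order $\eta \cdot \frac{\mu^m}{m!} e^{\mu}$; the target $e^{-\mu}\mu^m/m!$ is smaller by a factor $e^{2\mu}$. You would therefore need $\eta = o(e^{-2\mu})$. At the top of your range $\mu \asymp C^{-2}(\log N)^2$, so $e^{-2\mu}$ decays faster than any power of $N$, and no polynomial-in-$N$ accuracy in the moments (which is all your overlap analysis can give) suffices. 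The paper avoids this cancellation problem entirely: it gets the lower bound on $-\log\Pr$ from Markov applied to $\ff{X}{t}$, and the upper bound from an exponential \emph{tilting} argument (Proposition~\ref{prop:poisson}) that needs only $M_t = (1\pm\eta)\mu^t$ for $t$ in the window $[(\delta-\eps)\mu,(\delta+\eps)\mu]$ with $\eta$ a small \emph{constant}. Tilting replaces the alternating sum by a second-moment (Chebyshev) computation under the biased measure, which is robust to $O(\eta)$ errors.

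The second gap is in the factorial-moment upper bound itself. The per-edge accounting---``each overlap edge contributes $O_k(1)/(Np)$, hence the total is $\mu^\ell\big((1+O(1/(Np)))^{\binom{\ell}{2}}-1\big)$''---does not compose correctly across edges. If $s \ge 3$ progressions all pass through a single common point, the overlap graph is $K_s$ with $\binom{s}{2}$ edges, yet the union has only $1+s(k-1)$ elements, a saving of $s-1$; the contribution relative to the disjoint case is $\Theta\big((Np)^{-(s-1)}\big)$, not $O\big((Np)^{-\binom{s}{2}}\big)$ as your edge-product heuristic predicts. The paper handles this with a cluster expansion (Lemma~\ref{lem:poissonhg}): one factors over the connected components of the overlap graph and reduces to bounding $\Ex[D_s(\cH_p)]$, the expected number of size-$s$ clusters. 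That bound in turn rests on a recursive count of $m$-element subsets of $\br N$ realisable as the union of a cluster of $k$-APs (the estimate $a_m \le N^2(2kmN)^{(m-k)/(k-1)}$), which is where the actual work lies and which your sketch does not address.
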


\begin{proposition}\label{prop:Hpoisson}
  For every $\Delta\geq 2$, every connected, $\Delta$-regular graph $H$, and all positive
  real numbers $\eps$ and $\delta$,
  there exists a positive constant $C$ such that the following holds.
  Suppose that $n\in \NN$ and $p\in (0,1)$ satisfy $C n^{-1} \leq p^{\Delta/2} \leq
  C^{-1} n^{-1} (\log n)^{\frac1{v_H-2}}$. Then $X = X_{n,p}^H$ satisfies
  \[ (1-\eps)\Psi_X(\delta) \leq -\log \Pr\big(X\geq (1+\delta)\e[X]\big) \leq (1+\eps)\Psi_X(\delta). \]
\end{proposition}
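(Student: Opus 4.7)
The plan is to prove Proposition~\ref{prop:Hpoisson} using the method of high factorial moments. Set $\mu = \Ex[X]$ and $t = \lceil (1+\delta)\mu \rceil$; the assumption on $p$ yields $C^{v_H} \le \mu \le C^{-v_H}(\log n)^{v_H/(v_H-2)}$, so $\mu$ is bounded below and grows at most polylogarithmically in $n$. The core technical claim is that, for some constant $K = K(\delta) > 2(1+\delta)$ and every integer $k \in \{0, 1, \dotsc, \lfloor K\mu \rfloor\}$,
\begin{equation}
  \label{eq:Hpoisson-facmom}
  \e[(X)_k] = \bigl(1+o(1)\bigr)^k \mu^k,
\end{equation}
where the $o(1)$ tends to $0$ uniformly in $k$ as $n \to \infty$.

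To establish~\eqref{eq:Hpoisson-facmom}, I would expand
\[
  \e[(X)_k] = \sum_{(\alpha_1, \dotsc, \alpha_k)} p^{|E(\alpha_1 \cup \dotsb \cup \alpha_k)|},
\]
where the sum ranges over ordered $k$-tuples of distinct copies of $H$ in $K_n$. The dominant contribution comes from tuples whose copies are pairwise vertex-disjoint: since $k \le K\mu \ll \sqrt{n}$ in the stated range of $p$, the number of such tuples is $(1-o(1))^k N_H^k$, contributing $(1-o(1))^k \mu^k$ in total. The error stems from tuples with at least one pair of copies overlapping in a nonempty subgraph $J \subseteq H$, each such pair contributing a relative factor of $O(1/(n^{v_J}p^{e_J}))$ to $\mu^2$. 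The key quantitative input is that, for every nonempty $J \subsetneq H$ with no isolated vertices, $J$ cannot itself be $\Delta$-regular (the only $\Delta$-regular subgraph of a connected $\Delta$-regular $H$ being $H$), so Lemma~\ref{lemma:eJ-alphaJ-clique} yields $v_J - 2e_J/\Delta \ge c_H$ for some constant $c_H = c_H(H) > 0$; consequently $n^{v_J}p^{e_J} \ge n^{c_H}$ grows polynomially in $n$. Organising the total error by the clump structure of the pairwise overlap hypergraph and summing the resulting geometric-type series in $k^2/n^{c_H}$ uniformly over $k$ then yields~\eqref{eq:Hpoisson-facmom}.

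Given~\eqref{eq:Hpoisson-facmom}, both inequalities in the proposition follow from the standard inclusion--exclusion identity
\[
  \Pr(X \ge t) = \sum_{k \ge t} (-1)^{k-t} \binom{k-1}{t-1}\frac{\e[(X)_k]}{k!}
\]
via Bonferroni truncation at $k = \lfloor K\mu \rfloor$. The truncated tail is bounded crudely using $\e[(X)_k] \le N_H^k$, which is negligible once $K$ is taken sufficiently large relative to $\delta$. Substituting the factorial moment asymptotics yields $\Pr(X \ge t) = (1+o(1))\Pr(\Po(\mu) \ge t)$, and a direct Stirling computation for the Poisson tail gives $-\log \Pr(X \ge t) = (1+o(1))\Psi_X(\delta)$ in the case $\mu \to \infty$; if $\mu$ remains bounded along a subsequence, one passes instead to the limiting Poisson distribution.

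The main obstacle is the clump analysis underlying~\eqref{eq:Hpoisson-facmom}: one must enumerate all partitions of $\{1, \dotsc, k\}$ into connected overlap clumps, along with all compatible choices of pairwise overlap subgraphs $J \subseteq H$, and verify that the aggregate error is only a $(1+o(1))^k$ multiplicative correction. The sharp threshold exponent $1/(v_H-2)$ in the hypothesis on $p$ is essentially determined by the balance between the Poisson rate $\Psi_X(\delta) = \Theta(\mu)$ and the competing localised-clique rate $\sim n^2p^\Delta\log(1/p)$; for $p$ above this threshold, small planted cliques begin to inflate the high factorial moments and the Poisson asymptotics break down.
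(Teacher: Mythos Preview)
Your factorial-moment estimate~\eqref{eq:Hpoisson-facmom} and the cluster/clump analysis you sketch for it are essentially what the paper does (Lemma~\ref{lem:poissonhg} and Claim~\ref{claim:Dskm-upper}). The genuine gap is in the second step, where you pass from factorial moments to the tail probability via the Bonferroni identity
\[
  \Pr(X\ge t)=\sum_{k\ge t}(-1)^{k-t}\binom{k-1}{t-1}\frac{M_k(X)}{k!}.
\]
This alternating sum exhibits massive cancellation: already the first term, $\mu^t/t!$ with $t=(1+\delta)\mu$, is of order $e^{\mu-\Psi_X(\delta)}$ by Stirling, i.e.\ a factor $e^{\mu}$ larger than the target $\Pr\big(\Po(\mu)\ge t\big)\asymp e^{-\Psi_X(\delta)}$. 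Consequently, a multiplicative error of the form $(1+o(1))^k$ with $k=\Theta(\mu)$ in each $M_k(X)$ produces an error of size $e^{o(\mu)}\cdot e^{\mu-\Psi_X(\delta)}$ in individual terms; since the $o(1)$ may vary with $k$, these errors do not cancel, and their absolute sum swamps the main term. Relatedly, your proposed truncation bound $M_k(X)\le N_H^k$ for $k>K\mu$ is far too weak: $N_H^{K\mu}/(K\mu)!\asymp (eN_H/(K\mu))^{K\mu}$ and $N_H/\mu=p^{-e_H}$ is polynomially large in~$n$, so this term is enormous rather than negligible.

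The paper sidesteps the cancellation entirely (Proposition~\ref{prop:poisson}). For the upper bound on the tail it applies Markov's inequality to $\ff{X}{t}$ with $t=\lfloor\delta\mu\rfloor$, using only a single factorial moment. For the lower bound it uses a tilting argument: it defines a reweighted law $\Pr(\tilde X=x)\propto\Pr(X=x)\,\ff{x}{t}$ with $t\approx(\delta+\gamma)\mu$, computes $\Ex[\tilde X]$ and $\Var(\tilde X)$ from three consecutive factorial moments $M_t,M_{t+1},M_{t+2}$, and concludes via Chebyshev that $\tilde X$ concentrates near $(1+\delta+\gamma)\mu$. Both steps require factorial-moment control only in a window of width $O(1)$ around $\delta\mu$, and neither involves an alternating sum.
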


It is not difficult to show that the requirements on $p$ in these results
are optimal up to the choice of the constant $C$. 
Indeed, if $X = X_{N,p}^\kap$, then planting an interval of length
$C_\delta Np^{k/2}$ creates $(1+\delta)\e[X]$ $k$-term arithmetic progressions
(for a sufficiently large $C_\delta$),
which shows that
\[ - \log
 \Pr\big(X \geq (1+\delta)\e[X]\big) = O\big(
N p^{k/2} \log (1/p)\big). \] 
Similarly, if $X = X_{n,p}^{H}$, then
planting a clique of size $C_\delta np^{\Delta/2}$ 
results in $(1+\delta)\e[X]$ copies of~$H$, which proves
\[ -\log \Pr\big(X \geq (1+\delta)\e[X]\big) = O\big(
n^2 p^{\Delta} \log (1/p)\big). \]
The upper bounds are $o(\Psi_X(\delta))$, and therefore dominate the Poisson bounds, whenever 
$p^{k/2}\gg N^{-1}\log N$ or
$p^{\Delta/2}\gg n^{-1}(\log n)^{\frac1{v_H-2}}$, respectively.

\subsection{Poisson approximation via factorial moments}

For a real number $m$ and a nonnegative integer $t$, write $\ff{m}{t} = m (m-1) \dotsb (m-t+1)$ for the
$t$-th falling factorial of $m$. For a random variable $X$, let $M_t(X) = \e[\ff{X}{t}]$
be the $t$-th factorial moment of $X$. It is straightforward to verify that, if $X\sim \Po(\mu)$,
then $M_t(X) = \mu^t$ for all $t \geq 0$.

A classical application of the method of moments is that, if $(X_n)$ is a sequence of random variables
whose $t$-th factorial moments converge to $\mu^t$ for some fixed $\mu$, then $X_n$ converges
in distribution to $\Po(\mu)$. The lemma below can be viewed as an extension of this result to the case when
$\mu \to \infty$. It states that, if the $t$-th factorial moment of some random variable $X$ is approximately $\mu^t$,
for each $t$ around $\delta \mu$, then the logarithmic upper tail probability $- \log \Pr\big(X \ge (1+\delta)\Ex[X]\big)$
is well approximated by $\Psi_X(\delta)$.

\begin{proposition}\label{prop:poisson}
  For all positive real numbers $\eps$ and $\delta$, there exists a positive constant $\eta$ such
  that the following holds. Let $X$ be a nonnegative integer-valued random
  variable with mean $\mu \geq 1/\eta$ such that $|M_t(X) - \mu^t|\leq \eta \mu^t$
  for every integer $t$ satisfying $(\delta-\eps)\mu\leq t\leq (\delta+\eps)\mu$.
  Then
  \[ 
    (1-\eps) \Psi_X(\delta) \leq -\log \Pr\big(X\geq (1+\delta)\e[X]\big) \leq (1+\eps) \Psi_X(\delta).
  \]
\end{proposition}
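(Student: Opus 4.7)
Both directions of the proof rest on the identity
\[
  \log \frac{\ff{y}{t}}{\mu^t} = \sum_{i=0}^{t-1} \log \frac{y-i}{\mu},
\]
which, by a standard Riemann sum comparison, equals $\mu \int_0^{t/\mu} \log(y/\mu - u)\,du + O(1)$ whenever $y, \mu$ are of the same order and $t \in [1,\delta\mu]$. In particular, for $y = (1+\delta)\mu$ and $t = \lfloor \delta\mu\rfloor$ this integral evaluates to $\Psi_X(\delta)/\mu$, and perturbing $y$ or $t$ by $o(\mu)$ changes the value by $o(\Psi_X(\delta))$; I will use this repeatedly.

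The lower bound $-\log \Pr(X \ge (1+\delta)\mu) \ge (1-\eps)\Psi_X(\delta)$ follows at once from a factorial-moment Markov inequality. Take $t = \lfloor \delta\mu\rfloor$, which lies in the allowed range for $\mu \ge 1/\eta$ large. Since $X$ is integer-valued, $X \ge (1+\delta)\mu \ge t$ implies $\ff{X}{t} \ge \ff{\lceil(1+\delta)\mu\rceil}{t}$, and so
\[
  \Pr\bigl(X \ge (1+\delta)\mu\bigr) \le \frac{M_t(X)}{\ff{\lceil(1+\delta)\mu\rceil}{t}} \le \frac{(1+\eta)\,\mu^t}{\ff{\lceil(1+\delta)\mu\rceil}{t}},
\]
whose negative logarithm is $(1-o(1))\Psi_X(\delta)$ by the observation above.

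For the matching upper bound on $-\log\Pr$, I plan to introduce the size-biased tilted distribution $q_k := \ff{k}{t}\,p_k / M_t(X)$ for $t$ slightly larger than $\delta\mu$. The identities $X\ff{X}{t} = \ff{X}{t+1} + t\ff{X}{t}$ and $X(X-1)\ff{X}{t} = \ff{X}{t+2} + 2t\ff{X}{t+1} + t(t-1)\ff{X}{t}$ yield
\[
  \Ex_q[X] = \frac{M_{t+1} + tM_t}{M_t}, \qquad \Var_q(X) = \frac{M_{t+2} + (2t+1)M_{t+1} + t^2 M_t}{M_t} - \Ex_q[X]^2,
\]
which the moment hypothesis reduces to $\Ex_q[X] = \mu + t + O(\eta\mu)$ and $\Var_q(X) = \mu + O(\eta\mu^2)$. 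Setting $t = \lceil \delta\mu + C\sqrt{\eta}\,\mu\rceil$ for a large constant $C = C(\eps,\delta)$ keeps $t$ in the allowed range (provided $\sqrt{\eta}$ is small relative to $\eps$) while arranging $\mu + t - C\sqrt{\eta}\,\mu \ge (1+\delta)\mu$. Chebyshev's inequality under $q$ then gives $\Pr_q\bigl(|X-(\mu+t)| \le C\sqrt{\eta}\,\mu\bigr) \ge 1/2$, so that
\[
  \sum_{(1+\delta)\mu \le k \le \mu+t+C\sqrt{\eta}\,\mu} \ff{k}{t}\,p_k \ge M_t(X)/2.
\]
Bounding each $\ff{k}{t}$ on this window from above by $\ff{\mu+t+C\sqrt{\eta}\,\mu}{t}$ and applying the Riemann sum estimate once more produces $\Pr(X \ge (1+\delta)\mu) \ge \exp\bigl(-\Psi_X(\delta) - o(\Psi_X(\delta))\bigr)$, as desired.

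The main technical obstacle is controlling $\Var_q(X)$: the $\eta\mu^2$ error arises from the independent $\eta$-sized perturbations of $M_t$, $M_{t+1}$, and $M_{t+2}$, and forces the Chebyshev window to be of width $\Theta(\sqrt{\eta}\,\mu)$ rather than the ideal $\Theta(\sqrt{\mu})$. This mandates shifting $t$ by the same order, distorting the final estimate by a relative factor of $O(\sqrt{\eta})$; choosing $\eta = \eta(\eps,\delta)$ sufficiently small absorbs this distortion into the $\eps\,\Psi_X(\delta)$ margin.
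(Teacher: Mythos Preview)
Your proposal is correct and follows essentially the same approach as the paper: a factorial-moment Markov bound with $t \approx \delta\mu$ for the lower bound, and an exponential tilt $\tilde X$ (your $q$) with $t$ slightly above $\delta\mu$, followed by Chebyshev, for the upper bound. The only organisational differences are that the paper packages the Riemann-sum estimate into a separate lemma (their Lemma on $\log\ff{(x+t)}{t}$) and introduces an independent small parameter $\gamma$ for the shift of $t$, choosing $\eta$ small relative to $\gamma$, whereas you tie the shift directly to $\sqrt{\eta}$; both parameterisations yield the same conclusion.
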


Define the continuous function $I \colon [0,\infty) \to [0, \infty)$ by
\[
  I(\delta) = (1+\delta) \log (1+\delta) - \delta,
\]
so that $\Psi_X(\delta) = I(\delta) \cdot \mu$. Note that $I(\delta) > 0$ whenever $\delta > 0$.

\begin{lemma}
  \label{lemma:ff}
  For every nonnegative integer $t$ and every positive real $x$,
  \[
    \log \ff{(x+t)}{t} = I(t/x) \cdot x + t \log x + \lambda(x,t),
  \]
  where $0 \le \lambda(x,t) \le (t+1)/x$.
\end{lemma}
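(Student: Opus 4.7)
The plan is to recognise the identity as a Riemann sum comparison for the monotone function $\log$. First I would observe that $\log \ff{(x+t)}{t} = \sum_{i=1}^t \log(x+i)$, treating $t=0$ as a trivial base case (the claim then reduces to $0 \le 0 \le 1/x$), and rewrite the target expression using the antiderivative of $\log$.

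Next, I would compute that $I(t/x)\cdot x + t\log x$ is nothing more than the integral $\int_x^{x+t} \log s\, ds$. Indeed, expanding $I(t/x)\cdot x = (x+t)\log(1+t/x) - t$ and adding $t\log x$ gives $(x+t)\log(x+t) - x\log x - t$, which is precisely $\int_x^{x+t}\log s\,ds$ evaluated via $s\log s - s$. Consequently,
\[
  \lambda(x,t) = \sum_{i=1}^t \log(x+i) - \int_x^{x+t} \log s\,ds,
\]
so the lemma reduces to bounding the error between a right Riemann sum and the integral of an increasing function.

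The lower bound $\lambda(x,t)\ge 0$ is immediate from the monotonicity of $\log$: on each subinterval $[x+i-1, x+i]$, the integrand $\log s$ is at most $\log(x+i)$. For the upper bound, I would write
\[
  \lambda(x,t) = \sum_{i=1}^t \int_{x+i-1}^{x+i} \big(\log(x+i) - \log s\big)\,ds \le \sum_{i=1}^t \big(\log(x+i) - \log(x+i-1)\big) = \log(1 + t/x),
\]
where the inequality uses monotonicity of $\log$ and that each subinterval has length one. Finally, $\log(1+t/x) \le t/x \le (t+1)/x$, completing the proof.

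There is no genuine obstacle here; the only place to be a little careful is making sure that the explicit evaluation of $I(t/x)\cdot x + t\log x$ indeed matches $\int_x^{x+t}\log s\,ds$ exactly, so that $\lambda$ is cleanly identified as a Riemann-sum error. The slack between $t/x$ and the stated bound $(t+1)/x$ presumably exists only to streamline the $t=0$ case and subsequent applications of the lemma.
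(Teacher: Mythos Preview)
Your proof is correct and follows essentially the same approach as the paper: both identify $I(t/x)\cdot x + t\log x$ with $\int_x^{x+t}\log s\,ds$ and bound the Riemann-sum error via the monotonicity of $\log$. Your telescoping argument for the upper bound gives the slightly sharper $\log(1+t/x)\le t/x$, whereas the paper shifts the integral by one unit to obtain $\log(1+(t+1)/x)\le (t+1)/x$, but this is a cosmetic difference.
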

\begin{proof}
  Observe that $\log \ff{(x+t)}{t} = \sum_{s=1}^t \log(x+s)$. Since $\log$ is an increasing function,
  \[
    \int_{x}^{x+t} \log y \, dy \le \sum_{s=1}^{t} \log (x+s) \le \int_{x+1}^{x+t+1} \log y \, dy.
  \]
  Recalling that $\int \log y \, dy = y(\log y - 1) + C$, we have
  \[
    \int_x^{x+t} \log y \, dy = \big((1+t/x) \log (1+t/x) - t/x\big) \cdot x + t \log x = I(t/x) \cdot x+ t \log x.
  \]
  On the other hand,
  \[
    \int_{x+1}^{x+t+1} \log y \, dy - \int_x^{x+t} \log y \, dy \le \log (x+t+1) - \log x = \log\big(1+(t+1)/x\big) \le (t+1)/x.
  \]
  This proves the claimed estimate.  
\end{proof}

\begin{proof}[{Proof of Proposition~\ref{prop:poisson}}]
  Since, for every positive integer $t$, the function $x \mapsto \ff{x}{t}$ is increasing on $[t-1, \infty)$ and nonnegative on $\ZZ_{\ge 0}$, Markov's inequality implies that
  \[
    \Pr\big( X \ge (1+\delta) \Ex[X] \big) \le \Pr\left(\ff{X}{t} \ge \ff{\big((1+\delta)\Ex[X]\big)}{t}\right) \le \frac{M_t(X)}{\ff{\big((1+\delta)\Ex[X]\big)}{t}}
  \]
  for every positive integer $t \le (1+\delta)\Ex[X]$. This implies that, for every such $t$,
  \begin{equation}
    \label{eq:Poisson-Markov}
    -\log \Pr\big(X \geq (1+\delta)\Ex[X]\big) \geq \log \ff{\big((1+\delta)\mu\big)}{t} - \log M_t(X).
  \end{equation}
  Let $t = \floor{\delta \mu}$. Since $(1+\delta) \mu - t \ge \mu$, it follows from Lemma~\ref{lemma:ff} that
  \[
    \log \ff{\big((1+\delta)\mu\big)}{t} \ge \log \ff{(\mu + t)}{t} \ge I(t/\mu) \cdot \mu + t \log \mu.
  \]
  On the other hand, our assumption implies that
  \[
    \log M_t(X) \le \log \big((1+\eta) \mu^t\big) = t \log \mu + \log (1+\eta) \le t \log \mu + \eta.
  \]
  Finally, since $I$ is continuous, $I(\delta) > 0$, and $|t/\mu - \delta| \le 1/\mu \le \eta$, substituting the above two inequalities into~\eqref{eq:Poisson-Markov} yields
  \[
    -\log \Pr\big(X \geq (1+\delta)\Ex[X]\big) \geq I(\delta) \cdot \mu - \eps \Psi_X(\delta) = (1-\eps) \Psi_X(\delta),
  \]
  provided that $\eta$ is sufficiently small (as a function of $\eps$ and $\delta$).
  
  For the upper bound, we will use the tilting argument, which is a
  standard trick of large deviation theory.
  For the sake of brevity, let $m_t = M_t(X)$ for every nonnegative integer $t$.
  Let $t = (\delta + \gamma)\mu$, where $\gamma$ is a small positive constant that depends on $\eps$ and $\delta$ (but not on $\eta$).
  Since $\eta$ is allowed to depend on $\gamma$ and $\mu \ge 1/\eta$, we may assume that $t$ is an integer.
  The idea is to consider a `tilted' random variable $\tilde X$ defined by the relation
  \[
    \Pr(\tilde X = x) = \frac{\Pr(X = x) \cdot \ff{x}{t}}{m_t} \qquad \text{for every $x \in \ZZ$}.
  \]
  The definition of $m_t$ ensures that $\tilde X$ is a well-normalised random variable. In particular,
  \begin{equation}\label{eq:rnd}
    \e[g(\tilde X)] = \frac{\e[g(X) \cdot \ff{X}{t}]}{m_t} \qquad \text{for every $g \colon \ZZ \to \ZZ$}.
  \end{equation}

  Using the identities 
  \[
    x \cdot \ff{x}{t} = \ff{x}{t+1} + t  \cdot \ff{x}{t} \qquad \text{and} \qquad x^2 \cdot \ff{x}{t} = \ff{x}{t+2} + (2t+1)  \cdot \ff{x}{t+1} + t^2 \cdot \ff{x}{t},
  \]
  we have
  \[
    \e[\tilde X] = \frac{\e[X \cdot \ff{X}{t}]}{m_t} =  \frac{m_{t+1}}{m_t} + t
  \]
  and
  \[
    \e[\tilde X^2] = \frac{\e[X^2 \cdot \ff{X}{t}]}{m_t} = \frac{m_{t+2} + (2t+1)m_{t+1}}{m_t}  + t^2,
  \]
  and so
  \[
    \Var[\tilde X] = \e[\tilde X^2] - \e[\tilde X]^2 = \frac{m_{t+2}m_t - m_{t+1}^2 + m_{t+1}m_t}{m_t^2}.
  \]
  Since $t=(\delta+\gamma)\mu$, we have $(\delta-\eps)\mu \leq t\leq t+2\leq (\delta+\eps)\mu$,
  provided that $\gamma$ is sufficiently small as a function of $\delta$ and $\eps$. Since the assumptions of the proposition
  imply that $m_s$ is well approximated by $\mu^s$ for each $s \in \{t, t+1, t+2\}$, a straightforward computation yields
  \[ (1+\delta+\gamma/2)\mu\leq \e[\tilde X]
  \leq (1+\delta+3\gamma/2)\mu \qquad
  \text{and}
  \qquad\Var[\tilde X]
    \leq 10\eta\mu^2 + 2\mu,
  \]
  provided that $\eta$ is sufficiently small. Therefore, Chebyshev's inequality yields
  \begin{equation}
    \label{eq:chebyshev}
    \Pr\big( |\tilde X - (1 + \delta + \gamma) \mu| \geq
    \gamma\mu\big)
    \leq
    \Pr\big( |\tilde X - \e[\tilde X]| \geq \gamma\mu/2\big)
    \leq \frac{4 \cdot (10\eta\mu^2+2\mu)}{\gamma^2\mu^2}
    \leq \eps.
  \end{equation}

  Next, using \eqref{eq:rnd} with the function
  $g(x) = 
  \1\big[|x - (1+\delta+\gamma)\mu| < \gamma \mu\big] \cdot (m_t/\ff{x}{t})$, we see that
  \[
    \Pr\big(X \geq (1 + \delta) \mu\big) \geq \Pr\big( |X - (1+\delta+\gamma)\mu|\leq\gamma\mu\big) = \Ex\big[g(\tilde X)\big].
  \]
  When $g(\tilde X)$ is nonzero, then $\tilde X$ is bounded from above by $(1+\delta+2\gamma)\mu$, and thus
  \begin{align*}
    \Ex\big[g(\tilde X)\big] & \ge \Pr\big(g(\tilde X) \neq 0\big) \cdot \frac{m_t}{\ff{(1+\delta+2\gamma)}{t}} \\
                             & =\big(1-\Pr( |\tilde X - (1 + \delta+\gamma) \mu | \geq \gamma\mu)\big) \cdot \frac{m_t}{\ff{(1+\delta+2\gamma)}{t}}
                            \stackrel{\eqref{eq:chebyshev}}{\geq} \frac{(1-\eps) \cdot m_t}{\ff{(1+\delta+2\gamma)}{t}}.
  \end{align*}
  Using Lemma~\ref{lemma:ff} with $x = (1+\gamma)\mu$, we obtain
  \[
    \log \ff{(1+\delta+2\gamma)}{t} \le I\big(t / (1+\gamma)\mu\big) \cdot (1+\gamma)\mu + t \log \big((1+\gamma)\mu\big) +  \frac{t+1}{(1+\gamma)\mu}.
  \]
  On the other hand, our assumptions imply that, when $\eta$ is small,
  \[
    \log m_t \ge t \log \mu + \log (1-\eta) \ge t \log \mu - 2\eta.
  \]
  Combining the above bounds, we obtain
  \[
    -\log \Pr\big(X \ge (1+\delta)\mu\big) \le  I\big(t / (1+\gamma)\mu\big) \cdot (1+\gamma)\mu + t \log (1+\gamma) + \frac{t+1}{(1+\gamma)\mu} + 2\eta - \log(1-\eps).
  \]
  Recalling that $t = (\delta + \gamma) \mu$, the continuity of $I$ and the fact that $I(\delta) > 0$ imply that
  \[
    -\log \Pr\big(X \ge (1+\delta)\mu\big) \le I(\delta) \cdot \mu + \eps \Psi_X(\delta) = (1+\eps) \Psi_X(\delta),
  \]
  provided that $\gamma$ is sufficiently small and $\eta$ is sufficiently small.
\end{proof}

\subsection{Cluster analysis}

We will deduce both Propositions~\ref{prop:kappoisson} and~\ref{prop:Hpoisson}
from Proposition~\ref{prop:poisson} and Lemma~\ref{lem:poissonhg}, stated below,
by analysing the component structure of certain random hypergraphs.
Since the proofs turn out to be quite similar, we adopt a general point of view from the start. Suppose that
$\cH$ is a hypergraph and, given some $p\in (0,1)$, denote by $\cH_p$ the
random induced subhypergraph of $\cH$ obtained by keeping every
vertex with probability $p$, independently.
The \emph{dependency graph} $G_\cH$ is the graph on the vertex set
$E(\cH)$ whose edges are all pairs $\{\sigma_1, \sigma_2\}$ such that
$\sigma_1\cap \sigma_2\neq \emptyset$. A \emph{cluster} is a set $E'\subseteq E(\cH)$
that induces a connected subgraph in $G_\cH$.
We write $D_s(\cH_p)$ for the number of clusters of size $s$ whose elements are edges in $\cH_p$.

\begin{lemma}\label{lem:poissonhg}
  For all positive real numbers $c$ and $\eta$, there exists a positive 
  constant $K$ such that the following holds.
  Let $\cH$ be a uniform hypergraph, let $p\in (0,1)$, and define
  $X = e_{\cH_p}$ and $\mu = \e[X]$. Assume that $K\leq \mu \leq
  \sqrt{e_\cH}/K$ and that $\e[D_s(\cH_p)]\leq \exp(-Ks)$
  for every integer $s$ such that $2\leq s\leq c\mu$. 
  Then $|M_t(X) - \mu^t|\leq \eta\mu^t$ for every integer $t$ such that $1 \leq t\leq c\mu$.
\end{lemma}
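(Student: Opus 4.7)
The plan is to use a cluster expansion of $M_t(X)$: I would expand the factorial moment as a sum over unordered $t$-subsets of edges grouped by their cluster decomposition, then show that the "generic" term where all clusters are singletons contributes essentially $\mu^t$, while contributions from configurations containing a cluster of size at least $2$ are exponentially suppressed.

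Writing $X = \sum_{\sigma \in E(\cH)}\1[\sigma\subseteq\cH_p]$, I would first express
\[
  M_t(X) = \Ex[\ff{X}{t}] = \sum_{(\sigma_1,\dotsc,\sigma_t)} p^{|V(\sigma_1\cup\dotsb\cup\sigma_t)|} = t!\sum_S p^{|V(S)|},
\]
where the first sum ranges over ordered $t$-tuples of distinct edges and $S$ ranges over unordered $t$-element subsets of $E(\cH)$. The crucial observation is that if $C_1,\dotsc,C_r$ are the connected components of the subgraph of $G_{\cH}$ induced by $S$, then vertex sets of distinct components are disjoint (any two edges sharing a vertex are adjacent in $G_\cH$, hence in the same component), so $p^{|V(S)|} = \prod_j p^{|V(C_j)|}$. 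Grouping the $S$'s by the multiset of cluster sizes $\{s_j\}$ with multiplicities $n_s$, and bounding by dropping pairwise vertex-disjointness of $(C_1,\dotsc,C_r)$ and permuting equal-sized clusters, I obtain
\[
  \sum_{S\,:\,\mathrm{type}(S) = \{s_j\}} \prod_j p^{|V(C_j)|} \leq \frac{1}{\prod_s n_s!} \prod_j \Ex[D_{s_j}(\cH_p)] = \frac{\prod_j a_{s_j}}{\prod_s n_s!},
\]
where $a_s := \Ex[D_s(\cH_p)]$. The main term, corresponding to $n_1 = t$ (all singletons), contributes exactly $t!\cdot\mu^t/t! = \mu^t$.

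For an error term with $u := \sum_{s\geq 2} sn_s = t - n_1 \geq 2$, the bound $t!/n_1! = \ff{t}{u}\leq t^u\leq (c\mu)^u$ yields
\[
  t!\cdot\frac{\mu^{n_1}\prod_{s\geq 2}a_s^{n_s}}{n_1!\prod_{s\geq 2}n_s!} \leq \mu^t\prod_{s\geq 2}\frac{(c^s a_s)^{n_s}}{n_s!}.
\]
Summing over all $(n_s)_{s\geq 2}$ not identically zero, invoking the hypothesis $a_s\leq e^{-Ks}$ (valid since every cluster size $s_j\leq t\leq c\mu$ falls in its range of applicability), and using the standard generating-function identity $\sum_{(n_s)}\prod_{s\geq 2}\tfrac{z_s^{n_s}}{n_s!} = \prod_{s\geq 2}\exp(z_s)$, I would conclude
\[
  \mathrm{error} \leq \mu^t\biggl(\exp\biggl(\sum_{s\geq 2}(ce^{-K})^s\biggr) - 1\biggr) \leq \eta\mu^t/2,
\]
once $K = K(c,\eta)$ is large. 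This gives $M_t(X)\leq (1+\eta/2)\mu^t$. For the matching lower bound, I would simply use $p^{|V|}\geq p^{tk}$ for every ordered $t$-tuple, where $k$ is the uniformity, so
\[
  M_t(X)\geq \ff{e_\cH}{t}\cdot p^{tk}\geq \bigl(1-t^2/e_\cH\bigr)\mu^t \geq \bigl(1 - c^2/K^2\bigr)\mu^t\geq (1-\eta/2)\mu^t,
\]
using $t\leq c\mu$ and $\mu\leq\sqrt{e_\cH}/K$.

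The main technical point is the factorisation $p^{|V(S)|}=\prod_j p^{|V(C_j)|}$, which follows from the fact that distinct components of $S$ occupy disjoint vertex sets; this is what lets the cluster-size decay hypothesis feed through a single exponential generating function and transform $\Ex[D_s(\cH_p)]\leq e^{-Ks}$ into an arbitrarily small multiplicative error on $M_t(X)$. The only other non-trivial ingredient is the comparison $\ff{e_\cH}{t}\geq (1 - t^2/e_\cH)e_\cH^t$, which is elementary but relies on the assumed upper bound $\mu\leq\sqrt{e_\cH}/K$ to beat $t^2/e_\cH$ down to $O(1/K^2)$.
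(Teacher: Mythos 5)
Your proof is correct and follows essentially the same approach as the paper: decompose by cluster type (sizes of connected components in $G_{\cH}$), exploit the factorisation $p^{|V(S)|} = \prod_j p^{|V(C_j)|}$ over vertex-disjoint components, and feed the hypothesis $\Ex[D_s(\cH_p)]\le e^{-Ks}$ through to control the contribution of types with a non-singleton cluster; the lower bound via $\ff{e_\cH}{t}p^{kt} \ge (1-t^2/e_\cH)\mu^t$ is identical. The only (cosmetic) difference is bookkeeping: you work with unordered $t$-subsets and sum over the multiplicity vector $(n_s)_{s\ge 2}$ using the exponential generating function $\prod_s e^{z_s}$, while the paper works with ordered $t$-tuples, groups by the total size $s = \sum s_i$ of non-trivial components, and uses a crude $2^s$ bound on the number of compositions followed by a geometric sum—both reduce to the same exponential-decay estimate in $K$.
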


\begin{proof}
  Let $t \le c\mu$ be a positive integer and let $\cH(t)$ denote the set of all sequences of $t$
  distinct edges of $\cH$. For each sequence $\sigmab = (\sigma_1,\dotsc,\sigma_t)
  \in \cH(t)$, let $X_\sigmab$ be the indicator random variable for the event 
  $\sigma_1 \cup \dotsb \cup \sigma_t \subseteq V(\cH_p)$. Denote the uniformity of $\cH$ by $k$.
  Our definitions readily imply that $\ff{X}{t} = \sum_{\sigmab\in \cH(t)}
  X_{\sigmab}$, that $|\cH(t)| = \ff{e_{\cH}}{t}$, and that $\e[X_\sigmab]\geq p^{kt}$ for all $\sigmab\in \cH(t)$.
  Thus
  \[
    M_t(X) =\e[\ff{X}{t}]= \sum_{\sigmab\in \cH(t)} \e[X_\sigmab] \geq \ff{e_{\cH}}{t} \cdot p^{kt}.
  \]
  Since, for every $x \ge t$,
  \[
    \ff{x}{t} = x^t \prod_{s=0}^{t-1} \left(1-\frac{s}{x}\right) \geq x^t \left(1-\sum_{s=0}^{t-1} \frac{s}{x}\right) \ge x^t \left(1 - \frac{t^2}{x}\right),
  \]
  and $t \le c\mu \le c\sqrt{e_{\cH}}/K \le e_\cH$ for sufficiently large $K$, we have
  \[
    M_t(X) \geq \left( 1- \frac{t^2}{e_{\cH}}\right) \cdot {e_{\cH}}^tp^{kt}
    = \left(1-\frac{t^2}{e_{\cH}}\right) \cdot \mu^t \geq \left(1-\frac{c^2}{K^2}\right) \cdot \mu^t \geq (1-\eta) \cdot \mu^t,
  \]
  provided that $K$ is sufficiently large.
  
  It remains to prove the upper bound. It will be convenient to partition the set $\cH(t)$ of sequences according to the component structure of the subgraph
  of $G_\cH$ induced by the elements of the sequence. More precisely, given a nonnegative integer $\ell$ and
  integers $s_1, \dotsc, s_\ell$ such that $2 \le s_1 \le \dotsb \le s_\ell$, let $\cH(t; s_1, \dotsc, s_\ell)$ be the family of all $\sigmab = (\sigma_1, \dotsc, \sigma_t) \in \cH(t)$
  such that the set $\{\sigma_1, \dotsc, \sigma_t\}$ induces a subgraph in $G_\cH$ whose $\ell$ nontrivial connected components (maximal clusters) have
  sizes $s_1, \dotsc, s_\ell$, so that this graph has $t - (s_1 + \dotsb + s_\ell)$ isolated vertices.\footnote{This includes the case $\ell = 0$ in which $\cH(t; \emptyset)$ corresponds to induced subgraphs of $G_\cH$ all of whose connected components are isolated vertices.} Observe that, for every collection $W_1, \dotsc, W_\ell$
  of connected subsets of vertices $G_\cH$ with sizes $s_1, \dotsc, s_\ell$, respectively, there are at most $t^{s_1 + \dotsb + s_\ell} \cdot {e_\cH}^{t-(s_1 + \dotsb + s_\ell)}$
  sequences $\sigmab = (\sigma_1, \dotsc, \sigma_t) \in \cH(t)$ such that the nontrivial connected components of $\{\sigma_1, \dotsc, \sigma_t\}$ are
  exactly $W_1, \dotsc, W_\ell$; indeed, there are at most $t^{s_1 + \dotsb + s_\ell}$ ways to choose the locations of the vertices in $W_1 \cup \dotsb \cup W_\ell$
  in a sequence of length $t$ and, for each such choice, at most ${e_\cH}^{t-(s_1 + \dotsb + s_\ell)}$ choices for the remaining elements of the sequence.
  We conclude that
  \[
    \sum_{\sigmab \in \cH(t;s_1, \dotsc, s_\ell)} \Ex[X_\sigmab] \le \mu^{t-(s_1+\dotsb+s_\ell)} \cdot \prod_{i=1}^{\ell} \Ex\big[D_{s_i}(\cH_p)\big] \cdot t^{s_i}
  \]
  and, consequently, summing over all $\ell$ and all sequences $s_1, \dotsc, s_\ell$ and using the assumed upper bound on the expectation of $D_s(\cH_p)$, valid for each $s \le t$,
  \[
    \begin{split}
      M_t(X) & \leq \sum_{s=0}^{t}\mu^{t-s} \cdot \sum_{\ell \ge 0} \sum_{\substack{s_1+\dotsb+s_\ell=s\\ 2 \le s_1 \le \dotsb \le s_\ell}} \prod_{i=1}^{\ell} \e\big[D_{s_i}(\cH_p)\big] \cdot t^{s_i} \\
      & \leq \sum_{s=0}^{t}\mu^{t-s} \cdot \sum_{\ell \ge 0} \sum_{\substack{s_1+\dotsb+s_\ell=s\\ 2 \le s_1 \le \dotsb \le s_\ell}} \prod_{i=1}^{\ell} \exp(-K s_i+s_i \log t)\\
      & = \sum_{s=0}^{t}\mu^{t} \cdot \sum_{\ell \ge 0} \sum_{\substack{s_1+\dotsb+s_\ell=s\\ 2 \le s_1 \le \dotsb \le s_\ell}}     \exp\big(-K s + s\log (t/\mu)\big).
    \end{split}
  \]
  Since, for every $s \ge 0$, there are at most $2^s$ sequences $s_1, \dotsc, s_\ell$ of positive integers whose sum is $s$ (this includes the case when $s=0$, when the only such sequence is the empty sequence), we have
  \[
    M_t(X) \le \mu^t \cdot \sum_{s=0}^t \exp\big(-Ks+s\log(t/\mu)+s\big).
  \]
  Finally, since $t/\mu \leq c$, we may choose $K =  K(c,\eta)$ so that
  \[
    M_t(X) \leq \mu^t \cdot \sum_{s=0}^t \left(\frac{\eta}{1+\eta}\right)^s \leq (1+\eta)\mu^t,
  \]
  completing the proof.
\end{proof}

\subsection{Proof of Proposition~\ref{prop:kappoisson}}

Let $\cH$ be the hypergraph on the vertex set $\br N$ whose edges are $k$-term arithmetic progressions in $\br N$,
so that $X= X_{N,p}^\kap = e_{\cH_p}$. Let $\mu = \Ex[X]$, let $\eta = \eta(\eps,\delta)$ be the constant from
the statement of Proposition~\ref{prop:poisson},
and let $K = K(\eps, \delta, \eta)$ be the constant from the statement of Lemma~\ref{lem:poissonhg}.

For any two integers $a, b \in \br{N}$ with $a < b$, there is at most one $k$-term arithmetic progression that starts
with $a$ and ends with $b$ (and exactly one such progression if $b-a$ is divisible by $k-1$). Therefore, $N^2/(2k) \le e_{\cH} \le N^2$
for all large enough $N$. In particular, since we assume that $CN^{-1}\leq p^{k/2} \leq C^{-1}N^{-1}\log N$, we find that
\begin{equation}
  \label{eq:Poisson-kap-mu}
  \frac{C^2}{2k} \leq \mu \leq \left(\frac{\log N}{C}\right)^2
\end{equation}
and thus $\max\{1/\eta, K\} \leq \mu \leq \sqrt{e_{\cH}} / K$ whenever $C$ is large.
The claimed estimate on $- \log \Pr\big(X \ge (1+\delta)\Ex[X] \big)$ will follow from Proposition~\ref{prop:poisson}
and Lemma~\ref{lem:poissonhg} once we verify that $D_s(\cH_p)$, the number of clusters of $s$ arithmetic progressions
of length $k$ in the set $\br{N}_p$, satisfies
\[
  \Ex[D_s(\cH_p)] \le \exp(-Ks)
\]
for every $s$ satisfying $2 \le s \le (\delta +\eps)\mu$.

In order to do so, let $\cD(s,m)$ be the the set of all
clusters $\{\sigma_1,\dotsc, \sigma_s\}$ of $s$ arithmetic progressions of length $k$ in $\br{N}$
such that $|\sigma_1\cup \dotsb \cup \sigma_s| = m$; we also let $D_{s,m}$ be the number of such clusters
whose union is contained in the random set $\br N_p$.
When $s \ge 2$, the union of any $s$ distinct $k$-term arithmetic progressions contains
between $k+1$ and $ks$ numbers, and therefore $D_{s,m} = 0$ unless $k+1 \le m \le ks$. Thus, we can write
\[
  D_s(\cH_p) = \sum_{m=k+1}^{ks} D_{s,m}.
\]

For each integer $m$, let $a_m$ denote the number of $m$-element subsets of $\br{N}$ that are the union
of a single, nonempty (but possibly trivial) cluster of $k$-term arithmetic progressions. Since a progression
is uniquely determined by its first and second element, it follows that, for each $s$,
\begin{equation}
  \label{eq:apcore}
  \Ex[D_{s,m}] \leq a_m p^m \binom{m^2}{s}.
\end{equation}

\begin{claim}
  \label{claim:am-upper}
  For every integer $m \ge 1$,
  \[
    a_m \leq N^2 \cdot (2kmN)^{\frac{m-k}{k-1}}. 
  \]
\end{claim}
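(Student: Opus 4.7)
The plan is to bound $a_m$ by constructing, for every $m$-element set $S$ counted by $a_m$, a short encoding as a sequence of $k$-APs whose union is $S$, and then counting the encodings.

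More precisely, I would like to show that every $S$ of size $m$ that arises as the union of a cluster of $k$-APs can be written as $\sigma_0 \cup \sigma_1 \cup \cdots \cup \sigma_q$ where $q = \lceil (m-k)/(k-1) \rceil$, each $\sigma_i$ is a $k$-AP contained in $S$, and $\sigma_i$ shares at least one element with $\sigma_0 \cup \cdots \cup \sigma_{i-1}$ for every $i \ge 1$. Heuristically, starting from an arbitrary $k$-AP $\sigma_0$ in a cluster covering $S$ and then greedily extending by $k$-APs that share at least one element with the current union while maximizing the number of newly added elements, each extension contributes at most $k-1$ new elements, and a total of $m-k$ new elements must be added; this suggests at most $\lceil (m-k)/(k-1)\rceil$ extensions will suffice.

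Given such a sequence, I would then estimate the number of codes. The initial AP $\sigma_0$ is determined by its first element together with its common difference, giving at most $N^2$ choices. For each subsequent $\sigma_i$, one can specify a shared element $y$ in the running union (at most $m$ choices), the common difference of $\sigma_i$ (at most $N$ choices), and the position of $y$ within $\sigma_i$ (at most $k$ choices), yielding at most $kmN$ options per step. Multiplying over the $q$ extensions and noting that $kmN \le 2kmN$, the total number of codes is bounded by $N^2 \cdot (2kmN)^{q}$, which yields the stated bound $N^2 \cdot (2kmN)^{(m-k)/(k-1)}$ once one packages the rounding appropriately (using in particular that $(2kmN)^q$ at the integer $q$ and the real exponent $(m-k)/(k-1)$ differ by absorbable factors).

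The main obstacle is making precise the existence of a covering sequence of length $1 + \lceil(m-k)/(k-1)\rceil$: a naive greedy procedure inside a fixed cluster could be forced to take more steps, potentially up to $m-k$, if the given cluster structure compels successive APs to overlap in more than a single element. I plan to circumvent this by exploiting the freedom to choose any $k$-APs in $S$ forming a cluster covering $S$ (rather than being tied to a prescribed one), and arguing by a short structural/combinatorial step that the minimum cluster covering $S$ uses exactly $\lceil(m-1)/(k-1)\rceil$ APs. Once this is in place, the counting argument described above goes through and delivers the claimed bound.
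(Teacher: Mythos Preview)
Your plan hinges entirely on the covering lemma---that every $m$-element set $S$ arising as the union of a cluster of $k$-APs can be covered by a cluster of only $\lceil (m-1)/(k-1)\rceil$ $k$-APs---and you have not proved it. You flag it as ``the main obstacle'' and then promise a ``short structural/combinatorial step'', but no argument is given. This is a genuine gap, and the claim is far from obvious: the analogous statement for general $k$-uniform hypergraphs is false (take all edges through a fixed pair of vertices), so any proof would have to use the arithmetic structure of progressions in an essential way. Your greedy intuition---pick APs that add $k-1$ new elements each---can certainly fail inside a \emph{given} cluster, and you have not shown how the freedom to choose different APs in $S$ repairs this.

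There is also a secondary issue with the rounding. Your encoding gives at most $N^2\cdot (kmN)^q$ with $q=\lceil (m-k)/(k-1)\rceil$, and you assert this matches $N^2\cdot (2kmN)^{(m-k)/(k-1)}$ up to ``absorbable factors''. But the discrepancy is $(kmN)^{q-(m-k)/(k-1)}$, which can be as large as $(kmN)^{(k-2)/(k-1)}$---polynomial in $N$, not absorbable. One could try to refine the encoding of the last AP (it shares at least two elements with the running union when $k-1\nmid m-k$, so costs only $k^2m^2$ rather than $kmN$), but you do not do this, and even then it only works under a smallness assumption on $m$.

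The paper sidesteps both problems. It proves the bound by induction on $m$: any such $A$ with $|A|=m>k$ decomposes as $A'\cup\sigma$ with $A'$ itself a cluster-union and $\sigma$ a $k$-AP meeting $A'$. Splitting on $|A'\cap\sigma|=1$ versus $|A'\cap\sigma|\ge 2$ yields
\[
  a_m \le kmN\cdot a_{m-k+1} + k^2m^2\cdot\bigl(a_{m-k+2}+\cdots+a_{m-1}\bigr),
\]
and the inductive hypothesis closes the loop. The second term---exactly the case your greedy argument cannot control---is absorbed using that $m$ is only polylogarithmic in $N$ in the ambient proof, so that $k^3m^2\cdot(2kmN)^{-1/(k-1)}\le 1/2$; this is precisely where the factor $2$ in $2kmN$ is spent. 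Thus the paper never needs the efficient-covering claim at all.
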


\begin{proof}
  We prove the claimed upper bound on $a_m$ by induction on $m$. It is vacuously true when $m < k$, since then $a_m = 0$,
  or when $m = k$, as $a_k = e_{\cH} \le N^2$. Assume now that $m \ge k+1$ and let $A$ be an arbitrary set counted by $a_m$.
  Since $m > k$, the set $A$ must be a union of at least two different progressions. Moreover, there are a proper subset $A' \subsetneq A$
  that is a union of a (nonempty) cluster of $k$-term arithmetic progressions and a $k$-term progression $\sigma$
  that intersects $A'$ such that $A = A' \cup \sigma$; note that the number of
  $\sigma_i$'s whose union is $A'$ may be significantly smaller than the number
  that was used to generate $A$. By
  construction, we have that $|A'| = |A| - k + |A' \cap \sigma| = m-k+|A' \cap
  \sigma|$. Since there are at most $k |A'| N \le kmN$ arithmetic progressions
  of length $k$ that intersect $A'$ in exactly one element and
  at most $k^2 |A'|^2 \le k^2m^2$ progressions that intersect $A'$ in two or more elements,
  \[
    a_m \le kmN \cdot a_{m-k+1} + k^2m^2 \cdot (a_{m-k+2} + \dotsb + a_{m-1}).
  \]
  It follows from our inductive assumption that
  \[
    \begin{split} a_m &\leq 
      km N\cdot N^2 \cdot (2km N)^{\frac{m-2k+1}{k-1}} + k^2m^2 \cdot k \cdot N^2 \cdot (2kmN)^{\frac{m-k-1}{k-1}}\\
      &=
      N^2\cdot \left((2km N)^{\frac{m-k}{k-1}} / 2 + k^3m^2 \cdot (2km N)^{\frac{m-k}{k-1}-\frac{1}{k-1}}\right).
    \end{split}
  \]
  Finally, as~\eqref{eq:Poisson-kap-mu} implies that $m \leq ks\leq k(\delta+\eps)\mu \leq k(\delta +\eps)(C^{-1}\log N)^2$ and $N \ge C$, then
  \[
    k^3m^2 \cdot (2kmN)^{-1/(k-1)} \le 1/2,
  \]
  provided that $C$ is sufficiently large. This implies the claimed upper bound on $a_m$.
\end{proof}

Assume now that $k+1 \le m \le ks$. Since $s \le (\delta+\eps)\mu$, inequality~\eqref{eq:Poisson-kap-mu} implies that $m$ is only
polylogarithmic in $N$; on the other hand, $p\leq (C^{-1}N^{-1}\log N)^{2/k}$. Since $k \ge 3$ and $N \ge C$, there is a positive
constant $\gamma$ that depends only on $k$ such that
\[ 
  (2km N)^{1/(k-1)} p \leq (2km N)^{1/(k-1)} \cdot (C^{-1}N^{-1}\log N)^{2/k} \le N^{-2(k+1)\gamma}  \le N^{-2m\gamma/ (m-k)}.
\]
In particular, Claim~\ref{claim:am-upper} implies that
\[
  a_m p^m \leq N^2p^k \cdot \left((2km N)^{1/(k-1)}p\right)^{m-k}
  \leq N^2p^k \cdot N^{-2m\gamma} \leq N^{-m\gamma},
  \]
  where for the last inequality we use that $N^2p^k$ is at most polylogarithmic in $N$ and $N \ge C$.
  Combining this bound with \eqref{eq:apcore} we conclude that
  \[
    \Ex[D_{s,m}] \le N^{-m\gamma}\cdot \binom{m^2}{s} \leq \exp\left(-m\gamma\log N + s\log\left( \frac{em^2}{s}\right)\right).
  \]
  Let $f \colon (0, \infty) \to (0, \infty)$ be the function defined by
  \[
    f(x) = \exp\left( - x \gamma \log N+ s\log\left(\frac{ex^2}{s}\right)\right) = \exp\big(-x\gamma \log N + 2s\log x + s\log(e/s)\big),
  \]
  so that $\Ex[D_{s,m}] \le f(m)$. Elementary calculus shows that $f$ is maximised at $x = 2s/(\gamma \log N)$. Therefore,
  \[
    \Ex[D_{s,m}] \le f\left(\frac{2s}{\gamma \log N}\right) = \exp\left( - 2s+ s\log\left(\frac{4es}{\gamma^2 (\log N)^2}\right)\right).
  \]
  Since our assumptions imply that, see~\eqref{eq:Poisson-kap-mu},
  \[
    \frac{s}{(\log N)^2} \le \frac{(\delta+\eps)\mu}{(\log N)^2} \le \frac{\delta+\eps}{C^2},
  \]
  we may conclude that $\Ex[D_{s,m}] \le \exp(-(K+k)s)$, provided that $C$ is sufficiently large.
  Therefore, if $C$ is sufficiently large,
  \[
      \e[D_s(\cH_p)]  = \sum_{m=k+1}^{ks}\e[D_{s,m}] \le ks \cdot \exp(-Ks-ks) \le \exp(-Ks).
  \]
  This completes the proof.

\subsection{Proof of Proposition~\ref{prop:Hpoisson}}
Let $H$ be a connected, $\Delta$-regular graph and let $\cH$ be the hypergraph on the vertex set $\binom{\br n}{2}$ 
whose edges are copies of $H$ in $K_n$, so that $X = X_{n,p}^H = e_{\cH_p}$. 
Let $\mu = \Ex[X]$, let $\eta = \eta(\eps,\delta)$ be the constant from
the statement of Proposition~\ref{prop:poisson},
and let $K = K(\eps, \delta, \eta)$ be the constant from the statement of Lemma~\ref{lem:poissonhg}.

Since $(n/v_H)^{v_H} \le \binom{n}{v_H} \le e_\cH \le n^{v_H}$ for all large enough $n$, our assumption
$Cn^{-1} \le p^{\Delta/2} \le C^{-1} n^{-1} (\log n)^{\frac{1}{v_H-2}}$ and the fact that $2e_H = \Delta v_H$ imply that
\begin{equation}
  \label{eq:Poisson-H-mu}
  \left(\frac{C}{v_H}\right)^{v_H} \le \mu \le \frac{(\log n)^{1+\frac{2}{v_H-2}}}{C^{v_H}},
\end{equation}
and thus $\max\{1/\eta, K\} \le \mu \le \sqrt{e_\cH}/K$ whenever $C$ is sufficiently large.
The claimed estimate on $- \log \Pr\big(X \ge (1+\delta)\Ex[X] \big)$ will follow from Proposition~\ref{prop:poisson}
and Lemma~\ref{lem:poissonhg} once we verify that $D_s(\cH_p)$, the number of clusters of $s$ copies of $H$
in the random graph $G_{n,p}$, satisfies
\[
  \Ex[D_s(\cH_p)] \le \exp(-Ks)
\]
for every $s$ satisfying $2 \le s \le (\delta +\eps)\mu$.

To this end, for every $s \ge 1$, every $k \ge 1$, and every $m \ge 1$, let $\cD(s,k,m)$ denote the set of all
clusters $\{\sigma_1,\dotsc, \sigma_s\}$ of $s$ distinct copies of $H$ in $K_n$ such that
the graph $\sigma_1\cup \dotsb \cup \sigma_s$ has $k$ vertices (of nonzero degree) and $m$ edges.
We further let $D_{s,k,m}$ denote the number of such clusters whose union is contained in $G_{n,p}$.
When $s \ge 2$, the union of any $s$ distinct copies of $H$ contains
between $v_H$ and $v_Hs$ vertices and between $e_H+1$ and $e_Hs$ edges,
and thus $D_{s,k,m} = 0$ unless $v_H \le k \le v_Hs$ and $e_H+1 \le m \le e_Hs$. We can therefore write
\[
  D_s(\cH_p) = \sum_{k=v_H}^{v_Hs} \sum_{m=e_H+1}^{e_Hs} D_{s,k,m}.
\]

\begin{claim}
  \label{claim:Dskm-upper}
  There exists a positive constant $\gamma$ such that, for every $s \ge 2$, every $k \ge v_H$, and every $m \ge e_H+1$,
  \[
    \Ex[D_{s,k,m}] \leq n^{-2\gamma m} \binom{k^2}{m} \binom{(2m)^{v_H/2}}{s}.
  \]
\end{claim}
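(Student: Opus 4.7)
The plan is to bound $\Ex[D_{s,k,m}]$ by conditioning on the union graph $G \subseteq K_n$ of the cluster. Any cluster of size $s$ whose union is $G$ lies in $G_{n,p}$ with probability $p^m$, and the number of such clusters with a fixed union $G$ is at most $\binom{N(H,G)}{s}$. Since $H$ is $\Delta$-regular, Lemma~\ref{lemma:eJ-alphaJ-clique} gives $\alpha_H^* = v_H/2$, and Theorem~\ref{thm:max-copies} then yields $N(H,G) \le |\Emb(H,G)| \le (2m)^{v_H/2}$. Combining this with the estimates $\binom{n}{k} \le n^k$ and $\binom{\binom{k}{2}}{m} \le \binom{k^2}{m}$ for the number of labelled $k$-vertex graphs on $\br{n}$ with $m$ edges, we obtain
\[
  \Ex[D_{s,k,m}] \le n^k p^m \cdot \binom{k^2}{m} \binom{(2m)^{v_H/2}}{s},
\]
so it suffices to show $n^k p^m \le n^{-2\gamma m}$ for some $\gamma = \gamma(H) > 0$.

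Using $p^{\Delta/2} \le C^{-1} n^{-1}(\log n)^{1/(v_H-2)}$, we have $\log(1/p) \ge (2/\Delta)\log n - O_H(\log\log n)$, so the target inequality reduces, modulo a polylogarithmic correction that vanishes for large $n$, to a combinatorial claim of the form
\[
  k \le (2/\Delta - \gamma_0)\,m \quad\text{for some $\gamma_0 = \gamma_0(H) > 0$,}
\]
valid for every cluster with $s \ge 2$. Note that two distinct copies of $H$ have distinct edge sets of the same cardinality $e_H$, so neither is contained in the other and their union has at least $e_H + 1$ edges; hence $s \ge 2$ automatically forces $m \ge e_H + 1$.

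To prove the combinatorial claim, first note that the union $G$ is connected (consecutive copies in the cluster share at least one edge of $K_n$) and that $\deg_G(v) \ge \Delta$ for every $v \in V(G)$ (since each vertex lies in some copy of $H$), so $2m - \Delta k = \sum_v (\deg_G(v) - \Delta) \ge 0$. Order the copies $\sigma_1, \dotsc, \sigma_s$ along a spanning tree of the cluster (so each $\sigma_i$ shares at least one edge with some earlier $\sigma_j$) and call $\sigma_i$ \emph{non-redundant} if it adds at least one edge to $\sigma_1 \cup \dotsb \cup \sigma_{i-1}$. For a non-redundant $\sigma_i$, the overlap $H'_i := \sigma_i \cap (\sigma_1 \cup \dotsb \cup \sigma_{i-1})$ is a non-empty proper subgraph of $\sigma_i \cong H$, and a non-empty $\Delta$-regular subgraph of a connected $\Delta$-regular graph must be the entire graph (any vertex of such a subgraph must have all of its $H$-neighbours inside it, so its vertex set is closed under $H$-adjacency). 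Therefore $H'_i$ is not $\Delta$-regular, some $v \in V(H'_i)$ has $\deg_{H'_i}(v) < \Delta$, and the increment to $2m - \Delta k$ contributed by $\sigma_i$ equals $\sum_{v \in V(H'_i)}(\Delta - \deg_{H'_i}(v)) \ge 1$. Letting $s_0$ denote the number of non-redundant copies, we get $2m - \Delta k \ge s_0 - 1$; since $s \ge 2$ forces $s_0 \ge 2$, the crude bound $m \le s_0 e_H$ yields $(2m - \Delta k)/m \ge 1/(2e_H)$, so we may take $\gamma_0 = 1/(2\Delta e_H)$ and $\gamma = \gamma_0/3$. The most delicate point is justifying the $\Delta$-regular-subgraph dichotomy cleanly when the overlap is possibly disconnected, but this is handled uniformly by the boundary-edge argument just sketched.
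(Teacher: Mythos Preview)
Your proof is correct. The overall skeleton matches the paper's exactly: one bounds
\[
\Ex[D_{s,k,m}]\le n^{k}p^{m}\binom{k^2}{m}\binom{(2m)^{v_H/2}}{s}
\]
using Theorem~\ref{thm:max-copies}, and then shows $n^{k}p^{m}\le n^{-2\gamma m}$ via a combinatorial inequality linking $k$ and $m$ for any cluster union. The two proofs diverge only in how that inequality is obtained. The paper proves, by induction on $s$, that
\[
m-e_H\ \ge\ \Bigl(\tfrac{\Delta}{2}+\tfrac{1}{2v_H}\Bigr)(k-v_H),
\]
the inductive step resting on $e_H-e_J\ge(\tfrac{\Delta}{2}+\tfrac{1}{2v_H})(v_H-v_J)$ for every proper subgraph $J\subsetneq H$. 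You instead sum the increments $\Delta v'_i-2e'_i\ge 1$ along a spanning-tree ordering to obtain $2m-\Delta k\ge s_0-1$, and then combine this with the crude bound $m\le s_0 e_H$ to conclude $2m-\Delta k\ge m/(2e_H)$. Both routes hinge on the same elementary observation (a nonempty proper subgraph $J$ of a connected $\Delta$-regular graph satisfies $2e_J\le\Delta v_J-1$), so the distinction is one of bookkeeping rather than substance; your version has the mild advantage that the linear relation $k\le(2/\Delta-\gamma_0)m$ falls out directly, avoiding the paper's additional passage from a bound on $k-v_H$ in terms of $m-e_H$ to one on $k$ in terms of $m$.
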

\begin{proof}
  We first show that, for every $s \ge 1$, the set $\cD(s,k,m)$ is empty unless
  \begin{equation}
    \label{eq:edges-vertices-cluster}
    m - e_H \ge \left(\frac{\Delta}{2} + \frac{1}{2v_H}\right) \cdot (k - v_H).
  \end{equation}
  We prove this fact by induction on $s$. The case $s=1$ holds vacuously, as the set $\cD(1,k,m)$ is
  nonempty only when $k = v_H$ and $m = e_H$. Assume now that $s \ge 2$ and let
  $G$ be the union of copies of $H$ that form some cluster in $\cD(s,k,m)$. By
  definition, $G$ has $k$ vertices and $m$ edges. Furthermore,
  for some $s' < s$, $k' \leq k$ and $m' < m$, there exist a subgraph $G'
  \subseteq G$ and a copy $\sigma$ of $H$ in $K_n$ that intersects (the edge
  set of ) $G'$ such that $G'$ is the union of copies of $H$ that form some
  cluster in $\cD(s',k',m')$, and $G = G' \cup \sigma$. We note that $s'$ may
  be strictly smaller than $s -1$. 
 Let $J \subseteq H$
  be the subgraph of $H$ that is isomorphic to $\sigma \cap G'$, so that $m = m' + e_H - e_J$ and $k = k' + v_H - v_J$. It follows
  from the inductive assumption that
  \[
    \begin{split}
      m - e_H & = m' - e_J \ge \left(\frac{\Delta}{2} + \frac{1}{2v_H}\right) \cdot (k' - v_H) +e_H - e_J \\
      & = \left(\frac{\Delta}{2} + \frac{1}{2v_H}\right) \cdot (k - v_H) - \left(\frac{\Delta}{2} + \frac{1}{2v_H}\right) \cdot (v_H - v_J) + e_H - e_J.
    \end{split}
  \]
  We claim that the above inequality implies~\eqref{eq:edges-vertices-cluster}. This is obviously true when $J = H$. If $J$ is a proper
  subgraph of $H$, then $2e_J \le\Delta v_J - 1$, since $H$ is connected and $\Delta$-regular, and therefore
  \[
    e_H - e_J \ge \frac{\Delta v_H}{2} - \frac{\Delta v_J - 1}{2} = \left(\frac{\Delta}{2} + \frac{1}{2(v_H-v_J)}\right) \cdot (v_H - v_J)
    \ge \left(\frac{\Delta}{2} + \frac{1}{2v_H}\right) \cdot (v_H - v_J),
  \]
  which gives~\eqref{eq:edges-vertices-cluster}.
  
  To complete the proof of the claim, note that
  \[ 
    \e[D_{s,k,m}] \leq n^kp^m \cdot \binom{k^2}{m} \cdot \binom{N(H, k, m)}{s},
  \]
  where $N(H,k,m)$ denotes the largest number of copies of $H$ in a graph with $k$ vertices and $m$ edges.
  Inequality~\eqref{eq:edges-vertices-cluster} implies that
  \[
    n^kp^m = n^{v_H}p^{e_H} \cdot n^{k-v_H} p^{m-e_H} \le n^{v_H} p^{e_H} \cdot \left(n p^{\Delta/2+1/(2v_H)}\right)^{2v_H(m- e_H)/(\Delta v_H +1)}.
  \]
  Since $p^{\Delta/2} \le C^{-1}n^{-1}\log n$, the quantity $n^{v_H}p^{e_H}$ is only polylogarithmic in $n$ and
  $n p^{\Delta/2+1/(2v_H)} \le n^{-\gamma'}$ for some positive constant $\gamma'$. As $m \ge e_H+1$ and
  $n$ is large, then there is a positive constant $\gamma$ such that $n^k p^m \le n^{-2\gamma m}$.
  The claimed upper bound on $\Ex[D_{s,k,m}]$ now follows from Theorem~\ref{thm:max-copies}, which implies that
  $N(H,k,m) \le (2m)^{v_H/2}$.
\end{proof}

Claim~\ref{claim:Dskm-upper} and the inequality $\binom{a}{b} \le (ea/b)^b$ impy that
\[
  \begin{split}
    \e[D_{s,k,m}] & \le \exp\left(-2\gamma m \log n + m\log(ek^2/m)+ s\log\big(e(2m)^{v_H/2}/s\big) \right) \\
    & \le \exp\left(-\gamma m\log n + s\log\big(e(2m)^{v_H/2}/s\big) \right),
  \end{split}
\]
where the second inequality as $k\leq v_Hs \leq v_H(\delta+\eps)\mu$ and $\mu$ is at most polylogarithmic
in $n$, see~\eqref{eq:Poisson-H-mu}. Let $f \colon (0, \infty) \to (0, \infty)$ be the function defined by
  \[
    f(x) = \exp\left( -\gamma x \log n+ s\log\big(e(2x)^{v_H/2}/s\big)\right),
  \]
  so that $\Ex[D_{s,k,m}] \le f(m)$. Elementary calculus shows that $f$ is maximised at $x = sv_H/(2 \gamma \log n)$. Therefore,
  \[
    \Ex[D_{s,k,m}] \le f\left(\frac{sv_H}{2\gamma \log n}\right) = \exp\left( -\frac{sv_H}{2} + \frac{sv_H}{2} \log\left(\frac{e^{2/v_H}v_H s^{1-2/v_H}}{\gamma \log n}\right)\right).
  \]
  Since our assumptions imply that, see~\eqref{eq:Poisson-H-mu},
  \[
    \frac{s^{1-2/v_H}}{\log n}  \le \frac{\big((\delta+\eta)\mu\big)^{1-2/v_H}}{\log n} \le \frac{(\delta+\eta)^{1-2/v_H}}{C^{v_H-2}},
  \]
  we may conclude that $\Ex[D_{s,k,m}] \le \exp(-(K+v_He_H)s)$, provided that $C$ is sufficiently large. Therefore, if $C$ is sufficiently large,
  \[
    \Ex[D_s(\cH_p)] = \sum_{k=v_H}^{v_Hs} \sum_{m=e_H+1}^{e_Hs} \Ex[D_{s,k,m}] \le s^2v_He_H \cdot \exp(-Ks-v_He_Hs) \le \exp(-Ks).
  \]
  This completes the proof.

\section{Beyond polynomials with nonnegative coefficients}
\label{sec:nonpolynomial}
 Although
Theorem~\ref{thm:packaged} applies only to the case where $X= X(Y)$ is a
polynomial with nonnegative coefficients, the proof can be adapted to
yield a similar result for all nonnegative functions $X\colon \{0,1\}^N \to
\RR_{\geq 0}$. In this case, the degree assumption in
Theorem~\ref{thm:packaged} has to be replaced by a more general assumption on the
`complexity' of $X$.

Given an $I \subseteq \br{N}$ and a $z \in \{0,1\}^N$, we let
\[
  F(I, z) = \{y\in \{0,1\}^N: \text{$y_i = z_i$ for all $i\in I$}\};
\]
we call sets of this from \emph{subcubes}. If $F$ is a subcube, then there is a unique
set $I$ such that $F = F(I, z)$, for some $z$. We can thus define
the \emph{codimension} of $F$ by $\codim F = |I|$.
Given a nonnegative function $X$ on the
hypercube, we define the \emph{complexity} of $X$ to be the smallest integer
$d$ for which it is possible to represent $X$ as a
linear combination with nonnegative coefficients
of indicator functions of subcubes with codimension at most $d$. The complexity
of $X$ is well defined, and at most $N$, since $X = \sum_{z\in \{0,1\}^N}
X(z) \1_{F(\br N,z)}$ is such a linear combination.
Note also that the complexity of every polynomial with nonnegative
coefficients and degree $d$ is at most $d$.

Assume now that $Y$ is a random variable taking values in $\{0,1\}^N$
and that $X = X(Y)$.
Given a subcube $F\subseteq \{0,1\}^N$, we write $\e_F[X] =
\e[X\mid Y\in F]$ for the expectation of $X$ conditioned on $Y \in F$. We
further define $\Phi_X \colon \RR_{\geq 0} \to \RR_{\geq 0}\cup \{\infty\}$ by
\begin{equation}\label{eq:beyond-phi}
  \Phi_X(\delta) = \min\big\{-\log \Pr(Y\in F): \text{$F\subseteq \{0,1\}^N$
  is a subcube with }\e_F[X]\geq (1+\delta)\e[X]\big\}.
\end{equation}
If $X$ is an increasing function of $Y$, this definition coincides 
with our earlier definition of $\Phi_X(\delta)$, because then the minimum
is achieved at a subcube of the form $F(I,\mathbf 1)$, where $\mathbf 1$ is the $N$-dimensional
all-ones vector. One may adapt the proof of Theorem~\ref{thm:packaged} to show the following. (A precise proof of this theorem can be found in~\cite{cohen}, which was written after this work was completed).

\begin{theorem}\label{thm:genpackaged}
  For every positive integer $d$ and all positive real numbers
  $\eps$ and $\delta$ with $\eps<1/2$, there is a positive
  $K=K(d,\eps,\delta)$ such that the following holds. Let $Y$ be a sequence
  of $N$ independent $\Ber(p)$ random variables for some $p \in (0, 1-\eps]$ and
  assume that $X=X(Y)$ is non-negative and has complexity at most $d$ and satisfies
  $\Phi_X(\delta-\eps)\geq K\log(1/p)$. Denote by $\cF^*$ the collection of all
  subcubes $F\subseteq \{0,1\}^N$ satisfying
  \begin{enumerate}[label=(F\arabic*)]
    \item\label{item:genthmcore-bias}
      $\e_F[X] \geq (1+\delta-\eps)\e[X]$,
    \item\label{item:genthmcore-size}
      $\codim F\leq K \cdot \Phi_X(\delta+\eps)$.
  \end{enumerate}
 Then, 
\[
\Pr\big( X \geq (1+\delta)\e[X] \big) \leq (1 + \eps) \cdot \Pr \left(Y \in F \text{ for some } F \in \cF^* \right).
\] 
\end{theorem}

We note that this theorem does not exactly match Theorem~\ref{thm:packaged}; indeed, we only assert that the upper tail event is dominated by the appearance of a subcube in $\cF^*$. It is possible to further restrict the family $\cF^*$, analogous to the extraction of cores from seeds. We do not pursue this direction here.

Theorem~\ref{thm:genpackaged} can be used to study the upper tail problem for
induced subgraph counts.
Suppose that $H$ is a fixed graph and $X =X_{n,p}^{H\text{-ind}}$ is the
number of induced copies of $H$ in the random graph $G_{n,p}$.
Let $N =
\binom{n}{2}$ and,
 for an arbitrary
bijection $\sigma_n\colon \binom{\br n}{2} \to \br N$,
let $Y_i$ be the indicator random variable of the event that $\sigma_n^{-1}(i)$ is an
edge in $G_{n,p}$. Then
 we can write
\[ X = \sum_{\substack{H'\subseteq K_n\\ H'\cong H}}
\prod_{e\in E(H)} Y_{\sigma_n(e)}
\prod_{e\in \binom{V(H)}{2}\setminus E(H)}(1- Y_{\sigma_n(e)}).
 \]
In particular, the complexity of $X$ is bounded
by $e_H$ and Theorem~\ref{thm:genpackaged} applies.
On the other hand, it is clear that $X$ is generally not monotone, so
one cannot use Theorem~\ref{thm:packaged}. This direction was successfully pursued to by Cohen~\cite{cohen}.

\section{Concluding remarks}
\label{sec:conclusion}

    \subsubsection*{Nonregular graphs}
    It is an open problem to extend Theorems~\ref{thm:H} and \ref{thm:Hpoisson}
    to nonregular graphs. 
    It is straightforward to extend Theorem~\ref{thm:Hpoisson} to the more general
    case of strictly balanced graphs;
    however, note that \v{S}ileikis and Warnke
    \cite{vsileikis2018counterexample} constructed balanced graphs
    for which the conclusion of Theorem~\ref{thm:Hpoisson} does not hold.
    In the localised regime, the results in~\cite{chatterjee2016nonlinear,cook2018large,cook2021hypergraph,eldan2016gaussian} apply to arbitrary (as opposed to only $\Delta$-regular) graphs $H$; however, these works require polynomially-suboptimal assumptions on the density $p$. Recently, 
    \v{S}ileikis and Warnke~\cite{sileikis2019stars}
    determined the order of the
    logarithmic upper tail probability for the number of copies
    of the star graph $K_{1,s}$ in $G_{n,p}$.
    
    \subsubsection*{The phase transition between the Poisson and the localised regimes}
      We believe that the logarithmic upper tail probabilities of the random
      variables considered in this paper are always determined by either the Poisson behaviour,
      the localised behaviour, or the coexistence of the two (in the regime where
      they are commensurate). More precisely, we believe that for both 
      $X= X_{n,p}^{H}$ (for a connected, $\Delta$-regular $H$) and $X=X_{N,p}^\kap$,
      \begin{equation}
        \label{eq:shy-conj}
        -\log \Pr\big(X\geq (1+\delta)\e[X]\big) = \big(1\pm o(1)\big)\cdot \min_{0\leq
          \delta'\leq \delta}\big(\Phi_X(\delta') + \Psi_X(\delta-\delta')\big),
      \end{equation}
      as long as $\e[X]\to\infty$ and $p\to 0$.
      Let $p^* = p^*(\delta,n)$ be such that $\Phi_X(\delta)  = \Psi_X(\delta)$.
      Note that if $p \ll p^*$, then $\Psi_X(\delta) \ll \Phi_X(\delta)$ and we recover
      Theorems~\ref{thm:kappoisson} and \ref{thm:Hpoisson}, whereas if $p\gg p^*$, we have 
      $\Psi_X(\delta) \gg \Phi_X(\delta)$ and~\eqref{eq:shy-conj} implies (in some cases
      a stronger version of) Theorems~\ref{thm:kap}, \ref{thm:H}, and~\ref{thm:krrate}.
      If $p= \Theta(p^*)$, then both terms are of the same order and the conjecture
      allows for the upper tail to be dominated by configurations
      exhibiting features of both the Poisson and localised regimes.

      \subsubsection*{Structural theorems for non-complete graphs}
      In the case where $X = X_{n,p}^H$ for a connected, $\Delta$-regular graph $H$,
      we have neither determined the asymptotics of $\Phi_X(\delta)$ in the range $np^\Delta \to c\in (0,\infty)$
      nor given a structural description of the upper tail event $\{X \ge (1+\delta)\Ex[X]\}$ for any density $p$.
      Doing the former would yield the logarithmic upper tail probability of $X$, via Proposition~\ref{prop:ldp-H};
      it is likely that the value of $\Phi_X(\delta)$ is given by a mixture of 
      the clique construction and a `hub-like' construction in which
      a constant number of vertices have degrees linear in $n$.
      As for the latter, the method used to prove Theorem~\ref{thm:Kr-stability} can be generalised to
      yield an analogous statement in which $K_r$ is replaced with an arbitrary $\Delta$-regular graph $H$.
      Armed with such a `stability' statement, it is relatively straightforward to show that, when $np^\Delta \to 0$,
      the random graph $G_{n,p}$ conditioned on the upper tail event $\{X \ge (1+\delta)\Ex[X]\}$ contains
      an `almost-clique' of the `right' size, as was the case when $H = K_r$. We were not able
      to prove such a structural statement in the complementary range $np^\Delta = \Omega(1)$.

      \subsubsection*{Stability results for arithmetic progressions}
    An interesting problem is to characterise the near-minimisers of the optimisation problem
    for $\Phi_X(\delta)$ when $X = X_{N,p}^\kap$. More precisely, we ask for a description
    of all subsets $I\subseteq \br N$ that satisfy $\e_I[X]\geq (1+\delta)\e[X]$ and $|I|\leq (1+\eps)\Phi_X(\delta+\eps)$.
    As a consequence of Theorem~\ref{thm:packaged} and the entropic stability of $X_{N,p}^\kap$, which
    we established in the proof of Proposition~\ref{prop:kap-rate}, such a result would imply
    a structural characterisation of the upper tail event. Since the dominant contribution to
    the difference $\Ex_I[X] - \Ex[X]$ comes from $k$-term arithmetic progressions contained
    in $I$, this problem is equivalent to understanding the structure of sets $I\subseteq \ZZ$ 
    that are near-maximisers of the number of $k$-term arithmetic progressions (among
    subsets of a given size). The structure of true maximisers was described by Green and
    Sisask~\cite{green2008maximal} in the case for $k=3$.
    
    \subsubsection*{Decomposing the upper tail measure}
    Let $\bar{Y}$ be the random variable obtained by conditioning $Y$ on the
    upper tail event $\{X(Y)\geq (1+\delta)\e[X]\}$ and let $\tilde{Y}$ be the random
    variable obtained by first choosing a uniformly random solution $I$ of the
    optimisation problem for $\Phi_X(\delta)$ and then conditioning $Y$ on
    $\prod_{i\in I}Y_i=1$. It would be very interesting to determine necessary
    and sufficient conditions so that $\bar{Y}$ and $\tilde{Y}$ are close in some metric.
    In particular, are the assumptions of Theorem~\ref{thm:packaged} sufficiently
    strong to imply this?
    This question is closely related to the more general problem of decomposing a Gibbs measure into a mixture of product measures. 
    The work of Eldan and Gross~\cite{eldan2018decomposition},
    and, more recently, of Austin~\cite{austin2018structure}, gives
    general conditions for the existence of such a decomposition.

    \subsubsection*{Moderate deviations}
    Throughout this paper, we have assumed that $\delta$ is a fixed, positive constant. It is interesting and natural
    to study the probability of $\{X \ge (1+\delta) \Ex[X]\}$ when $\delta$ is allowed to depend on $N$ and~$p$.
    In the case where $\delta \Ex[X]$ is of the same order as $\sqrt{\Var(X)}$, one can often prove a Central Limit
    Theorem, see~\cite{BarKarRuc89, BarKocLiu19, Ruc88}. The regime in which $\delta \Ex[X] \gg \sqrt{\Var(X)}$ but $\delta \to 0$
    is often referred to as the moderate deviation regime. One expects that, under reasonable assumptions,
    the logarithmic upper tail probability $-\log \Pr(X \ge (1+\delta)\Ex[X])$ is of order $\min\{ (\delta\Ex[X])^2/\Var(X), \, \Phi_X(\delta) \}$;
    this has been verified in certain cases, see~\cite{bhattacharya2016upper,griffiths2019regular,GolGriSco19, griffiths2020irregular, sileikis2019stars,warnke2017upper}.
    Our methods can be adapted to the moderate deviation regime. In an upcoming work~\cite{HarMouSam-kap}, we calculate
    the logarithmic upper tail probability for $X = X_{N,p}^\kap$ for nearly all pairs $(p, \delta)$ for which
    localisation is believed to occur---that is, when $\Phi_X(\delta) \ll (\delta \Ex[X])^2/\Var(X)$.

    \subsubsection*{Other random graph models}
    Upper tails for subgraph counts have been considered in random graph
    models other than $G_{n,p}$, such as exponential random graphs~\cite{chatterjee2013estimating},
    random geometric graphs~\cite{chatterjee2014localization}, random regular graphs~\cite{gunby2020regular, HooLipMos19},
    and (dense) uniform random graphs~\cite{dembo2018large}.
    The framework developed here can be generalised to other (non-product) measures
    on the hypercube, providing a possible approach to such questions.  
    It is likely that this requires adapting the notions of cores and entropic stability
    to the model.

    \subsection*{Acknowledgements}
    We thank Bhaswar Bhattacharya, Asaf Cohen, Nicholas Cook, Ronen Eldan, Michael Krivelevich, Eyal Lubetzky, Matas {\v{S}}ileikis, Lutz Warnke, and Yufei Zhao for helpful comments and discussions. We are indebted to the three anonymous referees whose comments greatly improved the presentation of this paper.

\bibliographystyle{abbrv}
\bibliography{refs}

\end{document}